\documentclass[12pt,reqno]{amsart}


\usepackage{graphicx}
\usepackage{amsmath,amsthm,amscd,amssymb}
\usepackage{amsfonts}
\usepackage{latexsym}
\usepackage{epsfig,epstopdf}
\usepackage{enumerate}


\setlength{\textwidth}{16.5 cm}             
\setlength{\textheight}{21.5 cm}           
\setlength{\oddsidemargin}{0.2cm}     
\setlength{\evensidemargin}{0.2cm}         


\newtheorem*{thm*}{Theorem \ref{main-general}*}
\newtheorem*{thm1*}{Proposition \ref{small data}*}
\newtheorem*{thm2*}{Proposition \ref{longperturbation}*}
\newtheorem*{thm3*}{Proposition \ref{H^1 Scattering}*}

\numberwithin{equation}{section}
\newtheorem{thm}{Theorem}
\numberwithin{thm}{section}
\newtheorem*{thmA*}{Theorem A}
\newtheorem*{thmB*}{Theorem A*}
\newtheorem{lemma}[thm]{Lemma}
\newtheorem{prop}[thm]{Proposition}
\newtheorem{cor}[thm]{Corollary}

\theoremstyle{definition}
\newtheorem{definition}[thm]{Definition}

\theoremstyle{remark}
\newtheorem{rmk}[thm]{Remark}


\newcommand{\N}{\mathbb{N}}
\newcommand{\Z}{\mathbb{Z}}
\newcommand{\R}{\mathbb{R}}
\newcommand{\C}{\mathbb{C}}
\newcommand{\at}{\mathcal{T}}

\newcommand{\IR}{\Rn \times I}
\newcommand{\Rn}{\R^{d}}
\newcommand{\dH}{\dot{H}}
\newcommand{\Hs}{\dH^{s}}
\newcommand{\dB}{\dot{\beta}}

\newcommand{\tM}{\tilde M}

\newcommand{\NLS}{\textrm{NLS}}

\newcommand{\NLSf}{\NLS_{ p}(\Rn)}

\newcommand{\lap}{\Delta}

\newcommand{\GB}{\text{GBG}}
\newcommand{\g}{\mathcal{G}}
\newcommand{\Pu}{\mathcal{P}}
\newcommand{\ME}{\mathcal{ME}}
\newcommand{\uQ}{u_{_Q}}


\newcommand{\SHsj}{S(\dH^{s},I_j)}
\newcommand{\SHdsj}{S^{\prime}(\dH^{-s},I_j)}

\newcommand{\dHs}{\dot{H}^{s}}
\newcommand{\dHds}{\dot{H}^{-s}}

\newcommand{\Lt}{L^2}
\newcommand{\SdHs}{S^{\prime}(\dHds)}
\newcommand{\SdLt}{S^{\prime}(\Lt)}

\newcommand{\SHs}{S(\dHs)}
\newcommand{\SLt}{S(\Lt)}


\newcommand{\wt}{\widetilde{W}}
\newcommand{\tpsi}{\widetilde{\psi}}
\newcommand{\te}{\tilde{e}}
\newcommand{\tu}{\tilde{u}}

\newcommand{\z}{\bar{z}}
\newcommand{\w}{\bar{w}}

\newcommand{\ds}{\displaystyle}


\newcommand{\qsd}{L^{\frac {d}{2 s}}_t} 
\newcommand{\qsdI}{L^{\frac {d}{2 s}}_{I_j}} 
\newcommand{\qsdtail}{L^{\frac {d}{2 s}}_{[t,\infty)}} 
\newcommand{\rsd}{L^{\frac{2 d^2(p-1)}{d^2 (p - 1) + 16}}_x}
\newcommand{\qs}{L^{\frac{d p}{2s}}_t} 
\newcommand{\qsI}{L^{\frac{d p}{2s}}_{I_j}}
\newcommand{\qstail}{L^{\frac{d p}{2s}}_{[t,\infty)}} 
\newcommand{\rs}{L^{\frac {2d^2 p}{d^2p-8s}}_x}
\newcommand{\rsh}{L^{\frac{d^2p(p-1)}{2(d+4)}}_x}

\newcommand{\rua}{L^{\frac{4 d^2 ( p-1)}{(d + 4) (d - d p + 8) + d^2 p (p - 1)}}_x}
\newcommand{\rda}{L^{\frac{8 d^2 p}{(p - 1)^2 ((d^2 - 3 d s + 2 s^2) (d + 4) + 8 s^2)}}_x}
\newcommand{\rta}{L^{\frac{ d^2 p (p - 1)}{2 (d + 4)}}_x}
\newcommand{\qda}{L^{\frac{ d  p}{2s (p-1)}}_t} 
 
\newcommand{\qca}{L^{\frac{d (p - 1)}{s}}_t} 
\newcommand{\rca}{L^{\frac{d^2 (d - 1)}{2 (d - s)}}_x}
\newcommand{\rcia}{L^{\frac{d^2 (d - 1)}{2 d + s^2 (p - 1)^2}}_x}
\newcommand{\qcia}{L^{\frac{d}{s}}_t}
\newcommand{\roa}{L^{\frac{2 d^2}{d^2 - 4 s}}_x}


\newcommand{\qsdq}{\frac {d}{2 s}}

\newcommand{\rsdr}{\frac{2 d^2(p-1)}{d^2 (p - 1) + 16}}
\newcommand{\qsq}{\frac{d p}{2s}}

\newcommand{\rsr}{\frac {2d^2 p}{d^2p-8s}}
\newcommand{\rshr}{\frac{d^2p(p-1)}{2(d+4)}}

\newcommand{\ruar}{\frac{4 d^2 ( p-1)}{(d + 4) (d - d p + 8) + d^2 p (p - 1)}}

\newcommand{\qcaq}{\frac{d (p - 1)}{s}} 
\newcommand{\rcar}{\frac{d^2 (d - 1)}{2 (d - s)}}

\newcommand{\qciaq}{\frac{d}{s}}
\newcommand{\roar}{\frac{2 d^2}{d^2 - 4 s}}

\newcommand{\rubr}{\frac{2 (d + 4) + d (p - 1)^3}{ d^2 (p - 1)}}


\newcommand{\Ds}{D^{s}}
\newcommand{\Dsa}{D^{\frac{s(p-1)}{2}}}

\newcommand{\expa}{\frac {p-1}{2}}

\newcommand{\rub}{L^{\frac{2 (d + 4) + d (p - 1)^3}{ d^2 (p - 1)}}_x}
\newcommand{\rdb}{L^{\frac{d^2p}{2 (d + 4) + d p(p - 1)^2 }}_x}
\newcommand{\qdb}{L^{\frac{d  p}{2s (p-1)}}_t}
\newcommand{\rtb}{L^{\frac{ d^2 p (p - 1)}{2 (d + 4)}}_x}
 
\newcommand{\qcb}{L^{\frac{d (p - 1)}{s}}_t} 
\newcommand{\rcb}{L^{\frac{d^2 (p - 1)}{2 (d - s)}}_x}
\newcommand{\rcib}{L^{\frac{2d^2}{d^2 + 2 d (p - 1)^2 - 2 s (d + 2)}}_x}%
\newcommand{\qcib}{L^{\frac{d}{s}}_t}
\newcommand{\rob}{L^{\frac{2 d^2}{d^2 - 4 s}}_x}
\newcommand{\Dsb}{D^{(p-1)^{2}}}

\newcommand{\expbu}{\frac {(p - 1) (1 + s - p)}{s}}
\newcommand{\expbd}{\frac {(p - 1)^2 }{s}}

\newcommand{\qzh}{L^{\frac{12 (d - 2 s)}{(8 + 3 d - 6 s) (1 - s)}}_{I_j}}
\newcommand{\qzht}{L^{\frac{12 (d - 2 s)}{(8 + 3 d - 6 s) (1 - s)}}_{t}}

\newcommand{\rzh}{L^{\frac{6 d (d - 2 s)}{  3 (d^2+ 2s^2)+ 9 d (1 - s) - 2(5 s + 4)}}_x}

\newcommand{\qdht}{L^{\frac{4}{1- s}}_{t}}
\newcommand{\qdh}{L^{\frac{4}{1- s}}_{I_j}}
\newcommand{\rdh}{L^{\frac{2 d}{d -s-1}}_x}
\newcommand{\rdhr}{\frac{2 d}{d -s-1}}
\newcommand{\qdhtq}{\frac{4}{1- s}}

\newcommand{\quht}{L^{\frac{6}{1-s}}_{t}}
\newcommand{\quh}{L^{\frac{6}{1-s}}_{I_j}}
\newcommand{\ruh}{L^{\frac{6 d}{3 d-4s-2}}_x}
\newcommand{\ruhr}{\frac{6 d}{3 d-4s-2}}

\newcommand{\quhtq}{\frac{6}{1-s}}

\newcommand{\profq}{\frac{2 ( d+2)}{d - 2 s}}


\newcommand{\crit}{{\textnormal{c}}}

\DeclareMathOperator{\im}{Im}


\begin{document}

\pagestyle{headings}

\title[Scattering and Blow-up]{Global Behavior Of Finite Energy Solutions To The $d-$ Dimensional Focusing Nonlinear Schr\"odinger Equation }
\author[C. Guevara]{Cristi Guevara}
\address{School of Mathematical and Statistical Sciences, Arizona State University, Tempe, Arizona, 85287}

\email{Cristi.guevara@asu.edu}

\begin{abstract}
We study the global behavior of  finite energy solutions to the $d$-dimensional focusing nonlinear Schr\"odinger equation (NLS), $i \partial _t u+\Delta u+ |u|^{p-1}u=0, $
with initial data $u_0\in H^1,\; x \in \Rn$.  The nonlinearity power $p$ and the dimension $d$ are such that the scaling index $s=\frac{d}2-\frac2{p-1}$ is between 0 and 1, thus, the NLS is mass-supercritical $(s>0)$ and energy-subcritical $(s<1).$ 

For solutions with $\ME[u_0]<1$ ($\ME[u_0]$ stands for an invariant and conserved quantity in terms of the mass and energy of $u_0$), a sharp threshold for scattering and blowup is given. Namely, if the renormalized gradient $\g_u$ of a solution $u$ to NLS is initially less than 1, i.e., $\g_u(0)<1,$ then the solution exists globally in time and scatters in $H^1$ (approaches some linear Schr\"odinger  evolution as $t\to\pm\infty$); if the renormalized gradient $\g_u(0)>1,$ then the solution exhibits a blowup behavior, that is, either a finite time blowup occurs, or there is a divergence of $H^1$ norm in infinite time.  

This work generalizes the results for the 3d cubic NLS obtained in a series of papers by  Holmer-Roudenko and Duyckaerts-Holmer-Roudenko with the key ingredients,  the concentration compactness and localized variance, developed in the context of the  energy-critical NLS and Nonlinear Wave equations by Kenig and Merle.

\end{abstract}

\maketitle

\section{Introduction}\label{Background}

In this paper, we consider the focusing  Cauchy problem for the nonlinear Schr\"odinger equation (NLS), denoted by $\NLSf$, with finite energy initial data  (i.e., $u_0 \in H^1(\Rn)$),
\begin{align}
	\left\{\begin{array}{ccc}i \partial _t u+\lap u+ |u|^{p-1}u=0  
		\\u(x,0)=u_0(x) \in H^1(\Rn),
		\end{array}\right.
	\label{eq:NLS}
\end{align}
where $u=u(x,t)$ is a complex-valued function in space-time $\Rn_x\times \R_t$,  $p\geq 1$.

For a fixed $\lambda \in (0,\infty)$, the rescaled function $u_{\lambda}(x,t):=\lambda^{\frac{2}{p-1}}u(\lambda x, \lambda^2 t)$ is a solution of $\NLSf$ in \eqref{eq:NLS} if and only if $u(x, t)$ is. This scaling property gives rise to the scale-invariant norms. Sobolev norm  $\dH^{s_c}(\Rn)$ with $s_c:=\frac d2-\frac2{p-1}$.Ginibre-Velo \cite{GiVe79a,GiVe79b}  showed that the initial-value problem $\NLSf$ with initial data $u(x,0)=u_0(x)\in H^1(\Rn)$, $1\leq p<1+\frac{4}{d-2}$  is locally well-posed in $H^s(\Rn)$ with $s\geq1$. Later, Cazenave-Weissler \cite{CaWe90} showed that for small initial data in $\dH^{s}(\Rn)$, with $0\leq s<\frac{d}{2}$ and $0<p\leq \frac{d+2}{d-2},$  there exists a unique solution to $\NLSf$ defined for all times. If the data is not small, we can define the  maximal interval of existence of solutions to $\NLSf$ and denote it   by  $(T_*,T^*)$. We say a solution is global in forward time if $T^*=+\infty.\;$ Similarly, if   $T_*=-\infty,$ the solution is global in backward time. A solution is global if ($T_*,T^*)=\R.$

On their maximal interval of existence solutions to \eqref{eq:NLS} have three conserved quantities:  mass, energy  and momentum, where
\begin{align*}
	M[u](t)&=\int_{\Rn} |u(x,t)|^2dx=M[u_0], 
	\\
	E[u](t)&=\dfrac 12 \int_{\Rn}|\nabla u(x,t)|^2dx-\dfrac {\mu}{p+1} \int_{\Rn}|u(x,t)|^{p+1} dx=E[u_0],  
	\\
 	P[u](t)&=\mbox{Im}\int_{\Rn}\bar{u}(x,t)\nabla u(x,t)dx=P[u_0]. 
	\end{align*}
The following quantities are  scaling invariant:
$$
E[u]^{s_c}M[u]^{1-s_c},\quad \mbox{ and }
\quad \|u\|_{L^{2}(\Rn)}^{1-s_c}\|\nabla u\|_{L^{2}(\Rn)}^{s_c}.$$
These quantities were first introduced in   \cite{HoRo07} in the context of mass-supercritical $\NLS$ $(0<s<1)$ and used to classify the global behavior of solutions.

We say that a global solution $u(t)$ to  $\NLSf$  \emph{scatters in $H^s(\Rn)$} as $t\to +\infty$   if
there exists $\psi^+\in H^s(\Rn)$ such that 
\begin{align}\label{eq:scatter}
\lim_{t\to +\infty}\|u(t)-e^{it\lap}\psi^+\|_{H^s(\Rn)}=0.
\end{align}
Similarly, we can define scattering in $H^s(\Rn)$ for $t\to-\infty.$

For the $L^2$-critical NLS equation (i.e. $s=0$)  with $u_0\in H^1(\Rn)$, Weinstein in \cite{We82} established a sharp threshold for global existence, namely, the condition $\|u_0\|_{L^2(\Rn)} < \|Q\|_{L^2(\Rn)}$, where $Q$ is the ground state solution (see  Section \ref{SEandGS}), guarantees a global existence of evolution $u_0\leadsto u(t)$.  Solutions at the threshold mass, i.e., when $\|u_0\|_{L^2(\Rn)} = \|Q\|_{L^2(\Rn)}$, may   blowup in finite time. Such solutions are called the minimal mass   blowup solutions. Merle in \cite{Me93} characterized  the minimal mass   blowup $H^1$ solutions showing that all such solutions are pseudo-conformal transformations of the ground state (up to $H^1$ symmetries), that is, $$u_{_T}(x,t)=\frac{e^{i/(T-t)}e^{i|x|^2/(T-t)}}{T-t}Q\left(\frac x{T-t}\right).$$

In the energy-critical case $s=1,$  Kenig-Merle \cite{KeMe06} studied global behavior of solutions with $u_0\in \dH^1(\Rn)$ in dimensions $d=3, 4$, and $5$ and showed that under a certain energy threshold (namely, $E[u_0]<E[W]$, where $W$ is the positive solution of $\Delta W+W^p=0$, decaying at $\infty$), it is possible to characterize global existence versus finite  blowup depending on the size of $\|\nabla u_0\|_{L^2(\Rn)}$, and also prove scattering for globally existing solutions. To obtain the last property, they  applied the concentration-compactness and rigidity technique.
The concentration-compactness method appears in the context of wave equation in G\'{e}rard \cite{Ge96} and NLS 
in Merle-Vega \cite{MeVe98}, which was later followed by Keraani \cite{Ke01}, and dates back to works of  P-L.  Lions \cite{Lions84} and Brezis-Coron \cite{BrCo85}. The rigidity argument (estimates on a localized variance) is the technique of Merle from mid 1980's. 

The mass-supercritical and energy-subcritical case ($0<s<1$) is discussed in detail in the next section, and the energy-supercritical case ($s>1$) is largely open. 

In the mass- supercritical and energy-subcritical case ($0<s_c<1$) the 3d cubic NLS equation with $u_0 \in H^1$ was studied in a series of papers \cite{HoRo08}, \cite{DuHoRo08}, \cite{DuRo08}, \cite{HoRo09} and \cite{HoPlRo09}. The authors obtained a sharp scattering threshold for radial initial data in \cite{HoRo08}, under a so called \emph{mass-energy threshold}
$M[u]E[u]<M[Q]E[Q],$ where $Q$ is the ground state solution. The extension of these results to the nonradial data is in \cite{DuHoRo08}.  Behavior of solutions and characterization of all solutions at the \emph{mass-energy threshold} $M[u]E[u]=M[Q]E[Q]$ is in \cite{DuRo08}. For infinite variance nonradial solutions  Holmer-Roudenko in \cite{HoRo09} introduced a first application of  concentration-compactness and rigidity arguments to prove the existence of a ``weak   blowup"\footnote{  See Section \ref{wbup} for exact formulation and discussion.}. In addition, Holmer-Platte-Roudenko \cite{HoPlRo09} consider (both theoretically and numerically) solutions to the 3d cubic NLS above the \emph{mass-energy threshold} and give new   blowup criteria in that region. They also predict the asymptotic behavior of solutions for different classes of initial data (modulated ground state, Gaussian, super-Gaussian, off-centered Gaussian, and oscillatory Gaussian) and provide several conjectures in relation to the threshold for scattering.

In the spirit of  \cite{DuHoRo08}, \cite{HoRo08},  \cite{HoRo09},Carreon-Guevara \cite{CaGu11}  study  the long-term behavior of solutions for the 2d quintic NLS equation with $u_0\in H^1(s=\frac12)$.  This equation is  important to study since  it has a higher power of nonlinearity (higher than cubic), and recently  a nontrivial   blowup result (a standing ring) was exhibited by Rapha\"el in \cite{Ra06}  (there are further extensions of \cite{Ra06} to higher dimensions and different nonlinearities in \cite{RaSz09}, \cite{HoRo10a}, \cite{HoRo10b}).

\subsection{Statement of the results}\label{overview}
Throughout this document, unless otherwise specified, we assume that $0<s<1$ and $s=\frac d2-\frac 2{p-1}$, $\alpha := \frac{\sqrt{d(p-1)}}{2}$, and $\beta := 1-\frac{(d-2)(p-1)}4.$ Let
\begin{align}\label{eq:ground}
\uQ(x,t):= e^{i\beta t} Q(\alpha x).
\end{align}  
Then $\uQ(x,t)$ solves the equation \eqref{eq:NLS},  provided $Q$ solves\footnote{  Here, in the equation \eqref{eq:Qground} and definition of Q, we use the notation from Weinstein \cite{We82}.  Rescaling $Q(x) \mapsto \beta^{\frac{1}{p-1}}Q \Big(\sqrt{\frac{\beta}{\alpha}}x\Big)$ will solve a  the nonlinear elliptic equation $ -Q+\lap Q +Q^p = 0$. \\ }  
\begin{align}
 \label{eq:Qground}
- \beta \; Q +\alpha^2 \; \lap Q + Q^{p} = 0, \quad Q=Q(x), \quad x\in{\Rn}.
\end{align}
The theory of  nonlinear elliptic equations (Berestycki-Lions \cite{BeLi83a,BeLi83b})
shows that \eqref{eq:Qground} has an infinite number of solutions in $H^1(\Rn)$, but a unique solution of minimal $\Lt$-norm, which we denote by $Q(x)$. It is positive, radial, exponentially decaying (for example, \cite[Appendix B] {Tao06NLD}) and is called the \emph{ground state} solution. 

We introduce the following notation:
 \begin{align}
 \label{eq:grad}
 &\bullet\text{the~renormalized~gradient} 
&\g_{u}(t)&:=\dfrac{\|u\|_{\Lt(\Rn)}^{1-s}\|\nabla u(t)\|_{\Lt(\Rn)}^{s}}{\|\uQ\|_{\Lt(\Rn)}^{1-s}\|\nabla \uQ\|_{\Lt(\Rn)}^{s}},\\
 &\bullet\text{the~renormalized~momentum} 
 &
 \Pu[u]&:=\dfrac {P[u]^s\|u\|_{\Lt(\Rn)}^{1-2s}}{\|\uQ\|^{{1-s}}_{\Lt(\Rn)}\|\nabla \uQ\|^s_{\Lt(\Rn)}},\label{eq:mom}\\
 &\bullet\text{the~renormalized~Mass-Energy}
 &
\ME[u]&:=\dfrac{M[u]^{1-s}E[u]^{s}}{M[\uQ]^{1-s}E[\uQ]^{s}} \label{eq:ME}\quad\quad(\text{for  }E[\uQ]>0).
\end{align}

Note that \eqref{eq:ME} we only consider $E[u]>0$, since for  $E[u]<0$ the blowup is known (see \cite{VlPeTa71}, \cite{Za72}, \cite{Gl77}, \cite{GlMe}) 

\begin{rmk}[Negative energy] \label{d:negative energy}
Note that  it is possible to have initial data with $E[u]<0$ and the blowup from the dichotomy in Theorem A  Part II (a)  below applies. (It
follows from the standard convexity blow up argument and the work of
Glangetas-Merle \cite{GlMe}). Therefore, we only consider $E[u]\geq0$ in the rest of the paper.
\end{rmk}

The main result of this paper is 

 \begin{thmA*}\label{main-general}
 Consider $\NLSf$ such that $0<s<1,$  $u_0 \in {H}^1(\Rn)$, $d\ge1$  and let $u(t)$ be the corresponding solution on its  maximal time interval of existence $(T_*,T^*)$. Assume 
\begin{align}\label{mass-energy}
\left(\ME[u]\right)^{\frac1s}-\frac d{2s}\left(\Pu[u]\right)^{\frac2s}<1.
\end{align} 
\begin{enumerate}
\item[I.] If  
\begin{align}\label{ground-momentum1}
\left[\g_u(0)\right]^{\frac2s}-\left(\Pu[u]\right)^{\frac2s}<1,\;
\end{align}
 then
\item[(a)] $\left[\g_u(t)\right]^{\frac2s}-\left(\Pu[u]\right)^{\frac2s}<1$ for all $t\in \R$, and thus, the solution is global in time  ($T_*=-\infty$, $T^*=+\infty$) and 
\item[(b)] $u$ scatters in $H^1(\Rn)$, i.e., there exists $\phi_\pm\in H^1(\Rn)$ such that 
\begin{align*}
\lim_{t\to \pm\infty}\|u(t)-e^{it\lap}\phi_\pm\|_{H^1(\Rn)}=0.
\end{align*}
\item[II.] If   
\begin{align}\label{ground-momentum2}
\left[\g_u(0)\right]^{\frac2s}-\left(\Pu[u]\right)^{\frac2s}>1,\;
\end{align}
then  $\left[\g_u(t)\right]^{\frac2s}-\left(\Pu[u]\right)^{\frac2s}>1$ for all $t\in(T_*,T^*)$ and
\item[(a)] if $u_0$ is radial (for $d\geq3$ and in $d=2, \; 3<p \leq 5$) or  $u_0$ is of finite variance, i.e., $|x|u_0 \in \Lt(\Rn)$,  then the solution blows up in finite time  ( $T^* < +\infty$, $\;T_*>-\infty$).
\item[(b)] If $u_0$ is non-radial and of infinite variance, then either the solution blows up in finite time ( $T^* < +\infty$, $\;T_*>-\infty$) or there exists a sequence of times $t_n\to+\infty$ (or $t_n\to-\infty$) such that $\|\nabla u(t_n)\|_{\Lt(\Rn)}\to \infty.$ 
\end{enumerate}
\end{thmA*}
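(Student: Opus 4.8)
The plan is to implement the concentration--compactness and rigidity scheme of Kenig--Merle \cite{KeMe06}, in the mass-supercritical form of Holmer--Roudenko \cite{HoRo08} and Duyckaerts--Holmer--Roudenko \cite{DuHoRo08}, carrying the momentum term through every step. \emph{Step 1 (Galilean reduction and the variational dichotomy).} First I would substitute the sharp Gagliardo--Nirenberg inequality into $E[u]=\tfrac12\|\nabla u\|_{\Lt}^2-\tfrac1{p+1}\|u\|_{L^{p+1}}^{p+1}$ and use the Pohozaev identities for the ground state $Q$ of \eqref{eq:Qground}; rewriting in the renormalized variables \eqref{eq:grad}--\eqref{eq:ME} and invoking conservation of mass and energy, this gives $(\ME[u])^{1/s}\ge f\!\big(\g_u(t)^{2/s}\big)$ for an explicit $f$ with $f(1)=1$, increasing on $(0,1)$ and decreasing on $(1,\infty)$. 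A Galilean boost $u(x,t)\mapsto e^{i\xi\cdot x}u(x-2t\xi,t)$ with a suitable $\xi$ sets $\Pu=0$ and converts the hypotheses \eqref{mass-energy}, \eqref{ground-momentum1}, \eqref{ground-momentum2} into the corresponding zero-momentum statements for the boosted solution (with $\ME,\g_u$ read off from it), so it suffices to treat $\Pu[u]=0$. In that case $t\mapsto\g_u(t)$ is continuous on $(T_*,T^*)$, and the inequality above together with $(\ME[u])^{1/s}<1$ traps $\g_u(t)^{2/s}$ in $[0,y_-]$ (if $\g_u(0)^{2/s}<1$) or in $[y_+,\infty)$ (if $\g_u(0)^{2/s}>1$), where $y_-<1<y_+$ are the two preimages of $(\ME[u])^{1/s}$ under $f$; the first case bounds $\|\nabla u(t)\|_{\Lt}$ and gives $(T_*,T^*)=\R$, proving part~I(a), and the second is the persistence assertion in part~II.

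\emph{Step 2 (finite-time blow-up, part II(a)).} Under $\g_u(t)^{2/s}>1$ the variational analysis of Step~1 upgrades to a uniform bound $\|\nabla u(t)\|_{\Lt}^2-\tfrac{d(p-1)}{2(p+1)}\|u(t)\|_{L^{p+1}}^{p+1}\le-\delta<0$, with $\delta$ depending on the gaps to $1$ in \eqref{mass-energy} and \eqref{ground-momentum2}. Feeding this into the virial identity $\tfrac{d^2}{dt^2}\!\int|x|^2|u|^2\,dx=8\big(\|\nabla u\|_{\Lt}^2-\tfrac{d(p-1)}{2(p+1)}\|u\|_{L^{p+1}}^{p+1}\big)$ in the finite-variance case, or into its Ogawa--Tsutsumi localized version with a radial cutoff in the radial case --- the error terms absorbed using the decay furnished by the radial Gagliardo--Nirenberg/Strauss estimates, which is exactly where the restriction $d\ge3$ (resp.\ $d=2$, $3<p\le5$) enters --- forces the (localized) variance negative in finite time, which is impossible; hence $T^*<\infty$ and $T_*>-\infty$. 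The case $E[u]<0$ is covered by Remark~\ref{d:negative energy}.

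\emph{Step 3 (scattering, part I(b), and weak blow-up, part II(b)), by contradiction.} Using the small-data theory (Proposition~\ref{small data}) and the long-time perturbation lemma (Proposition~\ref{longperturbation}), set $\ME_{\crit}:=\sup\{m:\ \text{every solution with }\Pu=0,\ \g_u(0)^{2/s}<1,\ \ME[u]<m\ \text{scatters in }H^1\}$. Small data gives $\ME_{\crit}>0$; if I(b) failed, $0<\ME_{\crit}<1$. An $H^1$ linear profile decomposition compatible with the $\dot{H}^{s}$-critical Strichartz norm, then a nonlinear profile decomposition combined with Proposition~\ref{longperturbation} --- with Step~1 forcing every profile strictly below threshold --- produces a critical solution $u_{\crit}$: global, $\ME[u_{\crit}]=\ME_{\crit}$, $\g_{u_{\crit}}(0)^{2/s}<1$, non-scattering, with $\{u_{\crit}(t)\}_{t\ge0}$ precompact in $H^1$ up to a translation path $x(t)$ satisfying $|x(t)|=o(t)$ as $t\to\infty$ (here $\Pu=0$ is essential). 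A localized virial estimate applied to $z_R(t):=\int\phi_R|u_{\crit}|^2\,dx$, $\phi_R=R^2\phi(\cdot/R)$, then finishes: almost-periodicity makes the tails uniformly small and $|x(t)|=o(t)$ keeps the bulk of $u_{\crit}(t)$ inside $\{|x|\lesssim R\}$ on long intervals, so $|z_R'(t)|\lesssim R$ while $z_R''(t)\le-c<0$ for $R$ large (the strict sign from coercivity, since $u_{\crit}\ne0$ keeps $\g_{u_{\crit}}^{2/s}$ bounded away from $1$); integrating over $[0,T]$ gives $T\lesssim R$, a contradiction for $T$ large, so $\ME_{\crit}=1$. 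Part~II(b) runs the same argument on the blow-up side: if a non-radial, infinite-variance solution with $\g_u(0)^{2/s}>1$ neither blows up in finite time nor satisfies $\|\nabla u(t_n)\|_{\Lt}\to\infty$ along some $t_n\to\pm\infty$, then $\|\nabla u(t)\|_{\Lt}$ is bounded, one extracts a compact-modulo-translation critical element, and the same localized virial estimate --- now with $z_R''(t)\le-c<0$ coming from $\g_u^{2/s}>1$ --- is a contradiction; this is the ``weak blow-up'' dichotomy of Holmer--Roudenko \cite{HoRo09}. (I expect Step~3 to lean on the dedicated $H^1$-scattering statement, Proposition~\ref{H^1 Scattering}.)

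\emph{Main obstacle.} The crux is Step~3: establishing the $H^1$ linear profile decomposition adapted to the $\dot{H}^{s}$-critical Strichartz space, and the accompanying nonlinear profile decomposition with a perturbation theory (Proposition~\ref{longperturbation}) strong enough to sum infinitely many profiles in the $s$-critical topology starting from merely $H^1$ data --- and, on top of this, tracking the Galilean parameters through the profiles and verifying that after the Step~1 normalization the critical element genuinely has zero momentum with $|x(t)|=o(t)$. This bookkeeping, which is fairly direct for the 3d cubic equation \cite{DuHoRo08}, is what must be made uniform over all $d\ge1$ and $0<s<1$.
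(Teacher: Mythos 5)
Your plan is the same one the paper implements: Galilean reduction to $P[u]=0$, variational trapping via sharp Gagliardo--Nirenberg, and a Kenig--Merle concentration--compactness $+$ rigidity scheme, with the weak-blowup dichotomy of Holmer--Roudenko for part~II(b). Two places where your sketch glosses over what the paper actually has to do are worth flagging.

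First, in part~II(b) you write that from ``$\|\nabla u(t)\|_{\Lt}$ bounded \ldots one extracts a compact-modulo-translation critical element.'' Boundedness of the gradient of a \emph{single} non-scattering solution does not yield precompactness of its orbit; a threshold/induction argument is required, and the paper's version is not a mirror image of the scattering one. Following \cite{HoRo09}, the paper first proves a ``near-boundary'' variational lemma (Proposition~\ref{near boundary case}: no solution on the mass-energy line for $\lambda\ge\lambda_0>1$ can have $\lambda\le\g_u(t)\le\lambda(1+\rho_0)$ for all $t$), introduces a two-parameter notion $\GB(\lambda,\sigma)$ of globally bounded gradient and a critical gradient threshold $\sigma_c(\lambda_0)$ (Definitions~\ref{GBG}, \ref{thresholdGB}), and only then, via profile reordering (Lemma~\ref{reordering}) and the nonlinear profile decomposition, produces a \emph{threshold} solution $u_c$ with $\lambda_c\le\g_{u_c}(t)\le\sigma_c$ whose orbit is compact (Lemmas~\ref{existence threshold}, \ref{compactH1}). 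The contradiction then comes from a blow-up lemma for a priori localized solutions (Lemma~\ref{blowup localization}), not from the same rigidity argument used on the scattering side. If you try to run your Step~3 literally on the blowup side, the compactness step fails. Second, your ``main obstacle'' paragraph correctly identifies the pain point (a profile decomposition and perturbation theory uniform over all $0<s<1$, $d\ge1$), but the paper's actual device for it is the Besov--Strichartz framework (the spaces $\dB^\sigma_{\SHs}$, $\dB^\sigma_{\SdHs}$ and Lemmas~\ref{linear-Strichartz}--\ref{BeStr}), needed precisely because for $d\ge5$ (or $d\ge6$) the nonlinearity $|u|^{p-1}u$ is only $C^1$, not $C^2$, so the standard fractional chain rule arguments behind Propositions~\ref{small data}, \ref{longperturbation}, \ref{H^1 Scattering} break down; you should name this, since it is the paper's principal technical contribution. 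A minor note: your sign $z_R''\le-c<0$ in the scattering rigidity is backwards (the paper's Lemma~\ref{Lower-bound-convexity} gives a positive lower bound on the virial second derivative when $\g_u<1$), but the contradiction with $|z_R'|\lesssim R$ goes through with either sign, so this is cosmetic.
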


We say there is  a ``weak   blowup" occurs if  $\ME[u]<1$, and $u(t)$ exists globally for all positive time (or negative times) and there exists a sequence of times $t_n\to \pm\infty$ such that $\|\nabla u(t_n)\|_{L^2}\to \infty$. In other words,  $\Lt$ norm of the gradient diverges along at least one infinite time sequence.

Our arguments follow \cite{DuHoRo08, HoRo07, HoRo08,HoRo09, CaGu11} which considered the focusing  $\NLS_3(\R^3)$ and $\NLS_5(\R^2)$, i.e., the integer powers of the nonlinearity. However for the general case we need to consider the fractional powers $p$ as well. To deal with them our innovation is to  use Besov spaces to treat the local theory, the long term perturbation and the $H^1$ scattering (see Propositions, \ref{small data}, \ref{longperturbation} and  \ref{H^1 Scattering}). In particular, 
the range of the Strichartz exponents is adjusted for the $d-$dimensional case, as well as the range of admissible pairs for the Kato-type estimates. And using  interpolation tricks on admissible pairs $(p,r)$ with $r < +\infty$ for the Strichartz and Kato-type estimates to avoid the pair$(2, \infty)$ which  is not $\dHs-$admissible.

The key  argument to obtain scattering\footnote{When writing this paper, we got aware of the paper \cite{FaXiCa11} proving the  scattering for general case using the embedding of $\{u\in H^1;\ME[u]<1\, \text{ and }\, \g_u(t)<1\}$to  $L^{\frac{2(p-1)(p+1)}{4-(d-2)(p-1)}}([0,\infty),L^{p+1}(R^d))$ instead of the Besov spaces as we do to treat the nonlinearity. Although, the scattering result in both paper is the same, we provide a different approach (via Besov spaces). Furthermore,  our approach lets us also obtain the ``weak blowup" result.} 
and ``weak blowup"  is the concentration compactness technique together with a rigidity theorem. Note that for $2<q<\frac{2d}{d-2}$ the embedding $H^1(\Rn)\hookrightarrow L^q(\Rn)$ is not compact\footnote{  In fact, given any $f\in H^1(\Rn)$, the sequence $f_n(x)=f(x-x_n)$, where the sequence $x_n\to \infty$ in $\Rn$, is uniformly bounded in $ H^1(\Rn)$, but has no convergent sequence on $L^q$.}; however, a profile decomposition allows to manage this lack of compactness and to produce a ``critical element". Then a localization principle proves scattering or ``weak   blowup", depending on the initial assumptions. 

The structure of this paper is as follows: Section 2 reviews the local theory,  the properties of the ground state and  reduction of the problem with nonzero momentum  to the case $P[u]=0$ via Galilean transformation for the equation \eqref{eq:NLS}. In Section 3 we present the outline of  concentration compactness machinery and localized virial identity. We include the detailed proofs for the linear and nonlinear profile decompositions, these are the key to prove scattering  and to obtain  the ``weak blowup" in Section 4.

\subsection{Notation} \label{notation}
The \emph{space-time} norms are
\begin{align*}
\|u\|_{L^q_tL^r_x(\R\times\Rn)}=\|u\|_{L^q_tL^r_x} :=\left(\int_{\R}\left(\int_{\Rn} |u(x,t)|^r dx\right)^{\frac{q}{r}}dt\right)^{\frac{1}{q}}, 
\end{align*}
with the corresponding changes when either $q=\infty$ or $r=\infty.$  

Consider the \emph{Littlewood-Paley} projection operators: if $\varphi \in C_{comp}^{\infty}(\Rn)$ be such that

\noindent$
\varphi(\xi)=\left\{\begin{array}{cc}1 &\qquad |\xi|\leq 1 \\0 &\qquad |\xi|\geq 2\end{array}\right.
$.
For each dyadic number $N\in 2^{\Z}$ and a Schwartz function $f$, define the Littlewood-Paley operators\quad
$\widehat{P_{\leq N}f}(\xi)
:=\varphi\Big(\frac{\xi}{N} \Big)\hat f(\xi)$,\quad
$\widehat{P_{> N}f}(\xi):=\Bigg(1-\varphi\Big(\frac{\xi}{N} \Big)\Bigg)\hat f(\xi)$,
\quad $\widehat{P_{ N}f}(\xi):=\Bigg(\varphi\Big(\frac{\xi}{N} \Big)-\varphi\Big(\frac{2\xi}{N} \Big)\Bigg)\hat f(\xi).$

 For $1\leq p,q\leq \infty$ and ${\sigma}>\frac{d}{p},\;$   the \emph{inhomogeneous Besov space} 
$\beta^{\sigma}_{p,q}(\Rn)=\big\{u\in S'(\Rn): \|u\|_{B^{\sigma}_{p,q}(\Rn)}<\infty\big\},$ 
where
\begin{align*}
\big\|u\big\|_{\beta^{\sigma}_{p,q}(\Rn)}
:=\|P_{ \leq N}u\|_{L^p_x}+\Bigg(\sum_{j=1}^{\infty} \Big(2^{j{\sigma}}\|P_{2^j}u\|_{L^p_x}\Big)^q\Bigg)^{\frac1q}
=\|P_{ \leq N}u\|_{L^p}+\Big(\sum_{N\in 2^{\Z}} \big(N^{\sigma}\|P_{N}u\|_{L^p_x}\big)^q\Big)^{\frac1q},
\end{align*}
 and  the \emph{homogeneous Besov space}
$\dB^{\sigma}_{p,q}(\Rn)=\big\{u\in S'(\Rn): \|u\|_{\dB^{\sigma}_{p,q}(\Rn)}<\infty\big\},$
where
$$\big\|u\big\|_{\dB^{\sigma}_{p,q}(\Rn)}
:=\Big(\sum_{N\in 2^{\Z}} \big(N^{\sigma}\|P_{N}u\|_{L^p_x}\big)^q\Big)^{\frac1q}.$$

Note that most of the $L^p$, $H^s$, $\dHs$, $\beta^{\sigma}_{p,q}$ and $\dB^{\sigma}_{p,q}$ norms are defined on $\Rn$, thus, we will omit the symbol $\Rn$ unless  we need a  specific  space dimension. 
\subsection{Acknowlegmets} This project was as a part of doctoral research of the author \cite{guevara} and was partially supported by grants from the National Science Foundation (NSF - Grant DMS - 080808; PI Roudenko), the Alfred P. Sloan Foundation. The author would like to thank Gustavo Ponce for discussions on the subject and Svetlana Roudenko for guidance on this topic.

\section{Preliminaries}\label{SG}

In this section, we review the  Strichartz estimates (e.g., see Cazenave \cite{Ca03}, Keel-Tao \cite{KeTa98}, Foschi \cite{Fos05}), fractional calculus tools  and local theory; these are the instruments to treat the nonlinearity $F(u)=|u|^{p-1}u$, in particular, when $p$ is fractional.  
 In addition,  we survey the ground state properties and the reduction to the zero momentum which allows us to restate Theorem A into a simpler form.

\subsection{Fractional calculus tools}

For Lemmas  \ref{chain}, \ref{Holderfd}, \ref{leibniz},  assume $p,p_i\in (1,\infty),\; \frac{1}{p}=\frac{1}{p_{i}}+\frac{1}{p_{i+1}}
$,  with $i=1,2,3.$ 

\begin{lemma}[Chain rule \cite{KePoVe93}] \label{chain}
Suppose $F \in C^1(\mathbb{C})$. Let $\sigma \in  (0,1),$  then
$$
\| D^{\sigma} F(u) \|_{L^p} \lesssim \| F'(u) \|_{L^{p_{_1}}} \| D^{\sigma} f \|_{L^{p_{_2}} }.
$$
\end{lemma}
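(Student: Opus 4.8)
The plan is to reduce the estimate to a pointwise inequality by means of the classical Stein--Strichartz characterisation of $D^{\sigma}$, valid for $0<\sigma<1$, through first differences, and then to close with H\"older's inequality and the Hardy--Littlewood maximal theorem. Recall that for every $1<q<\infty$,
\begin{align*}
\|D^{\sigma}g\|_{L^q}\;\approx\;\Big\|\Big(\int_0^\infty\Big(\frac{1}{t^{\sigma}|B(0,t)|}\int_{|y|\le t}|g(\cdot)-g(\cdot+y)|\,dy\Big)^2\,\frac{dt}{t}\Big)^{1/2}\Big\|_{L^q}=:\|\mathcal{D}_{\sigma}g\|_{L^q},
\end{align*}
with the same equivalence when the inner $L^1$ average over $B(0,t)$ is replaced by an $L^{\rho}$ average, $1\le\rho<\infty$; this follows by writing the Fourier multiplier $|\xi|^{\sigma}$ as a superposition of the elementary multipliers $1-e^{iy\cdot\xi}$ against a homogeneous measure and invoking Littlewood--Paley / Fefferman--Stein theory. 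It thus suffices to prove $\|\mathcal{D}_{\sigma}(F(u))\|_{L^p}\lesssim\|F'(u)\|_{L^{p_1}}\|\mathcal{D}_{\sigma}u\|_{L^{p_2}}$, since $\|D^{\sigma}(F(u))\|_{L^p}\lesssim\|\mathcal{D}_{\sigma}(F(u))\|_{L^p}$ and $\|\mathcal{D}_{\sigma}u\|_{L^{p_2}}\lesssim\|D^{\sigma}u\|_{L^{p_2}}$.

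Since $F\in C^1$, the fundamental theorem of calculus along the segment from $u(x)$ to $u(x+y)$ gives $|F(u)(x)-F(u)(x+y)|\le\big(\int_0^1|F'((1-\theta)u(x)+\theta u(x+y))|\,d\theta\big)\,|u(x)-u(x+y)|$. For the nonlinearity of interest $F(u)=|u|^{p-1}u$ one has $|F'(z)|\approx|z|^{p-1}$, nondecreasing in $|z|$, so $\int_0^1|F'((1-\theta)a+\theta b)|\,d\theta\lesssim|F'(a)|+|F'(b)|$ --- this inequality, rather than mere $C^1$ regularity, is all the argument uses. Averaging over $B(0,t)$ and inserting the $t^{-\sigma}$ weight we obtain the pointwise bound
\begin{align*}
\frac{1}{t^{\sigma}|B(0,t)|}\int_{|y|\le t}|F(u)(x)-F(u)(x+y)|\,dy\;\lesssim\;|F'(u)(x)|\,\mathcal{D}_{\sigma,t}u(x)+\frac{1}{t^{\sigma}|B(0,t)|}\int_{|y|\le t}|F'(u)(x+y)|\,|u(x)-u(x+y)|\,dy,
\end{align*}
where $\mathcal{D}_{\sigma,t}u(x):=\frac{1}{t^{\sigma}|B(0,t)|}\int_{|y|\le t}|u(x)-u(x+y)|\,dy$, so that $\big(\int_0^\infty(\mathcal{D}_{\sigma,t}u)^2\frac{dt}{t}\big)^{1/2}=\mathcal{D}_{\sigma}u$. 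Taking $\ell^2(dt/t)$ and then $L^p$ norms of the first term and using H\"older with $\tfrac1p=\tfrac1{p_1}+\tfrac1{p_2}$ gives $\||F'(u)|\,\mathcal{D}_{\sigma}u\|_{L^p}\le\|F'(u)\|_{L^{p_1}}\|\mathcal{D}_{\sigma}u\|_{L^{p_2}}$, which is the desired bound.

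The main obstacle is the second term, where the derivative factor is evaluated at the shifted point $x+y$ and hence cannot simply be pulled out of the average. I would apply H\"older in $y$ over $B(0,t)$ with exponents $(r,r')$ for some $1<r<p_1$: the factor $\big(\frac{1}{|B(0,t)|}\int_{|y|\le t}|F'(u)(x+y)|^{r}\,dy\big)^{1/r}$ is dominated by $\mathcal{M}\big(|F'(u)|^{r}\big)(x)^{1/r}$, which is independent of $t$ and bounded on $L^{p_1}$ by the Hardy--Littlewood maximal theorem (as $r<p_1$), while the remaining factor $t^{-\sigma}\big(\frac{1}{|B(0,t)|}\int_{|y|\le t}|u(x)-u(x+y)|^{r'}\,dy\big)^{1/r'}$, after taking $\ell^2(dt/t)$, is the $L^{r'}$-averaged Gagliardo square function of $u$, still comparable to $\|D^{\sigma}u\|_{L^{p_2}}$ in $L^{p_2}$ by the version of the characterisation quoted above; a final H\"older produces $\|F'(u)\|_{L^{p_1}}\|D^{\sigma}u\|_{L^{p_2}}$. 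The only delicate points are the admissible choice of $r$ and the verification that the $L^{r'}$-averaged square function remains equivalent to $D^{\sigma}$; once these are in place the estimate follows from routine H\"older and maximal-function bounds.
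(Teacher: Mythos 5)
The paper offers no proof of Lemma~\ref{chain}: it is quoted from Kenig--Ponce--Vega~\cite{KePoVe93}, and the route you sketch — the first-difference square-function characterisation of $D^\sigma$, the fundamental theorem of calculus along the segment from $u(x)$ to $u(x+y)$, H\"older, and the Hardy--Littlewood maximal function — is indeed the classical one behind that reference (and behind Christ--Weinstein). Your remark that ``$F\in C^1(\mathbb{C})$'' is an understatement is also correct and worth recording: what is actually used is the segment bound $\int_0^1|F'((1-\theta)a+\theta b)|\,d\theta\lesssim|F'(a)|+|F'(b)|$, which does not follow from $C^1$ regularity alone but does hold for $F(u)=|u|^{p-1}u$ since $|F_z|,|F_{\bar z}|\approx|z|^{p-1}$ are nondecreasing radial functions of $|z|$.

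Where I think you have a genuine gap, rather than a deferred routine detail, is the H\"older-in-$y$ step on the shifted term. You need two things at once: $r<p_1$, so that $\mathcal{M}\bigl(|F'(u)|^{r}\bigr)^{1/r}$ is bounded on $L^{p_1}$; and a value of $r'=r/(r-1)$ for which the $L^{r'}$-averaged Gagliardo square function $\mathcal{D}_\sigma^{(r')}u$ is still dominated in $L^{p_2}$ by $\|D^\sigma u\|_{L^{p_2}}$. The Strichartz equivalence you quote supplies that \emph{upper} bound for the $L^1$- and $L^2$-averages; for $\rho>2$ the $L^\rho$-averaged square function is strictly larger (by Jensen) and picks up pointwise oscillation that $D^\sigma$ does not control, so the needed inequality cannot simply be assumed to persist. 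But $r'\le 2$ forces $r\ge 2$, which is compatible with $r<p_1$ only when $p_1>2$, and the Strichartz pairs to which the paper applies this lemma do not all satisfy $p_1>2$. To cover the whole range $1<p_1<\infty$ the proof in the literature replaces this single H\"older split by a Littlewood--Paley telescoping $F(u)=\sum_{N\in 2^{\Z}}\bigl(F(P_{\le 2N}u)-F(P_{\le N}u)\bigr)$ together with the Fefferman--Stein vector-valued maximal inequality; with that substitution (or with an explicit restriction to $p_1>2$) your argument closes.
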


\begin{lemma}[Leibniz rule \cite{KePoVe93}]\label{leibniz}
 Let $\sigma \in  (0,1),\;$ then
$$
\| D^{\sigma} (fg) \|_{L^p } \lesssim \bigg (\| f \|_{ L^{p_{_1}} } \| D^{\sigma} g \|_{ L^{p_{_2}}}+\| g\|_{L^{p_{_3}} } \| D^{\sigma} f \|_{ L^{p_{_4}}}\bigg).$$
\end{lemma}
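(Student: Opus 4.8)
The plan is to prove this classical fractional product rule (as in Kenig--Ponce--Vega) by the Littlewood--Paley / paraproduct method. First I would record the one-sided square-function bound: for $1<p<\infty$,
$$
\|D^{\sigma}h\|_{L^p}\ \lesssim\ \Big\|\Big(\sum_{N\in 2^{\Z}}N^{2\sigma}\,|P_N h|^2\Big)^{\!1/2}\Big\|_{L^p},
$$
which holds because $D^{\sigma}P_N=N^{\sigma}m_N(\nabla)P_N$ for a fixed rescaled Schwartz bump $m_N$, so $|D^{\sigma}P_N h|\lesssim N^{\sigma}\mathcal{M}(P_N h)$ pointwise (here $\mathcal{M}$ is the Hardy--Littlewood maximal function); one then sums the almost-orthogonal pieces $D^{\sigma}P_N h$ by the Littlewood--Paley inequality and invokes the Fefferman--Stein vector-valued maximal inequality. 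The companion bound $\|(\sum_N N^{2\sigma}|P_N g|^2)^{1/2}\|_{L^q}\lesssim\|D^{\sigma}g\|_{L^q}$ for $1<q<\infty$ follows directly from the Littlewood--Paley inequality applied to $D^{\sigma}g$. Next I would split $fg$ by the paraproduct decomposition
$$
fg\;=\;\underbrace{\sum_{N}P_{\le N/8}f\;P_N g}_{\Pi_{lh}}\;+\;\underbrace{\sum_{N}P_N f\;P_{\le N/8}g}_{\Pi_{hl}}\;+\;\underbrace{\sum_{N}\ \sum_{M:\,M\sim N}P_N f\;P_M g}_{\Pi_{hh}}.
$$

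For the off-diagonal piece $\Pi_{lh}$: each summand $P_{\le N/8}f\,P_N g$ is frequency-localized to $\{|\xi|\sim N\}$, so $P_K\Pi_{lh}$ is essentially $\sum_{N\sim K}P_{\le N/8}f\,P_N g$. Applying the square-function bound, pulling the low-frequency factor out in $\sup$-norm (dominated by $\mathcal{M}f$), and using H\"older with $\tfrac1p=\tfrac1{p_1}+\tfrac1{p_2}$,
$$
\|D^{\sigma}\Pi_{lh}\|_{L^p}\ \lesssim\ \big\|\mathcal{M}f\big\|_{L^{p_1}}\,\Big\|\Big(\sum_N N^{2\sigma}|P_N g|^2\Big)^{\!1/2}\Big\|_{L^{p_2}}\ \lesssim\ \|f\|_{L^{p_1}}\,\|D^{\sigma}g\|_{L^{p_2}},
$$
using $p_1>1$ for the boundedness of $\mathcal{M}$ on $L^{p_1}$. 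By symmetry $\|D^{\sigma}\Pi_{hl}\|_{L^p}\lesssim\|g\|_{L^{p_3}}\|D^{\sigma}f\|_{L^{p_4}}$, which produces the second term on the right-hand side.

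The diagonal piece $\Pi_{hh}$ is the step I expect to be the main obstacle, since $P_N f\,P_M g$ with $M\sim N$ is only frequency-localized to $\{|\xi|\lesssim N\}$, so $D^{\sigma}$ no longer behaves like the scalar $N^{\sigma}$. Here I would again use the square-function bound, writing $P_K\Pi_{hh}=\sum_{N\gtrsim K}P_K\big(P_N f\,\widetilde{P}_N g\big)$ with $\widetilde{P}_N=\sum_{M\sim N}P_M$; on the relevant range $N\gtrsim K$ one has $K^{\sigma}=2^{(K-N)\sigma}N^{\sigma}$, and since $\sigma>0$ the factor $2^{(K-N)\sigma}$ is summable, which absorbs the loss from $K$ ranging over all low output frequencies. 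Distributing the gain $N^{\sigma}$ onto the $f$-factor (so that $N^{\sigma}|P_N f|\lesssim\mathcal{M}(D^{\sigma}f)$ pointwise) and then applying H\"older yields $\|D^{\sigma}\Pi_{hh}\|_{L^p}\lesssim\|D^{\sigma}f\|_{L^{p_4}}\|g\|_{L^{p_3}}$; the gain could equally be placed on $g$. Adding the three contributions gives the claimed inequality. The hypotheses enter precisely here: $\sigma>0$ for the geometric summation in the diagonal term (one does not need $\sigma<1$ for this estimate), and the strict inequalities $1<p,p_i<\infty$ for Littlewood--Paley theory, the Fefferman--Stein inequality, and the boundedness of $\mathcal{M}$.
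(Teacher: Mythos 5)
Your paraproduct proof is correct; note, though, that the paper does not actually prove this lemma --- it simply cites Kenig--Ponce--Vega \cite{KePoVe93} and uses the estimate as a black box, so there is no ``paper's proof'' to compare against.

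Your argument is the standard Bony/Coifman--Meyer route: the two square-function inequalities, the trichotomy $fg=\Pi_{lh}+\Pi_{hl}+\Pi_{hh}$, the sup-out of the low-frequency factor via the Hardy--Littlewood maximal function for the off-diagonal terms, and the geometric summation over output scales $K\lesssim N$ (Schur's test on the dyadic grid) for $\Pi_{hh}$. All of the ingredients you invoke --- Littlewood--Paley inequality, Fefferman--Stein vector-valued maximal inequality, the pointwise bounds $|P_{\le N}f|\lesssim\mathcal{M}f$ and $N^{\sigma}|P_N f|\lesssim\mathcal{M}(D^{\sigma}f)$ --- are applied in their proper ranges $1<p,p_i<\infty$, and your observation that the proof uses only $\sigma>0$ (so the upper bound $\sigma<1$ in the statement is not needed for this particular inequality) is accurate. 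The original Kenig--Ponce--Vega appendix reaches the same estimate by a commutator/Coifman--Meyer route rather than an explicit trichotomy, but the two are morally equivalent; your version is arguably more self-contained. One cosmetic point: since the dyadic indices live in $2^{\mathbb Z}$ (not $\mathbb Z$), the factor you write as $2^{(K-N)\sigma}$ should be $(K/N)^{\sigma}$ (equivalently pass to exponents $K=2^{k}$, $N=2^{n}$ and write $2^{(k-n)\sigma}$); the summability you rely on is unaffected.
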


\begin{lemma}[Chain rule for H\"older-continuous functions \cite{Vi07}] \label{Holderfd}
Let $F$ be a H\"older-continuous function of order $0<\rho<1$, then for every $0<\sigma<\rho$,
and $\tfrac{\sigma}{\rho}<\nu<1$ we have
\begin{align*}
\bigl\| D^\sigma F(u)\bigr\|_{L^p}
\lesssim \bigl\||u|^{\rho-\frac{\sigma}{\nu}}\bigr\|_{ L^{p_{_1}}} \bigl\|D^{\nu} u\bigr\|^{\frac{\sigma}{\nu}}_{L^{\frac{\sigma}{\nu}p_{_2}}},
\end{align*}
provided  $(1-\frac\sigma{\rho \nu})p_{_1}>1$.
\end{lemma}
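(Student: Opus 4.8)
The plan is to reduce the statement to a Littlewood--Paley estimate and then exploit the modulus of continuity of $F$ together with Bernstein's inequality. First I would write $D^\sigma F(u)$ via its Littlewood--Paley decomposition, $D^\sigma F(u) \sim \big(\sum_N N^{2\sigma}\,|P_N F(u)|^2\big)^{1/2}$, so that it suffices to bound $N^\sigma \|P_N F(u)\|_{L^p}$ in a square-summable fashion. For fixed dyadic $N$, the key is to use the frequency localization: since $P_N$ has a kernel $\check{\psi_N}$ of mean zero (for the nonzero-frequency pieces), one has the pointwise bound
\begin{align*}
|P_N F(u)(x)| \lesssim \int |\check{\psi_N}(y)|\,\big|F(u(x-y)) - F(u(x))\big|\,dy \lesssim \int |\check{\psi_N}(y)|\,|u(x-y)-u(x)|^{\rho}\,dy,
\end{align*}
using that $F$ is Hölder of order $\rho$. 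I would then interpolate this bound with the trivial bound $|P_N F(u)(x)| \lesssim \int |\check{\psi_N}(y)|\,|F(u(x-y))|\,dy$ controlled by $\|F(u)\|_\infty$-type quantities replaced by $|u|^{\rho}$, so that effectively one trades $\rho$ derivatives at "cost" dictated by the parameter $\nu$: raising the difference $|u(x-y)-u(x)|$ to a fractional power and invoking the elementary inequality relating $\|u(\cdot-y)-u(\cdot)\|$ to $|y|^{\nu}\|D^\nu u\|$ (a characterization of $\dot W^{\nu,q}$ via differences).

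Next I would organize the resulting expression: the factor $|u|^{\rho - \sigma/\nu}$ is pulled out in $L^{p_1}$ via Hölder, and the remaining factor, which involves $\sigma/\nu$ copies of a first-difference quotient of $u$, is estimated in $L^{\sigma p_2/\nu}$ by $\|D^\nu u\|_{L^{p_2}}^{\sigma/\nu}$, again through the difference characterization of the fractional Sobolev norm. The condition $\sigma/\rho < \nu < 1$ guarantees that the exponent $\rho - \sigma/\nu$ is positive (so the first factor makes sense as a genuine power of $|u|$) and that $\sigma/\nu < 1$ (so we only ever use a sub-unit fractional power of the difference quotient, keeping us in the range where the difference characterization of $D^\nu$ is valid). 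The side condition $(1 - \frac{\sigma}{\rho\nu})p_1 > 1$ is exactly what is needed for the local integrability of $|u|^{\rho - \sigma/\nu}$ against the summable tail of the kernel, i.e., to make the $y$-integral and the dyadic sum converge; I would check this at the point where the geometric series in $N$ is summed.

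I expect the main obstacle to be the bookkeeping of the two competing bounds under the square-summation over dyadic frequencies $N$: one has to split the dyadic sum at the scale $|y|\sim N^{-1}$, use the Hölder bound (gaining $N^{-\rho}$-type decay) on the high-frequency/small-$y$ regime and the trivial bound on the complementary regime, and then verify that the exponents line up so that $\sum_N N^{2\sigma}(\cdots)^2$ converges precisely when $\sigma < \rho$ and $\sigma/\nu < 1$. Keeping the Hölder pairings $\frac1p = \frac{1}{p_1} + \frac{1}{p_2}$ consistent throughout — and in particular making sure the second factor is raised to the power $\sigma/\nu$ with the matching Lebesgue exponent $\frac{\sigma}{\nu}p_2$ — is the delicate part; everything else is a routine application of Bernstein's inequality, Minkowski's inequality, and the difference-quotient characterization of fractional Sobolev spaces. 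Since this lemma is quoted from Visan \cite{Vi07}, I would in practice cite that reference for the full details rather than reproduce the entire computation.
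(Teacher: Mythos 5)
The paper does not prove Lemma~\ref{Holderfd}; it states it and cites Visan \cite{Vi07}, so there is no in-paper proof to compare against. Your sketch correctly reproduces the strategy of Visan's argument: reduce to estimating $N^\sigma\|P_N F(u)\|_{L^p}$, exploit the mean-zero kernel of $P_N$ to insert the difference $F(u(x-y))-F(u(x))$, invoke H\"older continuity to get $|u(x-y)-u(x)|^\rho$, split the power $\rho=(\rho-\tfrac{\sigma}{\nu})+\tfrac{\sigma}{\nu}$, and control the two pieces by H\"older with exponents $p_1$ and $p_2$ after using the difference/maximal-function characterization of $D^\nu$. One imprecision worth noting: the step you describe as ``interpolating this bound with the trivial bound'' is really a direct pointwise power split, $|u(x-y)-u(x)|^\rho \le (|u(x-y)|+|u(x)|)^{\rho-\sigma/\nu}\,|u(x-y)-u(x)|^{\sigma/\nu}$, not an interpolation between two inequalities, and the difference factor is bounded not by the naive $\|u(\cdot-y)-u\|\lesssim|y|^\nu\|D^\nu u\|$ but by the pointwise maximal-function estimate $|u(x-y)-u(x)|\lesssim|y|^\nu\big(M(D^\nu u)(x)+M(D^\nu u)(x-y)\big)$, which is where the $L^{p_1}>1$-type integrability condition on $|u|^{\rho-\sigma/\nu}$ actually enters (through boundedness of the Hardy--Littlewood maximal operator). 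With that clarification the plan is sound and matches the cited source.
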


\subsection{Strichartz  type estimates}

We say the pair $(q,r)$ is $\dHs-${\it admissible} if 
\begin{align*}
\frac{2}{q}+\frac{d}{r}=\frac{d}{2}-s, \qquad \mbox{ with } \quad 2 \leq q,r \leq \infty  \quad \mbox{ and }
  \quad(q,r,d) \neq (2,\infty,2);
\end{align*}
and  the pair $(q,r)$ is $\frac{d}{2}-${\it acceptable} if 
\begin{align*}
1\leq q,r \leq \infty, \qquad \frac{1}{q}<d\Big(\frac12-\frac1r\Big), \quad  \mbox{ or } \quad (q,r)=(\infty,2).
\end{align*}

As usual we denote by  $q'$ and $r'$ the H\"older conjugates of $q$ and $r$,  respectively (i.e., $\frac1r+\frac1{r'}=1$). Note that any $L^2-$admissible pair is also a $\frac{d}{2}-$acceptable, but not vice versa.

\subsubsection{Strichartz estimates}

The Strichartz estimates (e.g., see Cazenave \cite{Ca03}, \cite{KeTa98}, Foschi \cite{Fos05}) are
\begin{align}
\Big\|e^{it\triangle}\phi\Big\|_{L_t^qL^r_x}&\lesssim \big\|\phi\big\|_{\Lt},
\qquad\qquad
\Bigg\|\int e^{-i\tau\triangle}f(\tau)d\tau\Bigg\|_{\Lt}\lesssim\bigl\|\phi\|_{L_t^{q'}L^{r'}_x},
\label{eq:linear and inhomogeneous}\\&
\Bigg\|\int_{\tau<t} e^{i(t-\tau)\triangle}f(\tau)d\tau\Bigg\|_{L_t^qL^r_x}\lesssim\bigl \|f\|_{L_t^{q'}L^{r'}_x},
\label{eq:retarded}
\end{align}
where $(q,r)$ is an $L^2-$admissible pair. The  retarded estimate \eqref{eq:retarded}  have a wider range of admissibility (not only $L^2-$admissible) and holds when the pair $(q,r)$ is $\frac{d}{2}-$acceptable \cite{Kato}. 

In order upgrade the estimates \eqref{eq:linear and inhomogeneous} and  \eqref{eq:retarded} to the $\dHs$ level,  define the \emph{Strichartz space}  $\SHs=S(\dHs(\IR))$ 
as the closure of all test functions under the norm $\|\cdot\|_{\SHs}$ with

{\small\begin{align*}
	\| u \|_{\SHs} =  \left\{\begin{array}{ccc}
	 \sup  \left\{\| u \|_{L^q_t L^r_x} \; \begin{array}{|c}(q,r) \; \dHs-\text{admissible~with~ } \\ 
	 \big(\frac{2}{1-s}\big)^+ \leq q \leq\infty, \quad\frac{2d}{d-2s}\leq r \leq\big(\frac{2d}{d-2}\big)^- 
	 \end{array}\right\}
	 & \text{if~} d \geq 3
	   \\ &    \\
	   \sup  \left\{\| u \|_{L^q_t L^r_x} \; \begin{array}{|c}(q,r) \; \dHs-\text{admissible~with~ } \\ 
	  \big(\frac{2}{1-s}\big)^+ \leq q \leq\infty,\quad\frac{2}{1-s}\leq r \leq\big( \big(\frac{2}{1-s}\big)^+\big)'
	 \end{array}\right\}
	 & \text{if~} d =2
	   \\ &    \\
	   \sup  \left\{\| u \|_{L^q_t L^r_x} \; \begin{array}{|c}(q,r) \; \dHs-\text{admissible~with~ } \\ 
	  \frac{4}{1-2s} \leq q \leq\infty,\quad\frac{2}{1-2s}\leq r \leq\infty
	 \end{array}\right\}
	 & \text{if~} d =1.
	 \end{array}\right.
\end{align*}}
Here, $ (a^+ )'$ is defined as $ (a^+ )' :=\frac{a^+\cdot a}{a^+-a},$  so that $\frac1a=\frac1{(a^+ )'} +\frac1{a^+ }$ for any positive real value $a$, with $a^+$ being a fixed number slightly larger than $a$. Likewise, $a^-$ is a fixed number slightly smaller than $a$.

{\rmk Note that $\frac{2d}{d-2s}<\big(\frac{2d}{d-2}\big)^- <\frac{2d}{d-2}$,  if $d\ge3$. Additionally,  when $d=2$ and $s\neq\frac12,$ the quantity $r=\frac{2d}{d-2s} $ might be very large, but $\frac{2d}{d-2s}<\big( \big(\frac{2}{1-s}\big)^+\big)'$.\label{extremos}}

 Similarly, define the \emph{dual  Strichartz space}  $\SdHs=S'(\dHds(\IR))$  as the closure of all test functions under the norm $\|\cdot\|_{\SdHs}$  with
{\small
\begin{align*}
	\| u \|_{\SdHs} =  \left\{\begin{array}{ccc}
	 \inf  \left\{\| u \|_{L^{q'}_t L^{r'}_x} \; \begin{array}{|c}(q,r) \; \dHds-\text{admissible~with~ } \\ 
	 \big(\frac{2}{1+s}\big)^+ \leq q \leq\big(\frac{1}{s}\big)^-,\; \big(\frac{2d}{d-2s}\big)^+\leq r \leq\big(\frac{2d}{d-2}\big)^-
	 \end{array}\right\}
	 & \text{if~} d \geq 3
	   \\ &    \\
	   \inf  \left\{\| u \|_{L^{q'}_t L^{r'}_x} \; \begin{array}{|c}(q,r) \; \dHds-\text{admissible~with~ } \\ 
	 \big(\frac{2}{1+s}\big)^+ \leq q \leq\big(\frac{1}{s}\big)^-, \; \big(\frac{2}{1-s}\big)^+\leq r \leq\big( \big(\frac{2}{1+s}\big)^+\big)'
	 \end{array}\right\}
	 & \text{if~} d =2
	   \\ &    \\
	   \inf  \left\{\| u \|_{L^{q'}_t L^{r'}_x} \; \begin{array}{|c}(q,r) \; \dHds-\text{admissible~with~ } \\ 
	 \frac{2}{1+2s} \leq q \leq\big(\frac{1}{s}\big)^-, \; \big(\frac{2}{1-s}\big)^+\leq r \leq\infty
	 \end{array}\right\}
	 & \text{if~} d =1.
	 \end{array}\right.
\end{align*}}

{\rmk Note that  $S(\Lt)= S(\dH^0)$ and $S'(\Lt)= S'(\dH^{-0}).$ In this dissertation, if $(q,r)$ is $\dH^{-0}$ admissible we say a pair $(q',r')$ is  \emph{$L^2-$dual admissible}.  }

Under the above definitions, the Strichartz estimates \eqref{eq:linear and inhomogeneous} become 
 \begin{align}
\| e^{it \Delta} \phi \|_{\SLt} \leq c \| \phi \|_{\Lt }
\quad\mbox{and}\quad
\Big\|\int_{s<t} e^{i(t-s) \Delta} f(s) ds \Big\|_{\SLt} \leq c \| f \|_{\SdLt} \label{eq:stri}
\end{align} 
and in this paper, we refer to them as \emph{the (standard) Strichartz} estimates. 

Combining \eqref{eq:stri}  with the  Sobolev embedding $W_x^{s,r}(\Rn) \hookrightarrow L_x^{\frac{nr}{n-sr}}(\Rn)$ for $s < \frac{n}{r}$ and interpolating yields \emph{the Sobolev Strichartz} estimates
 \begin{align}
\| e^{it \Delta} \phi \|_{\SHs} \leq c \| \phi \|_{\dHs }
\quad\mbox{and}\quad
\Big\|\int^t_0 e^{i(t-s) \Delta} f(s) ds\Big \|_{\SHs} \leq c \|\Ds  f \|_{\SdLt}  \label{eq:strisob},
\end{align} 
and in similar fashion \eqref{eq:retarded} leads to the \emph{Kato's Strichartz} estimate \cite{Kato87, Fos05}
\begin{align}
\label{eq:Kato-Strichartz}
\Big\|\int^t_0 e^{i(t-s) \Delta} f(s) ds\Big \|_{\SHs} \leq c \| f \|_{\SdHs}.
\end{align}  

Kato's Strichartz estimate  along with the  Sobolev embedding imply the inhomogeneous estimate (second estimate in \eqref{eq:strisob} but not vice versa) and it is the key estimate in the long term perturbation argument (Proposition \ref{longperturbation}).

\subsubsection{Besov Strichartz estimates}  We address the question of non-integer nonlinearities for the $\NLSf$. The following remark is due

{\rmk \label{re:nonlineal} 
The complex derivative of the nonlinearity $F(u)=|u|^{p-1}u$     
is $ F_z (z)=\frac{p+1}{2} |z|^{p-1}\;$ and $\;F_{\z} (z)=\frac{p-1}{2} |z|^{p-1}\frac{z}\z$. They are H\"older-continuous functions of order $p$, and  for any $u,v \in\C$, we have
\begin{align}
\label{eq:dif-int}
F(u)-F(v)=\int_0^1\Big[ F_z (v+t(u-v))(u-v)+ F_{\z} (v+t(u-v))\overline{(u-v)}\Big]dt,
\end{align}
thus,
\begin{align}
\label{eq:Holder-continuous}
|F(u)-F(v)|\lesssim|u-v|\bigl( |u|^{p-1}+|v|^{p-1}\bigr).
\end{align} 
Hence, the nonlinearity  $F(u)$ satisfies
\begin{enumerate}
\item[(a)] $F \in C^2(\C),$ if $2\leq d< 5,$ or  $d=5$ when $\frac{1}2<s<1$,
\item[(b)] $F \in C^1(\C),$ if $d\geq 6,$ or $d=5$ when  $0< s \leq \frac12$.   
\end{enumerate}

When estimating the fractional derivatives of  \eqref{eq:dif-int}, 
in the case (b), there is a lack of smoothness. This issue is resolved by using the Besov spaces.
}

Define the \emph{Besov Strichartz} space   $\dB^{\sigma}_{\SHs}=\dB^{\sigma}_{\SHs}(\IR)$
as the closure of all test functions under the semi-norm $\|\cdot\|_{\dB^{\sigma}_{\SHs}}$
with
{\small\begin{align*}
	\| u \|_{\dB^{\sigma}_{\SHs}} =  \left\{\begin{array}{ccc}
	 \sup  \left\{ \| u \|_{L^q_t \dB^{\sigma}_{r,2}}
	  \; \begin{array}{|c}(q,r) \; \dHs-\text{admissible~with~ } \\ 
	 \big(\frac{2}{1-s}\big)^+ \leq q \leq\infty, \quad\frac{2d}{d-2s}\leq r \leq\big(\frac{2d}{d-2}\big)^- 
	 \end{array}\right\}
	 & \text{if~} d \geq 3
	   \\ &    \\
	   \sup  \left\{ \| u \|_{L^q_t \dB^{\sigma}_{r,2}}
	    \; \begin{array}{|c}(q,r) \; \dHs-\text{admissible~with~ } \\ 
	  \big(\frac{2}{1-s}\big)^+ \leq q \leq\infty,\quad\frac{2}{1-s}\leq r \leq\big( \big(\frac{2}{1-s}\big)^+\big)'
	 \end{array}\right\}
	 & \text{if~} d =2.
	   \\ &    \\
	   \sup  \left\{\| u \|_{L^q_t \dB^{\sigma}_{r,2}} \; \begin{array}{|c}(q,r) -\dHs \; \text{admissible~with~ } \\ 
	  \frac{4}{1-2s} \leq q \leq\infty,\quad\frac{2}{1-2s}\leq r \leq\infty
	 \end{array}\right\}
	 & \text{if~} d =1.
	 \end{array}\right.
\end{align*}}

Similary, define the \emph{dual Besov Strichartz} space  $\dB^{\sigma}_{\SdHs}=\dB^{\sigma}_{\SdHs}(\IR)$ as the closure of all test functions under the semi-norm $\|\cdot\|_{\dB^{\sigma}_{\SdHs}}$ with

{\small
\begin{align*}
	\| u \|_{\dB^{\sigma}_{\SdHs}} =  \left\{\begin{array}{ccc}
	 \inf  \left\{\| u \|_{L^{q'}_t \dB^{\sigma}_{r',2}} 
	 \; \begin{array}{|c}(q,r) \; \dHds-\text{admissible~with~ } \\ 
	 \big(\frac{2}{1+s}\big)^+ \leq q \leq\big(\frac{1}{s}\big)^-,\; \big(\frac{2d}{d-2s}\big)^+\leq r \leq\big(\frac{2d}{d-2}\big)^-
	 \end{array}\right\}
	 & \text{if~} d \geq 3
	   \\ &    \\
	   \inf  \left\{\| u \|_{L^{q'}_t \dB^{\sigma}_{r',2}} 
	  \; \begin{array}{|c}(q,r) \; \dHds-\text{admissible~with~ } \\ 
	 \big(\frac{2}{1+s}\big)^+ \leq q \leq\big(\frac{1}{s}\big)^-, \; \big(\frac{2}{1-s}\big)^+\leq r \leq\big( \big(\frac{2}{1+s}\big)^+\big)'
	 \end{array}\right\}
	 & \text{if~} d =2
	 	   \\ &    \\
	   \inf  \left\{\| u \|_{L^{q'}_t \dB^{\sigma}_{r',2}}
	    \; \begin{array}{|c}(q,r) \; \dHds-\text{admissible~with~ } \\ 
	 \frac{2}{1+2s} \leq q \leq\big(\frac{1}{s}\big)^-, \; \big(\frac{2}{1-s}\big)^+\leq r \leq\infty
	 \end{array}\right\}
	 & \text{if~} d =1.
	 \end{array}\right.
\end{align*}}

\begin{lemma}\label{Besov-sigma}
If $u\in \dB^{\sigma}_{\SHs}$ and $\sigma\geq0, \; s \in \R,\;$ then
\begin{equation*}\label{eq:Besov-sigma}
\big\|D^{\sigma}u\big\|_{\SHs}
\lesssim
\|u\|_{\dB^{\sigma}_{\SHs}}.
\end{equation*}
\end{lemma}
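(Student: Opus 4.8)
The plan is to unwind both sides through the definitions of the Besov Strichartz space and the homogeneous Besov norm, and then to commute the fractional derivative $D^\sigma$ with the Littlewood--Paley pieces. First I would fix a pair $(q,r)$ that is $\dHs$-admissible in the allowed range from the definition of $\SHs$ (the cases $d\ge 3$, $d=2$, $d=1$ being handled identically since only the admissible range changes, not the argument). The goal is then to bound $\|D^\sigma u\|_{L^q_t L^r_x}$ by $\|u\|_{L^q_t\dB^\sigma_{r,2}}$ with a constant independent of the pair, after which taking the supremum over admissible pairs on the left gives $\|D^\sigma u\|_{\SHs}$ and the supremum on the right is exactly $\|u\|_{\dB^\sigma_{\SHs}}$.

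The heart of the matter is the fixed-time inequality $\|D^\sigma f\|_{L^r_x}\lesssim \|f\|_{\dB^\sigma_{r,2}}$ for $2\le r<\infty$ (and $\sigma\ge 0$), which I would prove by the standard square-function argument: write $D^\sigma f = \sum_{N\in 2^\Z} D^\sigma P_N f$, note that $D^\sigma P_N f = N^\sigma (D^\sigma/N^\sigma) P_N f$ where the multiplier $m_N(\xi)=|\xi|^\sigma N^{-\sigma}\widetilde\varphi(\xi/N)$ (with $\widetilde\varphi$ a fattened version of the Littlewood--Paley cutoff supported where $P_N$ lives) is, by Mikhlin--Hörmander together with scaling, a bounded $L^r$ Fourier multiplier uniformly in $N$. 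Then the Littlewood--Paley square function estimate $\|(\sum_N |g_N|^2)^{1/2}\|_{L^r_x}$ controls $\|\sum_N g_N\|_{L^r_x}$ for $1<r<\infty$, applied with $g_N = D^\sigma P_N f$, yields
\begin{align*}
\|D^\sigma f\|_{L^r_x}\lesssim \Big\|\Big(\sum_{N\in 2^\Z} N^{2\sigma}|P_N f|^2\Big)^{1/2}\Big\|_{L^r_x}.
\end{align*}
By Minkowski's inequality in $\ell^2$ versus $L^r$ (valid since $r\ge 2$), the right side is bounded by $\big(\sum_N N^{2\sigma}\|P_N f\|_{L^r_x}^2\big)^{1/2}=\|f\|_{\dB^\sigma_{r,2}}$. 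Integrating the $q$-th power in time and applying this pointwise-in-$t$ bound finishes the fixed pair.

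I would then simply observe that the passage from the fixed-pair estimate to the space-level estimate is just the definition: since the constants above do not depend on $(q,r)$, taking $\sup$ over admissible pairs commutes through, and the $d=2$, $d=1$ ranges cause no new issue because $r<\infty$ throughout those ranges as well (the endpoint $r=\infty$ only appears in $d=1$ at $q=\infty$, and there one either excludes it as in the definition of $\SHs$ via the interpolation trick mentioned in the introduction, or handles $r=\infty$ by first moving slightly off the endpoint). The main obstacle is really the care needed in the Mikhlin multiplier bound being uniform in the dyadic scale $N$ and in ensuring the square-function/Minkowski steps only use $1<r<\infty$ and $r\ge 2$, which the admissible ranges in all three dimension cases respect; once those are in place the lemma is essentially a bookkeeping statement about the definitions. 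An alternative, slightly slicker route avoiding the square function is to note $\dB^\sigma_{r,2}\hookrightarrow \dB^\sigma_{r,r'}$-type embeddings are false in general, so the square-function argument (which genuinely uses $r\ge 2$) seems to be the right tool; I would flag that the hypothesis $\sigma\ge 0$ is exactly what makes $m_N$ a uniformly bounded multiplier.
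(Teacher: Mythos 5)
Your proof takes essentially the same route as the paper's: fix a $\dHs$-admissible pair $(q,r)$, apply the Littlewood--Paley square function estimate pointwise in time, commute $D^\sigma$ with $P_N$ to gain the factor $N^\sigma$, use Minkowski's inequality (in the direction requiring $r\ge 2$) to pull $L^r_x$ inside $\ell^2_N$, and take the supremum over admissible pairs. You fill in more detail than the paper — the uniform-in-$N$ Mikhlin multiplier bound and the explicit concern about the $r=\infty$ endpoint in $d=1$, which the paper leaves implicit — but the underlying argument is identical.
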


\begin{proof}
Let $(q,r)$ be $\dHs-$admissible pair, then
\begin{align*}
\big\|D^{\sigma}u\big\|_{L_t^qL_x^r}
&\lesssim
 \bigg\|\Big(\sum_{N\in 2^{\Z}} \big |P_{N}D^{\sigma} u\big|^2\Big)^{\frac12}\bigg\|_{L_t^qL_x^r}
\lesssim
\Big\| \Big(\sum_{N\in 2^{\Z}} \|P_{N}D^{\sigma}u\|_{L_x^r}^2\Big)^{\frac12} \Big\|_{L_t^q}
 \\&
\approx
\Big\| \Big(\sum_{N\in 2^{\Z}} N^{2\sigma} \|P_{N}u\|_{L_x^r}^2\Big)^{\frac12}\Big\|_{L_t^q}
 \lesssim
\|u\|_{\dB^{\sigma}_{L_t^q\dB^{\sigma}_{r,2}}}.
\end{align*}
Taking $\sup$ over all $(q,r)\; \dHs-$admissible pairs, yields the claim.
\end{proof}

\begin{lemma}[Embedding] \label{interpolacion}For any compact time interval $I$, assume $ 0\leq\sigma<\rho$, $1\leq r, r_{_1}, q\leq \infty$. Then
\begin{align}
\label{eq:inter1}
\|D^{\sigma}u\|_{L^{q}_tL^{r}_x}\lesssim \|D^{\rho}u\|_{L^{q}_tL^{r_{_1}}_x},
\end{align}
where $r_{_1}=\frac{r d}{(\rho-\sigma) r+d}$  and $q_{_1}=q_{_2}$.
\end{lemma}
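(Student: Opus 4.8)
The plan is to deduce \eqref{eq:inter1} from the classical Sobolev embedding in the spatial variable, applied slicewise in $t$, and then to take the $L^q_t$ norm of the resulting pointwise bound. Observe first that the exponent $r_{_1}=\frac{rd}{(\rho-\sigma)r+d}$ is precisely the one determined by $\frac1{r_{_1}}=\frac1r+\frac{\rho-\sigma}{d}$, i.e. $r$ is the Sobolev conjugate of $r_{_1}$ at differentiation order $\rho-\sigma$; this is the same embedding $W^{\rho-\sigma,r_{_1}}_x(\Rn)\hookrightarrow L^r_x(\Rn)$ already used above to pass from \eqref{eq:stri} to \eqref{eq:strisob}. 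Since $0\le\sigma<\rho$, I would write, on test functions, $D^{\sigma}u(\cdot,t)=D^{-(\rho-\sigma)}\big(D^{\rho}u(\cdot,t)\big)$, and $D^{-(\rho-\sigma)}$ is (up to a constant) the Riesz potential $I_{\rho-\sigma}$, i.e. convolution with $|x|^{-(d-\rho+\sigma)}$.

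The core step is then the Hardy--Littlewood--Sobolev inequality (fractional integration): for $1<r_{_1}<r<\infty$ and $0<\rho-\sigma<d$,
\[
\big\|D^{\sigma}u(t)\big\|_{L^r_x}=\big\|I_{\rho-\sigma}\big(D^{\rho}u(t)\big)\big\|_{L^r_x}\lesssim \big\|D^{\rho}u(t)\big\|_{L^{r_{_1}}_x}\qquad\text{for a.e. }t.
\]
Raising to the power $q$ and integrating over the time interval (or taking the essential supremum when $q=\infty$) gives $\|D^{\sigma}u\|_{L^q_tL^r_x}\lesssim\|D^{\rho}u\|_{L^q_tL^{r_{_1}}_x}$ with the temporal exponent unchanged (which is the content of the side remark ``$q_{_1}=q_{_2}$''), and this is \eqref{eq:inter1}. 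Passing from test functions to general $u$ lying in the closures that define the spaces of Section \ref{SG} is carried out by density, exactly as for Lemma \ref{Besov-sigma}. The degenerate case $r=r_{_1}$ forces $\sigma=\rho$ and is excluded by hypothesis, while $\rho-\sigma\ge d$ would be reduced to the stated range by first removing an integer number of derivatives (this situation never arises in the applications, where $\rho\le1\le d$).

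The only genuine obstacle is the endpoint behavior: the embedding underlying \eqref{eq:inter1} fails at $r=\infty$ (it would require $\dot{W}^{\rho-\sigma,\,d/(\rho-\sigma)}_x\hookrightarrow L^\infty_x$, which is false) and at $r_{_1}=1$. Accordingly, although the statement is phrased with $1\le r,r_{_1},q\le\infty$, I would point out that \eqref{eq:inter1} is only ever invoked with $(q,r)$ and $(q,r_{_1})$ drawn from the admissible ranges that define the Strichartz spaces $\SHs$, $\SdHs$ and their Besov counterparts from Section \ref{SG}, for which $r$ and $r_{_1}$ stay strictly inside $(1,\infty)$ and the hypotheses of Hardy--Littlewood--Sobolev hold automatically. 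An equivalent argument in that same range proceeds at the Littlewood--Paley level: Bernstein's inequality gives $\|P_N D^{\sigma}u(t)\|_{L^r_x}\lesssim N^{\sigma+d(1/r_{_1}-1/r)}\|P_N u(t)\|_{L^{r_{_1}}_x}=N^{\rho}\|P_N u(t)\|_{L^{r_{_1}}_x}\approx\|P_N D^{\rho}u(t)\|_{L^{r_{_1}}_x}$, and one closes the sum in $N$ via the square-function characterization of $L^r_x$ and $L^{r_{_1}}_x$ ($1<r_{_1},r<\infty$), which is the route to take if one prefers to remain consistent with the Besov formulation of Lemma \ref{Besov-sigma}.
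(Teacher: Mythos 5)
Your proof is correct and uses the same approach as the paper: the paper's one-line proof is precisely the homogeneous Sobolev embedding $\dot W^{\rho,r_{_1}}_x(\Rn)\hookrightarrow \dot W^{\sigma,r}_x(\Rn)$ applied slicewise in $t$, which is what your Hardy--Littlewood--Sobolev (and equivalent Bernstein/Littlewood--Paley) argument establishes. The additional detail you supply about the endpoints $r=\infty$, $r_{_1}=1$ and the reduction via density is a sensible fleshing-out of the same idea, not a different route.
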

\begin{proof} The Sobolev embedding $\dot W_x^{\rho,r_{_1}}(\Rn) \hookrightarrow \dot W_x^{\sigma,r}(\Rn)$ yields the inequality \eqref{eq:inter1}.
 \end{proof}
{\rmk \label{remark lemma 5}If $q', r'$ and ${r'_{_1}}$ are the H\"older's conjugates of $r, q$ and $r_{_1}$, respectively, then  we have 
\begin{align*}
 \|D^{\rho}u\|_{L^{q'}_tL^{r'_{_1}}_x}\lesssim\|D^{\sigma}u\|_{L^{q'}_tL^{r'}_x} .
\end{align*}
 }

 \begin{lemma} [Linear Besov-Strichartz]\label{linear-Strichartz}
Let $u \in \dB^{\sigma}_{\SLt}$ be a solution to
the forced Schr\"odinger equation
\begin{equation}\label{eqs}
i u_t + \Delta u = \sum_{m=1}^M F_m
\end{equation}
for some functions $F_1 ,\dots,F_M$ and $\sigma=0$ or $\sigma=s$.  Then on $\IR$ we have
\begin{equation}\label{eq:linear-besov-strichartz}
\|u\|_{\dB^{\sigma}_{\SHs}}
\lesssim \|u_0\|_{\dH^{\sigma}} + \sum_{m=1}^M \| 
F_m \|_{\dB^{\sigma}_{\SdLt}}.
\end{equation}
\end{lemma}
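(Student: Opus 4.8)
The plan is to reduce the estimate to the already-established Besov-Strichartz bounds for the linear propagator and the Kato/standard retarded estimates applied frequency-by-frequency. First I would write $u$ via Duhamel's formula: $u(t) = e^{it\Delta}u_0 - i\sum_{m=1}^M \int_0^t e^{i(t-\tau)\Delta} F_m(\tau)\, d\tau$. By the triangle inequality in the $\dB^{\sigma}_{\SHs}$ semi-norm, it suffices to bound $\|e^{it\Delta}u_0\|_{\dB^{\sigma}_{\SHs}}$ and each retarded term $\big\|\int_0^t e^{i(t-\tau)\Delta}F_m(\tau)\,d\tau\big\|_{\dB^{\sigma}_{\SHs}}$ separately.

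For the homogeneous term, I would apply the Littlewood--Paley projection $P_N$, which commutes with $e^{it\Delta}$, and use the standard Strichartz estimate \eqref{eq:stri} (for $\sigma=0$) or the Sobolev-Strichartz estimate \eqref{eq:strisob} together with $\|D^s P_N u_0\|_{\Lt} \approx N^s\|P_N u_0\|_{\Lt}$ (for $\sigma = s$) on each dyadic piece; then take the $\ell^2_N$ sum and the $\sup$ over $\dHs$-admissible pairs, interchanging the $\ell^2_N$ summation with the $L^q_t$ norm via Minkowski's inequality (legitimate since $q \geq 2$). This gives $\|e^{it\Delta}u_0\|_{\dB^{\sigma}_{\SHs}} \lesssim \|u_0\|_{\dH^\sigma}$. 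For the retarded terms, the same argument applies: project with $P_N$, invoke the Kato-Strichartz estimate \eqref{eq:Kato-Strichartz} (which holds for the wider $\frac d2$-acceptable range of pairs, hence covers all the admissible pairs used to define $\SHs$ and $\SdHs$) on each dyadic block, apply the $\ell^2_N$ summation together with Minkowski, and take the $\sup$ over admissible pairs on the left and recognize the $\inf$ over dual-admissible pairs on the right, yielding the bound by $\|F_m\|_{\dB^{\sigma}_{\SdLt}}$. Summing over $m=1,\dots,M$ finishes the proof.

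The main obstacle I anticipate is purely technical bookkeeping rather than a conceptual one: one must check that the dyadic-pieces estimates can be summed in $\ell^2_N$ and that the order of the $L^q_tL^r_x$ norms and the $\ell^2_N$ sum can be interchanged consistently on both sides, which requires $q, q' \in [2,\infty]$ on the relevant admissible ranges (true by the definition of the Strichartz spaces, where the lower endpoint for $q$ exceeds $2$ and for $q'$ the exponent is at most $(1/s)^-$, and the Minkowski inequality $\|\cdot\|_{\ell^2 L^q} \leq \|\cdot\|_{L^q \ell^2}$ goes the right way when $q \geq 2$). One also needs the Littlewood--Paley square-function characterization of $L^r_x$ norms for $1 < r < \infty$, which is standard but should be cited. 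A minor subtlety is that the $\dB^{\sigma}_{\SHs}$ object is only a semi-norm (the zero-frequency behavior for $\sigma > 0$), so the statement is really about the homogeneous pieces and no completeness issue arises. Finally, since the estimate is linear in each $F_m$ and the equation \eqref{eqs} is linear, there is no fixed-point or smallness requirement here — the lemma is a direct consequence of the linear estimates combined with Littlewood--Paley theory.
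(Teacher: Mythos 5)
The paper offers no proof of this lemma --- both it and Lemma~\ref{BeStr} are cited from \cite{Tao06NLD} without argument --- so there is no paper proof to compare against; your strategy (Duhamel, commute Littlewood--Paley projections through the propagator, apply Strichartz/Kato at each dyadic scale, interchange $\ell^2_N$ and $L^q_t$ by Minkowski for $q\geq2$, square-function characterization of $L^r$) is indeed the right one. The problem is that if you actually carry out the frequency-localized computation you describe, the Sobolev index on the right-hand side does not come out as $\dH^\sigma$. Since $\SHs$ is built from $\dHs$-admissible pairs, the single-frequency homogeneous bound is $\|P_N e^{it\Delta}u_0\|_{L^q_tL^r_x}\lesssim\|P_N u_0\|_{\dHs}\approx N^s\|P_N u_0\|_{\Lt}$, and the $\ell^2_N$ sum with the Besov weight $N^\sigma$ therefore produces $\|u_0\|_{\dH^{\sigma+s}}$, not $\|u_0\|_{\dH^\sigma}$; the retarded term likewise produces $\|F_m\|_{\dB^{\sigma+s}_{\SdLt}}$. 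So your concluding assertion ``This gives $\|e^{it\Delta}u_0\|_{\dB^{\sigma}_{\SHs}}\lesssim\|u_0\|_{\dH^\sigma}$'' does not follow from the sketch as written.

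This shift by $s$ is not a defect in your plan so much as evidence that \eqref{eq:linear-besov-strichartz} is printed with a typo, and you should not force your output to match it literally. Compare with how the lemma is actually invoked in Proposition~\ref{small data}: the first displayed bound $\|\Phi_{u_0}(u)\|_{\dB^{0}_{\SHs}}\lesssim\|e^{it\Delta}u_0\|_{\dB^{0}_{\SHs}}+\|F(u)\|_{\dB^{s}_{\SdLt}}$ carries Besov index $\sigma+s=s$ on the source term, exactly as your computation predicts, while the second bound is taken in $\dB^{s}_{\SLt}$, where the Strichartz pairs are $L^2$-admissible and no shift arises. The statement your method proves directly --- using only \eqref{eq:stri} and \eqref{eq:retarded} frequency by frequency --- is $\|u\|_{\dB^{\sigma}_{\SLt}}\lesssim\|u_0\|_{\dH^{\sigma}}+\sum_m\|F_m\|_{\dB^{\sigma}_{\SdLt}}$; the $\SHs$-level bounds then follow by the dyadic Bernstein gain $N^s$ at each frequency, which is precisely where the shifted index on the source term enters. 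Make this bookkeeping explicit rather than reproducing the mismatched indices of the printed statement.
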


 \begin{lemma}[Inhomogeneous Besov Strichartz estimate] If $F \in\dB^{\sigma}_{\SdHs},$ then
 \label{BeStr}
 \begin{equation}
\label{eq:In-Besov-Strichartz}
\Big\|\int^t_0 e^{i(t-\tau) \Delta} F(\tau) d\tau\Big \|_{\dB^{\sigma}_{\SHs}} \lesssim \| F\|_{\dB^{\sigma}_{\SdHs}}.
\end{equation} 
\end{lemma}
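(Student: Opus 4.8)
\textbf{Proof proposal for Lemma \ref{BeStr} (Inhomogeneous Besov Strichartz estimate).}

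The plan is to reduce the Besov-level estimate to the already-established Kato-type Strichartz estimate \eqref{eq:Kato-Strichartz} by using Littlewood--Paley projections and the fact that the Schr\"odinger propagator $e^{it\Delta}$ commutes with $P_N$. First I would freeze a dyadic frequency $N\in 2^{\Z}$ and apply $P_N$ to the Duhamel term; since $P_N$ commutes with $e^{i(t-\tau)\Delta}$, we get
$$
P_N\Big(\int_0^t e^{i(t-\tau)\Delta}F(\tau)\,d\tau\Big)=\int_0^t e^{i(t-\tau)\Delta}(P_NF)(\tau)\,d\tau,
$$
so that \eqref{eq:Kato-Strichartz} applied to the single Littlewood--Paley piece $P_N F$ gives, for every $\dHs$-admissible pair $(q,r)$ in the defining family of $\SHs$ and its dual pair in the family of $\SdHs$,
$$
\Big\| P_N\!\!\int_0^t e^{i(t-\tau)\Delta}F(\tau)\,d\tau\Big\|_{L^q_tL^r_x}\lesssim \|P_NF\|_{L^{q'}_t L^{r'}_x},
$$
with constant independent of $N$ (the implicit constant in \eqref{eq:Kato-Strichartz} depends only on the admissible exponents, not on the frequency support).

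Next I would multiply both sides by $N^{\sigma}$, take the $\ell^2_N$ norm over the dyadic frequencies, and move that $\ell^2$ summation inside the space-time norms. On the left, Minkowski's inequality (valid since $r,q\ge 2$ in the $\SHs$ families, so we may exchange $\ell^2_N$ with $L^q_tL^r_x$ in the correct direction) lets us bound
$$
\Big\|\Big(\sum_N N^{2\sigma}\big|P_N\!\!\int_0^t e^{i(t-\tau)\Delta}F\,d\tau\big|^2\Big)^{1/2}\Big\|_{L^q_tL^r_x}
$$
from above by the $\ell^2_N$ sum of the pieces, and this quantity is exactly $\big\|\int_0^t e^{i(t-\tau)\Delta}F\,d\tau\big\|_{L^q_t\dB^\sigma_{r,2}}$ by definition of the homogeneous Besov norm. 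On the right, the dual side, one uses the reverse Minkowski inequality (here the relevant exponents $q',r'$ satisfy $q',r'\le 2$, which is the correct direction to pull the $\ell^2_N$ sum out of $L^{q'}_tL^{r'}_x$) to bound $\big(\sum_N N^{2\sigma}\|P_NF\|_{L^{q'}_tL^{r'}_x}^2\big)^{1/2}$ by $\|F\|_{L^{q'}_t\dB^\sigma_{r',2}}$. Finally, taking the supremum over all $\dHs$-admissible pairs in the defining family on the left and the infimum over all $\dHds$-admissible pairs on the right yields exactly \eqref{eq:In-Besov-Strichartz}. This is essentially the same square-function bookkeeping as in the proof of Lemma \ref{Besov-sigma}, applied to the Duhamel operator rather than to a derivative.

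The main technical point — and the step I would be most careful about — is the direction of the Minkowski interchanges between $\ell^2_N$ and the mixed space-time Lebesgue norms: on the primal side one needs $2\le q,r$ (so $L^q_tL^r_x(\ell^2_N)\hookrightarrow \ell^2_N(L^q_tL^r_x)$ fails, but the \emph{other} inclusion $\ell^2(L)\hookrightarrow L(\ell^2)$ is what one actually wants after applying \eqref{eq:Kato-Strichartz} to each piece), and on the dual side one needs $q',r'\le 2$ so that $L^{q'}_tL^{r'}_x(\ell^2_N)\hookrightarrow\ell^2_N(L^{q'}_tL^{r'}_x)$ holds in the right direction. These constraints are exactly guaranteed by the exponent ranges in the definitions of $\dB^\sigma_{\SHs}$ and $\dB^\sigma_{\SdHs}$ (which is precisely why those ranges were chosen with $r\ge \frac{2d}{d-2s}\ge 2$ and the dual ranges bounded above by $2$), so no genuine obstruction arises; one only has to check the bookkeeping carefully. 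The frequency-uniformity of the constant in \eqref{eq:Kato-Strichartz} is immediate because that estimate is scale-invariant in an appropriate sense and in any case its constant depends only on $(q,r,d,s)$.
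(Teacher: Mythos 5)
Your plan — commute $P_N$ with the propagator, apply the Kato estimate \eqref{eq:Kato-Strichartz} to each Littlewood--Paley piece $P_NF$, and then interchange the $\ell^2_N$ summation with the space-time norms by Minkowski — is the standard route, and note that the paper does not prove this lemma either: it simply cites \cite{Tao06NLD}, so there is no in-paper argument to compare against.

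However, there is a genuine error in how you identify the norms. You write that
\[
\Big\|\Big(\sum_N N^{2\sigma}\big|P_N\!\!\int_0^t e^{i(t-\tau)\Delta}F\,d\tau\big|^2\Big)^{1/2}\Big\|_{L^q_tL^r_x}
\quad\text{``is exactly''}\quad
\Big\|\int_0^t e^{i(t-\tau)\Delta}F\,d\tau\Big\|_{L^q_t\dB^\sigma_{r,2}}.
\]
This is false for $r\neq 2$. The left-hand quantity is the Triebel--Lizorkin (square-function) norm $L^q_t\dot F^\sigma_{r,2}$, in which $\ell^2_N$ sits \emph{inside} $L^r_x$; the Besov--Strichartz norm $L^q_t\dB^\sigma_{r,2}$ that the paper actually uses has $\ell^2_N$ sitting \emph{between} $L^q_t$ and $L^r_x$, i.e.
\[
\|u\|_{L^q_t\dB^\sigma_{r,2}}=\Big\|\Big(\sum_N N^{2\sigma}\|P_N u(t)\|_{L^r_x}^2\Big)^{1/2}\Big\|_{L^q_t}.
\]
A telltale symptom of the conflation is your claim that the Minkowski interchange requires \emph{both} $q\ge 2$ and $r\ge 2$: in the correct argument $L^r_x$ is innermost on both sides of Kato's estimate and is never touched by the interchange, so the condition on $r$ is irrelevant. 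The correct chain is: $\|u\|_{L^q_t\dB^\sigma_{r,2}}\le \big(\sum_N N^{2\sigma}\|P_N u\|_{L^q_tL^r_x}^2\big)^{1/2}$ by Minkowski between $L^q_t$ and $\ell^2_N$ (needs only $q\ge2$); apply \eqref{eq:Kato-Strichartz} to each $P_NF$ with an $N$-independent constant; then $\big(\sum_N N^{2\sigma}\|P_N F\|_{L^{q'}_tL^{r'}_x}^2\big)^{1/2}\le \|F\|_{L^{q'}_t\dB^\sigma_{r',2}}$ by the reverse Minkowski between $\ell^2_N$ and $L^{q'}_t$ (needs only $q'\le2$). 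Finally take the $\sup$ over admissible pairs on the left and the $\inf$ on the right. Dropping the pointwise square function entirely fixes the proof; as written, your identification step does not hold.
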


Proofs of Lemma \ref{linear-Strichartz} and \ref{BeStr} can be found in \cite{Tao06NLD}.

\begin{lemma}[Interpolation inequalities for Besov spaces \cite{Tri78}] \label{inter-besov}
Let $1\leq p_i, q_i\leq \infty$  and $u \in \beta^{\sigma_i}_{p_i,q_i}(\Rn),$ where $i=1,2,3$. Then
\begin{align*}
\|u\|_{\beta^{\sigma_1}_{p_{_1},q_{_1}}(\Rn)}=\|u\|_{\beta^{\sigma_2}_{p_{_2},q_{_2}}(\Rn)}^{1-\theta}\|u\|_{\beta^{\sigma_3}_{p_{_3},q_{_3}}(\Rn)}^\theta
\end{align*}
provided that 
\begin{align*}
\sigma_1=(1-\theta)\sigma_2+\theta\sigma_3,\qquad \frac1{p_{_1}}=\frac{1-\theta}{p_{_2}}+\frac{\theta}{p_{_3}}\qquad \mbox{and}\qquad
\frac1{q_{_1}}=\frac{1-\theta}{q_{_2}}+\frac{\theta}{q_{_3}}.
\end{align*}
\end{lemma}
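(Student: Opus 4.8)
The plan is to reduce everything to the classical real interpolation of $\ell^q$-valued $L^p$ sequence spaces, using the Littlewood--Paley characterization of Besov norms that has already been recalled in the Notation subsection. First I would recall that, by definition,
\[
\|u\|_{\beta^{\sigma_i}_{p_i,q_i}} \approx \|P_{\leq 1}u\|_{L^{p_i}_x}+\Bigl\|\bigl(2^{j\sigma_i}\|P_{2^j}u\|_{L^{p_i}_x}\bigr)_{j\geq 1}\Bigr\|_{\ell^{q_i}},
\]
so that the map $u\mapsto \bigl(P_{2^j}u\bigr)_{j}$ identifies $\beta^{\sigma_i}_{p_i,q_i}$ isometrically (up to equivalence of norms) with a weighted $\ell^{q_i}(L^{p_i})$ space, the weight being $2^{j\sigma_i}$. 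The asserted identity is then exactly the statement that these weighted mixed-norm spaces interpolate with the stated exponents, i.e. a H\"older-type inequality across the three indices.

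The key steps, in order, are: (1) fix $j$ and apply the elementary three-exponent H\"older inequality in the $x$-variable, $\|P_{2^j}u\|_{L^{p_1}_x}\leq \|P_{2^j}u\|_{L^{p_2}_x}^{1-\theta}\|P_{2^j}u\|_{L^{p_3}_x}^{\theta}$, valid since $\frac1{p_1}=\frac{1-\theta}{p_2}+\frac{\theta}{p_3}$; (2) multiply through by the weight, splitting it as $2^{j\sigma_1}=2^{j(1-\theta)\sigma_2}\,2^{j\theta\sigma_3}$ using $\sigma_1=(1-\theta)\sigma_2+\theta\sigma_3$, which gives
\[
2^{j\sigma_1}\|P_{2^j}u\|_{L^{p_1}_x}\leq \bigl(2^{j\sigma_2}\|P_{2^j}u\|_{L^{p_2}_x}\bigr)^{1-\theta}\bigl(2^{j\sigma_3}\|P_{2^j}u\|_{L^{p_3}_x}\bigr)^{\theta};
\]
(3) take $\ell^{q_1}$-norms in $j$ and apply H\"older's inequality for sequences with the conjugate exponents $\frac{q_2}{(1-\theta)q_1}$ and $\frac{q_3}{\theta q_1}$, whose reciprocals sum to $1$ precisely because $\frac1{q_1}=\frac{1-\theta}{q_2}+\frac{\theta}{q_3}$; this yields the bound by $\|u\|_{\beta^{\sigma_2}_{p_2,q_2}}^{1-\theta}\|u\|_{\beta^{\sigma_3}_{p_3,q_3}}^{\theta}$ for the high-frequency part. (4) Handle the low-frequency piece $P_{\leq 1}u$ by the same three-exponent H\"older inequality in $x$ alone (no weights, no summation), and combine. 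Since this is a standard fact (Triebel \cite{Tri78}), I would either cite it directly or present only this short reduction.

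The main obstacle — really the only subtlety — is bookkeeping the constraint that the three pairs of exponents be genuinely \emph{compatible}, i.e. that a single $\theta\in[0,1]$ simultaneously realizes all three identities for $\sigma_i$, $p_i$, $q_i$; the hypotheses as stated build this in, so once one observes that the same $\theta$ threads the H\"older inequalities at the $L^{p}_x$ level and at the $\ell^{q}_j$ level, the proof is immediate. A secondary point worth a remark is the equivalence-of-norms implicit in the $\approx$ above (the classical Littlewood--Paley square-function/Besov equivalence), which makes the displayed equality an equivalence of norms rather than a literal equality; this is the sense in which the statement is used elsewhere in the paper, e.g. in Lemma \ref{Besov-sigma}.
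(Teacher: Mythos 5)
The paper supplies no proof for this lemma; it is stated as a quoted fact from Triebel's monograph \cite{Tri78}, so there is no in-text argument to compare against. Your elementary proof is nevertheless correct and complete in substance: the three-exponent H\"older inequality applied dyad-by-dyad in $x$ gives $\|P_{2^j}u\|_{L^{p_1}}\leq\|P_{2^j}u\|_{L^{p_2}}^{1-\theta}\|P_{2^j}u\|_{L^{p_3}}^{\theta}$, the weight factors multiplicatively by the convexity relation on $\sigma_i$, a second H\"older in $j$ with exponents $\frac{q_2}{(1-\theta)q_1}$ and $\frac{q_3}{\theta q_1}$ closes the high-frequency sum, and the low-frequency piece is H\"older in $x$ alone. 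The only step you gloss over is the final combination of the two summands: one needs the two-term H\"older inequality
\[
A_2^{1-\theta}A_3^{\theta}+B_2^{1-\theta}B_3^{\theta}\leq (A_2+B_2)^{1-\theta}(A_3+B_3)^{\theta}
\]
to pass from separate bounds on the low- and high-frequency parts to a bound on the full $\beta^{\sigma_1}_{p_1,q_1}$ norm. Your remark at the end is well taken and worth emphasizing: what this argument (and what is actually true) proves is the one-sided multiplicative inequality $\|u\|_{\beta^{\sigma_1}_{p_1,q_1}}\leq\|u\|_{\beta^{\sigma_2}_{p_2,q_2}}^{1-\theta}\|u\|_{\beta^{\sigma_3}_{p_3,q_3}}^{\theta}$, not the equality displayed in the lemma statement (an equality could only hold in degenerate cases such as $\theta\in\{0,1\}$). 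The paper only uses the lemma as an upper bound, so the ``$=$'' is a typographical slip for ``$\leq$'' or ``$\lesssim$'', and your proof supplies exactly the direction that is needed.
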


\subsection{Local Theory} 
In this subsection the global existence and scattering in $H^1(\Rn)$ for small data in $\dHs$ (Propositions \ref{small data} and \ref{H^1 Scattering}), and a long  perturbation argument (Proposition \ref{longperturbation}) are examined. The proofs rely on  Besov spaces which allow us to treat the lack of smoothness of the nonlinearity $F(u)=|u|^{p-1}u$  
(see Remark \ref{re:nonlineal}). 

Although it may appear that the small data theory (Proposition \ref{small data}) is by now a straight forward argument, we write out its proof carefully to show how we deal with the non-integer nonlinearities. For the same reason we include full proofs of the long-term perturbation (Proposition \ref{longperturbation}) and the $H^1$ scattering (Proposition \ref{H^1 Scattering}).


\begin{prop}[Small data]  \label{small data}
Suppose $\|u_0\|_{\dHs}\lesssim A.$ There exists $\delta_{sd}=\delta_{sd}(A)>0$ such that if $\|e^{it\triangle}u_0\|_{\dB^0_{\SHs}}\lesssim \delta_{sd}$, then u(t) solving the $\NLSf$ is global in $\dHs(\Rn)$ and 
\begin{align*}
\|u\|_{\dB^{0}_{\SHs}}
\lesssim 2 \|e^{it \triangle}u_0\|_{\dB^{0}_{\SHs}},\qquad
\|u\|_{\dB^{s}_{\SLt}}
\lesssim 2c \|u_0\|_{\dHs}.
\end{align*}
\end{prop}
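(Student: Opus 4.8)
The plan is to run a standard contraction-mapping / continuity argument for the Duhamel formulation
\[
u(t) = e^{it\Delta}u_0 + i\int_0^t e^{i(t-\tau)\Delta}\,|u|^{p-1}u(\tau)\,d\tau,
\]
but carried out in the Besov-Strichartz spaces $\dB^0_{\SHs}$ and $\dB^s_{\SLt}$ rather than in plain Lebesgue-type Strichartz spaces, precisely so that the nonlinearity $F(u)=|u|^{p-1}u$ — which is only $C^1$ (H\"older-continuous derivative) in the low-regularity dimensional regimes of Remark \ref{re:nonlineal} — can still be estimated using the fractional chain rules (Lemmas \ref{chain}, \ref{Holderfd}) together with Lemma \ref{Besov-sigma}. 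First I would fix the complete metric space
\[
B = \Bigl\{ u : \ \|u\|_{\dB^0_{\SHs}} \le 2\|e^{it\Delta}u_0\|_{\dB^0_{\SHs}}, \quad \|u\|_{\dB^s_{\SLt}} \le 2c\|u_0\|_{\dHs}\Bigr\}
\]
with the metric induced by, say, the $S(\Lt)$-norm (to avoid differentiating the nonlinearity for the difference estimate — one uses \eqref{eq:Holder-continuous} instead), and define the map $\Phi(u)(t)$ to be the right-hand side of the Duhamel formula above.

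The heart of the matter is the nonlinear estimate: one must show
\[
\bigl\| F(u) \bigr\|_{\dB^0_{\SdHs}} \lesssim \|u\|_{\dB^0_{\SHs}}^{p-1}\,\|u\|_{\dB^0_{\SHs}}, \qquad
\bigl\| D^s F(u) \bigr\|_{\SdLt} \lesssim \|u\|_{\dB^0_{\SHs}}^{p-1}\,\|u\|_{\dB^s_{\SLt}},
\]
or the analogous bounds with the correct (small) power of $\|e^{it\Delta}u_0\|_{\dB^0_{\SHs}}$ up front so the smallness hypothesis closes. This is where the admissible-pair bookkeeping lives: one selects a $\dHds$-admissible (dual) pair $(q',r')$ for the output, writes $F$ as a product of $p$ copies of $u$ (or $u$ and $\bar u$), distributes with H\"older in space and time across the custom exponents defined in the preamble (the $\qsd, \rsd, \qs, \rs, \ldots$ zoo), applies the Leibniz/chain rule to move the $D^s$ onto one factor, and uses Lemma \ref{interpolacion} / Remark \ref{remark lemma 5} to trade $D^s$-at-one-integrability for $D^0$-at-another so that every factor lands on a genuinely $\dHs$-admissible pair inside the Strichartz space — this is the "interpolation trick to avoid the endpoint $(2,\infty)$" mentioned in the introduction. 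Then I would invoke Lemma \ref{linear-Strichartz} (or \ref{BeStr}) to get
\[
\|\Phi(u)\|_{\dB^0_{\SHs}} \lesssim \|e^{it\Delta}u_0\|_{\dB^0_{\SHs}} + \bigl\| F(u)\bigr\|_{\dB^0_{\SdHs}},
\qquad
\|\Phi(u)\|_{\dB^s_{\SLt}} \lesssim \|u_0\|_{\dHs} + \bigl\| F(u)\bigr\|_{\dB^s_{\SdLt}},
\]
and for $u\in B$ bound the nonlinear terms by $\bigl(\|e^{it\Delta}u_0\|_{\dB^0_{\SHs}}\bigr)^{p-1}$ times the relevant norm, so that choosing $\delta_{sd}(A)$ small — small enough that $C(2\delta_{sd})^{p-1}<\tfrac12$, with $A$ entering only through the $\dB^s_{\SLt}$ bound — makes $\Phi$ map $B$ into $B$; an identical computation using \eqref{eq:Holder-continuous} on the difference $F(u)-F(v)$ makes $\Phi$ a contraction in the $S(\Lt)$ metric.

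Once the fixed point $u$ exists, globality in $\dHs$ is immediate because the Strichartz norms are finite on all of $\R$, and the two displayed bounds in the statement are exactly the "$u\in B$" constraints inherited by the fixed point. Standard persistence of regularity (running the same estimates with the $H^1$ data, or noting $u_0\in H^1\subset \dHs$ and propagating the extra derivative) upgrades this to the $H^1$ solution, though strictly for the proposition as stated only the $\dHs$ statement is needed. The main obstacle I anticipate is purely technical but genuinely delicate: verifying that all the exponent choices in the H\"older decomposition of $F(u)$ are simultaneously (i) H\"older-admissible (sum to the target), (ii) land on pairs inside the prescribed $q$- and $r$-windows of $\SHs$ (which are dimension-dependent and exclude endpoints), and (iii) remain valid in the $C^1$-only regime where one must use Lemma \ref{Holderfd} with a carefully chosen $\nu$ satisfying $\tfrac{\sigma}{\rho}<\nu<1$ and the side condition $(1-\tfrac{\sigma}{\rho\nu})p_1>1$. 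Getting a single choice of exponents that works uniformly across $d\ge1$ and $0<s<1$ — or splitting into the $d\le 5$ / $d\ge 6$ cases as Remark \ref{re:nonlineal} suggests — is the real content of the proof.
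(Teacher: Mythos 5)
Your proposal is correct in broad outline and matches the paper on the $\Phi\colon B\to B$ estimate (Besov Strichartz lemma, chain rule Lemma \ref{chain} on $D^s F(u)$ since $F\in C^1$, Littlewood--Paley summation), but it diverges from the paper on the contraction step, and the divergence is worth flagging because it actually simplifies the argument. You propose to run the contraction in the metric of $S(\Lt)$, using only the pointwise bound \eqref{eq:Holder-continuous} and H\"older (no derivative lands on $F(u)-F(v)$), relying on the fact that the ball $B$ — defined by the stronger $\dB^0_{\SHs}$ and $\dB^s_{\SLt}$ bounds — is complete under the weaker $S(\Lt)$ metric (a Fatou/lower-semicontinuity argument). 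This entirely bypasses the need to estimate $D^s\bigl(F(u)-F(v)\bigr)$, which is precisely where the paper's proof has its technical weight: because $F'$ is only H\"older continuous when $p<2$, the paper cannot simply apply Lemma \ref{chain} to the difference, and so it splits into Case (a) ($F\in C^2$, dimensions $d\le 4$ or $d=5$ with $s\ge\tfrac12$) where the ordinary chain rule after using \eqref{eq:Holder-continuous} suffices, and Case (b) (higher dimensions) where it must invoke the H\"older-continuous chain rule Lemma \ref{Holderfd} via the representation \eqref{eq:dif-int}, with a further sub-split into $s\le p-1$ and $s>p-1$ governing the choice of $\sigma,\nu$. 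Your weak-metric route avoids Lemma \ref{Holderfd} and the whole case taxonomy for the contraction, at the modest cost of a completeness/weak-closure remark. Both are legitimate; the paper's version proves the contraction in the stronger topology outright, while yours is leaner. The one place where you are a bit vague and the paper is explicit is the exponent bookkeeping (the H\"older split \eqref{split1} and its variants, and the admissibility checks in the $\SHs$/$\SdHs$ windows); you correctly identify this as the technical core but do not carry it out, whereas the paper records the precise splits and the resulting $\delta_i$'s.
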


\begin{proof}
Using a fixed point argument in a ball $B$, 
 the existence of solutions to \eqref{eq:NLS} and continuous dependence on the initial data is proven as follows.

Let 
\begin{align*} 
B=\Big\{\|u\|_{\dB^{0}_{\SHs}}
\lesssim 2 \|e^{it \triangle}u_0\|_{\dB^{0}_{\SHs}},\quad
\|u\|_{\dB^{s}_{\SLt}}
\lesssim 2 c\|u_0\|_{\dHs}
\Big\}.
\end{align*}
Assume $F(u)=|u|^{p-1}u$ and  the map $u\mapsto \Phi_{u_0}(u)$  defined via
\begin{equation*}
\Phi_{u_0}(u):=e^{it\triangle}u_0+i \int_0^t e^{i(t-\tau)\triangle}F(u(\tau))d\tau.
\end{equation*}
Combining the triangle inequality and the Linear Besov Strichartz estimates \eqref{eq:linear-besov-strichartz} and the fact that $F(u) \in C^1$, we obtain
\begin{align*}
\|\Phi_{u_0}(u)\|_{\dB^{0}_{\SHs}} &\lesssim \|e^{it\triangle}u_0\|_{\dB^{0}_{\SHs}} +  \| F(u)\|_{\dB^{s}_{\SdLt}},\\
\|\Phi_{u_0}(u)\|_{\dB^{s}_{\SLt}} &
\lesssim \|u_0\|_{\dB^{s}_{\SLt}} +\| F(u)\|_{\dB^{s}_{\SdLt}}. 
\end{align*}
For each dyadic number $N\in 2^{\Z},\;$ the fractional chain rule (Lemma \ref{chain}) and H\"older's inequality lead to
\begin{align*}
\| \Ds F(u)\|_{\SdLt}& 
\lesssim \| \Ds (|u|^{p-1}u)\|_{\qsd \rsd}
\\&\lesssim \|u\|^{p-1}_{\qs \rsh} \|\Ds u\|_{\qs \rs} \lesssim \|u\|^{p-1}_{S(\dH^s)} \|\Ds u\|_{\SLt},
\end{align*}
thus, Littlewood-Paley theory yields 
\begin{align}\label{small besov}
\| |u|^{p-1}u\|_{\dB^{s}_{\SdLt}} 
  \lesssim \|u\|^{p-1}_{\dB^{0}_{S(\dH^s)}} \|u\|_{\dB^{s}_{\SLt}}.
\end{align}
Therefore,
\begin{align*}
\|\Phi_{u_0}(u)\|_{\dB^{0}_{\SHs}} &\lesssim \|e^{it\triangle}u_0\|_{\dB^{0}_{\SHs}}+
  \|u\|^{p-1}_{\dB^{0}_{S(\dH^s)}} \|u\|_{\dB^{s}_{\SLt}},\\
\|\Phi_{u_0}(u)\|_{\dB^{s}_{\SLt}} & 
\lesssim \|u_0\|_{\dB^{s}_{\SLt}} +
 \|u\|^{p-1}_{\dB^{0}_{S(\dH^s)}} \|u\|_{\dB^{s}_{\SLt}}
\end{align*}
and if $\|e^{it\triangle}u_0\|_{\dB^{0}_{\SHs}}\leq\delta_1$ with $\delta_1 = \min\Bigl\{\frac{1}{2^pc_1^{p-1}A^{p-2}},\sqrt[p-1]{\frac{1}{2^pc_2^{p-1}A}}\Bigr\}$ leads to $\Phi_{u_0}(u)\in B$. 

To complete the proof, we need to show that the map $u\mapsto \Phi_{u_0}(u)$ is a contraction. Take  $u,v \in B$,  and note that the triangle inequality and Besov Strichartz estimates  yield
\begin{align*}
\|\Phi_{u_0}(u)-\Phi_{u_0}(v)\|_{\dB^0_{\SHs}} 
&\lesssim
\Big\| \int_0^t e^{i(t-\tau)\triangle}\Big(F\big(u(\tau)\big)-F\big(v(\tau)\big)\Big)d\tau\Big\|_{\dB^0_{\SHs}}\\
&\lesssim
 \| \Ds \big(F(u)-F(v)\big)\|_{\dB^0_{\SdLt}}\approx  \| F(u)-F(v)\|_{\dB^s_{\SdLt}},
\end{align*}
and
\begin{align*}
\|\Phi_{u_0}(u)-\Phi_{u_0}(v)\|_{\dB^s_{\SLt}} &\approx
\|\Ds (\Phi_{u_0}(u)-\Phi_{u_0}(v))\|_{\dB^0_{\SLt}}\\ 
&\lesssim
\Big\| \int_0^t e^{i(t-\tau)\triangle}\Ds \Big(F\big(u(\tau)\big)-F\big(v(\tau)\big)\Big)d\tau\Big\|_{\dB^0_{\SLt}}  \\
&\lesssim
 \| \Ds \big(F(u)-F(v)\big)\|_{\dB^0_{\SdLt}}\approx  \| F(u)-F(v)\|_{\dB^s_{\SdLt}}.
\end{align*}
For each dyadic number $N\in 2^{\Z},\;$ we estimate $ \| \Ds \big(F(u)-F(v)\big)\|_{\SdLt}.\;$ Recall that we are considering the mass-supercritical energy-subcritical NLS, i.e., $0<s<1$ and $p=1+\frac {4}{d-2s}.$ Due to the lack of smoothness of the nonlinearity (Remark \ref{re:nonlineal}), we consider two (complementary) cases:
 
\noindent{\bf (a)} The function $F(u)$ is \textbf{ at least} in  $C^2(\C)$.

\noindent{\bf (b)} The nonlinearity $F(u)$ is \textbf{ at most} in $C^1(\C)$. 
 
In the rest of the proof we examine these cases separately. After the proof we refer to the specific examples to illustrate our approach. 

\noindent {\bf Case  (\emph{a})}.   
$F(u)$ is \textbf{ at least} in  $C^2(\C)$: this case occurs when $1\leq d\leq 4+2s,$ i.e.,  dimensions 2, 3, and 4  for $0<s<1$, or  dimension 5 when $\frac12\leq s<1$.  
Combining \eqref{eq:Holder-continuous}, chain rule  (Lemma \ref{chain})  and H\"older's inequality, gives
\begin{align*}
\|\Ds \big(F(u)-&F(v)\big)\|_{\SdLt} \notag
\lesssim \| \Ds (|u|^{p-1}u-|v|^{p-1}v)\|_{\qsd \rsd}
\\&\lesssim  \|\Ds |u-v|\|_{\qs \rs}
\Big(\|u\|^{p-1}_{\qs \rsh}
+\|v\|^{p-1}_{\qs \rsh}
\Big)\notag\\
& \lesssim  \|\Ds |u-v|\|_{\SLt}\Big(\|u\|^{p-1}_{S(\dH^s)}+\|v\|^{p-1}_{S(\dH^s)}\Big).
\end{align*}
Here, we used  the H\"older split 
\begin{equation}\label{split1}
\rsdr=\frac{d^2p-8s}{2d^2p}+(p-1)\frac{2(d+4)}{d^2p(p-1)}
\end{equation} 
together with the fact that the pair $\left(\qsdq, \rsdr \right)$ is $\Lt-$ dual admissible, the pair $\left(\qsq, \rsr\right)$ is $\Lt-$admissible and the pair $\left(\qsq, \rshr\right)$  is $\dHs-$ admissible.

Therefore, $\| F(u)-F(v)\|_{\dB^s_{\SdLt}}\lesssim  \| u-v\|_{\dB^s_{\SLt}}\Big(\|u\|^{p-1}_{\dB^0_{S(\dH^s)}}+\|v\|^{p-1}_{\dB^0_{S(\dH^s)}}\Big).$
If $\|e^{it\triangle}u_0\|_{\dB^{0}_{\SHs}}\leq\delta_2$ with  
$
\delta_2 =  \min\Big\{  \sqrt[p-1]{ \frac{1}{2^{p}C}},\frac{1}{2^{p}  A^p-2C}  \Big\}
$
implies that $\Phi_{u_0}$ is a contraction.
\bigskip

\noindent {\bf Case (\emph{b})}.  
 $F(u)$ is \textbf{ at most} in $C^1(\C)$:  this corresponds to dimensions higher than $4+2s$, i.e.,  $d=5$ with $0<s< \frac12$ or $d\geq 6$ with $0<s<1$. 
Let $w=u-v$,  therefore \eqref{eq:dif-int} and the triangle inequality imply 
\begin{align}\label{eq:difestimate2}
\|\Ds \big(&F(u)-F(v)\big)\|_{\SdLt} \notag
\lesssim \| \Ds (|u|^{p-1}u-|v|^{p-1}v)\|_{\qsd \rsd}
\\&\lesssim  \|\Ds F_{z}(v+w)w\|_{\qsd \rsd}+\|\Ds F_{\z}(v+\w)\w\|_{\qsd \rsd}.
\end{align}
To  estimate \eqref{eq:difestimate2}, we consider the subcases (i) $s\leq p-1$ and (ii) $s>p-1$.

\noindent (i) 
If dimensions $4+2s<d\leq  \frac{4+2s^2}{s}$, then $s\leq p-1<1$, thus,

\begin{align}
\|\Ds & F_{z}(u)w\|_{\qsd \rsd}\lesssim 
 \|\Dsa F_{z}(u)w\|_{\qsd\rua} \label{1}
 \\
\lesssim& \|\Dsa F_{z}(u)\|_{\qda\rda}
\|w\|_{\qs\rta} \label{2}\\
&+\| u\|^{p-1}_{\qca\rca}
\|\Dsa w\|_{\qcia \rcia} \label{2l}
\\
\lesssim& \|u\|^{\expa}_{\qs \rsh}\|\Ds u\|^{\expa}_{\qs \rs}\|\Ds w\|_{\qs \rs}\label{3}\\
&+\| u\|^{p-1}_{\qca\rca}
\|\Ds w\|_{\qcia \roa}\label{4}\\
\lesssim& \|\Ds w\|_{\SLt}\Big(\|u\|^{\expa}_{\SHs}\|\Ds u\|^{\expa}_{\SLt}
+\| u\|^{p-1}_{\SHs}\Big)\notag,
\end{align}
where,  Remark \ref{remark lemma 5} yields \eqref{1}, since  $\ruar$ and $\rsdr$ are H\"older conjugates and $\frac{s(p-1)}{2}<s$.  Leibniz rule gives \eqref{2} and \eqref{2l}. Then applying  chain rule for H\"older-continuous functions (Lemma \ref{Holderfd}) with $\rho:=p-1,\; \sigma:=\frac{s(p-1)}{2}$  and $\nu:=s$ to \eqref{2},  we obtain  \eqref{3}. Noticing that $\roa\hookrightarrow \rcia,\;$  Lemma \ref{interpolacion} implies \eqref{4}.  The last line comes from the fact that the pairs  
$\left (\qsq, \rshr\right),\; \left(\qcaq,\rcar\right)$ are $\dHs-$admissible, and the pairs $\left(\qsq, \rsr\right)$, $\left({\qciaq, \roar}\right)$  are $\Lt-$admissible.  In a similar fashion, we obtain the estimate for the conjugate
\begin{align*}
\|\Ds F_{\z}(v+\w)&\w\|_{\qsd \rsd} 
\lesssim\|\Ds w\|_{\SLt}\big(\|u\|^{\expa}_{\SHs}\|\Ds u\|^{\expa}_{\SLt}
+\| u\|^{p-1}_{\SHs}\big).
\end{align*}
Thus, Littlewood-Paley theory implies that 
$$\| F(u)-F(v)\|_{\dB^s_{\SdLt}}\lesssim 2  \| u-v\|_{\dB^s_{\SLt}}\Big(\|u\|^{\expa}_{\dB^0_{S(\dH^s)}}\|u\|^{\expa}_{\dB^s_{\SLt}}+\|u\|^{p-1}_{\dB^0_{S(\dH^s)}}\Big),$$
and  letting
$\delta_3 \leq  \sqrt[\frac{p-1}2]{ \frac{1}{2^{(p+2)}CA^{\frac{p-1}2}}}$
gives that $\Phi_{u_0}$ is a contraction.

\noindent(ii) 
If the dimensions $d>  \frac{4+2s^2}{s},$ then $ p-1<s.$  Therefore, we make an estimate for  $ \|\Ds F_{z}(u) w\|_{\qsd \rsd}$, as follows

\begin{align}
\|\Ds  F_{z}&(u)w\|_{\qsd \rsd}\lesssim 
 \|\Dsb F_{z}(u)w\|_{\qsd\rub} \label{1b}
 \\
\lesssim& \|\Dsb F_{z}(u)\|_{\qdb\rdb}
\|w\|_{\qs\rtb} \label{2b}\\
&+\| u\|^{p-1}_{\qcb\rcb}
\|\Dsb w\|_{\qcib \rcib} \label{2bl}
 \\
\lesssim &\|u\|^{\expbu}_{\qs \rsh}
\|\Ds u\|^{\expbd}_{\qs \rs}
\|\Ds w\|_{\qs \rs}\label{3b}
\\
&+\| u\|^{p-1}_{\qcb\rcb}
\|\Ds w\|_{\qcib \rob}\label{4b}\\
\lesssim &\|\Ds w\|_{\SLt}\Big(\|u\|^{\expbu}_{\SHs}\|\Ds u\|^{\expbd}_{\SLt}
+\| u\|^{p-1}_{\SHs}\Big) \notag,
\end{align}
as before in (i), Remark \ref{remark lemma 5} yields \eqref{1b}, since  $\rubr$ and $\rsdr$ are H\"older conjugates and ${(p-1)}^{2}<s$.  
Leibniz rule gives \eqref{2b} and \eqref{2bl}. To obtain \eqref{3b}, we use  the chain rule for H\"older-continuous functions (Lemma \ref{Holderfd}) with $\rho:= (p-1)^2$ and $\nu:=s\;$ in \eqref{2b}. The line \eqref{4b} follows from Lemma \ref{interpolacion},   
 and finally, since the pairs 
$\left (\qsq, \rshr\right)$, $\left(\qcaq,\rcar\right)$ are $\dHs-$admissible, and the pairs $\left(\qsq, \rsr\right), \left({\qciaq, \roar}\right)$  are $\Lt-$admissible, we obtain the last estimate.  Similarly, 
\begin{align*}
\|\Ds F_{\z}(v+\w)&\w\|_{\qsd \rsd} 
\lesssim \|\Ds w\|_{\SLt} \big(\|u\|^{\expbu}_{\SHs}\|\Ds u\|^{\expbd}_{\SLt}
+\| u\|^{p-1}_{\SHs}\big).
\end{align*}

Therefore, Littlewood-Paley theory produces
$$\| F(u)-F(v)\|_{\dB^s_{\SdLt}}\lesssim 2  \| u-v\|_{\dB^s_{\SLt}}\Big(\|u\|^{\expbu}_{\dB^0_{S(\dH^s)}}\|u\|^{\expbd}_{\dB^s_{\SLt}}+\|u\|^{p-1}_{\dB^0_{S(\dH^s)}}\Big),$$
and taking $\delta_4 \leq  \sqrt[\frac{(p-1)(1+s-p)}s]{ \frac{1}{2^{(p+1)}CA^{\frac{(p-1)^2}s}}}$
implies that $\Phi_{u_0}$ is a contraction.

From cases (a) and (b) choosing   
$
\delta_{sd}\leq  \min\big\{\delta_{1},\delta_{2},\delta_{3},\delta_{4}  \big\}
$ implies that the map $u\mapsto \Phi_{u_0}(u)$ is a contraction which concludes the proof.
\end{proof}
To better understand the difference for the above cases (a), (b)(i) and (b)(ii), we refer to the reader to   \cite[Examples 2.14, 2.15 and 2.16]{guevara} were we give examples of  $\dH^{\frac12}-$critical$\NLS_{\frac73}(\R^4)$, $\NLS_{\frac53}(\R^7)$ and  $\NLS_{\frac{13}9}(\R^{10})$ and demonstrate how the estimates work.


\begin{prop}[Long term perturbation]\label{longperturbation}
For each $A>0$, there exist $\epsilon_0=\epsilon_0(A)>0$ and $c=c(A)>0$ such that 
the following holds.
Let  $u=u(x,t) \in H^1(\Rn)$  
solve 
$\; \NLSf.$
 Let $v=v(x,t) \in H^1(\Rn)$ for all $t$ and satisfy $\te=i v_t + \Delta v +|v|^{ p-1} v.$  

\noindent If 
$
\| v \|_{\dB^{0}_{\SHs}} \leq A$, $ \|\te \|_{\dB^{0}_{\SdHs}} \leq \epsilon_0
$
and
$
\| e^{i (t-t_0) \Delta} (u(t_0)-v(t_0)) \|_{\dB^{0}_{\SHs}} \leq \epsilon_0,
$
then $
\| u \|_{\dB^{0}_{\SHs}} \leq c.
$
\end{prop}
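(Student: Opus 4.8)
The plan is to run the standard Bourgain-type ``stability/long-time perturbation'' argument, subdividing the time line into finitely many intervals on which the $\dB^0_{\SHs}$ norm of the approximate solution $v$ is small, then propagating the smallness of the error from one interval to the next. First I would use the hypothesis $\|v\|_{\dB^0_{\SHs}}\le A$ together with the fact that the Besov-Strichartz norm of $v$ is finite to partition $\R=\bigcup_{j=1}^{J}I_j$, $I_j=[t_j,t_{j+1}]$, into $J=J(A,\eta)$ subintervals so that $\|v\|_{\dB^0_{S(\dH^s,I_j)}}\le\eta$ on each $I_j$, where $\eta=\eta(A)$ is a small constant to be fixed later and $J$ is correspondingly large. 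Here one must be a little careful because the Besov-Strichartz ``norm'' is a supremum over admissible pairs; the subdivision should be done with respect to one controlling pair (or, as in the small-data proof, with respect to $\|v\|^{p-1}_{\dB^0_{S(\dH^s)}}\|\,\cdot\,\|_{\dB^s_{\SLt}}$-type quantities) so that the nonlinear estimates of Proposition~\ref{small data} apply on each piece.

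Next, set $w=u-v$, so that $w$ solves $iw_t+\Delta w = -\bigl(F(u)-F(v)\bigr)-\te$ with $F(u)=|u|^{p-1}u$. On the first interval $I_1$, by the Duhamel formula and Lemma~\ref{linear-Strichartz} (Linear Besov-Strichartz),
\begin{align*}
\|w\|_{\dB^0_{S(\dH^s,I_1)}} \lesssim \|e^{i(t-t_0)\Delta}w(t_0)\|_{\dB^0_{\SHs}} + \|F(u)-F(v)\|_{\dB^s_{\SdLt(I_1)}} + \|\te\|_{\dB^0_{\SdHs(I_1)}}.
\end{align*}
The middle term is estimated exactly as in Case (a) or Case (b) of the proof of Proposition~\ref{small data} (via the chain rule, Lemma~\ref{chain}, or the chain rule for H\"older-continuous functions, Lemma~\ref{Holderfd}, together with the Leibniz rule and H\"older), giving a bound of the form $\bigl(\|v\|^{p-1}_{\dB^0_{S(\dH^s,I_1)}}+\|w\|^{p-1}_{\dB^0_{S(\dH^s,I_1)}}\bigr)\|w\|_{\dB^0_{S(\dH^s,I_1)}}$ plus, where $p>2$, lower-order cross terms; writing $X_1=\|w\|_{\dB^0_{S(\dH^s,I_1)}}$, one obtains $X_1\lesssim \epsilon_0+(\eta^{p-1}+X_1^{p-1})X_1$, and a continuity/bootstrap argument (using $X_1\to0$ as $|I_1|\to0$, or rather as $t\to t_0$) forces $X_1\lesssim\epsilon_0$ once $\eta$ is small enough. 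Then one also controls $\|e^{i(t-t_1)\Delta}w(t_1)\|_{\dB^0_{\SHs}}$ at the right endpoint: since $e^{i(t-t_1)\Delta}w(t_1)=e^{i(t-t_0)\Delta}w(t_0)+i\int_{t_0}^{t_1}e^{i(t-\tau)\Delta}(F(u)-F(v)+\te)\,d\tau$ restricted to $t\ge t_1$, the same estimates give $\|e^{i(t-t_1)\Delta}w(t_1)\|_{\dB^0_{\SHs}}\le C_1\epsilon_0$ for some constant $C_1=C_1(A)$.

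Then I would iterate: on $I_j$ the bound on $\|e^{i(t-t_j)\Delta}w(t_j)\|_{\dB^0_{\SHs}}$ coming from the previous step is at most $C_1^{\,j}\epsilon_0$ (the constant compounds at each stage because each application of the nonlinear estimate produces a fixed multiplicative constant depending on $A$), and the same local argument yields $\|w\|_{\dB^0_{S(\dH^s,I_j)}}\lesssim C_1^{\,j}\epsilon_0$ provided $C_1^{\,j}\epsilon_0$ stays below the small-data threshold. Since $J=J(A)$ is finite, choosing $\epsilon_0=\epsilon_0(A)$ so small that $C_1^{\,J}\epsilon_0$ is below that threshold closes the induction over all $J$ intervals. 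Summing, $\|w\|_{\dB^0_{\SHs}}\lesssim \sum_{j=1}^J C_1^{\,j}\epsilon_0 \lesssim C_1^{\,J}\epsilon_0 =: c'$, and then $\|u\|_{\dB^0_{\SHs}}\le\|v\|_{\dB^0_{\SHs}}+\|w\|_{\dB^0_{\SHs}}\le A+c'=:c(A)$, which is the claim. The main obstacle I expect is purely technical rather than conceptual: namely, making the nonlinear difference estimate $\|F(u)-F(v)\|_{\dB^s_{\SdLt(I_j)}}$ work uniformly across all the ranges of $d$ and $s$ — one has to run the Case (a) argument when $F\in C^2$ and the Case (b)(i)/(b)(ii) arguments (with the splittings $\rho=p-1$ or $\rho=(p-1)^2$, and the interpolation Lemma~\ref{interpolacion}) when $F$ is only $C^1$, keeping careful track that all the H\"older exponents used are those appearing in the definition of the Besov-Strichartz spaces — and in verifying that the subdivision into finitely many intervals can be performed simultaneously for every admissible pair entering the supremum defining $\|\cdot\|_{\dB^0_{\SHs}}$.
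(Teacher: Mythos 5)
Your overall architecture — partition into $J=J(A)$ intervals on which $v$ is small in Besov–Strichartz, bootstrap a perturbative estimate on each piece, and let the constants compound geometrically so that $\epsilon_0$ must be chosen small depending on $J$, hence on $A$ — is exactly the paper's. The iteration step where you propagate $\|e^{i(t-t_{j+1})\Delta}w(t_{j+1})\|_{\dB^0_{\SHs}}$ from one endpoint to the next is also right. But there is a real gap in the single-interval nonlinear estimate.

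You propose to bound the forcing term in the form $\|F(u)-F(v)\|_{\dB^s_{\SdLt}}$, i.e.\ with $s$ derivatives, and to estimate it ``exactly as in Case (a)/(b) of Proposition~\ref{small data}'' via the fractional chain/Leibniz rules. But those lemmas produce a bound in terms of $\|w\|_{\dB^s_{\SLt}}$ (the $s$-derivative Strichartz norm), \emph{not} in terms of $\|w\|_{\dB^0_{\SHs}}$ as you wrote. Controlling $\|w\|_{\dB^s_{\SLt}}$ requires smallness of $\|w(t_0)\|_{\dHs}$ (and of $\|\te\|_{\dB^s_{\SdLt}}$), neither of which is among the hypotheses of the proposition — you are given only $\|e^{i(t-t_0)\Delta}w(t_0)\|_{\dB^0_{\SHs}}\le\epsilon_0$ and $\|\te\|_{\dB^0_{\SdHs}}\le\epsilon_0$. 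So the induction you describe cannot be closed from the stated assumptions.

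The fix is precisely what the paper flags as ``the key estimate in the long term perturbation argument'': use the Kato-type Besov–Strichartz estimate \eqref{eq:In-Besov-Strichartz} to bound $\|w\|_{\dB^0_{\SHsj}}$ by $\|e^{i(t-t_j)\Delta}w(t_j)\|_{\dB^0_{\SHsj}}+c\|W(v,w)\|_{\dB^0_{\SHdsj}}+c\|\te\|_{\dB^0_{\SHdsj}}$, so that no derivatives are placed on the nonlinearity. Then $\|W(v,w)\|_{\dB^0_{\SHdsj}}$ is estimated purely from the pointwise Lipschitz bound $|F(u)-F(v)|\lesssim |w|(|u|^{p-1}+|v|^{p-1})$ and H\"older (with the admissible pairs $(\frac{6}{1-s},\frac{6d}{3d-4s-2})$, $(\frac{4}{1-s},\frac{2d}{d-s-1})$ on the source side and the $\dH^{-s}$-admissible pair on the dual side), yielding $\|w\|_{\SHsj}\big(\|v\|^{p-1}_{\SHsj}+\|w\|^{p-1}_{\SHsj}\big)$. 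This keeps everything in the single norm $\dB^0_{\SHs}$, which is the one controlled by the hypotheses, and then your bootstrap and geometric iteration close exactly as you describe. In short: replace the fractional-derivative/Sobolev inhomogeneous estimate with the Kato estimate and the elementary difference bound; the rest of your plan is sound.
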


\begin{proof}
Let $F(u)= |u|^{p-1}u,\;
w=u - v$, and $
W(v,w) =F(u) -F(v) =F(v+w)-F(v)$. Therefore, $w$ solves the equation
$$
i w_t + \Delta w +W(v,w)  +\te=0.
$$

Since $\| v \| _{\dB^{0}_{S(\dHs)}} \leq A$, split the interval $[t_0 , \infty)$ into $K=K_A$ intervals $I_j = [t_j,t_{j+1}]$ such that for each $j$, 
$\| v \|_{\dB^{0}_{S(\dHs, I_j)}} \leq \delta$ with $\delta$ to be chosen later. Recall that the integral equation of $w$ at time $t_j$ is given by
\begin{equation}
\label{DuhamelsInterval}
w(t)= e^{i (t-t_j) \Delta}w(t_j) + i \int_{t_j}^t  e^{i (t-\tau) \Delta} (W+\te)(\tau) d\tau.
\end{equation}
Applying Kato Besov Strichartz estimate \eqref{eq:In-Besov-Strichartz} on \eqref{DuhamelsInterval} for each $I_j$, we obtain
\begin{align*}
\| w \|_{\dB^{0}_{\SHsj}}& \lesssim \| e^{i (t-t_j) \Delta}w(t_j) \|_{\dB^{0}_{\SHsj}}+
\| \int_{t_j}^t  e^{i (t-\tau) \Delta} (W+\te)(\tau) d\tau \|_{\dB^{0}_{\SHsj}} \\
 &  \lesssim \| e^{i (t-t_j) \Delta}w(t_j) \|_{\dB^{0}_{\SHsj}}
 +c \| W(v,w)\| _{\dB^{0}_{\SHdsj}}
 +c \| \te\| _{\dB^{0}_{\SHdsj}}\\
 &  \lesssim \| e^{i (t-t_j) \Delta}w(t_j) \|_{\SHsj}+c \| W(v,w)\| _{\dB^{0}_{\SHdsj}} 
 +c \epsilon_0.
\end{align*}
Thus, for each dyadic number $N\in 2^{\Z},\;$  the following estimate  holds
\begin{align}
\| W(v,w)&\| _{\SHdsj}  
\lesssim\| F(v+w) -F(v)\| _{\qzh\rzh} \notag\\
&\lesssim 
 \| w \|_{\qdh\rdh} \Big(\| v\|^{p-1}_{\quh\ruh}
+\| w \|^{p-1}_{\quh\ruh}\Big)\label{LP-1}\\
&\lesssim 
 \| w \|_{\SHsj} \Big(\| v\|^{p-1}_{\SHsj}+\| w \|^{p-1}_{\SHsj}\Big)\notag\\
 & \leq \| w \|_{\SHsj} \Big( \delta_N^{p-1}+\| w \|^{p-1}_{\SHsj}\Big),\label{LP-3}
\end{align}
where we first observed that the pairs $(\frac{6}{1-s},\frac{6 d}{3 d-4s-2})$, $(\frac{4}{1- s},\frac{2 d}{d -s-1})$ are $\Hs-$admissible;  the pair $(\frac{12 (d - 2 s)}{(8 + 3 d - 6 s) (1 - s)},\frac{6 d (d - 2 s)}{  3 (d^2+ 2s^2)+ 9 d (1 - s) - 2(5 s + 4)})$ is $\dH^{-s}-$admissible. Thus, we used \eqref{eq:Holder-continuous}  and H\"older's inequality to obtain \eqref{LP-1}. Since $\| v \|_{\dB^{0}_{S(\dHs, I_j)}} \leq \delta$ for each dyadic interval, there exists $\delta_N=\delta(N)$, so we obtain \eqref{LP-3}.
Therefore, 
\begin{align*}
\| F(v+w) -F(v)\|_{\dB^{0}_{\SHdsj}}
&\lesssim 
 \| w \|_{\dB^{0}_{\SHsj}} \Big(\| v\|^{p-1}_{\dB^{0}_{\SHsj}}+\| w \|^{p-1}_{\dB^{0}_{\SHsj}}\Big)\\
 & \leq \| w \|_{\dB^{0}_{\SHsj}}\Big( \delta^{p-1}+\| w \|^{p-1}_{\dB^{0}_{\SHsj}}\Big).
\end{align*}
Choosing 
$ \delta=\sum_{N\in2^{\Z}}\delta_N < \min\Big\{ 1, \frac{1}{
{4 c_1}} \Big\}$ and $\|  e^{i (t-t_j) \Delta} w(t_j) \|_{\dB^{0}_{\SHsj}}+ c_1 \epsilon_0  \leq \min \Big\{1, \frac{1}{2\sqrt[p]{4 c_1}}\Big\}
$, we have
$$
\| w \|_{\dB^{0}_{\SHsj}} \leq 2 \|  e^{i (t-t_j) \Delta} w(t_j) \|_{\dB^{0}_{\SHsj}}+ 2 c_1 \epsilon_0.
$$
Taking  $t=t_{j+1}$, applying $e^{i (t-t_{j+1}) \Delta } $ to both sides of (\ref{DuhamelsInterval}) and repeating the Kato estimates  \eqref{eq:Kato-Strichartz} , we obtain 
\begin{align}
\| e^{i (t-t_{j+1}) \Delta }w(t_{j+1}) \|_{\dB^{0}_{S( \dHs)}} 
\nonumber & \leq 2 \|  e^{i (t-t_j) \Delta} w(t_j) \|_{\dB^{0}_{\SHsj}}+ 2 c_1 \epsilon_0.
\end{align}
Iterating this process until $j=0$, we obtain
\begin{align*}
\| e^{i (t-t_{j+1}) \Delta }w(t_{j+1}) \|_{\dB^{0}_{S( \dHs)}} & \leq 2^j \| e^{i (t-t_{0}) \Delta }w(t_{0}) \|_{\dB^{0}_{\SHsj}}+(2^j-1) 2 c_1\epsilon_0
 \leq 2^{j+2} c_1 \epsilon_0.
 \end{align*}
 These estimates must hold for all intervals $I_j$ for $0 \leq j \leq K-1$, therefore,
 $$
 2^{K+2} c_1 \epsilon_0 \leq \min\Big\{1, \frac{1}{2\sqrt[p]{4 c_1}}\Big\},
$$ 
which determines how small $\epsilon_0$ has to be taken in terms of $K$ (as well as, in terms of $A$).
\end{proof}

An illustration of  specific cases (the nonlinearity $F(u)$ is (a) at least in  $C^2(\C)$
and (b) at most in $C^1(\C)$) of the estimate $\| W(v,w)\| _{\SHdsj}$ is given in \cite[Examples 2.18, 2.19  and 2.20 ]{guevara}.
\begin{prop}[$H^1$ scattering]\label{H^1 Scattering}
Assume $u_0 \in H^1(\Rn)$. Let  $u(t)$ be a global solution to $\NLSf$ 
with the initial condition $u_0$, globally finite $\dHs$ Besov Strichartz norm $\|u\|_{\dB^{0}_{\SHs}} <+\infty$ and uniformly bounded $H^1(\Rn)$ norm $\sup_{t\in[0,+\infty)}\|u(t)\|_{H^1}\leq B$. Then there exists $\phi_+\in H^1(\Rn)$ such that \eqref{eq:scatter} holds, 
 i.e., $u(t)$ scatters in $H^1(\Rn)$ as $t\to +\infty$. Similar statement holds for negative time.
\end{prop}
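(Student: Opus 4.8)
The plan is to produce the scattering state $\phi_+$ as a convergent Duhamel tail and to verify $H^1$-convergence by a Cauchy criterion, the only real work being to first upgrade the hypothesis $\|u\|_{\dB^{0}_{\SHs}}<+\infty$ to full $L^2$-level Strichartz control of $u$ \emph{and} of its gradient on $[0,\infty)$. To this end I would use the finiteness of $\|u\|_{\dB^{0}_{\SHs}}$ to split $[0,\infty)=\bigcup_{j=0}^{K-1}I_j$, $I_j=[t_j,t_{j+1}]$, into finitely many intervals on which $\|u\|_{\dB^{0}_{S(\dHs,I_j)}}\le\delta$, with $\delta=\delta(B)$ small. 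On each $I_j$, writing the Duhamel formula based at $t_j$ and applying the (Besov) Strichartz estimates \eqref{eq:stri}, \eqref{eq:strisob}, \eqref{eq:In-Besov-Strichartz} together with the pointwise bound $|\nabla F(u)|\lesssim |u|^{p-1}|\nabla u|$ (valid since $F(u)=|u|^{p-1}u\in C^1$ because $p>1$) and a H\"older split placing the $|u|^{p-1}$ factor into $\dHs$-admissible Lebesgue norms and $\nabla u$ into an $L^2$-admissible one — the analogue of the bookkeeping in the proof of Proposition \ref{small data}, with only the ordinary chain rule needed for $\nabla F(u)$ — one obtains
\begin{align*}
\|\nabla u\|_{S(L^2,I_j)}\ \lesssim\ \|\nabla u(t_j)\|_{L^2}+\delta^{p-1}\,\|\nabla u\|_{S(L^2,I_j)},
\end{align*}
and analogously $\|u\|_{S(L^2,I_j)}\lesssim\|u(t_j)\|_{L^2}+\delta^{p-1}\|u\|_{S(L^2,I_j)}$. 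Choosing $\delta$ small, absorbing the last terms, using $\sup_t\|u(t)\|_{H^1}\le B$, and summing over the $K$ intervals then yields $\|u\|_{S(L^2,[0,\infty))}+\|\nabla u\|_{S(L^2,[0,\infty))}<\infty$.

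With this global control in hand, set $\psi(t):=e^{-it\Delta}u(t)=u_0+i\int_0^t e^{-i\tau\Delta}F(u(\tau))\,d\tau$ and show $\psi$ is Cauchy in $H^1$ as $t\to\infty$. For $t_2>t_1\ge0$, unitarity of $e^{it\Delta}$ on $H^1$ and the dual Strichartz estimate \eqref{eq:stri} (applied to $\langle\nabla\rangle F(u)$, using the $(\infty,2)$ pair on the left) give
\begin{align*}
\|\psi(t_2)-\psi(t_1)\|_{H^1}&=\Big\|\int_{t_1}^{t_2}e^{i(t_1-\tau)\Delta}F(u(\tau))\,d\tau\Big\|_{H^1}\\
&\lesssim\big\|\langle\nabla\rangle F(u)\big\|_{S'(L^2,[t_1,\infty))}\lesssim\|u\|^{p-1}_{L^{q_1}_tL^{r_1}_x([t_1,\infty))}\,\|\langle\nabla\rangle u\|_{S(L^2,[t_1,\infty))},
\end{align*}
where $(q_1,r_1)$ is the $\dHs$-admissible pair (with $q_1<\infty$) used in the H\"older split. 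Both factors on the right are finite on $[0,\infty)$ (the $u$-factor by hypothesis and the embedding $\dB^{0}_{r_1,2}\hookrightarrow L^{r_1}$, the $\langle\nabla\rangle u$-factor by the previous step); since $q_1<\infty$, the first one is the tail of a convergent $L^{q_1}_t$-integral and tends to $0$ as $t_1\to\infty$. Hence $\psi(t)$ converges in $H^1$ to $\phi_+:=u_0+i\int_0^\infty e^{-i\tau\Delta}F(u(\tau))\,d\tau\in H^1$, and applying $e^{it\Delta}$ (again unitary on $H^1$) gives $\|u(t)-e^{it\Delta}\phi_+\|_{H^1}=\|\psi(t)-\phi_+\|_{H^1}\to0$, which is \eqref{eq:scatter}; the negative-time statement follows identically with $[0,\infty)$ replaced by $(-\infty,0]$.

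I expect the main obstacle to be the first step, namely the passage from a finite $\dHs$-scattering norm to finite $H^1$-level Strichartz norms: one must check that the H\"older exponents arising in the estimates of $\nabla F(u)$ and of $F(u)$ land on genuinely $L^2$-admissible, $\dHs$-admissible, or $\tfrac d2$-acceptable pairs in \emph{every} dimension $d\ge1$ and every admissible $p$, so that the Strichartz and Kato estimates of Section \ref{SG} apply and the nonlinear term can be absorbed after the subdivision; the subdivision and the bootstrap are then routine. One should also be careful that the $\dHs$-factor in the nonlinear estimate is taken in an $L^{q_1}_t$ norm with $q_1<\infty$ — so that its restriction to $[t_1,\infty)$ genuinely vanishes, which is the crucial input for the Cauchy criterion — rather than in the $q=\infty$ endpoint of $\SHs$.
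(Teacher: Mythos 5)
Your proposal is correct and follows essentially the same route as the paper: subdivide $[0,\infty)$ into finitely many intervals where a $\dHs$-admissible Strichartz norm of $u$ is small, run the $L^2$-level Strichartz bootstrap on each interval (using only the pointwise bound $|\nabla F(u)|\lesssim|u|^{p-1}|\nabla u|$, so no fractional machinery is needed here), absorb and sum to conclude $\|(1+|\nabla|)u\|_{S(L^2,[0,\infty))}<\infty$, define $\phi_+$ as the full Duhamel integral, and deduce convergence from the vanishing of a tail Strichartz norm with finite time exponent. The paper estimates $\|u(t)-e^{it\Delta}\phi_+\|_{H^1}$ directly rather than via the Cauchy criterion on $\psi(t)=e^{-it\Delta}u(t)$, but this is only a cosmetic reframing; the key observation you highlight — that the factor $\|u\|^{p-1}$ must be placed in an $L^{q}_t$ norm with $q<\infty$ so that its tail on $[t_1,\infty)$ vanishes — is exactly what the paper uses.
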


\begin{proof}
Suppose $u(t)$ solves  $\NLSf$  
with the initial datum $u_0$, and satisfies the integral equation 
\begin{align}
\label{eq:Duhamel}
u(t) = e^{it\lap}u_0+ i\mu \int_0^t  e^{i(t-\tau)\lap}\left( |u|^{p-1}u(\tau)\right) d\tau.
\end{align}

The assumption  $\|u\|_{\dB^{0}_{\SHs}}<+\infty$ implies that for each dyadic $N\in 2^{\Z}$ there exists $M=\|u\|_{\qs \rsh}<\infty$ and let $\tM\sim M^{\frac{np}{2s}}$. Decompose $[0,+\infty)=\cup_{j=1}^{\tM}I_j$, such that  for each $j$, $\|u\|_{\qsI \rsh}<\delta$. Hence, the triangle inequality and Strichartz estimates  yield
\begin{align*}
\|u\|_{\SLt} &\lesssim \|e^{it\triangle}u_0\|_{\SLt} +  \| F(u)\|_{\SdLt},\\
\|\nabla u\|_{\SLt} &\lesssim \|e^{it\triangle}\nabla u_0\|_{\SLt} +  \| \nabla F(u)\|_{\SdLt}.
\end{align*}

Therefore,  the integral equation \eqref{eq:Duhamel}  
 on $I_j$, combined with the above inequalities, leads to 
\begin{align}
\|\nabla u\|_{S(L^2;I_j)}
&\lesssim B+\big\||u|^{p-1}\nabla u\big\|_{S'(L^2;I_j)} 
\lesssim B+\big\||u|^{p-1}\nabla u\big\|_{\qsdI \rsd} \label{estper1a} 
\\&\lesssim B+ \|u\|^{p-1}_{\qsI \rsh} \|\nabla u\|_{\qsI \rs}\label{estper1b} 
\\& \lesssim B+ \delta^{p-1} \|\nabla u\|_{S(\Lt;I_j)}.\label{estper1}
\end{align}
The pairs $\big(\frac {d}{2 s},\frac{d^2p(p-1)}{2(d+4)}\big)$ and $\big(\frac {d}{2 s},\frac {2d^2 p}{d^2p-8s}\big)$ are $\Lt-$admissible and the pair $\big(\frac {d}{2 s},\frac{2 d^2(p-1)}{d^2 (p - 1) + 16}\big)$ is $\Lt-$ dual admissible;  we obtain \eqref{estper1b}  applying  H\"older's inequality  to  \eqref{estper1a}. Similarly, by dropping the gradient, it follows
\begin{align}\label{estper2}
\|u\|_{S(L^2;I_j)}
 \lesssim B+ \delta^{p-1} \| u\|_{S(\Lt;I_j)}.
\end{align}

Combining \eqref{estper1} and \eqref{estper2} and using the fact that $\delta$ can be chosen appropiately small, gives that
$
\|(1+|\nabla|) u\|_{S(L^2;I_j)}
 \lesssim 2 B. \;
$ Summing over the $\tM$ intervals, leads to
 $$\|(1+|\nabla|) u\|_{S(L^2)}\lesssim  BM^{\frac{np}{2s}}.$$

Define the wave operator 
\begin{align*}
\phi_+=u_0+i\int^{+\infty}_0 e^{-i\tau\Delta}F(u(\tau))d\tau,
\end{align*}
note that $\phi_+\in H^1$, thus, Strichartz estimates and the hypothesis  lead to
\begin{align}
\|\phi_+\|_{H^1} 
&\lesssim\|u_0\|_{H^1}+\big\||u|^{p-1}\nabla u\big\|_{S'(L^2)}\notag
 \lesssim\|u_0\|_{H^1}+\big\||u|^{p-1}\nabla u\big\|_{\qsd \rsd}\notag
\\&\lesssim\|u_0\|_{H^1}+ \|u\|^{p-1}_{\qs \rsh} \|\nabla u\|_{\qs \rs}
\lesssim B+ BM^{\frac{p(d+2s)-2s}{2s}}.\label{scat:esti3}
\end{align}
Additionally,
\begin{align}
u(t)-e^{it\Delta}\phi^+=-i\int^{+\infty}_t e^{i(t-\tau)\Delta}F(u(\tau))d\tau \label{scatterH1}.
\end{align}
Therefore, estimating the $\Lt$ norm of \eqref{scatterH1}, Strichartz estimates and H\"older's inequality give
\begin{align}
\|u(t)&-e^{it\Delta}\phi_+\|_{\Lt}\notag
\lesssim \Big\|\int^{+\infty}_t e^{i(t-\tau)\Delta}F(u(\tau))d\tau\Big\|_{\SLt}\\
&\lesssim\big\|F(u(\tau))\big\|_{S'(L^2;[t,+\infty)}
\lesssim\big\||u|^{p-1}\nabla u\big\|_{\qsd \rsd}\label{scat:esti1},
\end{align}
and simillary, estimating the $\dH^1$ norm of  \eqref{scatterH1}, we obtain
\begin{align}
\|\nabla(u(t)&-e^{it\Delta}\phi_+)\|_{\Lt}
\lesssim \Big\|\int^{+\infty}_t e^{i(t-\tau)\Delta}F(u(\tau))d\tau\Big\|_{\SLt}\notag\\
&\lesssim\big\|F(u(\tau))\big\|_{S'(L^2;[t,+\infty))}
\lesssim\big\||u|^{p-1}\nabla u\big\|_{\qsdtail \rsd}.\label{scat:esti2}
\end{align}
Using the Leibniz rule (Lemma \ref{leibniz}) to estimate \eqref{scat:esti1} and \eqref{scat:esti2}, yields
\begin{align*}
\big\||u|^{p-1}\nabla u\big\|_{\qsdtail \rsd}
&\lesssim \|u\|^{p-1}_{\qstail \rsh} \|\nabla u\|_{\qstail \rs}.
\end{align*}
By \eqref{scat:esti3} the term $\|u\|^{p-1}_{\qstail \rsh} \|\nabla u\|_{\qstail \rs}$  is bounded. Then as $t\to\infty$ the term $\|u\|_{\qstail \rsh} \to 0$, thus, summing over all dyadic $N$,  \eqref{eq:scatter}  is obtained.
\end{proof}

\subsection{Properties of the Ground State} \label{SEandGS}
Weinstein \cite{We82} proved the Gagliardo-Nierberg inequality
\begin{equation}
\|u\|^{p+1}_{L^{p+1}}\leq C_{GN} \|\nabla u\|^{\frac{d(p-1)}{2}}_{L^{2}}\| u\|^{2-\frac{(d-2)(p-1)}{2}}_{L^{2}}\label{G-N}
\end{equation}
with the sharp constant 
\begin{equation}\label{const GN}
C_{GN}=\frac{p+1}{2\|Q\|^{p-1}_{L^2}},
\end{equation}
  where $Q$ is as in \eqref{eq:Qground}. 
  
This inequality \eqref{G-N} is optimized by $Q$, i.e.,
$ 
\|Q\|^{p+1}_{L^{p+1}}=\frac{p+1}{2}\| \nabla Q\|^{\frac{d(p-1)}{2}}_{L^2}\| Q\|^{2-\frac{d(p-1)}{2}}_{L^2}.
$ 
 
Multiplying (\ref{eq:Qground}) by $Q$ and integrating, gives
 $\|Q\|^{p+1}_{L^{p+1}}=\alpha^2\|\nabla Q\|^2_{L^2}+\beta\|Q\|^2_{L^2},$
thus, Pohozhaev identities yield
 ${\|\nabla Q\|_{L^2}}={\| Q\|_{L^2}},$ and, ${\|\ Q\|^{p+1}_{L^{p+1}}}=\frac{p+1}{2}{\| Q\|^2_{L^2}}.$ 
 
 In addition,
\begin{align}
\|\uQ\|^2_{L^2}=\alpha^{-d}\|Q\|^2_{L^2},\quad\|\nabla \uQ\|^2_{L^2}=\alpha^{2-d}\|\nabla Q\|^2_{L^2} , \quad\mbox{and} \quad \|\uQ\|^{p+1}_{L^{p+1}}=\alpha^{-d}\|Q\|^{p+1}_{L^{p+1}} ,
\label{poz3}
\end{align}
therefore, the scale invariant quantity becomes
\begin{eqnarray}\label{ground modify}
\|\uQ\|_{L^{2}}^{1-s}\|\nabla \uQ\|_{L^{2}}^{s}
=\alpha^{-\frac 2{p-1}}\|Q\|_{L^{2}},
\end{eqnarray}
and the mass-energy scale invariant quantity is 
\begin{eqnarray}\label{me Q1}
M[\uQ]^{1-s}E[\uQ]^{s}&=&\big(\alpha^{-d}\|Q\|^2_{L^2}\big)^{1-s}\bigg(\frac{\alpha^{2-d}}{2} \|\nabla Q\|^2_{L^2}-\frac{\alpha^{-d}}{p+1}\|Q\|^{p+1}_{L^{p+1}}\bigg)^{s}\\
&=&\frac{\alpha^{-d}}{2^s}\left(\frac{(p-1)s}{2}\right)^s\|Q\|^2_{L^{2}}\label{me Q2}\\
&=&\bigg(\frac{s}{d}\bigg)^{s} \big(\|\uQ\|_{L^{2}}^{1-s}\|\nabla \uQ\|_{L^{2}}^{s}\big)^2 
\label{mass modify},
\end{eqnarray}
since the  energy definition yields \eqref{me Q1}, Pohozhaev identities \eqref{poz3} and \eqref{ground modify} imply \eqref{me Q2} and \eqref{mass modify}. 

Notice that 
\begin{eqnarray*}
M[u]^{1-s}E[u]^{s}&=&(\|u\|^2_{L^2})^{1-s}\bigg(\frac12 \|\nabla u\|^2_{L^2}-\frac1{p+1} \|\ u\|^{p+1}_{L^{p+1}} \bigg)^{s}\\
&\geq&(\|u\|^{1-s}_{L^2}\|\nabla u\|^{s}_{L^2})^2\bigg(\frac12 -\frac{C_{GN}}{p+1} \big( \|u\|^{1-s}_{L^2}\|\nabla u\|^{s}_{L^2}\big)^{p-1} \bigg)^{s}\\
&\geq&\frac1{2^{s}}(\|u\|^{1-s}_{L^2}\|\nabla u\|^{s}_{L^2})^2\bigg(1 -\alpha^{-2}\bigg(\dfrac{ \|u\|^{1-s}_{L^2}\|\nabla u\|^{s}_{L^2}}{ \|\uQ\|^{1-s}_{L^2}\|\nabla \uQ\|^{s}_{L^2}} \bigg)^{p-1} \bigg)^{s},
\end{eqnarray*}
therefore,
\begin{eqnarray}
\label{thrME-G}
\frac{d}{2s} \left[\g_u(t )\right]^{\frac2s}\bigg(1 -\frac{\left[\g_u(t )\right]^{p-1}}{\alpha^{2}} \bigg)
\leq\left(\ME[u]\right)^{\frac1s}\leq\frac{d}{2s}\left[\g_u(t)\right]^{\frac2s}.
\end{eqnarray}

Summarizing,  the upper bound in \eqref{thrME-G} is obtained by bounding the energy $E[u]$ above by the kinetic energy part, and the lower bound is achieved using the definition of energy and the sharp Gagliardo-Nirenberg inequality \eqref{G-N} to bound the potential term.

\subsection{Properties of the Momentum}\label{momentum}
 
Let $u$ be a solution of $\NLSf$ and assume that $P[u]\neq 0$. Let $\xi_0 \in \Rn$ to be chosen later and $w$ be the Galilean transformation of $u$ 
\begin{align*}
 w(x,t)=e^{ix\cdot\xi_0}e^{-it|\xi_0|^2}u(x-2\xi_0t,t).
\end{align*}
Then 
\begin{align*}
	\nabla w(x,t)=i
	\xi_0 \cdot e^{ix \cdot \xi_0}e^{-it|\xi_0|^2}u(x-2\xi_0t,t)+e^{ix\cdot\xi_0}e^{-it|\xi_0|^2}\nabla u(x-2\xi_0t,t),
\end{align*}
therefore,
\begin{align}\label{gradientw}
	\ds	\|\nabla w\|^2_{\Lt}=|\xi_0|^2 M[u]+2\xi_0\cdot P[u]+\|\nabla u\|^2_{\Lt}. 
\end{align}
Observe that $M[w]=M[u]$,  $P[w]=\xi_0M[u]+P[u], $ and 
\begin{align}\label{energyw}
	\ds E[w]=\dfrac12|\xi_0|^2M[u]+\xi_0 \cdot P[u]+E[u].
\end{align}

Note that the value $\xi_0=-\frac{P[u]}{M[u]}$ minimizes the expressions  \eqref{gradientw} and \eqref{energyw}, with $P[w]=0$, that is, 
\begin{align*}\ds
		E[w]=E[u]-\dfrac{(P[u])^2}{2M[u]}
		 \quad\quad \mbox{and}\quad\quad		 
		 \|\nabla w\|^2_{\Lt}=\|\nabla u\|^2_{\Lt}-\dfrac{(P[u])^2}{M[u]}.
\end{align*}
Thus, the conditions \eqref{mass-energy},  \eqref{ground-momentum1} and \eqref{ground-momentum2} in Theorem A become
$$ \left(\ME[w]\right)^{\frac1s}=\left(\ME[u]\right)-\frac d{2s}\left(\Pu[u]\right)^{\frac2s}<1,\quad\quad
 [\g_w(0)]^{\frac2s}=[\g_u(0)]^{\frac2s}-\Pu^{\frac2s}[u]<1$$
and $\;\ds [\g_w(0)]^{\frac2s}>1,$
hence we restate Theorem A 
 as 
 
 \begin{thmB*}[Zero momentum]
 Let $u_0\in H^{1}(\Rn)$, $d\ge1$, with $P[u_0]$ and $u(t)$ be the corresponding solution to \eqref{eq:NLS} in $H^1(\Rn)$ with maximal time interval of existence $(T_*,T^*)$ and $s\in (0,1)$. 
Assume $\ME[u]<1.$ 
\begin{enumerate}
\item[I.] If  
$\g_u(0)<1,$
then
\begin{enumerate}[(a)]
\item $\g_u(t)<1$ for all $t\in \R$, thus, the solution is global in time  ( $T_*=-\infty$, $T^*=+\infty$) and 
\item $u$ scatters in $H^1(\Rn)$, this means, there exists $\phi_\pm\in H^1(\Rn)$ such that  
\begin{align*}
\lim_{t\to \pm\infty}\|u(t)-e^{it\Delta}\phi_\pm\|_{H^1(\Rn)}=0.
\end{align*}
\end{enumerate}
\item[II.] If  $\g_u(0)>1$,  then  $\g_u(t)>1$ for all $t\in(T_*,T^*)$ and if 
\begin{enumerate}[(a)]
\item  $u_0$ is radial (for $d\geq3$ and in $d=2, \; 3<p \leq 5$) or  $u_0$ is of finite variance, i.e., $|x|u_0 \in \Lt(\Rn)$,  then the solution blows up in finite time  ($T^* < +\infty$, $\;T_*>-\infty$).
\item $u_0$ non-radial and of infinite variance, then either the solution blows up in finite time
 ( $T^* < +\infty$, $\;T_*>-\infty$)  or there exists a sequence of times $t_n\to +\infty$ (or $t_n\to-\infty$)  such that $\|\nabla u(t_n)\|_{\Lt(\Rn)}\to \infty$.  
\end{enumerate}
\end{enumerate}
\end{thmB*}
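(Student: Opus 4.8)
The plan is to prove the zero–momentum statement above; the general Theorem~A then follows from it by the Galilean reduction carried out in \S\ref{momentum}. I would organize the argument in four stages: the renormalized–gradient dichotomy together with global existence in case~I, the scattering claim~I(b), the finite–time blow‐up~II(a), and the weak blow‐up alternative~II(b).

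First I would extract the dichotomy $\g_u(t)<1$ versus $\g_u(t)>1$ from the two–sided bound \eqref{thrME-G}. Writing $\varphi(y):=\tfrac{d}{2s}\,y^{2/s}\bigl(1-\alpha^{-2}y^{p-1}\bigr)$ for its left‐hand side, a direct computation using $\alpha^{2}=\tfrac{d}{d-2s}$ and $p-1=\tfrac{4}{d-2s}$ shows $\varphi(1)=1$ and that $\varphi$ is strictly increasing on $[0,1]$ and strictly decreasing on $[1,\infty)$. Since $M[u]$ and $E[u]$ are conserved, $\ME[u]<1$ forces $\varphi(\g_u(t))\le(\ME[u])^{1/s}<1$ for all $t\in(T_*,T^*)$, hence $\g_u(t)\neq1$; by continuity of $t\mapsto\|\nabla u(t)\|_{\Lt}$ on the interval of existence (local theory) and conservation of mass, $\g_u(t)$ stays strictly on the side of $1$ fixed by $\g_u(0)$, and \eqref{thrME-G} upgrades this to a uniform gap $\g_u(t)\le 1-\delta$ (resp.\ $\ge 1+\delta$) with $\delta=\delta(\ME[u])>0$. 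When $\g_u(0)<1$ this yields $\sup_{t}\|\nabla u(t)\|_{\Lt}\le B<\infty$, so the solution is global by the blow‐up criterion; this is~I(a), and the opposite sign is trapped in~II.

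For the scattering claim~I(b), by Proposition~\ref{H^1 Scattering} it suffices to show $\|u\|_{\dB^{0}_{\SHs}}<\infty$, and I would obtain this by the concentration–compactness/rigidity scheme of Sections~3--4. The small–data theory (Proposition~\ref{small data}) and the long–term perturbation lemma (Proposition~\ref{longperturbation}) let one define a critical mass–energy level $\ME_{\crit}\le 1$ below which every solution with $\g_{u_0}<1$ has finite $\dB^{0}_{\SHs}$ norm; if scattering failed then $\ME_{\crit}<1$, and feeding an almost–optimal sequence of solutions into the linear and nonlinear profile decompositions (with Proposition~\ref{longperturbation} for stability) would produce a \emph{critical element}: a global solution $u_{\crit}$ with $\ME[u_{\crit}]=\ME_{\crit}<1$, $\g_{u_{\crit}}(t)<1$, $\|u_{\crit}\|_{\dB^{0}_{\SHs}}=\infty$, whose orbit is precompact in $H^{1}$ modulo a translation path $x(t)$, where the zero–momentum normalization forces $x(t)=o(|t|)$. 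The rigidity step excludes $u_{\crit}$: a variational argument based on the sharp Gagliardo–Nirenberg inequality \eqref{G-N} and the uniform gap gives a coercive lower bound $V''(t)\ge c_{\delta}\|\nabla u_{\crit}(t)\|_{\Lt}^{2}$ for the untruncated virial, so that the truncated variance $z_{R}(t)=\int\psi_{R}(x)\,|u_{\crit}(x,t)|^{2}\,dx$ ($\psi_R$ a smooth truncation of $|x|^{2}$) satisfies $z_{R}''(t)\ge c_{\delta}\|\nabla u_{\crit}(t)\|_{\Lt}^{2}-\mathrm{Err}_{R}(t)$ with $|\mathrm{Err}_{R}(t)|$ arbitrarily small uniformly in $t$ once $R$ is large (precompactness, plus $x(t)=o(t)$); since $\inf_{t}\|\nabla u_{\crit}(t)\|_{\Lt}>0$ this gives $z_{R}'(t)\gtrsim t$, contradicting $|z_{R}'(t)|\lesssim R$. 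Hence no critical element exists and every solution in the region scatters.

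For the blow‐up statements $\g_u(t)>1$ throughout, which via the same Gagliardo–Nirenberg analysis and \eqref{thrME-G} gives both $\|\nabla u(t)\|_{\Lt}\ge c_{0}>0$ and the strict sign $V''(t)=4(p-1)\bigl(dE[u]-s\|\nabla u(t)\|_{\Lt}^{2}\bigr)\le -c\,\|\nabla u(t)\|_{\Lt}^{2}\le -cc_{0}<0$. In~II(a), for finite–variance data $V(t)=\int|x|^{2}|u(x,t)|^{2}\,dx$ is nonnegative with $V''\le -cc_0$, so it cannot stay nonnegative for all $t\in\R$: the solution leaves $H^{1}$ in finite time, $T^{*}<+\infty$ and $T_{*}>-\infty$. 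For radial data ($d\ge3$, or $d=2$ with $3<p\le5$) one replaces $V$ by a radial localized virial and absorbs the truncation error using the radial Sobolev embedding, reaching the same conclusion. In~II(b) one first notes that if the solution is not global forward we are done; otherwise, assuming for contradiction $\sup_{t\ge0}\|\nabla u(t)\|_{\Lt}<\infty$ (hence $\sup_{t\ge0}\|u(t)\|_{H^{1}}\le B$), the concentration–compactness construction run in the $\g_u>1$ regime produces a critical element with precompact orbit modulo $x(t)=o(t)$, and a localized virial centered along $x(t)$ inherits $z_{R}''\le -c<0$ for all $t\ge0$ once $R$ is large while $|z_{R}'(t)|\lesssim R$ — impossible. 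Hence $\sup_{t\ge0}\|\nabla u(t)\|_{\Lt}=+\infty$, i.e.\ $\|\nabla u(t_{n})\|_{\Lt}\to\infty$ along some $t_{n}\to+\infty$; the backward case is identical. The main obstacle is the concentration–compactness/rigidity core of the scattering and weak blow‐up stages: constructing the critical element (the $H^{1}$ nonlinear profile decomposition ordered by the mass–energy functional, with stability from Proposition~\ref{longperturbation}), proving its orbit is precompact modulo translations and that $x(t)=o(t)$ — which is exactly why the reduction to zero momentum is performed first — and closing the localized–virial rigidity with error bounds uniform in $R$; a subsidiary difficulty, already handled by the Besov‐space local theory (Propositions~\ref{small data}, \ref{longperturbation}, \ref{H^1 Scattering}), is the mere Hölder (not $C^\infty$) regularity of $|u|^{p-1}u$ for non‐integer $p$.
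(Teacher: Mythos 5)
Your proposal is essentially correct and follows the same overall strategy as the paper: the dichotomy and global existence (I(a), II) via the two‑sided bound \eqref{thrME-G} together with conservation of mass and energy; scattering (I(b)) via a critical mass–energy level, linear/nonlinear profile decompositions, precompactness modulo a translation path $x(t)=o(t)$ (which is exactly what the zero–momentum reduction buys), and a localized‑virial rigidity step; finite‑variance blowup (II(a)) via the Glassey convexity argument and its radial truncated‑virial variant; and weak blowup (II(b)) via the concentration–compactness machinery run in the $\g_u>1$ regime, culminating in a localized‑virial contradiction with precompactness. Your explicit analysis of $\varphi(y)=\tfrac{d}{2s}y^{2/s}(1-\alpha^{-2}y^{p-1})$ — verifying $\varphi(1)=1$, $\varphi'(1)=0$, and strict monotonicity on either side of $y=1$ — is a clean way to package the trapping argument that the paper implements via \eqref{thrME-G} and the figures; the paper itself does not rederive I(a), II, and II(a) but cites Theorem 2.1 and Corollary 2.5 of \cite{HoRo07} (and \cite{CaGu11} for the $d=2$ radial case), whereas your sketch carries them out, so the only places where the paper's own argument lives are Sections 3 and 4, and there your plan matches it. Two small points worth making explicit if you flesh this out: (i) the chain $V''=\tfrac{16}{d-2s}(dE-s\|\nabla u\|^2)\le -c\|\nabla u\|^2$ does not follow from $dE-s\|\nabla u\|^2<0$ alone but requires the quantitative gap $\g_u(t)\ge\lambda>1$, from which $s-\tfrac{d}{2}\bigl(1-\alpha^{-2}\lambda^{p-1}\bigr)>0$ supplies the constant $c=c(\lambda)$; and (ii) in II(b) the paper organizes the induction via the ``mass–energy line'' parameter $\lambda$ and the threshold $\sigma_c(\lambda_0)$ (Definitions \ref{me_line}, \ref{GBG}, \ref{thresholdGB}), proving a near‑boundary exclusion (Proposition \ref{near boundary case}) as the base case before the profile–decomposition step; your outline compresses this, but the endgame (Lemma \ref{blowup localization}: a precompact $\g_u>1$ orbit forces finite‑time blowup, contradicting global existence) is the same.
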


Thus, in the rest of the paper, we will assume that $P[u]=0$ and prove only Theorem  A*.  To illustrate the scenarios for global behavior of solutions  given by  Theorem A* 
we provide Figure \ref{fig1}. 
We plot $y=(\ME[u])^{\frac1{s}}$ vs. $x=[\g_u(t)]^{\frac2{s}}$ using the \eqref{thrME-G} restriction 
in Figure 1. 
\begin{figure}[h]
\includegraphics[width=160mm, height=120mm]{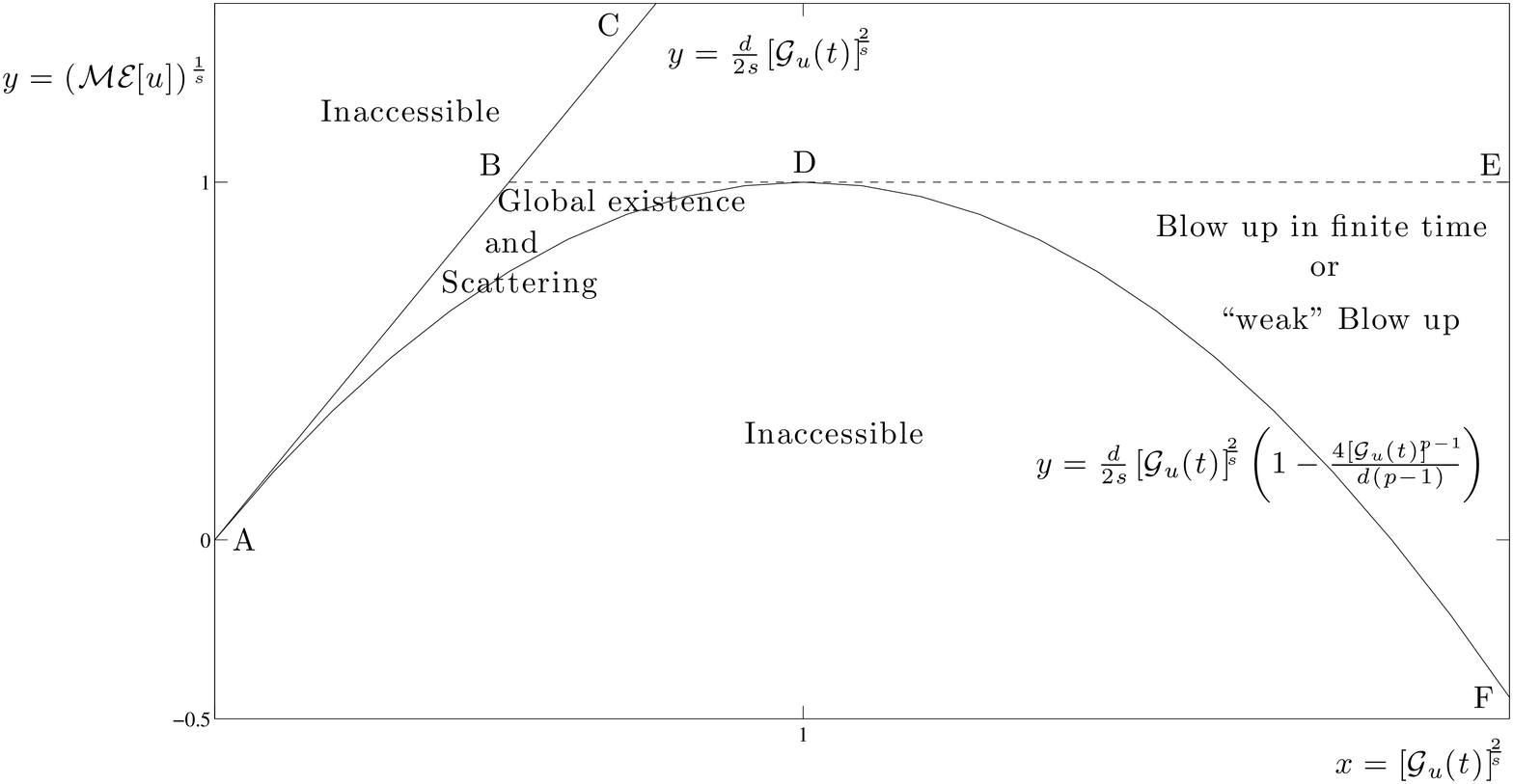}
\caption[Scenarios for global behavior of solutions to the $d$-dimensional focusing critical NLS  with finite energy initial data. ]
{ 
Plot of plot $y=(\ME[u])^{\frac1{s}}$ vs. $x=[\g_u(t)]^{\frac2{s}}$ 
where  $\g_u(t)$ and $\ME[u]$ are defined by \eqref{eq:grad} and \eqref{eq:ME}, respectively.  
The region above the line ABC and below the curve ADF are forbidden regions by \eqref{thrME-G}.  Global existence of solutions and scattering holds in the region ABD, which corresponds to Theorem A* part I and the region EDF explains Theorem A* part II (a), and the ``weak blowup" Theorem A part II  (b). }
 \label{fig1}
\end{figure}

\subsection{ Energy bounds  and Existence of the Wave Operator}
\begin{lemma}[Comparison of Energy and Gradient]\label{equivalence grad and energy} \label{comparison}
Let $u_0 \in H^1(\Rn)$ such that $\g_u(0)<1$ and $\ME[u]<1.$
Then
\begin{equation}\label{eq:ener-grad}
\frac{s}{d}\| \nabla u(t) \|^2_{\Lt} \leq E[u] \leq \frac1 2 \| \nabla u(t) \|^2_{\Lt}.
\end{equation}
\begin{proof}
  The energy definition combined with  $\g(0)<1$ (and thus, by Theorem A* part I (a) $\g_u(t)<1$), the Gagliardo-Nirenberg inequality  \eqref{G-N} and Pohozhaev identities \eqref{poz3}  and \eqref{ground modify} yield
\begin{eqnarray}
E[u]&\geq& \| \nabla u(t) \|^2_{\Lt}\left(\frac12 - \frac{C_{GN}}{p+1} \| \nabla u(t) \|_{L^{2}}^{\frac{d(p-1)}2-2}\|  u \|_{L^{2}}^{2-\frac{(d-2)(p-1)}2}\right)\notag \\
&\geq& \| \nabla u(t) \|^2_{\Lt}\left(\frac12 - \frac{C_{GN}}{p+1} \left(\| \nabla \uQ \|_{L^{2}}^{s}\|  \uQ \|_{L^{2}}^{(1-s)}\right)^{p-1}\right) \notag\\
&=& \| \nabla u(t) \|^2_{\Lt}\left(\frac12 - \frac{C_{GN}}{p+1} \alpha^{-2}\| Q \|_{L^{2}}^{p-1}\right) \notag\\
&=&  \left(\frac{\alpha^{2}-1}{2\alpha^{2}}\right)\| \nabla u(t) \|^2_{\Lt} =
\frac{s}{d}  \| \nabla u(t) \|^2_{\Lt},\label{eq:valor}
\end{eqnarray}
where the equality \eqref{eq:valor} is obtained from combining \eqref{ground modify}, the sharp constant \eqref{const GN} and $\alpha = \frac{\sqrt{d(p-1)}}{2}$.
The second inequality of \eqref{eq:ener-grad} follows directly from the definition of energy.
\end{proof}
\end{lemma}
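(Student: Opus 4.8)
The plan is to establish the two inequalities in \eqref{eq:ener-grad} separately; the right-hand one is immediate, while the left-hand one uses the sharp Gagliardo--Nirenberg inequality together with the propagation of the subcritical gradient bound. For the upper bound, I would simply recall that $E[u]=\tfrac12\|\nabla u(t)\|_{\Lt}^2-\tfrac1{p+1}\|u(t)\|_{L^{p+1}}^{p+1}$ and that in the focusing problem the potential term is nonnegative; dropping it yields $E[u]\le\tfrac12\|\nabla u(t)\|_{\Lt}^2$.

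For the lower bound, first bound the potential term using \eqref{G-N}, which gives
\[
E[u]\ \ge\ \tfrac12\|\nabla u(t)\|_{\Lt}^2-\frac{C_{GN}}{p+1}\,\|\nabla u(t)\|_{\Lt}^{\frac{d(p-1)}2}\,\|u(t)\|_{\Lt}^{2-\frac{(d-2)(p-1)}2}.
\]
Since $s=\tfrac d2-\tfrac2{p-1}$, one checks $\tfrac{d(p-1)}2=2+s(p-1)$ and $2-\tfrac{(d-2)(p-1)}2=(1-s)(p-1)$, so the right-hand side factors as $\|\nabla u(t)\|_{\Lt}^2\bigl(\tfrac12-\tfrac{C_{GN}}{p+1}\bigl(\|u(t)\|_{\Lt}^{1-s}\|\nabla u(t)\|_{\Lt}^s\bigr)^{p-1}\bigr)$. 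The key input is then that $\g_u(0)<1$ and $\ME[u]<1$ force $\g_u(t)<1$ for all $t$ by Theorem A* part I(a); combined with \eqref{ground modify} this says $\|u(t)\|_{\Lt}^{1-s}\|\nabla u(t)\|_{\Lt}^s<\alpha^{-\frac2{p-1}}\|Q\|_{\Lt}$. Plugging this in, together with the value of the sharp constant \eqref{const GN}, collapses the parenthesis below $\tfrac12-\tfrac{\alpha^{-2}}2=\tfrac{\alpha^2-1}{2\alpha^2}$, and since $\alpha^2=\tfrac{d(p-1)}4$ a short computation gives $\tfrac{\alpha^2-1}{2\alpha^2}=\tfrac sd$, which is exactly the claimed bound.

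The only genuinely non-routine point is that the lower bound is not a pointwise-in-$t$ triviality: it relies on the fact that the renormalized gradient stays strictly below $1$ on the whole time interval, which is the content of Theorem A* part I(a) and is itself proved by a separate continuity/bootstrap argument. Given that, the remainder is a direct algebraic manipulation, so I expect no further obstacle.
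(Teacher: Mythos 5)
Your proposal is correct and follows essentially the same argument as the paper: drop the nonnegative potential term for the upper bound; for the lower bound, apply the sharp Gagliardo--Nirenberg inequality, rewrite the exponents so that $\|u\|_{\Lt}^{1-s}\|\nabla u\|_{\Lt}^{s}$ appears, invoke Theorem~A* Part~I(a) to propagate $\g_u(t)<1$, and then use \eqref{ground modify} and \eqref{const GN} together with $\alpha^2=\tfrac{d(p-1)}4$ to evaluate the resulting constant as $\tfrac sd$. Your exponent identities and algebra check out, and you correctly flag that the only non-elementary input is the persistence of $\g_u(t)<1$, which is established separately.
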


\begin{lemma}[Lower bound on the convexity of the variance]
\label{Lower-bound-convexity}
Let $u_0 \in H^1(\Rn)$ satisfy  $\g_u(0)<1$ and $\ME[u]<1$. 
Then $\g_u(t)\leq \omega:=\sqrt{\ME[u]}$ for all $t,$ and 
\begin{align}\label{lower}
16(1-\omega^{p-1}) E[u] \leq 8(1-\omega^{p-1})\| \nabla u \|^2_{\Lt}\leq 8 \| \nabla u \|_{\Lt}^2-\frac{4d({p-1})}{p+1}  \|u\|_{L^{p+1}}^{p+1}.
\end{align}
\begin{proof} The first inequality in \eqref {eq:ener-grad} yields $\|\nabla u\|_{\Lt}^2\leq\frac {d}{s}E[u],\;$  
multiplying it by $M^\theta[u]$, where $\theta=\frac{1-s}{s}$, normalizing by  $\|\nabla \uQ\|^2_{\Lt} \| \uQ\|_{\Lt}^{2\theta}$  and using the fact that  $\|\nabla \uQ\|_{\Lt}^2\leq\frac {d}{s}E[\uQ]$ leads to
\begin{eqnarray*}
[\g_u(t)]^2\leq \ME[u], \quad\quad\mbox{i.e,} \quad\quad\g_u(t)\leq\omega.
\end{eqnarray*}
Next, considering the right side of \eqref{lower}, applying Gagliardo-Nirenberg inequality \eqref{G-N}, then the relation \eqref{ground modify} and recalling that $\alpha = \frac{\sqrt{d(p-1)}}{2}$, we obtain
\begin{align}
 8 \|\nabla u\|^2_{\Lt}-\frac{4d(p-1)}{p+1} \|u\|^{p+1}_{L^{p+1}}
\geq \|\nabla u\|^2_{\Lt}\bigg(8- \frac{2d(p-1)}{\alpha^2}[\g_u(t)]^{p-1}\bigg)
\geq8 \|\nabla u\|^2_{\Lt}(1- \omega^{p-1}),\label{lower conv}
\end{align}
which gives the middle inequality in \eqref{lower}. 

Finally, combining \eqref{lower conv} with the second inequality in \eqref{eq:ener-grad},  completes the proof.
\end{proof}

\end{lemma}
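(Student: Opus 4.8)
The plan is to deduce both assertions from the two-sided bound \eqref{eq:ener-grad} of Lemma \ref{comparison}, the sharp Gagliardo--Nirenberg inequality \eqref{G-N}--\eqref{const GN}, and the ground-state relations of Section \ref{SEandGS}. Since $\g_u(0)<1$ and $\ME[u]<1$, the trapping from Theorem A* part I(a) gives $\g_u(t)<1$ on the whole interval of existence, so \eqref{eq:ener-grad} holds for all $t$; in particular $\|\nabla u(t)\|^2_{\Lt}\le\frac ds E[u]$, while for the soliton the identities \eqref{ground modify}--\eqref{mass modify} give the equality $\|\nabla \uQ\|^2_{\Lt}=\frac ds E[\uQ]$.

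First I would prove $\g_u(t)\le\omega$. Multiply $\|\nabla u(t)\|^2_{\Lt}\le\frac ds E[u]$ by $M[u]^{\theta}$ with $\theta=\frac{1-s}s$ and normalize by $\|\nabla \uQ\|^2_{\Lt}M[\uQ]^{\theta}=\frac ds E[\uQ]M[\uQ]^{\theta}$. The left-hand side is exactly $[\g_u(t)]^{2/s}$: since $M[u]^{\theta}=\|u\|_{\Lt}^{2(1-s)/s}$, the product $M[u]^{\theta}\|\nabla u\|^2_{\Lt}$ is the square of the scale-invariant quantity in \eqref{eq:grad}. The right-hand side is $\big(E[u]^{s}M[u]^{1-s}\big/E[\uQ]^{s}M[\uQ]^{1-s}\big)^{1/s}=(\ME[u])^{1/s}$. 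Hence $[\g_u(t)]^2\le\ME[u]$, i.e. $\g_u(t)\le\omega=\sqrt{\ME[u]}$ for all $t$, and $0\le\omega<1$ because $\ME[u]<1$.

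With $\g_u(t)\le\omega$ in hand, the chain \eqref{lower} splits into three parts. The leftmost inequality is obtained by multiplying the upper bound $E[u]\le\frac12\|\nabla u\|^2_{\Lt}$ from \eqref{eq:ener-grad} by the nonnegative constant $16(1-\omega^{p-1})$. For the remaining inequalities I would bound the potential term via \eqref{G-N} with the sharp constant \eqref{const GN}, using that the Gagliardo--Nirenberg exponents are tuned so that $\|\nabla u\|^{d(p-1)/2}_{\Lt}\|u\|^{2-(d-2)(p-1)/2}_{\Lt}=\|\nabla u\|^2_{\Lt}\big(\|u\|^{1-s}_{\Lt}\|\nabla u\|^s_{\Lt}\big)^{p-1}$, together with \eqref{ground modify} and $\alpha=\frac{\sqrt{d(p-1)}}2$ (so $\frac{2d(p-1)}{\alpha^2}=8$); this yields
\begin{align*}
8\|\nabla u\|^2_{\Lt}-\frac{4d(p-1)}{p+1}\|u\|^{p+1}_{L^{p+1}}
&\ \ge\ \|\nabla u\|^2_{\Lt}\Big(8-\frac{2d(p-1)}{\alpha^2}[\g_u(t)]^{p-1}\Big)\\
&\ =\ 8\big(1-[\g_u(t)]^{p-1}\big)\|\nabla u\|^2_{\Lt}\ \ge\ 8(1-\omega^{p-1})\|\nabla u\|^2_{\Lt},
\end{align*}
the last step using $\g_u(t)\le\omega$. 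Rearranging gives the middle and rightmost inequalities of \eqref{lower}.

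This is essentially bookkeeping, so I expect no genuine obstacle; the only points needing care are the algebraic identifications of the scaling exponents (that $M[u]^{\theta}\|\nabla u\|^2_{\Lt}$ renormalizes to $[\g_u]^{2/s}$, and that the Gagliardo--Nirenberg power equals $(p-1)$ copies of $\|u\|^{1-s}_{\Lt}\|\nabla u\|^s_{\Lt}$ with $2d(p-1)/\alpha^2=8$), and the sign condition $1-\omega^{p-1}\ge0$, which forces the bound $\g_u(t)\le\omega$ to be established before the inequalities in \eqref{lower}. The only inputs are Lemma \ref{comparison} (hence, indirectly, Theorem A* part I(a)) and the ground-state relations, so no new machinery is needed.
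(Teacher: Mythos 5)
Your proposal is correct and follows essentially the same route as the paper: establish $\g_u(t)\le\omega$ by renormalizing the gradient--energy bound of Lemma \ref{comparison}, then control the potential term via the sharp Gagliardo--Nirenberg inequality and the ground-state identities, using $2d(p-1)/\alpha^2=8$. Your write-up is, if anything, slightly more explicit about the exponent bookkeeping (noting that $M[u]^{\theta}\|\nabla u\|^2_{\Lt}$ renormalizes to $[\g_u]^{2/s}$ and that $\|\nabla\uQ\|^2_{\Lt}=\tfrac ds E[\uQ]$ is an equality), but these are the same computations the paper performs.
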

\begin{prop}[Existence of  Wave Operators]\label{Existence of wave operator}
Let $\psi \in H^1(\Rn).$
\begin{enumerate}[I.]
\item Then there exists $v_{+}\in H^1$ such that for some $-\infty<T^*<+\infty$ it produces a solution $v(t)$ to $\NLSf$ on time interval $[T^*,\infty)$ such that 
\begin{equation}\label{wave1}
\|v(t)-e^{it\Delta}\psi\|_{H^1}\to0 \quad\quad\text{~~as~~}\quad\quad t\to+\infty
\end{equation}
 Similarly, there exists $v_{-}\in H^1$ such that for some $-\infty<T_*<+\infty$ it produces a solution $v(t)$ to $\NLSf$ on time interval $\;(-\infty,T_*]$ such that 
\begin{equation}\label{wave2}
\|v(-t)-e^{-it\Delta}\psi\|_{H^1}\to0 \quad\quad\text{~~as~~}\quad\quad t\to+\infty
\end{equation}
\item Suppose that for some $0<\sigma\leq\left(\frac{2s}{d}\right)^{\frac s2}<1$
\begin{equation}
\label{psi-identity}
\| \psi \|^{2(1-s)}_{\Lt} \| \nabla \psi \|^{2s}_{\Lt}  < \sigma^2 \left(\frac {d  }{s}\right)^sM[\uQ]^{1-s} E[\uQ]^{s}\;.
\end{equation}
Then there exists $v_0 \in H^1$ such that $v(t)$ solving $\NLSf$ with initial data $v_0$ is global in $H^1$ with 
\begin{eqnarray}
 M[v]= \| \psi \|_{\Lt}^2,\quad\quad
 E[v]= \frac{1}{2} \| \nabla \psi \|_{\Lt}^2, \quad\quad 
 \g_v(t)\leq\sigma< 1\label{wave meg}\\
 \text{and~~}\quad\quad\quad\quad
\| v(t) - e^{i t \triangle} \psi \|_{H^1}\to0 \quad\quad\text{~~as~~}\quad \quad t\to\infty.\label{wave scat}
\end{eqnarray}
Moreover, if $\| e^{i t \triangle} \psi \|_{\dB^0_{S(\dHs)}} \leq \delta_{sd},$ then
$\| v_0 \| _{\dH^{s}} \leq 2 \| \psi \|_{\dH^{s}}$
  and  
  $\| v \|_{\dH^{s}}  \leq 2 \| e^{i t \triangle} \psi \|_{\dB^0_{S(\dHs)}}.$
\end{enumerate}
\end{prop}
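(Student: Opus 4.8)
The plan is to solve the endpoint (final--state) problem. For Part~I, seek $v$ solving
\[
v(t)=e^{it\Delta}\psi-i\int_t^{\infty}e^{i(t-\tau)\Delta}F(v(\tau))\,d\tau ,\qquad F(u)=|u|^{p-1}u .
\]
Since $\psi\in H^1(\Rn)\hookrightarrow\dHs(\Rn)$, the Sobolev--Strichartz estimate \eqref{eq:strisob} together with Lemma~\ref{Besov-sigma} gives $\|e^{it\Delta}\psi\|_{\dB^{0}_{\SHs}(\R)}<\infty$, so by monotone convergence there is a finite $T$ with $\|e^{it\Delta}\psi\|_{\dB^{0}_{\SHs}([T,\infty))}\le\delta_{sd}$, the small--data threshold of Proposition~\ref{small data}. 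On $[T,\infty)$ I would run the contraction of Proposition~\ref{small data} verbatim (in particular the Besov--space treatment of the non--integer nonlinearity), producing $v$ with $\|v\|_{\dB^{0}_{\SHs}([T,\infty))}\le 2\|e^{it\Delta}\psi\|_{\dB^{0}_{\SHs}([T,\infty))}$ and, by the $H^1$ Strichartz bootstrap used in the proof of Proposition~\ref{H^1 Scattering}, $\sup_{t\ge T}\|v(t)\|_{H^1}\lesssim\|\psi\|_{H^1}$. Estimating $\|v(t)-e^{it\Delta}\psi\|_{H^1}=\bigl\|\int_t^{\infty}e^{i(t-\tau)\Delta}F(v)\,d\tau\bigr\|_{H^1}$ by Strichartz and the Leibniz rule (Lemma~\ref{leibniz}) bounds it by $\|v\|^{p-1}_{S(\dHs;[t,\infty))}\,\|(1+|\nabla|)v\|_{S(L^2;[t,\infty))}\to0$ as $t\to\infty$, which is \eqref{wave1}. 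Extending $v$ to the left by the local well--posedness theory, its maximal interval of existence contains $[T^*,\infty)$ for some finite $T^*$; set $v_+:=v(T^*)$. The backward statement \eqref{wave2} follows from the symmetry $v(x,t)\mapsto\overline{v(x,-t)}$.

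For Part~II, impose in addition \eqref{psi-identity}. Mass and energy are conserved, so \eqref{wave1} yields $M[v]=\lim_{t\to\infty}\|e^{it\Delta}\psi\|_{\Lt}^2=\|\psi\|_{\Lt}^2$ and, since $\|\nabla v(t)\|_{\Lt}\to\|\nabla\psi\|_{\Lt}$ while $\|v(t)\|_{L^{p+1}}\le\|v(t)-e^{it\Delta}\psi\|_{L^{p+1}}+\|e^{it\Delta}\psi\|_{L^{p+1}}\to0$ (the first term because $H^1(\Rn)\hookrightarrow L^{p+1}(\Rn)$, the second by the $L^{p+1}$ decay of the free Schr\"odinger evolution), $E[v]=\lim_{t\to\infty}\bigl(\tfrac12\|\nabla v(t)\|_{\Lt}^2-\tfrac1{p+1}\|v(t)\|_{L^{p+1}}^{p+1}\bigr)=\tfrac12\|\nabla\psi\|_{\Lt}^2$; these are the first two identities in \eqref{wave meg}. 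Inserting them into the definition of $\ME$ and using \eqref{mass modify}, hypothesis \eqref{psi-identity} gives $\ME[v]<(\tfrac{d}{2s})^{s}\sigma^2\le1$, the last step from $\sigma\le(2s/d)^{s/2}$, and likewise $\lim_{t\to\infty}[\g_v(t)]^2=\dfrac{\bigl(\|\psi\|_{\Lt}^{1-s}\|\nabla\psi\|_{\Lt}^{s}\bigr)^2}{\bigl(\|\uQ\|_{\Lt}^{1-s}\|\nabla\uQ\|_{\Lt}^{s}\bigr)^2}<\sigma^2$, so $\g_v(t_0)<\sigma$ for $t_0$ large. Since $\g_v(t_0)<1$ and $\ME[v]<1$, Theorem~A* part~I~(a) --- whose proof is the self--contained energy--trapping/continuity argument and does not invoke the concentration--compactness/scattering machinery, so there is no circularity --- shows that $v$ extends to all of $\R$ with $\g_v(t)<1$ for every $t$. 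Combining this with the trapping estimate \eqref{thrME-G}, the bound $\ME[v]<(\tfrac{d}{2s})^s\sigma^2$, Lemma~\ref{Lower-bound-convexity}, and the continuity of $t\mapsto\g_v(t)$ propagates $\g_v(t)\le\sigma$ to all $t$ by a standard continuity argument (cf.\ \cite{HoRo08}), which is the last bound in \eqref{wave meg}; \eqref{wave scat} is just \eqref{wave1}.

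For the ``moreover'': if in addition $\|e^{it\Delta}\psi\|_{\dB^{0}_{\SHs}}\le\delta_{sd}$ on all of $\R$, then the contraction can be run on $\R$ at once, giving $\|v\|_{\dB^{0}_{\SHs}}\le 2\|e^{it\Delta}\psi\|_{\dB^{0}_{\SHs}}$ and $\|v\|_{\dB^{s}_{\SLt}}\lesssim\|\psi\|_{\dHs}$; evaluating the integral equation at $t=0$, $v_0=\psi-i\int_0^{\infty}e^{-i\tau\Delta}F(v)\,d\tau$, and bounding the nonlinear term via \eqref{small besov} by $\|v\|^{p-1}_{\dB^{0}_{\SHs}}\|v\|_{\dB^{s}_{\SLt}}\lesssim\delta_{sd}^{p-1}\|\psi\|_{\dHs}\le\tfrac12\|\psi\|_{\dHs}$ gives $\|v_0\|_{\dHs}\le 2\|\psi\|_{\dHs}$. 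The nonlinear estimates throughout are exactly those of Proposition~\ref{small data} and the $H^1$ bootstrap exactly that of Proposition~\ref{H^1 Scattering}; the only genuinely new ingredients are the elementary fact $\|e^{it\Delta}\psi\|_{\dB^{0}_{\SHs}([T,\infty))}\to0$ and, in Part~II, the identification of $M[v],E[v],\ME[v]$ in the limit $t\to+\infty$ together with the non--circular use of the global--existence half of Theorem~A* and the backward propagation of $\g_v(t)\le\sigma$. I expect this last bookkeeping --- making sure the scattering part of the main theorem is not used in its own proof, and pushing the sharp gradient bound down to all times --- to be the main (if mild) obstacle; the analytic estimates are already in hand.
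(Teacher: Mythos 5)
Your Part~II is essentially the same argument as the paper's: solve the final--state integral equation on $[T,\infty)$ by the small--data contraction of Proposition~\ref{small data}, get the $H^1$ bound from the bootstrap in Proposition~\ref{H^1 Scattering}, identify $M[v]$ and $E[v]$ through the limits $\|\nabla v(t)\|_{\Lt}\to\|\nabla\psi\|_{\Lt}$ and $\|v(t)\|_{L^{p+1}}\to0$, check $\ME[v]<1$ and $\lim_{t\to\infty}\g_v(t)<\sigma$ from \eqref{psi-identity} and \eqref{mass modify}, and then invoke Theorem~A* part~I(a) to extend $v$ backward to $t=0$. Your remark on non--circularity is correct and worth making; the paper silently relies on the fact that Theorem~A* I(a) is the energy--trapping result from \cite{HoRo07}.

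Where you differ is Part~I: the paper simply cites Strauss \cite{St81,St81s} (Theorem~2(a), Remark~(36), and \cite[Thm.~17]{St81s}), whereas you rederive the asymptotic completeness from scratch by running the same final--state contraction without the extra mass--energy hypothesis and then using the conjugation symmetry $v(x,t)\mapsto\overline{v(x,-t)}$ for the backward wave. This is a legitimate and more self--contained route (and is in fact what the paper already does in Part~II, so it costs nothing new). One small precision your write--up glosses over: ``monotone convergence'' makes the tail of the $L^q_tL^r_x$ components with $q<\infty$ small, but the $q=\infty$ component of $\dB^0_{\SHs}$ does not shrink as $T\to\infty$; what saves the contraction is that only finite--$q$ Strichartz norms of $e^{it\Delta}\psi$ enter the nonlinear estimates, and those do tend to zero. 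The paper has the same implicit step.

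Finally, both you and the paper are a bit loose about the pointwise bound $\g_v(t)\le\sigma$ in \eqref{wave meg}: the trapping via Lemma~\ref{Lower-bound-convexity} gives $\g_v(t)\le\sqrt{\ME[v]}$, and $\ME[v]<\sigma^2(\tfrac d{2s})^s$ is strictly weaker than $\sigma^2$; the paper's own proof in fact only concludes $\g_v(t)\le1$. Your appeal to ``a standard continuity argument'' doesn't by itself close this gap, but the discrepancy is inherited from the paper (and what is actually used downstream, in Proposition~\ref{existence of u_c}, is only $\ME[\tilde\psi^j]<(ME)_\crit$ and $\g_v(t)<1$). So this is a fair and essentially correct reconstruction, with the one genuine difference being that you prove Part~I rather than citing it.
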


\begin{proof}
\noindent I.  This is essentially Theorem 2 part (a) of \cite{St81} adapted to the case $0<s<1$ (see his Remark (36) and \cite[Theorem 17]{St81s}).

\noindent II. For this part, we consider the integral equation
\begin{equation}\label{int-wave}
v(t)= e^{i t \triangle} \psi - i \int_t^{\infty} e^{i (t-t') \triangle} (|v|^{p-1} v) dt'.
\end{equation}
We want to find a solution to \eqref{int-wave} which exists for all t. Note that for $T>0$  from the small data theory (Proposition \ref{small data}) there exists $\delta_{sd}>0$ such that $\| e^{i t \triangle} \psi \|_{\dB^0_{S([T,\infty),\dH^{s})}} \leq  \delta_{sd}$. Thus,  
 repeating the argument of Proposition \ref{small data}, we first show that we can solve the equation \eqref{int-wave} in $\dHs$ for $t\geq T$ with $T$ large. So this solution will estimate $\| \nabla v \|_{S(\Lt;[T, \infty))}, $ which will also show that $v$ is in $H^1.$

Observe that for any $v\in H^1$
\begin{align}
\| \nabla |v|^{p-1}v\|_{\SdLt} \notag
&\lesssim \| \nabla |v|^{p-1}v\|_{\qsd \rsd}
\\&\lesssim \|v\|^{p-1}_{\qs \rsh} \|\nabla v\|_{\qs \rs} 
\lesssim \|v\|^{p-1}_{S(\dH^s)} \|\nabla v\|_{\SLt}.\label{est1-wave}
\end{align}
Note that the pairs $\big(\frac {d}{2 s},\frac{d^2p(p-1)}{2(d+4)}\big)$ and $\big(\frac {d}{2 s},\frac {2d^2 p}{d^2p-8s}\big)$ are $\Lt-$ admissible and the pair $\big(\frac {d}{2 s},\frac{2 d^2(p-1)}{d^2 (p - 1) + 16}\big)$ is $\Lt-$ dual admissible.  Thus,  the H\"older's inequality  yields  \eqref{est1-wave}.
Now, the Strichartz \eqref{eq:stri} and Kato Strichartz \eqref{eq:Kato-Strichartz} estimates  imply
\begin{align*}
\| \nabla v \|_{S([T, \infty), \Lt)} & \lesssim c_1 \|\nabla \psi \|_{S([T, \infty), \Lt)}+  c \| \nabla (|v|^p v )\|_{S'([T, \infty), \Lt)} \\
      & \lesssim c_1\| \psi \|_{\dH^1}+  c_3 \|v\|^{p-1}_{S([T, \infty),\dH^s)} \|\nabla v\|_{S([T, \infty), \Lt)}.
      \end{align*}
Taking $T$ large enough, so that $c_3 \|  v \|_{S([T, \infty),\dH^{s})}\leq \frac 12$, we obtain
\begin{equation*}
\label{nablav}
\| \nabla v \|_{S([T, \infty),\Lt)}  \leq 2 c_1 \| \psi \|_{H^1}.
\end{equation*}
It now follows
\begin{align*}
\| \nabla ( v-e^{i t \Delta} \psi)\|_{S([T, \infty),\Lt)} &\leq \| \nabla( |v|^{p-1} v ) \|_{S'([T, \infty),\Lt)}\\&
 \leq \| \nabla v \|_{S([T, \infty),\Lt)} \| v \|^{p-1}_{S([T, \infty),H^s)} \leq   c \| \psi \|_{H^1},
\end{align*}
hence, $\| \nabla ( v-e^{i t \Delta} \psi)\|_{S(\Lt([T, \infty)))} \to 0$ as $T \to \infty$. 

On the other hand,  Proposition \ref{H^1 Scattering} ($H^1$ Scattering) implies $v(t) \to e^{i t \Delta} \psi$ in $H^1$ as $t \to \infty$, and the decay estimate  together with the embedding 
and $H^1(\Rn)\hookrightarrow  L^q(\Rn)$ for  
$q\leq \frac{2d}{d-2}$ when $3\leq d$, $q<\infty$ when  $d=2$ and $q\leq\infty$ when  $d=1$ 
imply
$$
\| e^{it \Delta} \psi \|_{L_x^{p+1}} \leq |t|^{\frac{-(p-1)d}{2(p+1)}} \| \psi \|_{H^1},$$
thus, $\| e^{it \Delta} \psi \|_{L_x^{p+1}} \to 0\;  \mbox{as}\; t \to \infty.
$ 
Since  $\| \nabla e^{it \Delta} \psi \|_{\Lt}=\| \nabla \psi \|_{\Lt}$, it follows
\begin{align*}
E[v] &=   \frac{1}{2} \| \nabla v \|_{\Lt}^2 - \frac{1}{p+1} \| v \|_{L_x^{p+1}}^{p+1}  \\
&= \lim_{t \to \infty}\Big(\frac{1}{2} \| \nabla  e^{it \Delta} \psi \|_{\Lt}^2 - \frac{1}{p+1} \|  e^{it \Delta} \psi \|_{L_x^{p+1}}^{p+1} \Big) = \frac{1}{2} \| \nabla \psi \|^2_{\Lt}
\end{align*}and
$$
M[v]=\lim_{t \to \infty} \|  v(t) \|_{\Lt}^2=\lim_{t \to \infty}  \| e^{it \Delta} \psi\|_{\Lt}^2=  \| \psi \|_{\Lt}^2.
$$
\noindent From the hypothesis \eqref{psi-identity},  we obtain 
 $$
 M[v]^{1-s}E[v]^s= \frac1{2^s}\| \psi \|^{2(1-s)}_{\Lt} \| \nabla \psi \|^{2s}_{\Lt}   < \sigma^2 \left(\frac {d  }{2s}\right)^sM[\uQ]^{1-s} E[\uQ]^{s}
$$
and thus,
$\ME[v]<1$,  since $\sigma^2<\left(\frac {2s}d\right)^s.$
Furthermore,
\begin{align*}
\lim_{t \to \infty} \| v(t) \|_{\Lt}^{2(1-s)}\| \nabla v(t) \|_{\Lt}^{2s} 
&= \lim_{t \to \infty}   \| e^{it \Delta} \psi \|_{\Lt}^{2(1-s)}\| \nabla(e^{it \Delta} \psi) \|_{\Lt}^{2s}
 =   \| \psi \|_{\Lt}^{2(1-s)}\| \nabla \psi \|_{\Lt}^{2s}\\
  & < \sigma^2 \left(\frac {d  }{s}\right)^s M[\uQ]^{1-s} E[\uQ]^{s} 
  = \sigma^2 \| \uQ \|_{\Lt}^{2(1-s)}  \| \nabla \uQ \|_{\Lt}^{2s},
\end{align*} 
where, the inequality is due to \eqref{psi-identity} and the last equality is obtained using \eqref{mass modify}.  Hence, $$\lim_{t\to\infty}\g_v(t)\leq\sigma<1.$$ 
We can take $T>0$ large so that $\g_v(T)\leq1.$ 
Then applying Theorem A* part I (a) (global existence of solutions with $\ME[v]<1$ 
and $\g_v(t)<1$), we evolve $v$ from time $T$ back to time 0 (we automatically get $\g_v(t)\leq1$ for all $t\in[0,+\infty)$.) Thus, we obtain   $v$ with initial data $v_0 \in H^1$ and properties \eqref{wave meg} and \eqref{wave scat} as desired.
\end{proof}

\section{Scattering via Concentration Compactness}\label{Scattering_cap}

 Theorem 2.1 and Corollary 2.5 of Holmer-Roudenko \cite{HoRo07} proved the general case  for the mass-supercritical and energy-subcritical NLS equations with $H^1$ initial data, thus, establishing Theorem A* I(a) and II(a) for finite variance data. In addition,  \cite{CaGu11} included  the proof of the blow up in finite time when $d=2$ and $p=5$ for the radial initial data (i.e., Theorem A* part II(a)), since it was not include  in \cite{HoRo07}, the authors considered $p<5$.
 
The goal of this section is to prove scattering in $H^1(\Rn)$ of global solutions of  $\NLSf$ 
 from Theorem A* part I (a).

\begin{definition}
Suppose $u_0\in H^1(\Rn)$ and  let $u$ be the corresponding $H^1(\Rn)$ solution to \eqref{eq:NLS} on $[0,T^*)$, the maximal (forward in time) interval of existence. We say that  $SC(u_0)$ holds if $T^*=+\infty$ and $\|u\|_{\dB^0_{\SHs}}<\infty$. 
 \end{definition}

\subsection{ Outline of Scattering via Concentration Compactness}\label{scattering}
Notice that  $H^1-$ scattering of $u(t)=\NLS(t)u_0$ is obtained when  $SC(u_0)$ holds by Proposition \ref{H^1 Scattering}. Therefore, to establish Theorem A* part I (b), it  will be enough  to verify that the global-in-time $\dHs$  Besov-Strichartz norm is finite, i.e., $\|u\|_{\dB^0_{\SHs}}<\infty$,  
since the hypotheses provides an \emph{a priori bound}  for $\|\nabla u(t)\|_{\Lt}$ (by Theorem A* part I a), thus, the maximal forward time of existence is $T=+\infty$. In other words, it  remains to show 
\begin{prop} \label{scatteringcondition}
If $\g_u(0)<1$ and $\ME[u]<1$, then $SC(u_0)$ holds.
\end{prop}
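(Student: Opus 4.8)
## Proof Proposal for Proposition \ref{scatteringcondition}

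The plan is to establish $SC(u_0)$ by a concentration-compactness/rigidity argument in the spirit of Kenig-Merle, adapted to the mass-supercritical, energy-subcritical setting with fractional nonlinearities. The starting point is to introduce the threshold function
\begin{align*}
\tau(\rho) := \sup\Big\{ \|u\|_{\dB^0_{\SHs}} \; : \; u \text{ solves } \NLSf, \; \ME[u]<1, \; \g_u(0)<1, \; \ME[u]\le \rho \Big\},
\end{align*}
and to argue that $\tau(\rho)<\infty$ for every $\rho<1$. First I would note that for $\rho$ small enough the small data theory (Proposition \ref{small data}) gives $\tau(\rho)<\infty$ directly, since by \eqref{thrME-G} a small mass-energy forces $\g_u(0)$, hence $\|\nabla u_0\|_{\Lt}$ via Lemma \ref{comparison}, to be small, and then $\|e^{it\Delta}u_0\|_{\dB^0_{\SHs}}$ is controlled by the Sobolev-Strichartz estimate \eqref{eq:strisob}. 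If $SC(u_0)$ failed for some admissible data, there would be a critical threshold $\rho_c<1$ with $\tau(\rho)<\infty$ for $\rho<\rho_c$ and $\tau(\rho)=\infty$ for $\rho\ge\rho_c$; one then produces a sequence of solutions $u_n$ with $\ME[u_n]\to\rho_c$, $\g_{u_n}(0)<1$, and $\|u_n\|_{\dB^0_{\SHs}}\to\infty$.

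The heart of the argument is the extraction of a \emph{critical element}. Applying the linear profile decomposition (whose proof the paper promises in Section 3) to the bounded-in-$H^1$ sequence $u_{n,0}$, one writes $u_{n,0}=\sum_{j=1}^{J} e^{-it_n^j\Delta}\phi^j(\cdot - x_n^j) + R_n^J$ with the usual orthogonality of parameters and smallness of the remainder's free evolution in $\dB^0_{\SHs}$. Using the conservation and near-additivity of mass and energy along the decomposition, together with \eqref{thrME-G} to keep each profile's renormalized gradient below $1$, a standard dichotomy shows that either a single profile carries all the mass-energy (the other profiles are trivial and the remainder vanishes), or every profile has strictly smaller mass-energy $\rho_c-\delta$ and hence, by the inductive hypothesis $\tau(\rho_c-\delta)<\infty$ plus the nonlinear profile decomposition and the long-term perturbation theory (Proposition \ref{longperturbation}), one could bound $\|u_n\|_{\dB^0_{\SHs}}$ uniformly — a contradiction. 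So there is a single profile, i.e. a global solution $u_c$ with $\ME[u_c]=\rho_c<1$, $\g_{u_c}(t)<1$, $\|u_c\|_{\dB^0_{\SHs}}=\infty$, and whose trajectory $\{u_c(t)\}$ is precompact in $H^1$ modulo the spatial translation symmetry $x(t)$ (the $H^1$ a priori bound coming from Lemma \ref{comparison}).

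The final step is the rigidity theorem: such a precompact, nonscattering trajectory must be identically zero, contradicting $\|u_c\|_{\dB^0_{\SHs}}=\infty$. Here I would use a localized virial/variance identity (the Section 3 localized virial estimate) applied to $\int \chi_R(x-x(t))|x|^2|u_c(x,t)|^2\,dx$, control the translation parameter $x(t)=o(t)$ using the precompactness and the zero-momentum normalization (Theorem A* form), and exploit the coercive lower bound on the convexity of the variance from Lemma \ref{Lower-bound-convexity}, namely $8\|\nabla u\|_{\Lt}^2 - \frac{4d(p-1)}{p+1}\|u\|_{L^{p+1}}^{p+1}\ge 8(1-\omega^{p-1})\|\nabla u_c\|_{\Lt}^2>0$. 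Integrating the virial inequality over $[0,T]$ and letting $R,T\to\infty$ forces $\|\nabla u_c\|_{\Lt}=0$, hence $u_c\equiv 0$.

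The main obstacle I anticipate is not the abstract scheme, which is by now classical, but the compatibility of all the nonlinear estimates with the fractional power $p$: the nonlinear profile decomposition requires approximating rescaled/translated nonlinear flows in the $\dB^0_{\SHs}$ norm, and stability under the long-term perturbation (Proposition \ref{longperturbation}) must be run in Besov-Strichartz spaces rather than ordinary Strichartz spaces — precisely the reason the paper develops the Besov machinery in Section 2. Ensuring the profile error terms are small in $\dB^0_{\SdHs}$ (so that Proposition \ref{longperturbation} applies) while simultaneously keeping each profile below the renormalized-gradient threshold is the delicate bookkeeping step.
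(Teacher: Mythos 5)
Your proposal reproduces the paper's argument essentially step for step: small-data base case, induction on the mass-energy threshold (your $\tau(\rho)$ is a cosmetic repackaging of the paper's $(ME)_c$), extraction of a single critical element via linear and nonlinear profile decomposition plus long-term perturbation in Besov--Strichartz spaces, precompactness modulo translation, control of $x(t)=o(t)$ via zero momentum, and rigidity via the localized virial with the coercivity from Lemma~\ref{Lower-bound-convexity}. This is the same route the paper takes in Section~\ref{Scattering_cap}, and you have correctly flagged the genuinely nontrivial bookkeeping (fractional $p$, Besov estimates for the remainder and perturbation) that distinguishes this case from the $3$d cubic one.
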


The technique to achieve the scattering property  above (Proposition \ref{scatteringcondition}) is the induction argument on the mass-energy threshold introduced in \cite{KeMe06} and is based on \cite{HoRo08} and \cite{DuHoRo08}. We describe it in  steps 1, 2, 3. 

\noindent \textbf{ Step 1:} \emph{ Small Data.} 

\noindent 
The equivalence of energy with the gradient (Lemma \ref{equivalence grad and energy}) yields 
$$ 
\|u_0\|_{\dHs}^{p+1}\leq(\|u_0\|^{1-s}_{\Lt}\| \nabla u_0\|^{s}_{\Lt})^{\frac{p+1}2} \leq \left(\bigg(\frac {d}{s}\bigg)^sM[u]^{1-s}E[u]^s\right)^{\frac{p+1}4}.
$$
If $\g_u(0)<1$ and $M[u]^{1-s}E[u]^s<\big(\frac {s}{d}\big)^{s}\delta_{sd}^4$, then using the above inequality one obtains
$\|u_0\|_{\dHs}\leq\delta_{sd}$
and  by Strichartz estimates $\|e^{it\Delta}u_0\|_{\dB^0_{\SHs}}\leq c \delta_{sd}$. Hence,  the small data (Proposition \ref{small data}) yields  $SC(u_0)$ property.

Observe that  Step 1 gives the basis for induction:

\noindent 
Assume $\g_u(0)<1.$ Then  for small $\delta>0$ such that $M[u_0]^{1-s}E[u_0]^{s}<\delta$, we have that $SC({u_0})$ holds. 

\noindent Let $(ME)_c$ be the supremum of all such $\delta$ for which $SC(u_0)$ holds, namely,
\begin{align*}
(ME)_c= \sup \big\{\delta \;|&\; u_0\in H^1(\Rn) \mbox{ with the property: } \\
&\g_u(0)<1 \mbox{ and }M[u]^{1-s}E[u]^{s} < \delta \Rightarrow SC(u_0) \mbox{ holds}\big\}.
\end{align*}
Thus,  the goal is to show that $(ME)_c =M[\uQ]^{1-s}E[\uQ]^{s}$. 

{\rmk In the definition of $(ME)_c$,  it should be considered $\g_u(0)\leq1$  instead of the strict inequality $\g_u(0)<1.$ However, $\g_u(0)$=1 only when $\ME[u]=1$ (see Figure \ref{fig1} point D). In other words,  $u_0=\uQ(x)$ is a soliton solution to \eqref{eq:NLS} and does not scatter, thus, it suffices to consider the strict inequality $\g_u(0)<1.$ }
\bigskip

\noindent \textbf{ Step 2:} \emph{ Induction on the scattering threshold and construction of the ``critical'' solution.} 

\noindent
Assume that $(ME)_c<M[\uQ]^{1-s}E[\uQ]^{s}$. This means that, there exists a sequence of initial data $\{u_{n,0}\}$ in $H^1(\Rn)$ which will approach the threshold $(ME)_c$ from above and produce solutions which do not scatter, i.e., there exists a sequence $u_{n,0}\in H^1(\Rn)$ with 
 \begin{align}\label{sequn}
 \g_{u_n}(0)<1\; \text{~~and~~}\;M[u_{n,0}]^{1-s}&E[u_{n,0}]^{s}\searrow(ME)_c\;\mbox{as}\; n\to\infty\\
\notag  \text{~~and~~}& \|u\|_{\dB^0_{\SHs}}=+\infty,
 \end{align}

i.e., $SC(u_{n,0})$ does not hold (this is possible by definition of supremum of $(ME)_c$).

Using a nonlinear profile decomposition on the sequence $\{u_{n,0}\}$ will allow us to construct a ``critical" solution of $\NLSf$, denoted by $u_c(t),$ that will lie exactly at the threshold $(ME)_c$ and will not scatter, see Existence of the Critical solution (Proposition \ref{existence of u_c}).

\noindent \textbf{ Step 3:}  \emph{ Localization properties of the critical solution.} 

\noindent The critical solution $u_c(t)$ will have the property that
$K=\{u_c(t)|t\in[0,+\infty)\}$ is precompact in $H^1(\Rn)$ (Lemma \ref{precompact}).  
Hence, its localization implies that for given $\epsilon >0$, there exists an $R>0$ such that
$
\|\nabla u(x,t)\|_{\Lt(|x+x(t)|>R)}^2\leq\epsilon
$   
uniformly in $t\;$ (Corollary \ref{precompact-localization}); this combined with the zero momentum will give control on the growth of $x(t)$ (Lemma \ref{spacial translation}). 

On the other hand, the rigidity theorem (Theorem \ref{rigidity}) implies that such compact in $H^1$ solutions  with the control on $x(t)$, can only be zero solutions,  which contradicts the fact that $u_c$ does not scatter. As a consequence, such $u_c$ does not exist and the assumption that $(ME)_c<M[\uQ]E[\uQ]$ is not valid. This finishes the proof of scattering in Theorem A*, Part 1(b).

 In the rest of this section we proceed with the linear and nonlinear profile decomposition and the proof of the existence and properties of the critical solution described  in Step 2 and Step 3. 

\subsection{Profile decomposition}\label{profile section}

This section contains the profile decomposition for linear and nonlinear flows for $\NLS^+_{p}(\Rn)$. The important point to make here is that these are general profile decompositions for bounded sequences on $H^1$.

\begin{prop}[Linear Profile decomposition]\label{nonradialPD}
Let $\phi_n(x)$ be a uniformly bounded  sequence in $H^1(\Rn)$. Then for each $M\in\N$ there exists a subsequence of $\phi_n$ (also denoted $\phi_n$), such that, for each $1 \leq j \leq M$, there exist, fixed in $n$, a  profile $\psi^j$ in $H^1(\Rn)$, a sequence  $t^j_n$ of time shifts, a sequence $x^j_n$ of space shifts 
and a sequence $W^M_n(x)$ of remainders\footnote[11]{\normalsize  Here, in Proposition \ref{nonradialPD} and Proposition \ref{nonradialPDNLS}, $W^M_n(x)$ and  $\wt^M_n(x)$ represent the remainders for the linear and nonlinear decompositions, respectively.\\} 
 in $H^1(\Rn)$,  such that  
$$
\phi_n(x)=\sum^{M}_{j=1} e^{-i t_n^j \Delta } \psi^j(x-x^j_n) + {W}^M_n(x)
$$
with the properties:
\begin{itemize}
\item Pairwise divergence for the time and space sequences. 
For $1 \leq k \neq j \leq M$, 
\begin{align}
\label{time-space-seq}
\lim_{n \to \infty} |t^j_n-t^k_n | + |x^j_n - x^k_n|=+ \infty.
\end{align} 
\item  Asymptotic smallness for the remainder sequence
\begin{align}
\label{smallnessProp}
\lim_{M \to \infty} \big( \lim_{n \to \infty} \| e^{i t \Delta}W^M_n \|_{\dB^0_{\SHs}}\big ) = 0.
\end{align}
\item Asymptotic Pythagorean expansion. 
For fixed $M\in\N$ and any $0 \leq s \leq 1$, we have 
\begin{align}
\label{limHs}
\| \phi_n \|^2_{\dot{H}^{s}} = \sum^M_{j=1} \| \psi^j \|^2_{\dot{H}^{s}} + \|W^M_n \|^2_{\dot{H}^{s}} +o_n(1).
\end{align}
\end{itemize}
\end{prop}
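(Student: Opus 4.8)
The plan is to run the standard bubble-extraction scheme, adapted to the Besov--Strichartz norm $\|\cdot\|_{\dB^0_{\SHs}}$ in place of a classical $L^q_tL^r_x$ Strichartz norm. The argument has four ingredients: an inverse Strichartz inequality producing a single profile, an inductive extraction of all profiles together with the one-step Pythagorean expansion, the pairwise divergence of the parameters, and finally the asymptotic smallness of the remainder. Since the sequence is bounded in $H^1$ (not merely in $\dHs$), no scaling parameter is needed, which is why only time and space shifts appear.

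First I would establish an \emph{inverse Strichartz inequality}: if $\{g_n\}$ is bounded in $H^1(\Rn)$, say $\|g_n\|_{H^1}\le B$, and $\liminf_{n\to\infty}\|e^{it\Delta}g_n\|_{\dB^0_{\SHs}}=A>0$, then along a subsequence there exist $\psi\in H^1(\Rn)$ with $\psi\neq0$ and sequences $t_n\in\R$, $x_n\in\Rn$ such that $e^{it_n\Delta}g_n(\cdot+x_n)\rightharpoonup\psi$ weakly in $H^1(\Rn)$, with a quantitative bound $\|\psi\|_{H^1}\gtrsim A^{a}B^{-b}$ for exponents $a,b>0$ depending only on $d,p$. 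The mechanism is to interpolate, using Littlewood--Paley pieces, Bernstein's inequality, and the Strichartz estimate $\|e^{it\Delta}h\|_{\dB^0_{\SHs}}\lesssim\|h\|_{H^1}$ of Lemma \ref{linear-Strichartz}, the Besov--Strichartz norm of $e^{it\Delta}g_n$ against its $H^1$ norm times a weaker, frequency-localized $L^\infty_{t,x}$-type quantity; that weaker quantity must then stay bounded below, which yields points $(t_n,x_n)$ at which a fixed mollification of $e^{it_n\Delta}g_n$ is bounded below at $x_n$, and the Rellich--Kondrachov theorem applied to the $H^1$-bounded translated sequence forces the weak limit $\psi$ to be nonzero with the stated lower bound. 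I expect this to be the main obstacle, since the required refined, frequency-localized Strichartz estimate must be compatible with the $\dHs$-admissible ranges used in this paper.

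Next I would iterate. Put $W^0_n:=\phi_n$; given $W^{M-1}_n$, if $\lim_{n\to\infty}\|e^{it\Delta}W^{M-1}_n\|_{\dB^0_{\SHs}}=0$ set $\psi^j:=0$ for $j\ge M$ and stop, and otherwise apply the inverse Strichartz inequality to $g_n=W^{M-1}_n$ to obtain $\psi^M\neq0$, $t^M_n$, $x^M_n$ with $e^{it^M_n\Delta}W^{M-1}_n(\cdot+x^M_n)\rightharpoonup\psi^M$, then put $W^M_n:=W^{M-1}_n-e^{-it^M_n\Delta}\psi^M(\cdot-x^M_n)$, so that $e^{it^M_n\Delta}W^M_n(\cdot+x^M_n)\rightharpoonup0$. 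Since $e^{it\Delta}$ is unitary on every $\dHs$, weak convergence gives the one-step expansion $\|W^{M-1}_n\|^2_{\dHs}=\|\psi^M\|^2_{\dHs}+\|W^M_n\|^2_{\dHs}+o_n(1)$ for each $0\le s\le1$.

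Then I would settle the parameters and conclude. If $|t^j_n-t^k_n|+|x^j_n-x^k_n|$ stayed bounded for some $j>k$, then along a subsequence $t^j_n-t^k_n\to t_0$ and $x^j_n-x^k_n\to x_0$; writing $W^{j-1}_n=W^k_n-\sum_{l=k+1}^{j-1}e^{-it^l_n\Delta}\psi^l(\cdot-x^l_n)$, translating by $x^j_n$, applying $e^{it^j_n\Delta}$, and using $e^{it^k_n\Delta}W^k_n(\cdot+x^k_n)\rightharpoonup0$ together with an induction showing each intermediate profile drops out of the weak limit (its parameters diverge from those of index $j$, already known for all indices below $j$), one gets $e^{it^j_n\Delta}W^{j-1}_n(\cdot+x^j_n)\rightharpoonup0$, contradicting $\psi^j\neq0$; this is \eqref{time-space-seq}. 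The same orthogonality makes all cross terms between distinct profiles vanish in the limit, so summing the one-step expansions over $M$ gives \eqref{limHs}. Finally, if $\lim_{M\to\infty}\lim_{n\to\infty}\|e^{it\Delta}W^M_n\|_{\dB^0_{\SHs}}=A'>0$, then for every $M$ the hypothesis of the inverse Strichartz inequality holds for $g_n=W^M_n$ with the uniform constant $A'$ and bound $B=\sup_n\|\phi_n\|_{H^1}$, so $\|\psi^{M+1}\|^2_{\dHs}\gtrsim (A')^{2a}B^{-2b}$ for a fixed $s\in(0,1)$; but \eqref{limHs} forces $\sum_{j\ge1}\|\psi^j\|^2_{\dHs}\le\limsup_n\|\phi_n\|^2_{\dHs}<\infty$, a contradiction, so $A'=0$, which is \eqref{smallnessProp}. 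The bookkeeping in the orthogonality step is routine but lengthy; the genuinely new input is the Besov-level inverse Strichartz inequality.
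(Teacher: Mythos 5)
Your plan is correct and mirrors the paper's proof: both run the bubble-extraction scheme with an inverse Strichartz inequality that interpolates the Besov--Strichartz norm down to a frequency-localized $L^\infty_t L^{\frac{2d}{d-2s}}_x$ and then $L^\infty_{t,x}$ quantity via a Schwartz frequency cutoff $\chi_r$, extract profiles iteratively with a quantitative $\dot H^s$ lower bound, derive pairwise divergence and the Pythagorean expansion from weak $H^1$ convergence, and conclude smallness from summability of $\sum_j\|\psi^j\|^2_{\dot H^s}$. One small technical correction: to show the weak limit is nonzero the paper does not invoke Rellich--Kondrachov (a compactness statement on bounded domains) but simply pairs the translated sequence against the fixed test function $\chi_r(x_n^1-\cdot)\in\dot H^{-s}$, and it is precisely this duality pairing, combined with $\|\chi_r\|_{\dot H^{-s}}$ controlled by a power of $r$, that delivers the quantitative bound $K c_1^{\frac{d+2s-4s^2}{2s}}\|\psi^1\|_{\dot H^s}\geq A_1^{\frac{d+4s-4s^2}{2s}}$; your appeal to Rellich--Kondrachov is not wrong in spirit but is the less direct route and does not immediately hand you the constants needed for the summability step.
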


\begin{proof}
Let $\phi_n$ be uniformly bounded in $H^1$, and $c_1>0$ such that $\|\phi_n \|_{H^1}\leq c_1$. 

For each dyadic $N\in 2^{\N}$, given $(q,r)$ an $\dot{H}^{s}$ admissible pair, pick   
  $\theta=\frac{4d(d+r)-2d^2r}{r^2(d-2s)(d-2)-2dr(d+2s-4)}$,  so $0<\theta<1$.
  
  Let $r_1=\frac{r (d - 2) + 2 d}{2 (d - 2)}$, and $q_1=\frac{8(d-2)+4dr}{r(d-2s)(d-2)-2d(d+2s-4)}$, so $(q_1,r_1)$ is  $\dot{H}^{s}$ admissible pair, for $0<s<1$ and $d\geq 2$. Interpolation and Strichartz estimates \eqref{eq:strisob} yield
\begin{align}
\| e^{it \Delta} W^M_n \|_{L^q_t L^r_x}
&\leq \|e^{it \triangle} W^M_n \|^{1-\theta}_{L^{q_1}_t L^{r_1}_x} \| e^{it \triangle} W^M_n \|^{\theta}_{L^{\infty}_t L^{\frac{2 d}{d - 2 s}}_x} \notag
\\&\leq c \| W^M_n \|^{1-\theta}_{\dHs} \| e^{it \Delta} W^M_n \|^{\theta}_{L^{\infty}_t L^{\frac{2 d}{d - 2 s}}_x}.\label{eq:rem}
\end{align}

The goal is to write the profile $\phi_n$  as $\sum^{M}_{j=1} e^{-i t_n^j \Delta } \psi^j(x-x^j_n) + {W}^M_n(x)$ with  $\|W^M_n(x)\|_{\dHs}\leq c_1$, for some constant $c_1$. By \eqref{eq:rem}, it suffices to show 
$$
\lim_{M\to +\infty} \left[ \limsup_{n\to +\infty}\|e^{it\Delta}
W_n^M\|_{L_t^\infty L_x^{\frac{2 d}{d - 2 s}}}\right] = 0\;. 
$$
We have $d\geq2s,$ since we are considering 
\begin{eqnarray}\label{casos dim}
\left\{\begin{array}{cccc}(i) & 0\leq s\leq 1 & \text{in} & d\geq3 \\(ii) & 0<s<1 & \text{in} & d=2 
\\(iii) & 0<s<\frac12 & \text{in} & d=1.
\end{array}\right.
\end{eqnarray}

 \noindent\emph{ Construction of $\psi^1_n$ }:  

\noindent Let $A_1 = \limsup_{n\to+\infty} \| e^{i t \Delta } \phi_n \|_{L^{\infty}_t L_x^{\frac{2 d}{d - 2 s}}}$. 
If $A_1=0$, taking $\psi^j=0$ for all $j$ finishes the construction.  

Suppose that $A_1>0$, and let
$c_1= \limsup_{n\to+\infty}\| \phi_n \|_{H^1}<\infty.$ 
Passing to a subsequence $\phi_n$, we show that there exist sequences $t^1_n$ and $x^1_n$ and a function $\psi^1 \in H^1$, such that  $$e^{it^1_n \Delta } \phi_n( \cdot + x^1_n)  \rightharpoonup \psi^1\quad\mbox{  in }\quad H^1,$$ 
and a constant $K>0,$ independent of all parameters, with
\begin{align}
K c_1^{\frac{d+2s-4s^2}{2s}} \| \psi^1 \|_{\dHs} \geq A_1^{\frac{d+4s-4s^2}{2s}}\label{boundpsi}.
\end{align}
Note that $d+2s-4s^2=d+2s(1-2s)>0$ by \eqref{casos dim}.

Let $\chi_r$ be a radial Schwartz function such that  supp $\chi_r \subset \big[\frac{1}{2r}, 2r\big]$ and $\widehat{\chi}_r(\xi)=1$ for $\frac{1}{r} \leq | \xi | \leq r$. 
Note that   $|1-\hat{\chi_r}| \leq 1$ and 
$\dHs \hookrightarrow L^{\frac{2 d}{d - 2 s}}$ in $\Rn$ with $2s<d$, then
\begin{align*}
\| &e^{it \Delta} \phi_n - \chi_r *  e^{it \Delta} \phi_n \|^2_{L^{\infty}_t L^{\frac{2 d}{d - 2 s}}_x}
 \leq \int |\xi| (1-\hat{\chi_r}(\xi))^2 | \hat{\phi}_n(\xi) |^2 d \xi \\
& \leq \int_{| \xi| \leq \frac{1}{r}} |\xi|  | \hat{\phi}_n |^2 d \xi + \int_{| \xi| \geq r} |\xi|  | \hat{\phi}_n (\xi)|^2 d \xi 
  \leq \frac{\| \phi_n \|_{\Lt_x}^2 + \| \phi_n \|_{\dot{H}^1_x}^2}{r}
\leq \frac{c_1^2}{r}.
\end{align*}
Take $r=\frac{4 c_1^2}{A_1^2}$, then $A_1=\frac{2 c_1}{\sqrt{r}}$. Using the definition of $A_1$, triangle inequality and the previous calculation, for large $n$ we have
\begin{align}
 \frac{A_1}{2}\leq\| \chi_r *  e^{it \Delta} \phi_n \|_{L^{\infty}_t L^{\frac{2 d}{d - 2 s}}_x}  .\label{A1}
\end{align}
Therefore, interpolation implies
\begin{align}
\| \chi_r *  e^{it \Delta} \phi_n \|^d_{L^{\infty}_t L^{\frac{2 d}{d - 2 s}}_x} &\leq
 \| \chi_r *  e^{it \Delta} \phi_n \|_{L^{\infty}_t \Lt_x}^{d-2s}  \|\chi_r *  e^{it \Delta} \phi_n \|_{L^{\infty}_t L_x^{\infty}} ^{2s}  \leq \| \phi_n \|_{\Lt_x}^{d-2s}  \|\chi_r *  e^{it \Delta} \phi_n \|_{L^{\infty}_t L_x^{\infty}}^{2s},\label{estA1}
\end{align}
where the second inequality follows from the fact that $|\widehat{\chi_r}| \leq 1$ and  $\Lt$ isometry property of the linear Schr\"odinger operator.  
Using the definition of $c_1$, combining \eqref{A1} and \eqref{estA1}, we get 
$\Bigg(\frac{A_1}{2 c_1^{\frac{d-2s}{d}}}\Bigg)^{\frac d{2s}}\leq  \|\chi_r *  e^{it \Delta} \phi_n \|_{L^{\infty}_t L_x^{\infty}}.$ 
Thus, there exists a sequence of $(x^1_n,t^1_n) \in \Rn \times \R^1_+$
satisfying 
$\Bigg(\frac{A_1}{2 c_1^{\frac{d-2s}{d}}}\Bigg)^{\frac d{2s}}\leq|\chi_r *  e^{i t^1_n \Delta} \phi_n(x^1_n)| .$
Since  $e^{it \Delta}$ is an $H^1$ isometry and  translation invariant,
 it follows that   $\{ e^{it_n^1\Delta} \phi_n (\cdot+x_n^1)\}$ is uniformly bounded in $H^1$ (with the same constant as $\phi_n$'s) and along a subsequence 
$ \{ e^{it_n^1\Delta} \phi_n (\cdot+x_n^1)\} \rightharpoonup \psi^1$ with 
$\| \psi^1 \|_{H^1} \leq c_1$.

Observe that 
$$
\bigg(\frac{A_1}{2 c_1^{\frac{d-2s}{d}}}\bigg)^{\frac d{2s}}\leq \big|\int_{\mathbb{R}^2} \chi_r(x_n^1-y) \psi^1(y) dy \big| \leq \| \chi_r \|_{\dH^{-s}} \| \psi^1 \|_{\dHs} \leq r^{1-s} \| \psi^1 \|_{\dHs},
$$
 since
$\| \chi_r \|_{\dH^{-s }}^2\lesssim r^{1-s}$ (by converting to radial coordinates) and the  H\"older's inequality produces \eqref{boundpsi} with $K=2^{\frac{d+4s-4s^2}{2s}}.$
  
  Define $W^1_n (x) = \phi_n(x)-e^{-it^1_n \Delta} \psi^1(x-x^1_n)$. Note that $e^{it^1_n \Delta } \phi_n( \cdot + x^1_n)  \rightharpoonup \psi^1$ in $H^1$, therefore, for any $0\leq s \leq1$, we have
  $$
  \langle  \phi_n,e^{-it^1_n \Delta} \psi^1(\cdot-x^1_n)\rangle_{\dH^{s}}=\langle e^{it^1_n \Delta}\phi_n, \psi^1(\cdot-x^1_n) \rangle_{\dH^{s}}\to\|\psi^1\|_{\dH^{s}}^2,
  $$
and since $ \|W_n^1 \|^2_{\dH^{s}}=\langle  \phi_n-e^{-it^1_n \Delta} \psi(\cdot-x^1_n), \phi_n-e^{-it^1_n \Delta} \psi^1(\cdot-x^1_n) \rangle^2_{\dH^{s}},$ we obtain
\begin{align*}
\lim_{n \to \infty} \| W^1_n \|^2_{\dH^{s}} =
\lim_{n \to \infty} \| e^{i t^1_n \Delta} \phi_n \|^2_{\dH^{s}} -\| \psi^1 \|^2_{\dH^{s}}.
\end{align*} 
Taking $s=1$ and $s=0$, yields  $\|W^1_n\|_{H^1}\leq c_1.$
\bigskip

\noindent\emph{ Construction of $\psi^j$ for $j \geq 2$} (
 Inductively we assume that $\psi^{j-1}$ is known 
and construct $\psi^j$):  
 
\noindent Let $M \geq 2$.   Suppose that $\psi^j$, $x_n^j$, $t^j_n$ and $W^j_n$ are known for 
$j \in \{ 1 ,\cdots , M-1 \}$. Consider
$$
A_M= \limsup_n \| e^{it\Delta}W^{M-1}_n \| _{L^{\infty}_t L_x^{\frac{2 d}{d - 2 s}}}.
$$
If $A_M=0$, then  
taking  $\psi^j=0$ for $j \geq M$ will end the construction. 

Assume $A_M>0$, we apply the previous step to $W^{M-1}_n$,  and let
$c_M= \limsup_n \| 
W^{M-1}_n \|_{H^1},$ thus,
obtaining sequences (or subsequences) $x_n^M, t_n^M$ and a function $\psi^M \in H^1$ such that
\begin{align}
e^{it^M_n \Delta } W^{M-1}_n( \cdot + x^M_n)  \rightharpoonup \psi^M \quad \mbox{in} \quad H^1\quad \mbox{and}\quad
  K c_M^{\frac{d+2s-4s^2}{2s}} \| \psi^M \|_{\dot{H}^{s}} \geq  A_M^{\frac{d+4s-4s^2}{2s}}.
 \label{condpsi} 
  \end{align}

Define 
$$
W^M_n(x) = W^{M-1}_n (x) - e^{-i t^M_n \Delta} \psi^M (x-x^M_n).
$$ 
Then \eqref{time-space-seq} and  \eqref{limHs}  follow from induction, i.e., we assume \eqref{limHs} holds at rank $M-1$, then expanding
$$
\|W^M_n(x)\|^2_{\dH^{s}}= \| e^{i t^M_n \Delta} W^{M-1} (\cdot+x^M_n)-\psi^M\|^2_{\dH^{s}},
$$ 
 the weak convergence yields \eqref{limHs} at  rank $M.$ 

In the same fashion, we assume \eqref{time-space-seq} is true for $j,k \in \{ 1, \hdots , M-1\}$ with $j\neq k$, that is $|t^j_n-t^k_n | + |x^j_n - x^k_n|\to+ \infty $ as $n\to \infty$. Take $k \in \{ 1, \hdots , M-1\}$ and show that 
$$
|t^M_n-t^k_n | + |x^M_n - x^k_n|\to+ \infty.
$$

 Passing to a subsequence, assume $t^M_n-t^k_n\to t^{M_1}$ and $x^M_n-x^k_n\to x^{M_1}$ finite, then as $n \to \infty$
 \begin{align*}
e^{i t_n^M \Delta} W^{M-1}_n (x+x_n^M) =& e^{i (t^M_n-t^j_n) \Delta}(e^{i t_n^j \Delta} W^{j-1}_n (x+x_n^j)-\psi^j(x+x_n^j))
\\&
- \sum^{M-1}_{k=j+1} e^{i (t_n^j-t^k_n) \Delta} \psi^{k}_n (x+x_n^j-x_n^k) .
 \end{align*}
The orthogonality condition \eqref{time-space-seq} implies that  the right hand side goes to $0$ weakly in $H^1$, while the left side converges weakly to a nonzero $\psi^M$, which is a contradiction. Note that the orthogonality condition \eqref{time-space-seq} holds for $k=M$, and since \eqref{limHs} holds for all $M,$ we have 
$$
\| \phi_n \|^2_{\dH^{s}} \geq \sum^M_{j=1} \| \psi^j \|^2_{\dH^{s}} + \|\,W^M_n \|^2_{\dH^{s}} 
$$
and  $c_M\leq c_1$. Fix $s$. If for all $M,$ $A_M>0,$ then  \eqref{condpsi}  yields
$$\sum _{M\geq1}\Bigg(\frac{A_M^{{\frac{d+4s-4s^2}{2s}}}}{Kc^{{\frac{d+2s-4s^2}{2s}}}_1}\Bigg)^2\leq \sum _{n\geq1}\|\psi^M\|^2_{\dHs}\leq \limsup_n\|\phi_n\|^2_{\dHs}< \infty,$$
 therefore, $A_M \to 0$ as $M \to \infty,$ and consequently, $ \| e^{i t \Delta}W^M_n \|_{\SHs}\to 0$ as $n\to \infty.$ Finally,  summing over all dyadic $N,$ yields \eqref{smallnessProp}.
\end{proof}
\begin{prop}[Energy Pythagorean expansion]\label{energy Pythagorean expansion} Under the hypothesis of Proposition \ref{nonradialPD}, we have 
\begin{align}
\label{pythagorean}
E[\phi_n]=\sum_{j=1}^M E [e^{-it^j_n\Delta}\psi^j]+E[W^M_n]+o_n(1).
\end{align}
\end{prop}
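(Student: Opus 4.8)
The energy splits as $E[u]=\tfrac12\|\nabla u\|_{\Lt}^2-\tfrac1{p+1}\|u\|_{L^{p+1}}^{p+1}$, and I would treat the two pieces separately. Since $e^{it\Delta}$ commutes with $\nabla$ and is unitary on $\Lt$, one has $\|\nabla e^{-it^j_n\Delta}\psi^j\|_{\Lt}=\|\nabla\psi^j\|_{\Lt}$, so \eqref{limHs} of Proposition \ref{nonradialPD} applied with $s=1$ gives at once
$$
\tfrac12\|\nabla\phi_n\|_{\Lt}^2=\sum_{j=1}^M\tfrac12\|\nabla e^{-it^j_n\Delta}\psi^j\|_{\Lt}^2+\tfrac12\|\nabla W^M_n\|_{\Lt}^2+o_n(1).
$$
Using translation invariance of $E$, this is exactly the kinetic part of \eqref{pythagorean}, and the proposition reduces to the $L^{p+1}$ decoupling
$$
\|\phi_n\|_{L^{p+1}}^{p+1}=\sum_{j=1}^M\|e^{-it^j_n\Delta}\psi^j\|_{L^{p+1}}^{p+1}+\|W^M_n\|_{L^{p+1}}^{p+1}+o_n(1),
$$
where we note that $s<1$ forces $p+1<\tfrac{2d}{d-2}$ (every finite exponent if $d\le2$), hence $H^1\hookrightarrow L^{p+1}$ and all $L^{p+1}$-norms above are finite and uniformly bounded in $n$.

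I would prove this $L^{p+1}$ identity by peeling off one profile at a time, through finitely many applications of the following one-step claim: \emph{if $v_n$ is bounded in $H^1$, $(t_n)\subset\R$, $(x_n)\subset\Rn$, $\psi\in H^1$, and $v_n=g_n+r_n$ with $g_n:=e^{-it_n\Delta}\psi(\cdot-x_n)$ and $e^{it_n\Delta}r_n(\cdot+x_n)\rightharpoonup 0$ in $H^1$, then $\|v_n\|_{L^{p+1}}^{p+1}=\|g_n\|_{L^{p+1}}^{p+1}+\|r_n\|_{L^{p+1}}^{p+1}+o_n(1)$.} One applies this first with $v_n=\phi_n$, $g_n=e^{-it^1_n\Delta}\psi^1(\cdot-x^1_n)$, $r_n=W^1_n$ — legitimate because $e^{it^1_n\Delta}W^1_n(\cdot+x^1_n)=e^{it^1_n\Delta}\phi_n(\cdot+x^1_n)-\psi^1\rightharpoonup 0$ by the construction of $\psi^1$ — then with $v_n=W^1_n=g^2_n+W^2_n$, and so on through $j=M$, where at each stage $e^{it^j_n\Delta}W^j_n(\cdot+x^j_n)=e^{it^j_n\Delta}W^{j-1}_n(\cdot+x^j_n)-\psi^j\rightharpoonup 0$. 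As $M$ is fixed, the finitely many $o_n(1)$ errors accumulate harmlessly and the displayed identity follows.

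For the one-step claim I would pass to a subsequence so that either $t_n\to t_\infty\in\R$ or $|t_n|\to\infty$ (by the standard subsequence principle it suffices to treat an arbitrary such further subsequence). If $t_n\to t_\infty$: translating by $x_n$ (an $L^{p+1}$-isometry), $g_n(\cdot+x_n)=e^{-it_n\Delta}\psi\to e^{-it_\infty\Delta}\psi=:\Psi$ strongly in $H^1\hookrightarrow L^{p+1}$, while $r_n(\cdot+x_n)=e^{-it_n\Delta}\big(e^{it_n\Delta}r_n(\cdot+x_n)\big)\rightharpoonup e^{-it_\infty\Delta}0=0$ in $H^1$; passing to a further subsequence (Rellich on balls, then a diagonal argument) we may assume $r_n(\cdot+x_n)\to 0$ a.e., hence $v_n(\cdot+x_n)\to\Psi$ a.e. The Br\'ezis--Lieb lemma (the refined Fatou identity for sequences uniformly bounded in $L^{p+1}$ and convergent a.e.) then gives $\|v_n(\cdot+x_n)\|_{L^{p+1}}^{p+1}-\|v_n(\cdot+x_n)-\Psi\|_{L^{p+1}}^{p+1}\to\|\Psi\|_{L^{p+1}}^{p+1}$; since $g_n(\cdot+x_n)-\Psi\to 0$ in $L^{p+1}$, the subtracted term equals $\|r_n\|_{L^{p+1}}^{p+1}+o_n(1)$ and $\|\Psi\|_{L^{p+1}}^{p+1}=\|g_n\|_{L^{p+1}}^{p+1}+o_n(1)$, which is the claim. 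If instead $|t_n|\to\infty$: the dispersive estimate $\|e^{it\Delta}f\|_{L^{p+1}}\lesssim|t|^{-d(\frac12-\frac1{p+1})}\|f\|_{L^{(p+1)'}}$ for Schwartz $f$, together with density of Schwartz functions in $H^1\hookrightarrow L^{p+1}$ and the uniform bound $\|e^{it\Delta}f\|_{L^{p+1}}\lesssim\|f\|_{H^1}$, yields $\|g_n\|_{L^{p+1}}=\|e^{-it_n\Delta}\psi\|_{L^{p+1}}\to 0$; then $\big|\,\|v_n\|_{L^{p+1}}-\|r_n\|_{L^{p+1}}\,\big|\le\|g_n\|_{L^{p+1}}\to 0$ and, $\|r_n\|_{L^{p+1}}$ being bounded, $\|v_n\|_{L^{p+1}}^{p+1}=\|r_n\|_{L^{p+1}}^{p+1}+o_n(1)=\|g_n\|_{L^{p+1}}^{p+1}+\|r_n\|_{L^{p+1}}^{p+1}+o_n(1)$. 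Combining the two cases proves the one-step claim.

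I expect the main (though still routine) obstacle to be organizing the subsequence extractions: each peeling step and each appeal to Br\'ezis--Lieb requires passing to a subsequence, and one must check that the hypothesis ``$e^{it_n\Delta}r_n(\cdot+x_n)\rightharpoonup 0$'' survives translation and composition with the propagator at every stage. The quantitative inputs — unitarity of $e^{it\Delta}$ on $\dot H^1$, $L^{p+1}$ dispersive decay, the Br\'ezis--Lieb identity, and the embedding $H^1\hookrightarrow L^{p+1}$ available precisely because $s<1$ — are all classical; this is the $d$-dimensional, fractional-$p$ counterpart of the energy expansions in \cite{HoRo08,DuHoRo08,CaGu11}. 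The conceptual point is the dichotomy on the time shifts: a profile whose time shift diverges disperses and contributes no potential energy in the limit, while one with bounded time shift is essentially stationary and is captured exactly by Br\'ezis--Lieb.
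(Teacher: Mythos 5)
Your reduction to the $L^{p+1}$ decoupling via the $\dot H^1$ isometry of $e^{it\Delta}$ and \eqref{limHs} with $s=1$ is exactly the paper's first move, but from that point the two proofs part ways. The paper proves \eqref{norm6seq} in two separate steps: it first shows that the \emph{finite sum of profiles alone} decouples in $L^{p+1}$, after reindexing the $j$'s into those with bounded $t_n^j$ (handled by near-disjointness of the spatially translated bumps) and those with $|t_n^j|\to\infty$ (handled by the dispersive $L^{p+1}$ decay estimate, exactly the ingredient you use too); it then treats the remainder $W^M_n$ by an $\epsilon/3$ approximation argument in which a much larger $M_1\geq M$ is chosen so that $\|W^{M_1}_n\|_{L^{p+1}}\to0$, a fact derived from the asymptotic smallness \eqref{smallnessProp} of the linear profile decomposition via an $L^\infty_t L^{p+1}_x$ interpolation between $L^\infty_t L^{2d/(d-2s)}_x$ and $L^\infty_t\dot H^1_x$. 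You instead peel off one profile at a time with a one-step Br\'ezis--Lieb claim that treats the dichotomy $t_n$ bounded vs.\ $|t_n|\to\infty$ uniformly; this uses only the weak-convergence input $e^{it^j_n\Delta}W^j_n(\cdot+x^j_n)\rightharpoonup 0$ built into the extraction of each $\psi^j$, together with Rellich, Br\'ezis--Lieb, and the $L^{p+1}$ dispersive decay, and in particular it avoids invoking \eqref{smallnessProp} entirely. Both routes are standard in the concentration--compactness literature: yours is the Keraani/Kenig--Merle style Br\'ezis--Lieb argument and is slightly more economical in its hypotheses, while the paper follows the Holmer--Roudenko/Duyckaerts--Holmer--Roudenko scheme, whose intermediate statement \eqref{normsum} (orthogonality of the sum of profiles itself) is explicitly recorded and reused. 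Your proposal is a valid alternative proof; the one small caveat to make explicit when writing it up is the subsequence bookkeeping you already flag — since for fixed $M$ there are only finitely many shift sequences, you should fix at the outset a single subsequence along which, for every $1\le j\le M$, either $t_n^j$ converges or $|t_n^j|\to\infty$, and then run the peeling along that subsequence, concluding by the usual "every subsequence has a further subsequence converging to $0$" argument.
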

\begin{proof}
By definition of $E[u]$ and \eqref{limHs} with $s=1$, it suffices to prove that  for all $M\leq1$, we have
\begin{align}
\label{norm6seq}
\|\phi_n\|^{p+1}_{L^{p+1}}=\sum_{j=1}^M\|e^{-it_n^j\Delta}\psi^j\|^{p+1}_{L^{p+1}}+o_n(1).
\end{align}

\noindent\emph{ Step 1. Pythagorean expansion of  a sum of orthogonal profiles.} Fix $M\geq1$. We want to show that the condition \eqref{time-space-seq} yields
\begin{align}
\label{normsum}
\bigg\|\sum_{j=1}^M e^{-it^j_n\Delta}\psi^j(\cdot-x^j_n)\bigg\|^{p+1}_{L_x^{p+1}}=\sum_{j=1}^M\|e^{-it_n^j\Delta}\psi^j\|^{p+1}_{L_x^{p+1}}+o_n(1).
\end{align}
By rearranging and reindexing, we can find $M_0\leq M$ such that 
\begin{enumerate}
\item[(a)]  $t_n^j$ is bounded in $n$ whenever $1\leq j\leq M_0$, 
\item[(b)] $|t_n^j|\to \infty$ as $n\to\infty$  if $M_0+1\leq j\leq M.$
\end{enumerate}

For the case (a) take a subsequence and assume that  for each $1\leq j \leq M_0$, $t_n^j$ converges (in $n$), then adjust the profiles $\psi^j$'s such that  $t^j_n=0$. From \eqref{time-space-seq}  we have
$|x^j_n-x^k_n|\to +\infty$ as $n\to\infty$, which implies
\begin{align}
\label{normsum0}
\bigg\|\sum_{j=1}^{M_0} \psi^j(\cdot-x^j_n)\bigg\|^{p+1}_{L_x^{p+1}}=\sum_{j=1}^{M_0}\|\psi^j\|^{p+1}_{L_x^{p+1}}+o_n(1).
\end{align}

For the case (b), i.e., for $M_0\leq j \leq M$,  $|t_n^j|\to \infty$ as $n\to\infty$ and for $\tilde{\psi}\in \dH^{\frac{p}{p+1}}\cap L^{\frac{p}{p+1}}$, thus, the Sobolev embedding and the $L^p$ space-time decay estimate 
yield 
\begin{align*}
\|e^{-it^k_n\Delta}\psi^k\|_{L_x^{p+1}}\leq c\|\psi^k-\tilde{\psi}\|_{\dH^{\frac{p}{p+1}}}+\dfrac {c}{|t^k_n|^{\frac{d(p-1)}{2(p+1)}}}\|\tilde{\psi}\|_{L^{\frac{p+1}{p}}_x},\end{align*}
and approximating $\psi^k$ by $\tilde{\psi} \in C^\infty_{comp}$ in $\dH^{\frac{p}{p+1}}$,  we have 
\begin{align}
\label{divergent}
\|e^{-it^k_n\Delta}\psi^k\|_{L_x^{p+1}}\to 0 \mbox{    as } n\to\infty.
\end{align}
Thus, combining \eqref{normsum0} and \eqref{divergent}, we obtain \eqref{norm6seq}.

\noindent\emph{ Step  2. Finishing the proof.}  Note that 
\begin{align*}
\|W^{M_1}_n\|_{L^{p+1}_x}&\leq\|W^{M_1}_n\|_{L^\infty_tL^{p+1}_x}
\leq\|W^{M_1}_n\|^{1/2}_{L^\infty_tL^{\frac{2 d}{d - 2 s}}_x}\|W^{M_1}_n\|^{1/2}_{L^\infty_tL^{{\frac{2 d( d + 2 - 2  s)}{d(d-2) + 4s( 1 - s)}}}_x}\\
&\leq\|W^{M_1}_n\|^{1/2}_{L^\infty_tL^{\frac{2 d}{d - 2 s}}_x}\|W^{M_1}_n\|^{1/2}_{L^\infty_t\dH^1_x}
\leq\|W^{M_1}_n\|^{1/2}_{L^\infty_tL^{\frac{2 d}{d - 2 s}}_x}\sup_n\|\phi_n\|^{1/2}_{H^1}.
\end{align*}
By \eqref{smallnessProp} it follows that
\begin{align}
\lim_{M_1\to+\infty}\Big(\lim_{n\to+\infty} \|e^{it\Delta}W^{M_1}_n\|_{L^{p+1}}\Big)=0
\label{limM1}.
\end{align}
Let $M\geq1$ and $\epsilon>0.$ The sequence of profiles $\{\psi^n\}$ is uniformly bounded in $H^1$ and in $L^{p+1}$.  Hence, \eqref{limM1} implies that the sequence of remainders $\{W^M_n\}$ is also uniformly bounded in $L_x^{p+1}$. Pick $M_1\geq M$ and $n_1$ such that for $ n\geq n_1$, we have
\begin{align}
\label{est1}
\Big|\|\phi_n&-W^{M_1}_n\|_{L^{p+1}_x}^{p+1}-\|\phi_n\|_{L^{p+1}_x}^{p+1}\Big|+\Big|\|W^{M}_n-W^{M_1}_n\|_{L^{p+1}_x}^{p+1}-\|W^{M}_n\|_{L^{p+1}_x}^{p+1}\Big|\\
&\leq C\Big(\big(\sup_n\|\phi_n\|^{p}_{L^{p+1}_x}+\sup_n\|W^{M}_n\|^{p}_{L^{p+1}_x}\big)\|W^{M_1}_n\|_{L^{p+1}_x}+\|W^{M_1}_n\|^{p+1}_{L^{p+1}_x}\Big)\leq  \dfrac{\epsilon}{3}.\notag
\end{align}
Choose $n_2\geq n_1$ such that $n\geq n_2$. Then \eqref{normsum} yields
\begin{align}
\label{est2}
\Big|\|\phi_n-W^{M_1}_n\|_{L^{p+1}_x}^{p+1}-\sum^{M_1}_{j=1}\|e^{-it^j_n\Delta}\psi^j\|_{L^{p+1}_x}^{p+1}\Big|\leq \dfrac{\epsilon}{3}.
\end{align}
Since $W^{M}_n-W^{M_1}_n=\sum^{M_1}_{j=M+1}e^{-it^j_n\Delta}\psi^j(\cdot-x_n^j)$, by \eqref{normsum}, there exist $n_3\geq n_2$ such that $n\geq n_3$, 
\begin{align}
\label{est3}
\Big|\|W^{M}_n-W^{M_1}_n\|_{L^{p+1}_x}^{p+1}-\sum^{M_1}_{j=M+1}\|e^{-it^j_n\Delta}\psi^j\|_{L^{p+1}_x}^{p+1}\Big|\leq  \dfrac{\epsilon}{3}.
\end{align}
Thus for $n\geq n_3$, \eqref{est1}, \eqref{est2}, and \eqref{est3} yield
\begin{align}
\Big|\|\phi_n\|_{L^{p+1}_x}^{p+1}-\sum^{M}_{j=1}\|e^{-it^j_n\Delta}\psi^j\|_{L^{p+1}_x}^{p+1}-\|W^{M}_n\|_{L^{p+1}_x}^{p+1}\Big|\leq  \epsilon,
\end{align}
which concludes the proof.
\end{proof}


\begin{prop}[Nonlinear Profile decomposition]\label{nonradialPDNLS}
Let $\phi_n(x)$ be a uniformly bounded  sequence in $H^1(\Rn)$. Then for each $M\in\N$ there exists a subsequence of $\phi_n$, also denoted by  $\phi_n$,  for each $1 \leq j \leq M$, there exist a (same for all n) nonlinear profile $\tpsi^j$ in $H^1(\Rn)$, a sequence  of time shifts $t^j_n$, and a sequence  of space shifts $x^j_n$
and in addition, a sequence (in n) of remainders $\wt^M_n(x)$ in $H^1(\Rn)$,  such that 
\begin{equation}\label{NP}
\phi_n(x)=\sum^{M}_{j=1} \NLS(-t^j_n) \tpsi^j(x-x^j_n) + {\wt}^M_n(x), 
\end{equation}
where (as $n\to \infty$)
\begin{enumerate}
\item[(a)] for each j, either $t^j_n=0, t^j_n\to +\infty$ or $t^j_n\to -\infty$,
\item[(b)] if $t^j_n\to +\infty,$ then  $\|\NLS(-t)\tpsi^j\|_{\dB^0_{S([0,\infty);\dHs)}}<+\infty$ and if $t^j_n\to -\infty$, \\ then
  $\|\NLS(-t)\tpsi^j\|_{\dB^0_{S((-\infty,0];\dHs)}}<+\infty$,
\item[(c)] for $k \neq j$, then  $|t^j_n-t^k_n | + |x^j_n - x^k_n|\to+ \infty.$
\end{enumerate}

The remainder sequence has the following asymptotic smallness property:
\begin{align}
\label{smallnessPropNL}
\lim_{M \to \infty} \big( \lim_{n \to \infty} \| \NLS(t) \wt^M_n \|_{\dB^0_{\SHs}}\big ) = 0.
\end{align}
For fixed $M\in\N$ and any $0 \leq s \leq 1$, we have the asymptotic Pythagorean expansion
\begin{align}
\label{HPdecompNL}
\| \phi_n \|^2_{\dH^{s}} = \sum^M_{j=1} \|\NLS(-t^j_n) \tpsi^j \|^2_{\dH^{s}} + \|\wt^M_n \|^2_{\dH^{s}} +o_n(1)
\end{align}
and the energy Pythagorean decomposition (note that $E[\NLS(-t^j_n)\tpsi^j]=E[\tpsi^j]$):
\begin{align}
\label{pythagoreanNLS}
E[\phi_n]=\sum_{j=1}^M E[\tpsi^j]+E[\wt^M_n]+o_n(1).
\end{align}
\end{prop}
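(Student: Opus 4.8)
# Proof Proposal for Proposition \ref{nonradialPDNLS}

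\textbf{Overall strategy.} The plan is to derive the nonlinear profile decomposition from the linear one (Proposition \ref{nonradialPD}) by replacing each linear profile $e^{-it^j_n\Delta}\psi^j$ with a suitably chosen \emph{nonlinear} profile $\NLS(-t^j_n)\tpsi^j$. First I would apply Proposition \ref{nonradialPD} to $\phi_n$ to obtain the linear profiles $\psi^j$, the parameters $t^j_n$, $x^j_n$, and the linear remainders $W^M_n$ satisfying \eqref{time-space-seq}, \eqref{smallnessProp}, \eqref{limHs}. Passing to a subsequence, for each fixed $j$ the sequence $t^j_n$ either stays bounded — in which case, after a further subsequence, $t^j_n\to t^j_\infty$ finite and one replaces $\psi^j$ by $e^{-it^j_\infty\Delta}\psi^j$ so that effectively $t^j_n\equiv 0$ — or $|t^j_n|\to\infty$, and passing to a subsequence $t^j_n\to+\infty$ or $t^j_n\to-\infty$. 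This gives (a).

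\textbf{Defining the nonlinear profiles.} For each $j$ I would define $\tpsi^j$ as follows. If $t^j_n\equiv 0$, set $\tpsi^j:=\psi^j$, so $\NLS(-t^j_n)\tpsi^j=\NLS(0)\psi^j=\psi^j$ trivially agrees with the linear profile at $t^j_n=0$. If $t^j_n\to+\infty$, invoke the existence of wave operators (Proposition \ref{Existence of wave operator} part I, with $\psi=\psi^j$): there is a solution $v^j(t)$ of $\NLSf$ on a half-line $[T^j,\infty)$ with $\|v^j(t)-e^{it\Delta}\psi^j\|_{H^1}\to 0$ as $t\to+\infty$; one then \emph{defines} $\tpsi^j:=v^j(0)$ (extending $v^j$ to $t=0$; since $t^j_n\to\infty$ the solution is defined at the relevant times for $n$ large). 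The key structural consequence — used for (b) — is that, since $v^j$ scatters forward, $\|\NLS(t)\tpsi^j\|_{\dB^0_{S([0,\infty);\dHs)}}<\infty$ by Proposition \ref{H^1 Scattering} applied in reverse (a globally scattering solution has finite Besov–Strichartz norm), together with the fact that the scattering state has the same $H^1$ size. The case $t^j_n\to-\infty$ is symmetric, yielding finite norm on $(-\infty,0]$. In all cases one has the crucial \emph{asymptotic matching}
\begin{equation*}
\big\|\NLS(-t^j_n)\tpsi^j - e^{-it^j_n\Delta}\psi^j\big\|_{H^1}\xrightarrow[n\to\infty]{}0,
\end{equation*}
which is immediate when $t^j_n\equiv 0$ and follows from the defining property of the wave operator when $|t^j_n|\to\infty$.

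\textbf{Transferring the properties.} Item (c) is inherited verbatim from \eqref{time-space-seq}. For \eqref{HPdecompNL}, I would define $\wt^M_n := W^M_n + \sum_{j=1}^M\big(e^{-it^j_n\Delta}\psi^j - \NLS(-t^j_n)\tpsi^j\big)(\cdot-x^j_n)$; then $\phi_n=\sum_{j=1}^M\NLS(-t^j_n)\tpsi^j(\cdot-x^j_n)+\wt^M_n$ holds by construction, and since the correction terms vanish in $H^1$ (hence in $\dH^s$ for $0\le s\le1$, by interpolation) as $n\to\infty$ with $M$ fixed, \eqref{HPdecompNL} follows from \eqref{limHs} by absorbing the cross terms into $o_n(1)$. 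The energy decomposition \eqref{pythagoreanNLS} follows from Proposition \ref{energy Pythagorean expansion}: since $E$ is continuous on $H^1$ and $E[\NLS(-t^j_n)\tpsi^j]=E[\tpsi^j]=E[\psi^j]$ when $t^j_n\equiv0$, while for $|t^j_n|\to\infty$ one has $E[\tpsi^j]=\tfrac12\|\nabla\psi^j\|_{L^2}^2=\lim_n E[e^{-it^j_n\Delta}\psi^j]$ (the $L^{p+1}$ part of the latter decays, as in \eqref{divergent}), the nonlinear energies match the linear ones asymptotically, and \eqref{pythagorean} transfers. Finally \eqref{smallnessPropNL}: write $\NLS(t)\wt^M_n = e^{it\Delta}W^M_n + \NLS(t)\wt^M_n - e^{it\Delta}W^M_n$; the first term is controlled by \eqref{smallnessProp}, and the difference is estimated via the long-term perturbation lemma (Proposition \ref{longperturbation}) — treating $\NLS(t)\wt^M_n$ as an exact solution and $e^{it\Delta}W^M_n + \sum_j\NLS(t-t^j_n)\tpsi^j(\cdot-x^j_n)$ as an approximate one — once one verifies the approximate solution has bounded $\dB^0_{\SHs}$ norm (from (b) and orthogonality, the profile sum telescopes with small error) and small error term, which is where \eqref{smallnessProp} and the vanishing $L^{p+1}$ decay feed in.

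\textbf{Main obstacle.} The hard part will be the proof of \eqref{smallnessPropNL}: one must show the superposition $\sum_{j=1}^M\NLS(t-t^j_n)\tpsi^j(\cdot-x^j_n)$ is a good approximate solution of $\NLSf$, which requires (i) a uniform-in-$M$ bound on its $\dB^0_{\SHs}$ norm — obtained by exploiting the almost-orthogonality \eqref{time-space-seq} to decouple the $L^q_tL^r_x$ norms of distinct profiles, so that the $\ell^2$-type sum of the individual finite norms (which requires $\sum_j\|\tpsi^j\|_{\dH^s}^2<\infty$, itself a consequence of \eqref{HPdecompNL} and uniform boundedness) controls the whole — and (ii) smallness of the error $i\partial_t+\Delta$ applied to the superposition, namely $\big\||\sum_j u^j_n + r^M_n|^{p-1}(\sum_j u^j_n + r^M_n) - \sum_j|u^j_n|^{p-1}u^j_n\big\|_{\dB^0_{\SdHs}}$, whose cross terms are handled by orthogonality and whose remainder-dependent terms are handled by \eqref{smallnessProp}; the fractional, non-integer power $p$ forces these multilinear estimates to be run through the Besov machinery and the Hölder-continuous chain rule (Lemma \ref{Holderfd}) exactly as in the proof of Proposition \ref{small data}, which is the technically delicate point.
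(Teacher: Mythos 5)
Your proposal follows essentially the same strategy as the paper: start from the linear profile decomposition (Proposition \ref{nonradialPD}), reduce $t^j_n$ to the three cases $0$, $+\infty$, $-\infty$, invoke the wave operator (Proposition \ref{Existence of wave operator}) to produce nonlinear profiles $\tpsi^j$ satisfying the crucial asymptotic matching $\|\NLS(-t^j_n)\tpsi^j - e^{-it^j_n\Delta}\psi^j\|_{H^1}\to 0$, define $\wt^M_n$ as $W^M_n$ plus the correction terms $\at^j = e^{-it^j_n\Delta}\psi^j(\cdot-x^j_n)-\NLS(-t^j_n)\tpsi^j(\cdot-x^j_n)$, and transfer (c), \eqref{HPdecompNL} and \eqref{pythagoreanNLS} from their linear counterparts using that $\at^j\to0$ in $H^1$; the paper likewise proves the approximation $\NLS(t)\phi_n\approx\sum_j\NLS(t-t^j_n)\tpsi^j(\cdot-x^j_n)$ by verifying precisely your Claims (i) and (ii) and feeding them into Proposition \ref{longperturbation}.

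One place where your plan diverges and would not run as written is the direct estimate of $\|\NLS(t)\wt^M_n\|_{\dB^0_{\SHs}}$. You propose a long-term perturbation comparison with exact solution $\NLS(t)\wt^M_n$ and approximate solution $e^{it\Delta}W^M_n + \sum_j\NLS(t-t^j_n)\tpsi^j(\cdot-x^j_n)$, but the latter has initial datum $W^M_n+\sum_j\NLS(-t^j_n)\tpsi^j(\cdot-x^j_n)$, which differs from $\wt^M_n$ by a quantity that is not small (it essentially contains $\phi_n$), so Proposition \ref{longperturbation} does not apply to that pair. The paper instead handles \eqref{smallnessPropNL} more directly: one has $\|e^{it\Delta}\wt^M_n\|_{\dB^0_{\SHs}}\le\|e^{it\Delta}W^M_n\|_{\dB^0_{\SHs}}+c\sum_j\|\at^j\|_{H^1}\to 0$, and then the small-data theory (Proposition \ref{small data}) — rather than long-term perturbation — upgrades smallness of the linear evolution of $\wt^M_n$ to smallness of the nonlinear evolution, with the Duhamel/nonlinearity term controlled via the chain rule in Besov spaces exactly as in the fixed-point argument. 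If you replace your perturbative comparison by this small-data argument applied to the remainder alone, your proof matches the paper's.
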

\begin{proof}  From Proposition \ref{nonradialPD}, given that $\phi_n(x)$ is a uniformly bounded  sequence in $H^1$, we have
\begin{align}
\label{linearflowPD}
\phi_n(x)=\sum^{M}_{j=1} e^{-it^j_n\Delta} \psi^j(x-x^j_n) + W^M_n(x)
\end{align}
satisfying \eqref{time-space-seq}, \eqref{smallnessProp}, \eqref{limHs} and \eqref{pythagorean}. We will choose $M\in\N$ later. To prove this proposition, the idea is  to replace a linear flow $e^{it\Delta}\psi^j$ by some nonlinear flow.

For each $\psi^j$ we can apply the wave operator (Proposition \ref{Existence of wave operator}) to obtain a function $\tpsi^j\in H^1,$ which we will refer to as the nonlinear profile (corresponding to the linear profile $\psi^j$) such that the following properties hold:

For a given $j,$ there are two cases to consider: either $t^j_n$ is bounded, or $|t^j_n|\to +\infty.$
\bigskip

\noindent\emph{ Case $|t^j_n|\to +\infty$:} 

If $t^j_n\to +\infty,$  Proposition \ref{Existence of wave operator} Part I equation \eqref{wave1} implies that
\begin{align*}
\|\NLS(-t^j_n)\tpsi^j-e^{-it^j_n\Delta}\psi^j\|_{H^1}\to 0\quad \mbox{    as   } \quad t^j_n\to +\infty
\end{align*}
and so
\begin{align}\label{cotacriticalnorm}
\|\NLS(-t)\tpsi^j\|_{\dB^0_{S([0,+\infty),\dHs)}}<+\infty.
\end{align}
\noindent Similarly, if 
$t^j_n\to -\infty$, by \eqref{wave2} we obtain 
\begin{align*}\|\NLS(-t^j_n)\tpsi^j-e^{-it^j_n\Delta}\psi^j\|_{H^1}\to 0\quad \mbox{    as   } \quad t^j_n\to -\infty,
\end{align*}
and hence,
\begin{align}\label{negcriticalnorm}
\|\NLS(-t)\tpsi^j\|_{\dB^0_{S((-\infty,0],\dHs)}}<+\infty.
\end{align}

\noindent\emph{ Case  $t^j_n$ is bounded} (as $n\to\infty$): Adjusting the profiles $\psi^j$ we reduce it to the  case $t^j_n=0$. Thus, \eqref{time-space-seq}  becomes $|x^j_n-x^k_n|\to +\infty$ as $n\to\infty$, and continuity of the \emph{linear} flow in $H^1$, leads 
to $e^{-t_n^j\Delta}\psi^j \to \psi^j$ strongly in $H^1$ as $n \to \infty$. In this case, we simply let 
\begin{align*}
\tilde{\psi}^j=\NLS(0)e^{-i(\lim_{n\to\infty}t_n^j)\Delta}\psi^j=e^{-i0\Delta}\psi^j=\psi^j. 
\end{align*}

Thus, in either case of sequence $\{t_n^j\}$, we have a new nonlinear profile ${\tpsi^j}$ associated to each original linear profile $\psi^j$ such that 
\begin{equation}\label{flow approx}
\|\NLS(-t^j_n)\tpsi^j-e^{-it^j_n\Delta}\psi^j\|_{H^1}\to 0\quad\text{~~as~~}\quad n\to +\infty.
\end{equation}
Thus, we can substitute $e^{-it^j_n\Delta}\psi^j$ by $\NLS(-t^j_n)\tilde{\psi^j}$ in \eqref{linearflowPD} to obtain
\begin{align}
\phi_n(x)=\sum^{M}_{j=1} \NLS(-t^j_n) \tpsi^j(x-x^j_n) + \wt^M_n(x),\label{profile+residuo}
\end{align}
where
\begin{align}
{\wt}^M_n(x)&={W}^M_n(x)+%
\sum_{j=1}^M\big\{
e^{-it^j_n\Delta}\psi^j(x-x^j_n)-\NLS(-t^j_n)\tpsi^j(x-x^j_n)\big\}\equiv {W}^M_n(x)+%
\sum_{j=1}^M \at^j.
\label{residuo}
\end{align}
The triangle inequality yields 
$
\|e^{it\Delta}{\wt}^M_n\|_{\dB^0_{\SHs}}\leq\|e^{it\Delta}W^M_n\|_{\dB^0_{\SHs}}+
c\sum_{j=1}^M\big\|  \at^j
\big\|_{\dB^0_{\SHs}}.
$
By \eqref{flow approx} we have that
$
\|e^{it\Delta}{\wt}^M_n\|_{\dB^0_{\SHs}}\leq\|e^{it\Delta}W^M_n\|_{\dB^0_{\SHs}}+
c\sum_{j=1}^Mo_n(1),
$
and thus, 
\begin{align*}
\lim_{M \to \infty} \big( \lim_{n \to \infty} \| e^{it\Delta} \wt^M_n \|_{\dB^0_{\SHs}}\big ) = 0.
\end{align*}
Now we are going to apply a nonlinear flow to $\phi_n(x)$ and approximate it by a combination of ``nonlinear bumps" $\NLS(t-t^j_n)\tpsi^j(x-x^j_n),\;$ i.e., 
$$
\NLS(t)\phi_n(x)\approx \sum^{M}_{j=1} \NLS(t-t^j_n) \tpsi^j(x-x^j_n).
$$
Obviously, this can not hold for any bounded in $H^1$ sequence $\{\phi_n\}$, since, for a example, a nonlinear flow can introduce finite time blowup solutions. However, under the proper conditions we can use  the long term perturbation theory (Proposition \ref{longperturbation}) to guarantee that a nonlinear flow behaves basically similar to the linear flow.

To simplify notation, introduce the nonlinear evolution of each separate initial condition $u_{n,0}=\phi_n$:
$u_n(t,x)=\NLS(t)\phi_n(x),\;$
 the nonlinear evolution of each separate nonlinear profile (``bump"):
$\;v^j(t,x)=\NLS(t)\tpsi^j(x), \;$  
and  a linear sum of nonlinear evolutions of ``bumps":
$\tu_n(t,x)=\sum_{j=1}^M v^j(t-t^j_n,x-x^j_n).$ 

Intuitively, we think that $\phi_n=u_{n,0}$ is a sum of bumps $\tpsi^j$ (appropriately transformed) and $u_n(t)$ is a nonlinear evolution of their entire sum. On the other hand, $\tu_n(t)$ is a sum of nonlinear evolutions of each bump so we now want to compare $u_n(t)$ with $\tu_n(t)$. 

Note that if we had just the linear evolutions, then both $u_n(t)$ and $\tu_n(t)$ would be the same. 

Thus, $u_n(t)$ satisfies 
 $$
i\partial_t u_n+\Delta u_n+| u_n|^{p-1} u_n=0,
$$
and $\tu_n(t)$ satisfies 
 $$
i\partial_t \tu_n+\Delta \tu_n+|\tu_n|^{p-1}\tu_n={\te^M_n},
$$
where 
$$
{\te^M_n}=|\tu_n|^{p-1}\tu_n-\sum_{j=1}^M |v^j_n(t-t^j_n,\cdot-x^j_n)|^{p-1} v^j_n(t-t^j_n,\cdot-x^j_n).
$$
{\claim There exists a constant $A$ independent of $M$, and for every $M$, there exists $n_0=n_0(M)$ such that if $n>n_0$,  then
$
\|\tu_n\|_{\dB^0_{\SHs}}\leq A.
$ \label{claim 1} }
{\claim  For  
each $M$ and $\epsilon>0$, there exists  $n_1=n_1(M, \epsilon)$ such that if $n>n_1$, then
$
\|{\te}^M_n\|_{\dB^0_{\SdHs}}
\leq \epsilon.
$ \label{claim 2}}

Note $\tu_n(0,x)-u_n(0,x)=\wt_n^M(x)$. 
Then for any $\tilde \epsilon>0$ there exists $M_1=M_1(\tilde \epsilon)$ large enough such that for each $M>M_1$ there exists $n_2=n_2(M)$ with $n>n_2$ implying 
$$
\|e^{it\Delta}(\tu_n(0)-u_n(0))\|_{\dB^0_{\SHs}}\leq\tilde \epsilon.
$$
Therefore, for $M$ large enough and $n=\max(n_0,n_1,n_2)$, since
 $$
 e^{it\Delta}(\tu_n(0))=e^{it\Delta}\Bigg(\sum_{j=1}^M v^j(-t^j_n,x-x^j_n)\Bigg),
 $$ 
 which are scattering by \eqref{flow approx},
Proposition \ref{longperturbation} implies 
$\|u_n\|_{\dB^0_{\SHs}}< +\infty$, a  contradiction. 

Coming back to the nonlinear remainder $\wt^M_n,$ we estimate its nonlinear flow as follows (recall the notation of $\wt^M_n$, $\; W^M_n$ and $\at^j$ in \eqref{residuo}): 

By Besov Strichartz estimates \eqref{eq:In-Besov-Strichartz} and by the triangle inequality, we get 
\begin{align}
\|\NLS(t)\wt^M_n\|_{\dB^0_{\SHs}}&
\leq\|e^{it\Delta}\wt^M_n\|_{\dB^0_{\SHs}}+\left\|\left|\wt^M_n\right|^{p-1}\wt^M_n\right\|_{\dB^0_{\SdHs}}\notag\\
&\leq\|e^{it\Delta}\wt^M_n\|_{\dB^0_{\SHs}}+c\sum_{j=1}^M \|\at^j\|_{\dB^0_{\SHs}}^{p-1} \|\at^j\|_{\dB^{s}_{\SLt}}\label{pb1}\\
&\leq\|e^{it\Delta}\wt^M_n\|_{\dB^0_{\SHs}}+c\sum_{j=1}^M \|\at^j\|_{\dB^0_{\SHs}}^{p-1} \|\at^j\|_{\dB^0_{S(\dH^1)}}\label{pb2}.
\end{align}
We used \eqref{small besov} to obtain \eqref{pb1} and since $s<1$ we have $\;\dH^1\hookrightarrow\dH^s,\;$ so it yields \eqref{pb2}.
Hence,
\begin{align*}
\|\NLS(t)\wt^M_n\|_{\dB^0_{\SHs}}&
\leq\|e^{it\Delta}\wt^M_n\|_{\dB^0_{\SHs}}+c\sum_{j=1}^M \big\|
e^{-it^j_n\Delta}\psi^j-\NLS(-t^j_n)\tpsi^j\big\|_{H^1}^{p}
\end{align*}
and by \eqref{flow approx} and then applying \eqref{smallnessProp}, we obtain 
$
\lim_{n \to \infty} \| e^{i t \Delta}W^M_n \|_{\dB^0_{\SHs}}\to 0\quad \text{~~ as~~}\quad M\to \infty.
$
Thus we proved \eqref{profile+residuo}, \eqref{smallnessPropNL}. This also gives \eqref{HPdecompNL}.

Next,  we substitute the linear flow in  Lemma \ref{energy Pythagorean expansion} by the nonlinear  
and repeat the above long term perturbation argument
to obtain
\begin{align}
\label{aproxL6NL}
\| \phi_n \|^{p+1}_{L^{p+1}} = \sum^M_{j=1} \|\NLS(-t^j_n) \psi^j \|^{p+1}_{L^{p+1}} + \|\wt^M_n \|^{p+1}_{L^{p+1}} +o_n(1),
\end{align} 
which yields the energy Pythagorean decomposition \eqref{pythagoreanNLS}. The proof will be concluded after we prove the Claims \ref{claim 1} and \ref{claim 2}.

\noindent\emph{ Proof of Claim \ref{claim 1}}.
We show that for a large constant $A$ independent of $M$ and if $n>n_0=n_0(M)$, then 
\begin{align}\label{boundsol}
\|\tu_n\|_{S(\dHs)}\leq A.
\end{align}
 
 Let $M_0$ be a large enough such that
$\|e^{it\Delta}\wt^{M_0}_n\|_{\SHs}\leq \delta_{sd}.$   
Then, by \eqref{residuo}, for  each $j>M_0,$ we have $\|e^{it\Delta} \psi^j\|_{\SHs}\leq \delta_{sd},$ thus, Proposition \ref{Existence of wave operator} yields  
$
\|v^j\|_{\SHs}\leq 2\|e^{it\Delta} \psi^j\|_{\SHs}$     for  $ j>M_0.
$

Assume both $s\neq \frac12$ and $d\neq2$,  the pairs $\;\left(\profq,\profq\right)$, $\;\left(\infty,\frac{2 d}{d - 2 s}\right)$, $\;\left(\quhtq, \ruhr\right)\;$ and $\;\left(\qdhtq, \rdhr\right)$, are $\dH^s$ admissible.
Hence, we have 
\begin{align}
\notag
\|\tu_n&\|^{\profq}_{L_t^{\profq}L^{\profq}_x}=\\
 =&\sum_{j=1}^{M_0}\|v^j\|^{\profq}_{L_t^{\profq}L^{\profq}_x}
 +\sum_{j=M_0+1}^{M}\|v^j\|^{\profq}_{L_t^{\profq}L^{\profq}_x} + \mbox{cross terms} 
 \notag
 \\
\leq&\sum_{j=1}^{M_0}\|v^j\|^{\profq}_{L_t^{\profq}L^{\profq}_x}
+2^{\profq}\sum_{j=M_0+1}^{M}\|e^{it\Delta}  \psi^j\|^{\profq}_{L_t^{\profq}L^{\profq}_x} + \mbox{cross terms,}
\label{sumsNLS}
\end{align}
note that by \eqref{linearflowPD} we have 
\begin{align}
\|&e^{it\Delta}  \phi_n\|^{\profq}_{L_t^{\profq}L^{\profq}_x}=\notag
\\&
=\sum_{j=1}^{M_0}\|e^{it\Delta}  \psi^j\|^{\profq}_{L_t^{\profq}L^{\profq}_x} +2^{\profq}\sum_{j=M_0+1}^{M}\|e^{it\Delta}  \psi^j\|^{\profq}_{L_t^{\profq}L^{\profq}_x} + \mbox{cross terms.} \label{sumlinear}
\end{align}
Observe that by \eqref{time-space-seq} and taking $n_0=n_0(M)$ large enough, we can consider $\{u_n\}_{n>n_0}$ and thus, make ``the cross terms" $\leq1$. Then \eqref{sumlinear} and
 $\|e^{it\Delta}  \phi_n\|_{L_t^{\profq}L^{\profq}_x}\leq c \|\phi_n\|_{\dHs}\leq c_1$ imply $\sum_{j=M_0+1}^{M}\|e^{it\Delta}  \psi^j\|^{\profq}_{L_t^{\profq}L^{\profq}_x} $ is bounded independent of $M$ provided $n>n_0$. If $n>n_0,$ then $\|\tu_n\|_{L_t^{\profq}L^{\profq}_x}$ is also bounded independent of $M$ by \eqref{sumsNLS}. 

 In a similar fashion, one can prove that $\|\tu_n\|_{L_t^{\infty}L^{\frac{2 d}{d - 2 s}}_x}$ is bounded independent of $M$ provided $n>n_0$. Interpolation between $\|\tu_n\|_{L_t^{\profq}L^{\profq}_x}$ and $\|\tu_n\|_{L_t^{\infty}L^{\frac{2 d}{d - 2 s}}_x}$ gives $\|\tu_n\|_{\quht \ruh}$ and $\|\tu_n \|_{\qdht\rdh}$   are both  bounded independent of $M$ for $n>n_0$. 

 When $s=\frac12$ and $d=2$, the previous argument takes the pair $(2,\infty)$ which is not an admissible pair in dimension 2. Instead  we estimate $\|\tu\|_{L^8_x,L^8_x}$ and $\|\tu\|_{L^\infty_x,L^4_x}$,   and interpolate between them to get that $\|\tu\|_{L^{12}_x,L^6_x}$ is bounded independent of $M$ provided $n>n_0.$
 
  To close the argument, we  apply Kato estimate \eqref{eq:Kato-Strichartz} to the integral equation of  
 $$
i\partial_t \tu_n+\Delta \tu_n+|\tu_n|^{p-1}\tu_n={\te^M_n}. 
$$
Claiming   $\|\te_n^M\|_{\dB^0_{S'(\dH^{-s})}}\leq 1$ (see Claim \ref{claim 2}) , as in Proposition \ref{longperturbation}, we obtain that $\|\tu_n\|_{\dB^0_{\SHs}}$ is as well bounded independent of $M$ provided $n>n_0.$ Thus, Claim \ref{claim 1} is proved.  
\\

\noindent\emph{ Proof of Claim \ref{claim 2}.} Note that the pairs $(\frac{6}{1-s},\frac{6 d}{3 d-4s-2})$, $(\frac{4}{1- s},\frac{2 d}{d -s-1})$ are $\Hs$ admissible and the pair $(\frac{12 (d - 2 s)}{(8 + 3 d - 6 s) (1 - s)},\frac{6 d (d - 2 s)}{  3 (d^2+ 2s^2)+ 9 d (1 - s) - 2(5 s + 4)})$ is $\dH^{-s}$ admissible.
Recall the elemental inequality: for $a_j, a_k \in \C,$ 
$$\Bigg|\bigg|\sum^M_{j=1}a_j\bigg|^{p-1}\sum^M_{k=1}a_k- \sum^M_{j=1}|a_j|^{p-1}a_j\Bigg| \leq c_{p,M} \sum_{j=1}^M\sum _{\substack{k=1\\k\neq j}}^M |a_k|^{p-1}|a_j|,$$
 which combined with the H\"older's inequality, for each dyadic number $N\in 2^{\Z}$, leads to
\begin{align*}
\|\te_n^M&\|_{S'(\dH^{-s})}\leq \|{\te}^M_n\|_{\qzht \rzh}\\
&\leq \sum_{j=1}^M\sum _{\substack{k=1\\k\neq j}}^M \|v^{k}(t-t^{k}_n,x-x^{k})\|^{p-1}_{\quht \ruh}\|v^{j}(t-t^{j}_n,x-x^{j})\|_{\qdht\rdh}.
\end{align*}
Here, we used the following H\"older splits: 
$$\frac{(p-1)(1-s)}{6}+\frac{1-s}{4}=\frac{(8 + 3 d - 6 s) (1 - s)}{12 (d - 2 s)},$$
$$\frac{(p-1)(3 d-4s-2)}{6 d}+\frac{d -s-1}{2 d}=\frac{  3 (d^2+ 2s^2)+ 9 d (1 - s) - 2(5 s + 4)}{6 d (d - 2 s)}.$$
 Note that either   $\{t_n^{k}\} \to \pm \infty$ or $\{t_n^{k}\}$ is bounded. 

If $\{t_n^{j}\} \to \pm \infty$, without loss of generality assume $|t_n^{k}-t_n^{j}|\to\infty$ as $n\to \infty$ and by adjusting the profiles that $|x_n^{k}-x_n^{j}|\to0$  as $n\to \infty$. Since $v^{k}\in{\quht \ruh}$ and $v^{j_2}\in \qdh\rdh$, then
$$\|v^{k}(t-t^{k}_n,x-x^{k})\|^{p-1}_{\quht \ruh}\|v^{j}(t-t^{j}_n,x-x^{j})\|_{\qdht\rdh}\to 0.$$ 

If $\{t_n^{j}\}$ is bounded, without loss of generality, assume $|x_n^{j}-x_n^{k}|\to\infty$ as $n\to \infty,$ then 
$$\|v^{k}(t-t^{k}_n,x-x^{k})\|^{p-1}_{\quht \ruh}\|v^{j}(t-t^{j}_n,x-x^{j})\|_{\qdht\rdh}\to 0.$$ 
Thus, in either case we obtain Claim \ref{claim 2}.

This finishes the proof of Proposition \ref{nonradialPDNLS}
\end{proof}

Observe that \eqref{HPdecompNL} gives $\dH^1$ asymptotic orthogonality at $t=0$ and the following lemma extends it to the bounded NLS flow for $0\leq t\leq T.$ 

\begin{lemma}[$\dH^1$ Pythagorean decomposition along the bounded  NLS flow]\label{HPdecompNLf} Suppose $\phi_n$ is a bounded sequence in $H^1(\Rn)$. Let $T\in (0,\infty)$ be a fixed  time. Assume that $u_n(t)\equiv \NLS(t)\phi_n$ exists up to time $T$ for all $n,\;$ and $\lim_{n\to \infty}\|\nabla u_n(t)\|_{L^\infty_{[0,T]} \Lt_x}<\infty.$  Consider the nonlinear profile decomposition from Proposition \ref{nonradialPDNLS}. Denote $W^M_n(t)\equiv \NLS(t)W^M_n$. Then for all $j$, the nonlinear profiles $v^j(t)\equiv \NLS(t)\tpsi^j$ exist up to time T and for all $t\in[0,T],$
\begin{align}
\label{PdecompNLS}
\|\nabla u_n(t)\|^2_{\Lt}=\sum_{j=1}^{M} \|\nabla v^j(t-t^j_n)\|^2_{\Lt}+\|\nabla W^M_n(t)\|^2_{\Lt_x}+o_n(1),
\end{align}
where $o_n(1)\to 0$ uniformly on $0\leq t\leq T.$
\end{lemma}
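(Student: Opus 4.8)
The plan is to upgrade the $t=0$ Pythagorean expansion \eqref{HPdecompNL} (with $s=1$) to a uniform-in-time statement on $[0,T]$ by using the nonlinear profile decomposition together with the long-term perturbation argument, exactly in the spirit of the Holmer--Roudenko and Duyckaerts--Holmer--Roudenko proofs. First I would fix $M$ and recall from Proposition \ref{nonradialPDNLS} that $\phi_n = \sum_{j=1}^M \NLS(-t^j_n)\tpsi^j(\cdot-x^j_n) + \wt^M_n$, and set $u_n(t)=\NLS(t)\phi_n$, $v^j(t)=\NLS(t)\tpsi^j$, and $\tu_n(t)=\sum_{j=1}^M v^j(t-t^j_n,\cdot-x^j_n)$ as in that proof. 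The hypothesis $\lim_n \|\nabla u_n(t)\|_{L^\infty_{[0,T]}\Lt}<\infty$ and the energy/gradient comparison (Lemma \ref{comparison}) force each $v^j$ to exist on $[0,T]$: indeed each profile $\tpsi^j$ carries a portion of the limiting mass-energy by \eqref{HPdecompNL}, \eqref{pythagoreanNLS}, so by subcriticality the $v^j$ cannot blow up before $T$, and likewise $W^M_n(t)=\NLS(t)W^M_n$ exists on $[0,T]$ for $M$ large (its $\dHs$-Strichartz norm is small by \eqref{smallnessProp}, so Proposition \ref{small data} applies).

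Next I would compare $u_n(t)$ with $\tu_n(t)$ on $[0,T]$ using Proposition \ref{longperturbation}. By Claim \ref{claim 1} $\|\tu_n\|_{\dB^0_{\SHs}}\le A$ uniformly, by Claim \ref{claim 2} the error $\te^M_n=|\tu_n|^{p-1}\tu_n - \sum_j |v^j_n|^{p-1}v^j_n$ has $\|\te^M_n\|_{\dB^0_{\SdHs}}$ arbitrarily small for $n$ large, and $\|e^{it\Delta}(u_n(0)-\tu_n(0))\|_{\dB^0_{\SHs}} = \|e^{it\Delta}\wt^M_n\|_{\dB^0_{\SHs}}$ is arbitrarily small by \eqref{smallnessPropNL}; hence long-term perturbation gives $\|u_n - \tu_n\|_{\dB^0_{\SHs([0,T])}}\to 0$, and by the $H^1$ theory (Strichartz applied to the Duhamel difference, plus the uniform $\nabla$ bound) one promotes this to $\sup_{0\le t\le T}\|u_n(t)-\tu_n(t)\|_{\dH^1}\to 0$ as $n\to\infty$. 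Therefore it suffices to prove the Pythagorean expansion \eqref{PdecompNLS} with $u_n(t)$ replaced by $\tu_n(t)=\sum_{j=1}^{M_1} v^j(t-t^j_n,\cdot-x^j_n)$, i.e.
\begin{align*}
\Big\|\nabla \sum_{j=1}^{M} v^j(t-t^j_n,\cdot-x^j_n)\Big\|_{\Lt}^2 = \sum_{j=1}^{M}\|\nabla v^j(t-t^j_n)\|_{\Lt}^2 + o_n(1),
\end{align*}
uniformly in $t\in[0,T]$, where the $W^M_n(t)$ term is handled separately via the same perturbation replacement applied to the linear flow remainder (its $\dH^1$ norm is comparable along the flow to that of $W^M_n$ at $t=0$). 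The cross terms $\langle \nabla v^j(t-t^j_n,\cdot-x^j_n),\nabla v^k(t-t^k_n,\cdot-x^k_n)\rangle_{\Lt}$ vanish as $n\to\infty$ by the orthogonality \eqref{time-space-seq}: when $|t^j_n-t^k_n|\to\infty$ one uses the $\Lt$-dispersive decay of $\nabla v^j$ (approximating $\nabla\tpsi^j$ by Schwartz data as in \eqref{divergent}), and when $t^j_n-t^k_n$ stays bounded but $|x^j_n-x^k_n|\to\infty$ one uses spatial separation of the supports after the same approximation. The uniformity in $t\in[0,T]$ follows because $\{\nabla v^j(t)\}_{t\in[0,T]}$ is a compact (hence equicontinuous-in-$\Lt$) family, so the decay/separation estimates for the cross terms can be made uniform.

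The main obstacle is the uniformity of the $o_n(1)$ over the whole interval $[0,T]$: one must show the cross-term estimates and the perturbation comparison degrade gracefully, not just pointwise in $t$. The perturbation step is fine because Proposition \ref{longperturbation} already produces a $\dB^0_{\SHs([0,T])}$ (hence $L^\infty_t\dH^1$, after the Strichartz bootstrap) bound; the delicate point is the cross terms, where I would exploit that each $v^j$ is a fixed $H^1$-solution on the compact interval $[0,T]$, so $\{v^j(t):t\in[0,T]\}$ is precompact in $H^1$ and can be uniformly approximated by a finite net of Schwartz functions, reducing the uniform estimate to finitely many instances of the pointwise argument. A secondary technical care is that only finitely many profiles (say $j\le M_0$) carry non-negligible mass, and for $j>M_0$ the smallness of $\|e^{it\Delta}\psi^j\|_{\dB^0_{\SHs}}$ gives $\|v^j\|_{\dB^0_{\SHs}}\lesssim \|e^{it\Delta}\psi^j\|_{\dB^0_{\SHs}}$ via Proposition \ref{Existence of wave operator}, so the tail $\sum_{j>M_0}$ contributes $o_M(1)$ uniformly in $n$ and $t$; this is the same bookkeeping as in the proof of Claim \ref{claim 1} and I would simply invoke it.
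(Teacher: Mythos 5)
Your proposal reproduces the paper's setup (comparison of $u_n(t)$ with $\tu_n(t)=\sum_j v^j(t-t^j_n,\cdot-x^j_n)$ via the long-term perturbation machinery and Claims \ref{claim 1}--\ref{claim 2}), but it breaks at the key step. The assertion that one can promote smallness of $\|u_n-\tu_n\|_{\dB^0_{\SHs}}$ on $[0,T]$ to $\sup_{0\le t\le T}\|u_n(t)-\tu_n(t)\|_{\dH^1}\to 0$ is false for fixed $M$: at $t=0$ the difference is exactly $\wt^M_n$, whose $\dH^1$ norm is generically \emph{not} $o_n(1)$ (only its $\dB^0_{\SHs}$ norm becomes small, and only in the double limit $M\to\infty$, $n\to\infty$). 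This is precisely why the conclusion \eqref{PdecompNLS} retains the nonnegligible term $\|\nabla W^M_n(t)\|^2_{\Lt}$; your argument would spuriously absorb it into the $o_n(1)$, and the remark that the remainder is ``handled separately because its $\dH^1$ norm is comparable along the flow'' does not reinstate it in your cross-term expansion. A secondary issue: ``$\Lt$-dispersive decay of $\nabla v^j$'' does not exist for the nonlinear flow (mass conservation); what holds is weak convergence of the scattering solution, so the uniform-in-$t$ cross-term estimate needs more than the compactness remark you give.

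The paper deliberately avoids the $\dH^1$-comparison. After the perturbation step gives $\|u_n-\tu_n\|_{S(\dHs;[0,\tilde T])}\le c(\tilde T)\,\epsilon$ (this is \eqref{aprox}, and it is genuinely all you get), it interpolates into $L^{p+1}$: because $L^{p+1}$ sits between the Strichartz space (where the difference is small) and $\dH^1$ (where it is merely bounded by the hypothesis $\lim_n\|\nabla u_n\|_{L^\infty_{[0,T]}\Lt}<\infty$), the quantity $\|u_n-\tu_n\|_{L^\infty_{[0,\tilde T]}L^{p+1}_x}$ is controlled by a positive power of $\epsilon$ -- see \eqref{aproxlinea1a}--\eqref{aproxlinea2a} and the $d=1,2$ analogues. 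This produces the $L^{p+1}$ Pythagorean expansion \eqref{aproxuL6NL} along the flow. Energy conservation together with \eqref{pythagoreanNLS} then gives \eqref{energyNLF}, and the gradient decomposition \eqref{PdecompNLS} follows algebraically from $\tfrac12\|\nabla u\|^2_{\Lt}=E[u]+\tfrac1{p+1}\|u\|^{p+1}_{L^{p+1}}$; the $\|\nabla W^M_n(t)\|^2_{\Lt}$ term emerges correctly from applying this identity to $W^M_n(t)$ and using $E[W^M_n(t)]=E[W^M_n]$. So the direct $\dH^1$ cross-term analysis you propose is not the right tool here: the correct mechanism is to transfer the Strichartz smallness into $L^{p+1}$ and reconstruct the gradient from the conserved energy.
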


\begin{proof}
We use Propositon \ref{nonradialPDNLS} to obtain profiles $\{\tpsi^j\}$ and the nonlinear profile decomposition \eqref{NP}. Note that $\lim_{n\to \infty}\| \NLS(t)W^M_n\|_{\dB^0_{\SHs}}\to0$ as $M\to\infty$, 
so by choosing a large M we can make $\| \NLS(t)W^M_n\|_{\dB^0_{\SHs}}$ small. 

Let $M_0$ be such that for $M\geq M_0$ (and for $n$ large), we have $\| \NLS(t)W^M_n\|_{\dB^0_{\SHs}} \leq \delta_{sd}$ (recall $\delta_{sd}$ from Proposition \ref{small data}). Reorder the first $M_0$ profiles and let $M_2,$  $0\leq M_2\leq M,$ be  such that 
\begin{enumerate}
\item For each $1\leq j\leq M_2,$ we have $t_n^j=0.$ Observe that if $M_2=0,$  there are no $j$ in this case.
\item For each $M_2+1\leq j\leq M_0,$ we have $|t_n^j|\to \infty.$ If $M_2=M_0,$ then it means that  there are no $j$ in this case.
\end{enumerate}
From Proposition \ref{nonradialPDNLS} and the profile decomposition \eqref{NP} we have that $v^j(t)$ for $j>M_0$ are scattering, and  for $M_2+1\leq j\leq M_0$  we have $\|v^j(t-t^j_n)\|_{S(\dHs;[0,T])}\to 0$ as $n\to +\infty$. 

  In fact, taking $t_n^j \to +\infty$ and
$\|v^j(-t)\|_{S(\dot H^{s}; [0,+\infty))}<\infty$, dominated convergence leads to $\|v^j(-t)\|_{L_{[0,+\infty)}^q L_x^r}<\infty$, for $q<\infty$,  where $(r,q)$ is an $\dHs$ admissible pair, and consequently,
$\|v^j(t-t_n^j)\|_{L_{[0,T]}^q L_x^r} \to 0$ as $n\to\infty$.  As   $v^j(t)$ has been constructed via the existence of wave operators to converge in $H^1$ to a linear flow,   the $L^r_x$  decay of the linear flow 
$$\|v^j(t-t^j)\|_{L^\infty_{[0,T]}L^{r}_x}\to 0,$$
 with 
$$r=\left\{\begin{array}{cccc}
\frac{2d}{d-2s} &  & d\geq3& 
\\\frac{2}{1-s} &  & d=2 &\\\frac{2}{1-2s} &  & d=1&
\end{array}\right. \quad\quad \text{and~~} s \text{~~as in~~} \eqref{casos dim}.$$

Let $B=\max\{1,\lim_{n}\|\nabla u_n(t)\|_{L^\infty_{[0,T]}\Lt_x}\}<\infty$. For each $1\leq j\leq M_2$, 
let $T^j\leq T$ be the maximal forward time  such that $\|\nabla v^j\|_{L^\infty_{[0,T^j]}\Lt_x}\leq 2B$,
 and $\tilde T=\min_{1\leq j\leq M_2}T^j$ or  $\tilde T=T$ if $M_2=0.$  It is sufficient to prove that 
 \eqref{PdecompNLS} holds for $\tilde T=T$, since for each $1\leq j \leq M_2$, we have $T^j=T,\;$ 
 and  therefore, $\tilde T=T.$ Thus, let's consider $[0,\tilde T]$. For each $1\leq j\leq M_2$,  we have
 \noindent for $d\geq3$:
\begin{align}
\|v^j(t)\|_{S(\dHs;[0,\tilde T])}
&\lesssim \|v^j\|_{L^{\frac {2}{1-s}}_{[0,\tilde T]}L^{\frac{2d}{d-2}}_x}
+\|v^j\|_{L^{\infty}_{[0,\tilde T]}L^{\frac{2 d}{d - 2 s}}_x}\label{line1}
\\
&\lesssim \|v^j\|_{L^{\frac {2}{1-s}}_{[0,\tilde T]}L^{\infty}_x}
\|v^j\|_{L^{\infty}_{[0,\tilde T]}L^{\frac{2d}{d-2}}_x}
+\|v^j\|^{1-s}_{L^{\infty}_{[0,\tilde T]}L^{2}_x}
\|v^j\|^s_{L^{\infty}_{[0,\tilde T]}L^{\frac{2d}{d-2}}_x}\label{line2}
\\
&\lesssim \big(\tilde T^{\frac{1-s}{2}}+c^{1-s}\big)\|\nabla v^j\|_{L^{\infty}_{[0,\tilde T]}L^{2}_x}
\lesssim \langle\tilde T^{\frac{1-s}{2}}\rangle B \label{line4},
\end{align}
note that  \eqref{line1} comes from  the ``end point" admissible $S(\dHs)$ Strichartz norms ($L^{\frac{2}{1-s}}_{t}L^{\frac{2d}{d-2s}}_x$ and $L^{\infty}_{t}L^{\frac{2 d}{d - 2 s}}_x$), since  all other $S(\dHs)$ norms will be bounded by interpolation; the H\"older's inequality  yields \eqref{line2} and  the Sobolev's embedding $\dH^1(\Rn)\hookrightarrow L^{\frac{2 d}{d - 2 s}}(\Rn)$ together with $\|v^j\|_{L^{\infty}_{[0,\tilde T]}L^{2}_x}=\|\psi^j\|_{L^{2}_x}\leq  \|\phi_n\|_{L^{2}}$, from \eqref{HPdecompNL} with $s=0$, 
gives \eqref{line4}.

\noindent For  $d=2$:  
\begin{align}
\|v^j(t)\|&_{S(\dHs;[0,\tilde T])}
\lesssim \|v^j\|_{L^{\infty}_{[0,\tilde T]}L^{\frac{2}{1-s}}_x}+
\|v^j\|_{L^{\frac {2}{1-s}}_{[0,\tilde T]}L^{r}_x}\label{linea1}
\\
&\lesssim \|v^j\|^{1-s}_{L^{\infty}_{[0,\tilde T]}L^{2}_x}
\|v^j\|_{L^{\infty}_{[0,\tilde T]}L^{\infty}_x}+
\|v^j\|^{1-s}_{L^{2}_{[0,\tilde T]}L^{\infty}_x}
\|v^j\|_{L^{\infty}_{[0,\tilde T]}L^{r
}_x}\label{linea2}
\\
&\lesssim \|v^j\|^{1-s}_{L^{\infty}_{[0,\tilde T]}L^{2}_x}
\|\nabla v^j\|_{L^{\infty}_{[0,\tilde T]}L^{2}_x}+
\|v^j\|^{1-s}_{L^{2}_{[0,\tilde T]}L^{\infty}_x}
\|v^j\|_{L^{\infty}_{[0,\tilde T]}\dH^{1-
\frac 2{r}
}_x}\label{linea2a}
\\
&\lesssim \left(\|v^j\|^{1-s}_{L^{\infty}_{[0,\tilde T]}L^{2}_x}
+
\|v^j\|^{1-s}_{L^{2}_{[0,\tilde T]}L^{\infty}_x}
\right)\|\nabla v^j\|_{L^{\infty}_{[0,\tilde T]}L^{2}_x}
\label{linea3}
\\
&\lesssim \big(\tilde T^{\frac{1-s}{2}}+c^{1-s}\big)\|\nabla v^j\|_{L^{\infty}_{[0,\tilde T]}L^{2}_x}\lesssim \langle\tilde T^{\frac{1-s}{2}}\rangle B\label{linea4},
\end{align}
where $r=\big( \big(\frac{2}{1-s}\big)^+\big)'$.
Note that  \eqref{linea1} comes from  the ``end point" admissible  Strichartz norms ($L^{\infty}_{t}L^{\frac{2}{1-s}}_x$ 
and $L^{\frac {2}{1-s}}_{t}L^{r}_x$); 
 H\"older's inequality yields \eqref{linea2};  the Sobolev's embeddings $\dH^1(\R^2)\hookrightarrow L^\infty(\R^2)$  and $\dH^{1-\frac{2}{r}}(\R^2)\hookrightarrow L^{r}(\R^2)$ leads to \eqref{linea2a}; since $r$ is large we have the Sobolev's embedding $\dH^{1}(\R^2)\hookrightarrow \dH^{1-\frac{2}{r}}(\R^2),\;$ which implies \eqref{linea3}, and finally,
since  $\|v^j\|_{L^{\infty}_{[0,\tilde T]}L^{2}_x}=\|\psi^j\|_{L^{2}_x}\leq  \|\phi_n\|_{L^{2}}$ by \eqref{HPdecompNL} with $s=0$ 
we get \eqref{linea4}.

\noindent For  $d=1$:  
\begin{align}
\|v^j(t)\|&_{S(\dHs;[0,\tilde T])}
\lesssim \|v^j\|_{L^{\infty}_{[0,\tilde T]}L^{\frac{2}{1-2s}}_x}+
\|v^j\|_{L^{\frac {4}{1-2s}}_{[0,\tilde T]}L^{\infty}_x}\label{linea1d1}
\\
&\lesssim \|v^j\|^{1-2s}_{L^{\infty}_{[0,\tilde T]}L^{2}_x}
\|v^j\|_{L^{\infty}_{[0,\tilde T]}L^{\infty}_x}+
\|v^j\|^{\frac{1-2s}{2}}_{L^{2}_{[0,\tilde T]}L^{\infty}_x}
\|v^j\|_{L^{\infty}_{[0,\tilde T]}L^{\infty
}_x}\label{linea2d1}
\\
&\lesssim \left(\|v^j\|^{1-2s}_{L^{\infty}_{[0,\tilde T]}L^{2}_x}
+
\|v^j\|^{\frac{1-2s}2}_{L^{2}_{[0,\tilde T]}L^{\infty}_x}
\right)\|\nabla v^j\|_{L^{\infty}_{[0,\tilde T]}L^{2}_x}\notag
\\
&\lesssim \big(\tilde T^{\frac{1-2s}{2}}+c^{\frac{1-2s}{2}}\big)\|\nabla v^j\|_{L^{\infty}_{[0,\tilde T]}L^{2}_x}\lesssim \langle\tilde T^{\frac{1-2s}{2}}\rangle B\label{linea4d1},
\end{align}
note that  \eqref{linea1d1} comes from  the ``end point" admissible  Strichartz norms ($L^{\infty}_{t}L^{\frac{2}{1-2s}}_x$ 
and $L^{\frac {4}{1-2s}}_{t}L^{\infty}_x$); 
 H\"older's inequality yields \eqref{linea2d1};  the Sobolev's embeddings $\dH^1(\R^1)\hookrightarrow L^\infty(\R^1)$  implies \eqref{linea2d1}, and finally,   
  $\|v^j\|_{L^{\infty}_{[0,\tilde T]}L^{2}_x}=\|\psi^j\|_{L^{2}_x}\leq  \|\phi_n\|_{L^{2}}$ leads to  \eqref{linea4d1}.

As in the proof of Proposition \ref{nonradialPDNLS}, set  
$\tu_n(t,x)=\sum_{j=1}^M v^j(t-t^j_n,x-x^j_n)$
 and, a linear sum of nonlinear flows of nonlinear profiles $\tpsi^j,\;$  $\te^M_n=i\partial_t \tu_n+\Delta \tu_n+|\tu_n|^{p-1}\tu_n.$ Thus,  for $M>M_0$ we have
 
 \noindent\emph{ Claim \ref{claim 1}}: There exist a constant $A=A(\tilde T)$ independent of $M$, and for every $M$, there exists $n_0=n_0(M)$ such that if $n>n_0$,  then
$
\|\tu_n\|_{\dB^0_{\SHs}}\leq A.
$ 

\noindent\emph{ Claim \ref{claim 2}}: For each $M$ and $\epsilon>0$, there exists  $n_1=n_1(M, \epsilon)$ such that if $n>n_1$, then
$\|{\te}^M_n\|_{\dB^0_{\SdHs}}.
$

\begin{rmk}\label{adapting long time}
Note  since $u(0)-\tu_n(0)=\wt^M_n$, there  exists $M'=M'(\epsilon)$ large enough so that for each $M>M'$  there exists  $n_2=n_2(M)$ such  that $n>n_2$ implies 
$$\|e^{it\Delta}(u(0)-\tu_n(0))\|_{\dB^0_{S(\dHs;[0,\tilde T])}}\leq \epsilon.$$
\end{rmk}

We will next apply the long term perturbation argument (Proposition \ref{longperturbation}); note that in Proposition \ref{longperturbation}, $T=+\infty$, while here, it is not necessary. However,  $T$ does not form part of the parameter
dependence, since $\epsilon_0$ depends only on $A=A(T)$, not on $T$, that is, there will be dependence on $T$, but it is only through $A$.

Thus, the long term perturbation argument  (Proposition \ref{longperturbation}) gives us $\epsilon_0=\epsilon_0(A).$
 Selecting an arbitrary $\epsilon\leq \epsilon_0,$ and from Remark \ref{adapting long time} take $M'=M'(\epsilon)$.  Now select an arbitrary $M>M'$ and take $n'=\max(n_0,n_1,n_2)$. Then combining claims \ref{claim 1} - \ref{claim 2}, Remark \ref{adapting long time} and Proposition \ref{nonradialPDNLS}, we obtain that for $n>n'(M,\epsilon)$ with $c=c(A)=c(\tilde T)$ we have
 \begin{align}\label{aprox}
\|u_n-\tu_n\|_{S(\dHs;[0,\tilde T])}\leq c(\tilde T)\epsilon.
\end{align}

We will next prove \eqref{PdecompNLS} for $0\leq t\leq\tilde T$.
Recall that for each dyadic number $N\in 2^{\Z},\;$ $\|v^j(t-t^j_n)\|_{S(\dHs;[0,\tilde T])}\to 0$ as $n\to \infty$ and  for each $1\leq j\leq M_2$, we have $\|\nabla v^j\|_{L^\infty_{[0,T^j]}\Lt_x}\leq 2B$.  Strichartz estimates imply 
$\|\nabla v^j(t-t^j_n)\|_{L^{\infty}_{[0,\tilde T])}\Lt_x}\lesssim \|\nabla v^j(-t^j_n)\|_{L^{\infty}_{[0,\tilde T])}\Lt_x},$
then
\begin{align*}
\|\nabla \tu(t)\|_{L^{\infty}_{[0,\tilde T]}\Lt_x}^2&
=\sum_{j=1}^{M_2}\|\nabla v^j(t)\|_{L^{\infty}_{[0,\tilde T]}\Lt_x}^2
+\sum_{j=M_2+1}^{M}\|\nabla v^j(t-t^j_n)\|_{L^{\infty}_{[0,\tilde T]}\Lt_x}^2+o_n(1)\\
&\lesssim M_2B^2+\sum_{j=M_2+1}^{M}\|\nabla \NLS(-t^j_n)\psi^j\|_{\Lt_x}^2+o_n(1)\\
&\lesssim M_2B^2+\|\nabla\phi_n\|_{\Lt_x}^2+o_n(1)
\lesssim M_2B^2+B^2+o_n(1).
\end{align*}
Using  \eqref{aprox}, we obtain
\noindent for  $d\geq3$:
\begin{align}
\|u_n-\tu_n\|_{L^\infty_{[0,\tilde T]}L^{p+1}_x}&
\lesssim \|u_n-\tu_n\|_{L^\infty_{[0,\tilde T]}L^{\frac{2 d}{d - 2 s}}_x}^{\frac{2}{d - 2 s+2}}
\|u_n-\tu_n\|_{L^\infty_{[0,\tilde T]}L^{\frac{2d}{d-2}}_x}^{\frac{d - 2 s}{d - 2 s+2}}
\label{aproxlinea1a}
 \\
 &\lesssim \|u_n-\tu_n\|_{S(\dHs;[0,\tilde T])}
 ^{\frac{2}{d - 2 s+2}}
\|\nabla(u_n-\tu_n)\|_{L^\infty_{[0,\tilde T]}L^{2}_x}^{\frac{d - 2 s}{d - 2 s+2}}
\label{aproxlinea2a}\\
&\lesssim c(\tilde T)^{\frac{2}{d - 2 s+2}}(M_2B^2+B^2+o(1))^{\frac{d - 2 s}{d - 2 s+2}}\epsilon^{\frac{2}{d - 2 s+2}}, \notag
\end{align}
in this case, we used H\"older's inequality to get \eqref{aproxlinea1a} and the Sobolev embedding $\;\dH^1(\Rn)\hookrightarrow L^{\frac{2 d}{d - 2 }}(\Rn)\;$  to obtain \eqref{aproxlinea2a}.

\noindent For  $d=2$:  
\begin{align}
\|u_n-\tu_n\|_{L^\infty_{[0,\tilde T]}L^{p+1}_x}
&\lesssim 
\|u_n-\tu_n\|_{L^\infty_{[0,\tilde T]}L^{\frac{2}{1-s}}_x}^{\frac{1}{2-s}}
\|u_n-\tu_n\|_{L^\infty_{[0,\tilde T]}L^{\infty}_x}^{\frac{1-s }{2-s}}\label{aproxlinea1} \\
&\lesssim 
\|u_n-\tu_n\|_{S(\dHs;[0,\tilde T])}
^{\frac{1}{2-s}}
\|\nabla(u_n-\tu_n)\|_{L^\infty_{[0,\tilde T]}L^{2}_x}^{\frac{1-s }{2-s}}\label{aproxlinea2} \\
&\lesssim c(\tilde T)^{\frac{1}{2-s}}
(M_2B^2+B^2+o(1))^{\frac{1-s }{2-s}}
\epsilon^{\frac{1}{2-s}},
\notag
\end{align}
here, we used H\"older's inequality to get \eqref{aproxlinea1} and the Sobolev embedding $\dH^1(\R^2)\hookrightarrow L^\infty(\R^2)\;$ to obtain \eqref{aproxlinea2}.

\noindent For  $d=1$:  
\begin{align}
\|u_n-\tu_n\|_{L^\infty_{[0,\tilde T]}L^{p+1}_x}
&\lesssim 
\|u_n-\tu_n\|_{L^\infty_{[0,\tilde T]}L^{\frac{4}{1-2s}}_x}^{\frac{2}{3-2s}}
\|u_n-\tu_n\|_{L^\infty_{[0,\tilde T]}L^{\infty}_x}^{\frac{1-2s }{3-2s}}\label{aproxlinea1d1} \\
&\lesssim 
\|u_n-\tu_n\|_{S(\dHs;[0,\tilde T])}
^{\frac{2}{3-2s}}
\|\nabla(u_n-\tu_n)\|_{L^\infty_{[0,\tilde T]}L^{2}_x}^{\frac{1-2s }{3-2s}}\label{aproxlinea2d1} \\
&\lesssim c(\tilde T)^{ \frac{2}{3-2s}}
(M_2B^2+B^2+o(1))^{\frac{1-2s }{3-2s}}
\epsilon^{\frac{2}{3-2s}},
\notag
\end{align}
here, we used H\"older's inequality to get \eqref{aproxlinea1d1} and the Sobolev embedding $\dH^1(\R^1)\hookrightarrow L^\infty(\R^1)\;$ to obtain \eqref{aproxlinea2d1}.

Similar to the argument in the proof of \eqref{aproxL6NL}, we establish that for $0\leq t \leq\tilde T$
\begin{align}
\label{aproxuL6NL}
\| u_n(t) \|^{p+1}_{L^{p+1}} = \sum^M_{j=1} \|v^j(t-t^j_n) \|^{p+1}_{L^{p+1}} + \|W^M_n(t) \|^{p+1}_{L^{p+1}} +o_n(1).
\end{align} 
Energy conservation and \eqref{pythagoreanNLS} give us
\begin{align}
E[u_n(t)]&=\sum_{j=1}^M E[v^j(t-t^j)]+E[W^M_n]+o_n(1)=\sum_{j=1}^M E[\psi^j]+E[W^M_n]+o_n(1).
\label{energyNLF} 
\end{align}
Combining \eqref{aproxuL6NL} and \eqref{energyNLF}, completes the proof of \eqref{PdecompNLS}.
\end{proof}

We now have all the profile decomposition tools to apply to our particular situation in part I (a) of Theorem A*.

\begin{prop}[Existence of a critical solution.]\label{existence of u_c}
There exists a global   $(T=+\infty)$   $H^1$ solution $u_\crit(t)\in H^1(\Rn)$ with initial datum $u_{c,0}\in H^1(\Rn)$ such that 
$$\|u_{\crit,0}\|_{\Lt}=1, \quad \quad
E[u_\crit]^s=(ME)_\crit<M[\uQ]^{1-s} E[\uQ]^s, $$
$$
\g_{u_c}(t)<1\; \;\text{~~~ for all~~~ }\; \;0\leq t<+\infty,$$
\begin{equation}\label{critical-besov}
\|u_\crit\|_{\dB^0_{\SHs}}=+\infty.
\end{equation}
\end{prop}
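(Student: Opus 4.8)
The plan is to argue by contradiction via the concentration–compactness scheme, following the Kenig--Merle induction on the mass–energy threshold that was set up in Section~\ref{scattering}. Suppose $(ME)_\crit < M[\uQ]^{1-s}E[\uQ]^s$. Then by the definition of $(ME)_\crit$ as a supremum, we can pick a sequence of initial data $u_{n,0}\in H^1(\Rn)$ with $\g_{u_n}(0)<1$, $M[u_{n,0}]^{1-s}E[u_{n,0}]^s \searrow (ME)_\crit$ as $n\to\infty$, and $SC(u_{n,0})$ failing, i.e.\ $\|\NLS(t)u_{n,0}\|_{\dB^0_{\SHs}}=+\infty$ for every $n$. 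By the scaling invariance of the quantities $\g_u$, $\ME[u]$, $\|u\|_{\dB^0_{\SHs}}$, and of the equation itself, we may rescale so that $\|u_{n,0}\|_{\Lt}=1$ for all $n$; this is why the normalization $\|u_{c,0}\|_{\Lt}=1$ can be imposed in the statement. Then $\{u_{n,0}\}$ is bounded in $H^1$ (the mass is fixed at $1$ and, since $\g_{u_n}(0)<1$ with $\ME[u_n]<1$, Lemma~\ref{comparison} bounds $\|\nabla u_{n,0}\|_{\Lt}^2$ in terms of $E[u_n]\le C$).

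Next I would apply the nonlinear profile decomposition (Proposition~\ref{nonradialPDNLS}) to the bounded sequence $\{u_{n,0}\}$, obtaining nonlinear profiles $\tpsi^j$, time shifts $t_n^j$, space shifts $x_n^j$, remainders $\wt_n^M$, the asymptotic smallness \eqref{smallnessPropNL}, the $\dH^s$ Pythagorean expansion \eqref{HPdecompNL}, and the energy Pythagorean decomposition \eqref{pythagoreanNLS}. The first key claim is that \emph{exactly one profile is nontrivial}. Indeed, the mass and energy Pythagorean identities force
\begin{align*}
1=\lim_n M[u_{n,0}] = \sum_j M[\tpsi^j] + \lim_n M[\wt^M_n], \qquad (ME)_\crit = \lim_n M[u_{n,0}]^{1-s}E[u_{n,0}]^s,
\end{align*}
and combining these with the superadditivity of $x\mapsto x^{1-s}(\cdot)^s$ along the decomposition, together with the nonnegativity of each $M[\tpsi^j]^{1-s}E[\tpsi^j]^s$ (which holds because each profile inherits $\g<1$, $\ME<1$ from the bound, hence positive energy by Lemma~\ref{comparison}), shows that $\sum_j M[\tpsi^j]^{1-s}E[\tpsi^j]^s \le (ME)_\crit$. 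If more than one profile were nontrivial, each would have $M[\tpsi^j]^{1-s}E[\tpsi^j]^s$ \emph{strictly} below $(ME)_\crit$, hence $< M[\uQ]^{1-s}E[\uQ]^s$; since each also has renormalized gradient $<1$ (for profiles with $t_n^j\to\pm\infty$ this is part (b) of Proposition~\ref{nonradialPDNLS}, for $t_n^j=0$ it follows from \eqref{HPdecompNL} and $\g_{u_n}(0)<1$), each would satisfy $SC(\tpsi^j)$ by the inductive hypothesis, i.e.\ $\|\NLS(t)\tpsi^j\|_{\dB^0_{\SHs}}<\infty$. The long-term perturbation argument (Proposition~\ref{longperturbation}), applied exactly as inside the proof of Proposition~\ref{nonradialPDNLS} with $\tu_n$ the sum of the (scattering) nonlinear evolutions of the bumps and $\te^M_n$ the small error from Claims~\ref{claim 1}--\ref{claim 2}, would then give $\|\NLS(t)u_{n,0}\|_{\dB^0_{\SHs}}<\infty$ for $n$ large, contradicting the choice of $u_{n,0}$.

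So there is a single nontrivial profile, say $\tpsi^1=:u_{c,0}$ (after relabeling and discarding the time shift, which I would absorb by evolving and re-choosing the base point, or by noting $t_n^1$ must be bounded — if $t_n^1\to\pm\infty$ then $u_{n,0}$ would be asymptotic to a linear flow on a half-line and $SC$ would hold again by small data plus perturbation), with $\wt_n^M\to 0$ appropriately. Passing $u_{n,0}\to u_{c,0}$ in $H^1$ (using that the remainder vanishes and there is only one profile), we get $M[u_{c,0}]=1$, $E[u_c]^s=(ME)_\crit<M[\uQ]^{1-s}E[\uQ]^s$, and $\g_{u_c}(0)<1$ (or $\le 1$; equality forces $u_{c,0}=\uQ$ by the characterization of equality, which would scatter trivially — contradiction — so $<1$). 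By Theorem~A* part I(a) (equivalently the a priori bound from Lemma~\ref{comparison} and local theory), the solution $u_c(t)$ is global, $T^*=+\infty$, and $\g_{u_c}(t)<1$ for all $t\ge 0$. Finally, \eqref{critical-besov} holds: if $\|u_c\|_{\dB^0_{\SHs}}<\infty$ then a standard perturbation stability argument (Proposition~\ref{longperturbation} with $v=u_c$ and $u$ the approximating $\NLS(t)u_{n,0}$, whose initial difference $\to 0$ in $H^1$ hence in $\dB^0_{\SHs}$ by Strichartz) would force $\|\NLS(t)u_{n,0}\|_{\dB^0_{\SHs}}<\infty$ for large $n$, contradicting $SC(u_{n,0})$ failing. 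I expect the main obstacle to be the bookkeeping in the ``one nontrivial profile'' step: carefully justifying that the super-additivity of the mass-energy functional along the Pythagorean decompositions is strict, handling the profiles with divergent time shifts (where the relevant Strichartz norm is only controlled on a half-line), and assembling the perturbation error $\te^M_n$ so that Proposition~\ref{longperturbation} applies uniformly in $M$ — all of which were done in the proof of Proposition~\ref{nonradialPDNLS} and need only be invoked here, but must be cited precisely.
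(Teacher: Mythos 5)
Your proposal follows the same architecture as the paper: Kenig--Merle induction on $(ME)_\crit$, nonlinear profile decomposition of a minimizing sequence normalized to unit mass, Pythagorean splits for mass and energy to isolate a single nontrivial profile, and a stability argument to transfer the divergence of the critical Strichartz norm to $u_\crit$. There is, however, one genuine gap to flag. For profiles with $|t^j_n|\to\infty$ you claim the bound $\g_{\tpsi^j}<1$ follows from part (b) of Proposition~\ref{nonradialPDNLS}, but that clause only gives finiteness of the Besov--Strichartz norm on a \emph{single half-line} and provides no information whatsoever about the renormalized gradient or the renormalized mass--energy of $\tpsi^j$; moreover, as $n\to\infty$ the time shift moves the relevant interval towards all of $\R$, so the one-sided bound is not by itself enough to feed the long-term perturbation lemma. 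The paper resolves this by refining the wave-operator construction: the mass and energy Pythagorean identities show each such linear profile $\psi^j$ obeys the smallness hypothesis \eqref{psi-identity}, so part~II of Proposition~\ref{Existence of wave operator} (not part~I) produces a nonlinear profile $\tpsi^j$ that is \emph{global} with $\g_{\tpsi^j}(t)\leq\sigma<1$, $M[\tpsi^j]=\|\psi^j\|_{\Lt}^2$, and $E[\tpsi^j]=\tfrac12\|\nabla\psi^j\|_{\Lt}^2$, whence $\ME[\tpsi^j]<(ME)_\crit$ and the inductive hypothesis gives $SC(\tpsi^j)$, i.e.\ a two-sided Strichartz bound. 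You would need to insert exactly this step for your ``exactly one profile'' argument to close. The remaining elements of your proposal do match the paper: the superadditivity observation is a slightly circuitous but valid route to the strict sub-threshold bound for each profile (the paper just notes directly that $M[\tpsi^j]<1$ when at least two profiles are nontrivial, and $E[\tpsi^j]^s\le (ME)_\crit$ from nonnegativity of the energies via Lemma~\ref{comparison}), and your perturbative derivation of \eqref{critical-besov} is the argument the paper makes implicitly when it equates $\|u_\crit\|_{\dB^0_{\SHs}}$ with $\lim_n\|u_n\|_{\dB^0_{\SHs}}$.
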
 

\begin{proof}
Consider a sequence of  solutions $u_n(t)$ to $\NLSf$ with corresponding initial data $u_{n,0}$ such that $\g_{u_n}(0)<1$
and $M[u_n]^{1-s} E[u_n]^s \searrow (ME)_\crit$ as $n\to +\infty$, for which $SC(u_{n,0})$ does not hold for any $n$. 

Without lost of generality, rescale the solutions so that $\|u_{n,0}\|_{\Lt}=1$, thus,
$$\|\nabla u_{n,0}\|_{\Lt}^s<\|\uQ\|^{1-s}_{\Lt}\|\nabla \uQ\|^{s}_{\Lt} \quad\mbox{ and }\quad E[u_n]^{s}\searrow (ME)_\crit.$$ 
By construction, $\|u_n\|_{\dB^0_{\SHs}}=+\infty$. Note that  the sequence $\{u_{n,0}\}$ is uniformly bounded on $H^1$. Thus, applying the nonlinear profile decomposition (Proposition \ref{nonradialPDNLS}), we have
\begin{align}\label{profileinitial}
u_{n,0}(x)=\sum_{j=1}^M\NLS(-t^j_n)\tpsi^j(x-x_n^j)+\wt^M_n(x).
\end{align}
Now we will refine the profile decomposition property (b) in Proposition \ref{nonradialPDNLS} by using part II of Proposition \ref{Existence of wave operator} (wave operator), since it is specific to our particular setting here. 

Recall that in nonlinear profile decomposition we consider 2 cases when $|t_n^j|\to \infty$ and $|t_n^j|$ is bounded. In the first case, we can refine it to the following.

First note that we can obtain $\tpsi^j$ (from linear $\psi^j$) such that 
$$\|\NLS(-t^j_n)\tpsi^j-e^{-it^j_n\Delta}\psi^j\|_{H^1}\to 0\quad\text{~~as~~}\quad n\to +\infty$$ 
with properties \eqref{wave meg}, since the linear profiles $\psi^j$'s satisfy 
$$\| \psi \|^{2(1-s)}_{\Lt} \| \nabla \psi \|^{2s}_{\Lt}  < \sigma^2 \left(\frac {d  }{s}\right)^sM[\uQ]^{1-s} E[\uQ]^{s}.$$
We also have,
$$\sum_{j=1}^{M}M[e^{-it^j\Delta}\psi^j]+\lim_{n\to+\infty}M[W^M_n]=\lim_{n\to+\infty}M[u_{n,0}]=1.$$
$$\sum_{j=1}^M\lim_{n\to+\infty} E[e^{-it^j_n\Delta}\psi^j] +\lim_{n\to+\infty}E[ W^M_n]=\lim_{n\to+\infty}E[u_{n,0}]=(ME)_\crit,$$
thus,
$\frac12\|\psi^j\|_{\Lt}^{1-s}\|\nabla\psi^j\|^{s}_{\Lt}\leq(ME)_\crit.$

The properties \eqref{wave meg} for $\tpsi^j$ imply that  $\ME[\tpsi^j]<(ME)_\crit,$ and thus, we get that 
\begin{equation}\label{nonlinear Besov}
\|\NLS(t)\tpsi^j(\cdot-x^j_n)\|_{\dB^0_{\SHs}}<+\infty.
\end{equation}

This fact will be essential for case 1 below. Otherwise, in nonlinear decomposition \eqref{profileinitial} we also have the Pythagorean decomposition for mass and energy:
$$
\sum_{j=1}^M\lim_{n\to+\infty} E[\tpsi^j] +\lim_{n\to+\infty}E[ \wt^M_n]=\lim_{n\to+\infty}E[u_{n,0}]=(ME)_\crit^{\frac1s}.
$$
Since each energy is greater than 0 (Lemma \ref{equivalence grad and energy}), for all $j$ we obtain
\begin{align}
\label{energy initial}
E[\tpsi^j]^{s}\leq(ME)_\crit.
\end{align}

Furthermore, 
 $s=0$ in (\ref{HPdecompNL}) imply
\begin{align}\label{masainitial}
\sum_{j=1}^{M}M[\tpsi^j]+\lim_{n\to+\infty}M[\wt^M_n]=\lim_{n\to+\infty}M[u_{n,0}]=1.
\end{align}

We show that  in the profile decomposition \eqref{profileinitial}  either 
more than one profiles $\tpsi^j$ are non-zero, or 
only one profile $\tpsi^j$ is non-zero  and the rest ($M-1$) profiles are zero. 
The first case will give a contradiction to the fact that each $u_n(t)$ does not scatter, consequently, only  the second possibility holds. That non-zero profile $\tpsi^j$ will be the initial data $u_{c,0}$
 and will produce the critical solutiton $u_\crit(t)=\NLS(t)u_{c,0},$ such that $\|u_\crit\|_{\dB^0_{\SHs}}=+\infty.$
\bigskip

\noindent\emph{ Case 1:} More than one $\tpsi^j\neq0.$  For each $j,$ \eqref{masainitial}  gives $M[\tpsi^j]<1$   and for a large enough $n$,  \eqref{energy initial} and \eqref{masainitial} yield
\begin{align*}
M[\NLS(t)\tpsi^j] ^{1-s} E[\NLS(t)\tpsi^j]^s=M[\tpsi^j] ^{1-s}E[\tpsi^j]^s<(ME)_\crit.
 \end{align*}
 Recall \eqref{nonlinear Besov}, we have
 $$
\|\NLS(t-t^j)\tpsi^j(\cdot-x^j_n)\|_{\dB^0_{\SHs}}<+\infty, \quad\quad \text{~ for large enough~} n,
$$
and thus, the right hand side in \eqref{profileinitial} is finite in $S(\dHs),\;$ since \eqref{smallnessPropNL} holds for the remainder $\wt_n^M(x).\;$ This contradicts the fact that $\|\NLS(t)u_{n,0}\|_{\dB^0_{\SHs}}=+\infty$.

\noindent\emph{ Case 2:} Thus, we have that only one profile $\tpsi^j$ is non-zero, renamed to be $\tpsi^1$, 
\begin{equation}\label{initial approx}
u_{n,0}=\NLS(-t^1_n)\tpsi^1(\cdot-x^1_n)+\wt^1_n,
\end{equation} 
with 
$$M[\tpsi^1]\leq 1,\quad\;E[\tpsi^1]^s\leq(ME)_\crit\quad\; \text{~~and~~} \quad\;
\lim_{n \to+\infty} \|\NLS(t)\wt^1_n\|_{\dB^0_{\SHs}}=0.
$$

Let $u_\crit$ be the solution to $\NLSf$ with the initial condition $u_{c,0}=\tpsi^1$. Applying $\NLS(t)$ to both sides of \eqref{initial approx} and estimating it in ${\dB^0_{\SHs}}$, we obtain (by the nonlinear profile decomposition Proposition \ref{nonradialPDNLS}) that
\begin{align*}
 \|u_\crit\|_{\dB^0_{\SHs}} 
&
=\|\NLS(t-t_n^1)\tpsi^1\|_{\dB^0_{\SHs}}
=\lim_{n\to\infty}\|\NLS(t)u_{n,0}\|_{\dB^0_{\SHs}}
\\&=\lim_{n\to\infty}\|u_{n}(t)\|_{\dB^0_{\SHs}}=+\infty,
\end{align*}
since by construction $\|u_n\|_{\dB^0_{\SHs}}=+\infty,$ completing the proof.
\end{proof}

\begin{lemma}[ Precompactness of the flow of the critical solution] \label{precompact} Assume $u_\crit$ as in Proposition \ref{existence of u_c}. Then there exists a continuous path $x(t)$ in $\Rn$ such that  
$$K=\{u_\crit(\cdot-x(t),t)|t\in[0,+\infty)\} $$
is precompact in $H^1(\Rn)$.
\end{lemma}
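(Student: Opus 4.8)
The plan is to follow the standard concentration-compactness route: the precompactness of the orbit $K=\{u_\crit(\cdot-x(t),t)\}$ is obtained by showing that \emph{any} time sequence $t_n\to\infty$ admits a subsequence along which $u_\crit(t_n)$, after a suitable spatial recentering, converges strongly in $H^1$. I would argue by contradiction with the minimality of $(ME)_\crit$: apply the nonlinear profile decomposition (Proposition \ref{nonradialPDNLS}) to the bounded-in-$H^1$ sequence $\phi_n:=u_\crit(t_n)$, writing
\begin{align*}
u_\crit(t_n,x)=\sum_{j=1}^M \NLS(-t^j_n)\tpsi^j(x-x^j_n)+\wt^M_n(x).
\end{align*}
The mass and energy Pythagorean expansions \eqref{HPdecompNL}, \eqref{pythagoreanNLS} together with $M[u_\crit(t_n)]=1$ and $E[u_\crit(t_n)]^s=(ME)_\crit$, and the positivity of energy (Lemma \ref{equivalence grad and energy}), force $M[\tpsi^j]\le1$ and $E[\tpsi^j]^s\le(ME)_\crit$ for every $j$. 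By the variational bounds of Section \ref{SEandGS} one also checks that each $\tpsi^j$ (and the remainder data, for large $n$) satisfies $\g<1$, hence sits in the regime where Theorem A* part I applies.

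The dichotomy is then exactly as in Proposition \ref{existence of u_c}: if two or more profiles are non-zero, each has strictly smaller mass-energy than $(ME)_\crit$, so by definition of the threshold each $v^j=\NLS(t)\tpsi^j$ has finite $\dB^0_{\SHs}$ norm; the long-term perturbation argument (Proposition \ref{longperturbation}), fed by Claims \ref{claim 1}-\ref{claim 2} and the smallness \eqref{smallnessPropNL} of the remainder, would then give $\|u_\crit\|_{\dB^0_{S(\dHs;[t_n,\infty))}}<\infty$, contradicting \eqref{critical-besov}. Likewise the remainder cannot be non-trivial: if $\limsup_n\|\wt^1_n\|_{\dHs}>0$ (equivalently the single profile does not capture the full $\dHs$ norm) then combining the sub-threshold profile evolution with the remainder via Proposition \ref{longperturbation} again yields a finite global Strichartz norm, a contradiction. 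So exactly one profile survives and moreover $\|\wt^1_n\|_{H^1}\to0$; that is,
\begin{align*}
u_\crit(t_n,\cdot+x^1_n)=\NLS(-t^1_n)\tpsi^1+o_{H^1}(1).
\end{align*}
One then rules out $|t^1_n|\to\infty$: if $t^1_n\to+\infty$ then $\NLS(-t^1_n)\tpsi^1$ is asymptotic to a linear flow $e^{-it^1_n\Delta}\psi^1$ which has small $\dB^0_{S([0,\infty),\dHs)}$ tail, making $\|u_\crit\|_{\dB^0_{S([t_n,\infty),\dHs)}}<\infty$, contradiction; similarly $t^1_n\to-\infty$ would make the \emph{past} Strichartz norm of $u_\crit$ finite, but $u_\crit$ is defined and non-scattering only forward — here one must argue slightly more carefully, using that $u_\crit(t_n)$ together with time reversal would give scattering for $u_\crit$ on $(-\infty, t_n]$, hence globally, again contradicting \eqref{critical-besov}. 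Thus $t^1_n\to t^1$ finite, which after adjusting $\tpsi^1$ we take to be $0$, and we set $x(t_n):=x^1_n$, so $u_\crit(t_n,\cdot+x(t_n))\to\tpsi^1$ strongly in $H^1$ along the subsequence. Since every sequence has such a convergent subsequence, $K$ is precompact.

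Finally I would promote the discrete family of shifts $\{x(t_n)\}$ to a \emph{continuous} path $x(t)$: this is a routine argument (as in Holmer-Roudenko and Kenig-Merle) using the uniform continuity of $t\mapsto u_\crit(t)$ in $H^1$ on compact time intervals together with the just-proven compactness of $K$, which guarantees that the recentering can be chosen to depend continuously on $t$ and that $\{u_\crit(\cdot-x(t),t):t\in[0,\infty)\}$ is still precompact; one typically defines $x(t)$ by a selection/interpolation between the $x(t_n)$ and checks the modulus-of-continuity estimate. I expect the main obstacle to be the case analysis for $|t^1_n|\to\infty$ — in particular the $t^1_n\to-\infty$ subcase, where one must convert finiteness of the backward-in-time Strichartz norm into a genuine contradiction with the forward-only non-scattering of $u_\crit$, invoking the existence of wave operators (Proposition \ref{Existence of wave operator}) and the fact that $(ME)_\crit$ is characterized symmetrically in time — and the bookkeeping needed to make the long-term perturbation hypotheses ($\|e^{i(t-t_0)\Delta}(u-v)\|_{\dB^0_{\SHs}}$ small on $[t_n,\infty)$) hold uniformly, which is where the smallness \eqref{smallnessPropNL} of the nonlinear remainder and Claim \ref{claim 2} do the real work.
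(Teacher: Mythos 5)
Your plan follows the paper's route: apply the nonlinear profile decomposition to $\phi_n=u_\crit(\tau_n)$, use the mass/energy Pythagorean expansions to reduce to a single non-trivial profile with vanishing remainder, rule out $|t^1_n|\to\infty$, and then promote the discrete recenterings to a continuous path $x(t)$ via local uniform continuity of the flow. That is exactly the structure of the paper's proof of Lemma \ref{precompact}.

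However, you have the two subcases of $|t^1_n|\to\infty$ swapped. Applying $\NLS(t)$, $t\ge 0$, to the decomposition $u_\crit(\tau_n)=\NLS(-t^1_n)\tpsi^1(\cdot-x^1_n)+\wt^1_n$ produces the profile piece $\NLS(s)\tpsi^1$ on the interval $s\in[-t^1_n,\infty)$. If $t^1_n\to-\infty$ then $-t^1_n\to+\infty$, so this is a shrinking forward tail; combined with Proposition \ref{nonradialPDNLS}(b) (which gives $\|\NLS(s)\tpsi^1\|_{\dB^0_{S([0,\infty);\dHs)}}<\infty$ in that case) its $\dB^0_{\SHs}$ norm vanishes, and with the small remainder, Proposition \ref{small data} bounds $\|u_\crit\|_{\dB^0_{S(\dHs;[\tau_n,\infty))}}$, contradicting \eqref{critical-besov}. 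If instead $t^1_n\to+\infty$ then $[-t^1_n,\infty)$ expands to all of $\R$, so the forward-from-$\tau_n$ Strichartz norm of the profile piece is \emph{not} controlled — this is the case where you must look backward in time: the piece on $(-\infty,-t^1_n]$ vanishes, giving $\|u_\crit\|_{\dB^0_{S(\dHs;(-\infty,\tau_n])}}\le\delta_{sd}$, and then simply letting $\tau_n\to\infty$ (no time-reversal symmetry of $(ME)_\crit$ or separate backward wave operator needed) yields $\|u_\crit\|_{\dB^0_{\SHs}}\le\delta_{sd}$, a contradiction. So the ``more careful'' subcase you flagged is $t^1_n\to+\infty$, not $t^1_n\to-\infty$, and its resolution is in fact more elementary than the mechanism you sketched: since $\tau_n$ runs through a sequence tending to $+\infty$, the backward-from-$\tau_n$ smallness already covers all forward time in the limit. (In your defense, the paper's own displayed intervals at this point contain typos — $[t^1_n,\infty)$ should read $[-t^1_n,\infty)$, and the small-data conclusion in the first case should be on $[\tau_n,\infty)$, not $(-\infty,\tau_n)$ — so the sign conventions are genuinely confusing; but the swap in your write-up is a real error and should be corrected.)

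One further minor point: to conclude $\|\wt^1_n\|_{H^1}\to0$ you suggest invoking the long-term perturbation argument again; the paper instead obtains it directly from the mass and energy Pythagorean identities (both tend to zero for the remainder because the single surviving profile already saturates $M[u_\crit]=1$ and $E[u_\crit]^s=(ME)_\crit$) together with Lemma \ref{equivalence grad and energy}. Your variant is workable, but the paper's route is cleaner and avoids a second perturbative step.
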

\begin{proof}
Let  a sequence $\tau_n \to
+\infty$ and $\phi_n = u_\crit(\tau_n)$ be a uniformly bounded sequence in $H^1$; we want to show that $u_\crit(\tau_n)$ has a convergent subsequence in $H^1$.

The nonlinear profile decomposition (Proposition \ref{nonradialPDNLS}) implies the existence of  profiles
$\tpsi^j,$ the time and space sequences $\{t^j_n\},\; \{x^j_n\}\;$ and an error $\wt_n^M$ such that
\begin{equation}\label{critical profile}
u_\crit(\tau_n) = \sum^{M}_{j=1} \NLS(-t^j_n)\tpsi^j(x-x^j_n) + {\wt}^M_n(x),
\end{equation}
with $|t_n^j-t_n^k|+ |x_n^j-x_n^k| \to +\infty$ as $n\to +\infty$ for fixed $j\neq
k$. In addition,
\begin{align*}
\sum_{j=1}^M E[\tpsi^j]+E[\wt^M_n]=E[u_\crit]=(ME)_\crit,
\end{align*}
since each energy is nonnegative, and we have
$$
\lim_{n\to\infty}E[\NLS(
-t^j_n)\tpsi^j(x-x^j_n) ]\leq(ME)_\crit.
$$
Taking $s=0$ in \eqref{HPdecompNL}
\begin{align*}
 \sum^M_{j=1}M[\tpsi^j(x-x^j_n)] +\lim_{n\to\infty} \|\wt^M_n \|^2_{\Lt} =M[u_{c}]=1.
\end{align*}
Note that, in the decomposition \eqref{critical profile} either we have more than one $\tpsi^j\neq0$ or only one $\tpsi^1\neq0$ and $\tpsi^j=0$ for all $2\leq j<M$. Following the argument of Proposition
\ref{existence of u_c}, we show that only the second case occurs:
\begin{equation}
\label{E:case2expand}
u_\crit(\tau_n) = \NLS(-t^1_n)\tpsi^1(x-x^1_n)+\wt^1_n(x)
\end{equation}
such that
$$M[\tpsi^1]=1,\quad\quad
\lim_{n\to\infty}E[\NLS(-t^1_n)\tpsi^j(x-x^1_n) ]=(ME)_\crit, $$
$$
\lim_{n\to\infty} M[\wt^M_n] =0\quad\text{~~and~~}\quad\lim_{n\to\infty} E[\wt^M_n]=0. 
$$
Lemma \ref{equivalence grad and energy} implies
\begin{equation}\label{E:H^1tozero}
\lim_{n\to\infty} \|\wt^M_n\|_{H^1}=0.
\end{equation}
The sequence $x^1_n$ will create a path $x(t)$ by continuity.
We now show that $t^1_n$ has a convergence
subsequence $\widetilde{t^1_n}.$

Assume that $\widetilde{t_n^1} \to
-\infty$, apply $\NLS(t)$ to \eqref{E:case2expand}  implies then  triangle inequality yields
\begin{align*}
\|\NLS(t)u_\crit(\tau_n)\|_{\dB^0_{S(\dHs; [0,+\infty))}} \leq &\|
\NLS(t-\widetilde {t^1_n})\tpsi^1(x-x^1_n)\|_{\dB^0_{S(\dHs; [0,+\infty))}}+ \|\NLS(t) \wt_n^M(x)\|_{\dB^0_{S(\dHs; [0,+\infty))}}.
\end{align*}
Note
\begin{align*}
\lim_{n\to +\infty} \| \NLS(t-\widetilde{t_n^1})&\tpsi^1(x-x^1_n)\|_{\dB^0_{S(\dHs; [0,+\infty))}} = \lim_{n\to +\infty} \|\NLS(t)\tpsi^1(x-x^1_n)\|_{\dB^0_{S(\dHs; [t^1_n,+\infty))}} = 0,
\end{align*}
and 
$$\|\NLS(t)\wt_n^M\|_{\dB^0_{S(\dHs)}} \leq \frac12 \,\delta_{\text{sd}},$$ 
thus, taking $n$ sufficiently large, the small data
scattering theory (Proposition \ref{small data}) implies
$\|u_\crit\|_{\dB^0_{S(\dHs;(-\infty,\tau_n))}} \leq \delta_{\text{sd}}$ a contradiction.  

In a similar fashion, assuming that 
$\widetilde{t_n^1} \to
+\infty$,  we obtain that for $n$ large,

$
\|\NLS(t)u_\crit(\tau_n)\|_{\dB^0_{S(\dHs; (-\infty,0])}} \leq
\frac12\delta_{\text{sd}},
$
and thus, the small data scattering theory (Proposition \ref{small data})
shows that
\begin{equation}\label{scatcrit}
\|u_\crit\|_{\dB^0_{S(\dHs;(-\infty,\tau_n])}} \leq
\delta_{\text{sd}}.
\end{equation}
Taking $n\to +\infty$ implies $\tau_n\to +\infty$, thus \eqref{scatcrit} becomes
 $\|u_\crit\|_{\dB^0_{S(\dHs;(-\infty,+\infty))}} \leq
\delta_{\text{sd}}$, a contradiction. 
Thus,
$\widetilde{t_n^1}$ must converge to some finite $t^1$.

 Since  \eqref{E:H^1tozero} holds and
$\NLS(\tilde{t_n^1})\tpsi^1 \to
{\NLS(t^1)}\tpsi^1$
  in  $H^1,$  \eqref{E:case2expand} implies  $u_\crit(\tau_n)$ converges in $H^1$.
\end{proof}

\begin{cor}\label{precompact-localization}\textrm{( Precompactness of the flow implies uniform localization.)} Assume $u$ is a solution to \eqref{eq:NLS}  such that
$$K=\{u
(\cdot-x(t),t)|t\in[0,+\infty)\} $$
is precompact in $H^1(\Rn)$. Then for each $\epsilon>0$, there exists $R>0$, so that for all $0\leq t<\infty$
\begin{align}\label{norm-H1-energy}
\int_{|x+x(t)|>R}|\nabla u(x,t)|^2+|u(x,t)|^2+|u(x,t)|^{p+1}dx<\epsilon.
\end{align}
Furthermore,
$\|u(t,\cdot-x(t))\|_{H^1(|x|> R)}< \epsilon.$
\end{cor}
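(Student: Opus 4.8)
The plan is to deduce the localization statement for $u$ directly from the precompactness of the set $K$ in $H^1(\Rn)$, using a standard compactness-plus-tails argument. First I would recall the elementary fact that a precompact subset of $H^1(\Rn)$ is \emph{uniformly small at spatial infinity}: for every $\epsilon>0$ there is $R_0>0$ such that
\begin{equation*}
\int_{|y|>R_0}\big(|\nabla g(y)|^2+|g(y)|^2\big)\,dy<\epsilon\qquad\text{for all }g\in K.
\end{equation*}
This is proved by contradiction: if not, there is a sequence $g_n\in K$ and radii $R_n\to\infty$ with $\int_{|y|>R_n}(|\nabla g_n|^2+|g_n|^2)\ge\epsilon$; by precompactness pass to a subsequence $g_n\to g$ in $H^1$, and then the tail of $g$ beyond $R_n$ tends to $0$ while $\|g_n-g\|_{H^1}\to 0$ forces the tails of $g_n$ to vanish as well, a contradiction.

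Next I would translate this into the statement for $u$ itself. By hypothesis $K=\{\,u(\cdot-x(t),t):t\in[0,\infty)\,\}$, so writing $g_t(y):=u(y-x(t),t)$ we have, after the change of variables $x=y-x(t)$ (equivalently $y=x+x(t)$),
\begin{equation*}
\int_{|x+x(t)|>R_0}\big(|\nabla u(x,t)|^2+|u(x,t)|^2\big)\,dx=\int_{|y|>R_0}\big(|\nabla g_t(y)|^2+|g_t(y)|^2\big)\,dy<\tfrac{\epsilon}{2}
\end{equation*}
uniformly in $t\ge0$. To also control the $L^{p+1}$ tail, I would invoke the Gagliardo--Nirenberg inequality \eqref{G-N} applied on the region $\{|x+x(t)|>R\}$ (valid since $0<s<1$ guarantees $p+1<\tfrac{2d}{d-2}$ when $d\ge3$, and similarly in $d=1,2$), which bounds $\int_{|x+x(t)|>R}|u|^{p+1}$ by a power of $\|u(t)\|_{H^1(|x+x(t)|>R)}$ times a power of the uniform $H^1$ bound on $K$; alternatively one can directly re-run the compactness argument including the $|g|^{p+1}$ term, using the compact Sobolev embedding $H^1\hookrightarrow L^{p+1}_{\mathrm{loc}}$. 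Either way, enlarging $R_0$ to some $R$ we get $\int_{|x+x(t)|>R}|u|^{p+1}\,dx<\tfrac{\epsilon}{2}$ as well, and combining the two estimates yields \eqref{norm-H1-energy}.

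Finally, the asserted bound $\|u(t,\cdot-x(t))\|_{H^1(|x|>R)}<\epsilon$ is just \eqref{norm-H1-energy} read back in the shifted variable: $\|u(t,\cdot-x(t))\|_{H^1(|x|>R)}^2=\int_{|x|>R}(|\nabla u(x-x(t),t)|^2+|u(x-x(t),t)|^2)\,dx$, which after the substitution used above equals the first two terms of the integral in \eqref{norm-H1-energy}, hence is $<\epsilon$. The only mildly delicate point is making sure the compactness argument is applied to the correctly shifted profiles and that the $L^{p+1}$ tail is handled; there is no serious obstacle, since the uniform $H^1$ bound on $K$ (from precompactness) together with the subcriticality of $p+1$ makes all the interpolation estimates routine. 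I would therefore present the proof as: (1) precompact sets in $H^1$ have uniformly small tails; (2) unshift to obtain \eqref{norm-H1-energy} for the first two terms; (3) use Gagliardo--Nirenberg to add the $L^{p+1}$ term; (4) reinterpret in the shifted variable.
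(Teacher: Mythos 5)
Your proposal is correct and follows essentially the same route as the paper: both arguments extract, from precompactness of $K$, strong $H^1$-convergence of a suitable subsequence $u(\cdot-x(t_n),t_n)\to\phi$, and then use the fact that the tails of a fixed $\phi\in H^1$ (and, via the subcritical Sobolev embedding, of $\phi\in L^{p+1}$) must vanish, yielding a contradiction to the assumed uniform lower bound on the tails. The only cosmetic difference is that the paper runs this contradiction argument directly, while you phrase it as an abstract ``precompact sets have uniformly small tails'' lemma plus a separate step for the $L^{p+1}$ term; both are valid (though note that directly applying Gagliardo--Nirenberg on $\{|x+x(t)|>R\}$ would require a smooth cutoff to handle the gradient term, so the cleaner route is the one you also mention, namely carrying the $|g|^{p+1}$ term through the compactness argument using $H^1\hookrightarrow L^{p+1}$).
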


\begin{proof}
Assume \eqref{norm-H1-energy} does not hold, i.e., there exists $\epsilon>0$ and a sequence of times $t_n$
such that for any $R>0, $ we have
$$
\int_{|x+x(t_n)|>R} |\nabla u(x,t_n)|^2 + |u(x,t_n)|^2 +|u(x,t_n)|^{p+1}\, dx \geq
\epsilon.
$$
Changing variables, we get
\begin{equation}
\label{int>epsilon}
\int_{|x|>R} |\nabla u(x - x(t_n),t_n)|^2 + |u(x - x(t_n),t_n)|^2+ |u(x - x(t_n),t_n)|^{p+1}\,
dx \geq \epsilon.
\end{equation}
Note that since $K$ is precompact, there exists $\phi \in H^1$ such that,
passing to a subsequence of $t_n$, we have  $u(\cdot - x(t_n),t_n)
\to \phi$ in $H^1$. For all $R>0$,   \eqref{int>epsilon} implies
$$
\forall R>0,\quad \int_{|x|>R} |\nabla \phi(x)|^2 + |\phi(x)|^2 +|\phi(x)|^4\geq \epsilon,
$$
which is a contradiction with the fact that $\phi\in H^1$. Thus,  \eqref{norm-H1-energy} and $\|u(t,\cdot-x(t))\|_{H^1(|x|> R)}< \epsilon$ hold.
\end{proof}

\begin{lemma}
\label{spacial translation}
Let $u(t)$ be a solution of $\NLSf$ defined on $[0,+\infty)$ such that $P[u]=0$ and either
\begin{enumerate}[(a)]
\item $K=\{u(\cdot-x(t),t) | t \in[0,+\infty)\}$ is precompact in $H^1(\Rn)$, 
or
\item for all $0<t$,
\begin{align} \label{E:ground-close}
\|u(t)-e^{i\theta(t)}\uQ(\cdot-x(t))\|_{H^1}\leq \epsilon_1
\end{align}
\end{enumerate}
for some continuous function $\theta(t)$ and $x(t).$ Then 
\begin{align}\label{E:x-control}
\lim_{t\to+\infty}\frac{x(t)} {t}  = 0.
\end{align} 
\end{lemma}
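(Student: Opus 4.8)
The plan is to run the standard contradiction argument: suppose the conclusion $\frac{x(t)}{t}\to 0$ fails. Then there is a sequence $t_n\to+\infty$ and a constant $\delta_0>0$ with $\frac{|x(t_n)|}{t_n}\ge\delta_0$. The goal is to derive a contradiction with conservation of momentum $P[u]=0$ by tracking how much ``mass'' is transported across growing balls. First I would introduce the truncated center-of-mass / momentum type functional
\[
z_R(t) := \int_{\Rn}\phi\!\left(\frac{x}{R}\right)|u(x,t)|^2\,dx,
\]
where $\phi$ is a smooth radial cutoff equal to $x$ (componentwise, i.e. $\phi(x)=x$) on $|x|\le 1$ and suitably truncated for $|x|\ge 2$, so that $\nabla\phi$ is bounded. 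A direct computation using the equation gives
\[
z_R'(t) = 2\,\mathrm{Im}\int_{\Rn}\nabla\phi\!\left(\frac{x}{R}\right)\cdot \bar u\,\nabla u\,dx = 2P[u] + \text{(error supported on }|x+x(t)|\gtrsim R\text{ after recentering)},
\]
where the error is controlled, uniformly in $t$, by the localization estimate from Corollary \ref{precompact-localization} in case (a) (or by \eqref{E:ground-close} together with the exponential decay of $Q$ in case (b)): given $\epsilon>0$ choose $R=R(\epsilon)$ so that the tail of $|\nabla u|\,|u|$ outside $|x+x(t)|>R$ is $<\epsilon$.

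The key step is then to couple $R$ to the scale of $x(t_n)$. Since $|x(t_n)|\to\infty$, I would pick, for each $\epsilon>0$, the radius $R=R(\epsilon)$ from the localization estimate, and then note that for $n$ large $z_R$ essentially ``sees'' the bulk of the mass concentrated near $-x(t)$ only when $|x(t)|\lesssim R$; more precisely, comparing $z_R$ at two times and integrating $z_R'$, using $P[u]=0$, yields
\[
|z_R(t_n)-z_R(0)| \le \int_0^{t_n}|z_R'(\tau)|\,d\tau \lesssim \epsilon\, t_n + C_R.
\]
On the other hand $|z_R(t_n)|\le \|\phi(\cdot/R)\|_{L^\infty}M[u] \lesssim R\,M[u]$, which is a fixed constant once $\epsilon$ (hence $R$) is fixed. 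Combining, for all large $n$ we get a bound of the form $z_R(t_n)=O(R)+O(\epsilon t_n)$. To extract the contradiction I would instead track the genuine first moment: one shows that the recentered mass forces $z_R(t_n)$ to be comparable to $|x(t_n)|$ up to an error controlled by the exterior mass $\epsilon$ and by $R$, i.e. $z_R(t_n) = -x(t_n)M[u] + O(R) + O(\epsilon)\cdot(\text{bounded})$ — wait, more carefully: since the mass is localized within distance $R$ of $-x(t)$, and $\phi(x/R)=x$ only on $|x|\le R$, one should rather use a cutoff at a scale $\rho_n\gg |x(t_n)|+R$; then $z_{\rho_n}(t_n)\approx -x(t_n)M[u]$, while $z_{\rho_n}'(\tau)=2P[u]+O(\epsilon)=O(\epsilon)$ uniformly once $\rho_n\ge R(\epsilon)$, so $|x(t_n)|M[u]\lesssim |z_{\rho_n}(0)| + \epsilon t_n \lesssim \rho_n + \epsilon t_n$.

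The delicate bookkeeping — and the main obstacle — is precisely this choice of scales: one must let $\epsilon\to 0$ (forcing $R(\epsilon)\to\infty$) while keeping the comparison $z_{\rho_n}(t_n)\approx -x(t_n)M[u]$ valid, which requires $\rho_n$ large relative to $|x(t_n)|+R(\epsilon)$ but still small relative to $t_n$ so that the contribution $\rho_n$ to the bound is negligible compared with $\delta_0 t_n$. The way to make this work is to argue by contradiction at fixed $\epsilon$: since $\frac{|x(t_n)|}{t_n}\ge\delta_0$, divide the estimate $|x(t_n)|M[u]\lesssim \rho_n + C(\epsilon)+\epsilon t_n$ by $t_n$ and send $n\to\infty$ with $\rho_n/t_n\to 0$ chosen along the sequence, obtaining $\delta_0 M[u]\lesssim \epsilon$; since $\epsilon>0$ was arbitrary and $M[u]>0$ (the solution is nontrivial, as it does not scatter), this is a contradiction. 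Thus $\frac{x(t)}{t}\to 0$. The case (b) proof is identical with the role of the localization estimate played by \eqref{E:ground-close} and the exponential decay of $Q$, which gives the same uniform smallness of the exterior mass and of $z'$ modulo $P[u]=0$; here one also uses $P[u]=0$ together with $P[\uQ]=0$ and the smallness of $\|u-e^{i\theta}\uQ(\cdot-x(t))\|_{H^1}$ to control the cross terms in $z'$.
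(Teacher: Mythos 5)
The paper itself defers the proof to \cite{guevara} and \cite{DuHoRo08}; the standard argument there is exactly the one you sketch — a truncated center-of-mass $z_R(t)=\int R\,\phi(x/R)|u|^2\,dx$ whose derivative is $2P[u]$ plus a tail that the localization estimate (Corollary \ref{precompact-localization}) makes small. So your strategy is the right one. However, your closing bookkeeping contains a genuine gap, and you in fact flag the tension yourself without resolving it: you simultaneously require $\rho_n\gg |x(t_n)|+R(\epsilon)$ (so that $z_{\rho_n}(t_n)\approx -x(t_n)M[u]$ and $z_{\rho_n}'=O(\epsilon)$ on all of $[0,t_n]$) and $\rho_n/t_n\to0$ (so that the term $\rho_n$ in $|x(t_n)|M[u]\lesssim\rho_n+\epsilon t_n$ is negligible). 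Under the contradiction hypothesis $|x(t_n)|\ge\delta_0 t_n$, the first requirement forces $\rho_n\ge\delta_0 t_n$, so $\rho_n/t_n$ cannot tend to $0$; the two requirements are incompatible. As written, the estimate $|x(t_n)|M[u]\lesssim\rho_n+\epsilon t_n$ is vacuous because $\rho_n$ already dominates $|x(t_n)|$.

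The fix is to replace the crude bound $|z_{\rho_n}(0)|\lesssim\rho_n$ with the sharper one coming from applying the localization estimate at time $0$ as well. Since the mass is concentrated, uniformly in $t$, in the ball $\{|x+x(t)|\le R_0(\epsilon)\}$, for any $\rho_n\ge\sup_{0\le\tau\le t_n}|x(\tau)|+R_0(\epsilon)$ one has $\phi(x/\rho_n)\rho_n=x$ on the concentration ball, and hence
\[
z_{\rho_n}(0)=-x(0)M[u]+O\bigl(R_0(\epsilon)\bigr)+O(\epsilon),
\]
a quantity bounded independently of $\rho_n$. Likewise $z_{\rho_n}(t_n)=-x(t_n)M[u]+O(R_0(\epsilon))+O(\epsilon)$. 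Integrating $z_{\rho_n}'=O(\epsilon)$ over $[0,t_n]$ then gives $|x(t_n)-x(0)|\,M[u]\lesssim R_0(\epsilon)+\epsilon t_n$, and dividing by $t_n$ and letting $n\to\infty$ yields $\delta_0 M[u]\lesssim\epsilon$; sending $\epsilon\to0$ gives the contradiction. With this correction (and the identical adjustment in case (b) using the exponential decay of $Q$ in place of Corollary \ref{precompact-localization}), the proof closes.
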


Proof of this Lemma can be found in \cite{guevara} or adjusted from \cite{DuHoRo08}.

\begin{thm}\textrm{ (Rigidity Theorem.)}\label{rigidity}
Let $u_0 \in H^1$   satisfy $P[u_0]=0$,  $\ME[u_0] <1$ and $\g_u(0)<1$. 
Let $u$ be the global $H^1(\Rn)$ solution of $\NLS_{p}(\Rn)$ with initial data $u_0$ and 
suppose that $K=\{u_\crit(\cdot-x(t),t)|t\in[0,+\infty)\} $ is precompact in $H^1,$ then $u_0\equiv0$.
\end{thm}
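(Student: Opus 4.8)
The plan is to run the classical Kenig--Merle rigidity argument via the localized virial identity, adapted to the mass-supercritical, energy-subcritical setting with zero momentum. I would first fix notation: let $u$ be as in the statement, so $K=\{u(\cdot-x(t),t):t\ge 0\}$ is precompact in $H^1$, $\ME[u]<1$, $\g_u(0)<1$ (hence $\g_u(t)<1$ for all $t$ by Theorem A* part I(a), and in particular $u$ is global forward in time), and $P[u]=0$. By Lemma \ref{spacial translation} the translation path satisfies $x(t)/t\to 0$ as $t\to+\infty$. The goal is to show the only such solution is $u\equiv 0$.

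The main tool is the truncated virial/variance functional. For $R>0$ let $\phi_R(x)=R^2\phi(x/R)$ with $\phi$ a smooth, radial, compactly supported bump that equals $|x|^2$ on $|x|\le 1$, and set
\[
z_R(t):=\int_{\Rn}\phi_R(x)\,|u(x,t)|^2\,dx .
\]
A direct computation (integration by parts, using the equation) gives $z_R'(t)=2\,\mathrm{Im}\int \nabla\phi_R\cdot\bar u\,\nabla u\,dx$ and then
\[
z_R''(t)=8\|\nabla u\|_{\Lt}^2-\frac{4d(p-1)}{p+1}\|u\|_{L^{p+1}}^{p+1}
+\mathcal{E}_R(t),
\]
where $\mathcal{E}_R(t)$ collects all the error terms, each supported on $|x|\gtrsim R$ and controlled by $\int_{|x|\gtrsim R}\big(|\nabla u|^2+|u|^{p+1}+R^{-2}|u|^2\big)\,dx$ (the $|u|^{p+1}$ term appearing only when $d\ge 2$; one should track the exact dependence on $d$ as in \cite{HoRo07}, and note the extra restriction $3<p\le5$ in $d=2$ enters precisely here, matching the statement of Theorem A*). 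By Lemma \ref{Lower-bound-convexity} the main part is bounded below: $8\|\nabla u\|_{\Lt}^2-\frac{4d(p-1)}{p+1}\|u\|_{L^{p+1}}^{p+1}\ge 16(1-\omega^{p-1})E[u]$ with $\omega=\sqrt{\ME[u]}<1$. Thus if $E[u]>0$ we get a strictly positive lower bound $z_R''(t)\ge c(1-\omega^{p-1})E[u]$ once $\mathcal{E}_R(t)$ is made negligible.

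The key point is that precompactness of $K$ lets us absorb $\mathcal{E}_R(t)$ \emph{uniformly in $t$}. By Corollary \ref{precompact-localization}, for any $\varepsilon>0$ there is $R_0$ such that $\int_{|x+x(t)|>R_0}(|\nabla u|^2+|u|^2+|u|^{p+1})\,dx<\varepsilon$ for all $t\ge 0$. Combining this with the control $x(t)=o(t)$ from Lemma \ref{spacial translation}, one chooses $R=R(t)$ of the form $R(t)=R_0+\sup_{0\le\tau\le t}|x(\tau)|=o(t)$, so that the region $|x|\gtrsim R(t)$ is contained in $|x+x(t)|>R_0$ and hence $|\mathcal{E}_{R(t)}(t)|<\varepsilon$, say $\varepsilon=\tfrac12 c(1-\omega^{p-1})E[u]$. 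Integrating $z_{R(t)}''\ge \tfrac12 c(1-\omega^{p-1})E[u]$ on $[t_0,t_1]$ — with the standard care that $R$ varies, handled by writing the virial estimate in integrated form $z_R'(t_1)-z_R'(t_0)\ge \int_{t_0}^{t_1}(\cdots)\,dt$ for each fixed $R$ and then taking $R$ large depending on $t_1$ — yields $z_{R(t_1)}'(t_1)-z_{R(t_1)}'(t_0)\gtrsim (t_1-t_0)$. On the other hand, $|z_R'(t)|\lesssim R\,\|u\|_{\Lt}\|\nabla u\|_{\Lt}\lesssim R(t)=o(t_1)$ by Cauchy--Schwarz and the uniform $H^1$ bound. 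This forces $t_1-t_0\lesssim o(t_1)$, a contradiction for $t_1$ large; therefore $E[u]=0$. But $E[u]\ge \frac{s}{d}\|\nabla u\|_{\Lt}^2\ge 0$ by Lemma \ref{comparison} (valid since $\g_u(t)<1$, $\ME[u]<1$), so $\|\nabla u(t)\|_{\Lt}=0$, hence $u\equiv 0$.

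The main obstacle is the uniform control of the error term $\mathcal{E}_R$ together with the coupling between the choice of $R$ and the time interval — i.e., making precise that $R(t)=o(t)$ while still dominating the spatial translation, so that the localization estimate from Corollary \ref{precompact-localization} applies on the right region at each time. Secondary technical care is needed in the virial computation itself in low dimensions and in the range $3<p\le 5$ when $d=2$ (the potential-energy error term must be handled, which is exactly why that restriction appears), and in the two-index bookkeeping of fixed-$R$ virial inequalities versus the moving cutoff $R(t)$; these I would organize exactly as in \cite{HoRo07,DuHoRo08}, only adjusting exponents to general $d$ and $p$.
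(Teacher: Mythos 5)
Your proposal is correct and follows essentially the same argument as the paper: the localized virial functional $z_R$, the lower bound on $z_R''$ via Lemma \ref{Lower-bound-convexity}, the uniform-in-time absorption of the cutoff error using Corollary \ref{precompact-localization}, the growth control $x(t)=o(t)$ from Lemma \ref{spacial translation} to choose $R$, and the integration-in-time contradiction forcing $E[u]=0$ and hence $u\equiv 0$. One small inaccuracy in your side remark: the restriction $3<p\le 5$ when $d=2$ in Theorem A* is attached to part II(a) (radial finite-time blowup), not to this Rigidity Theorem, which is used in the scattering direction (part I(b)) and carries no such restriction; the error term $A_R$ here is handled purely by the localization from precompactness, as you otherwise correctly do.
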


\begin{proof}
Let $\phi \in C^{\infty}_0$ be radial, with
$$
\phi(x) =
\begin{cases}
|x|^2 & \mbox{ for } |x| \leq 1\\
0 & \mbox{ for } |x| \geq 2.
\end{cases}
$$
For $R>0$ define
\begin{align}
z_R(t) = \int R^2 \phi(\frac{x}{R}) |u(x,t)|^2 dx.
\label{localization_z}
\end{align} 
Then 
\begin{align}
z'_R(t) =2\im \int R \nabla \phi(\frac{x}{R})\cdot \nabla u(t)\bar u(t) dx,
\label{derivative_z}
\end{align} 
 and H\"older's inequality yields
\begin{align}
|z_R'(t)| \leq c R \int_{ \{ |x| \leq 2R \} } | \nabla u(t) | |u(t)| dx \leq c R \| u(t)\|_{\Lt}^{2(1-s)} \| \nabla u(t) \|_{\Lt}^{2s}
\label{localization_zder}.
\end{align}
Note that, 
\begin{align} \label{virialz}
\notag z''_R(t)  = 4 \sum_{j,k}\int \frac{\partial^2\phi}{\partial x_j\partial x_k}& \bigg( \frac{|x|}{R} \bigg)\frac{\partial u}{\partial x_j}\frac{\partial \bar u}{\partial x_k} - \frac{1}{R^2} \int \Delta^2 \phi \bigg ( \frac{|x|}{R} \bigg) |u|^2\\& -4\left( \frac{1}{2}-\frac{1}{p+1} \right) \int \Delta \phi  \bigg( \frac{|x|}{R} \bigg) |u|^{p+1}.
\end{align}
Since $\phi$ is radial, we have
\begin{align}\label{ridigity-eq1}
z''_R(t)  = 8 \int | \nabla u |^2 - \frac{4d({p-1})}{p+1}  \int |u|^{p+1}+A_R(u(t)),
\end{align}
where 
\begin{align}\label{ARvar}
A_R&(u(t)) = 4 \sum_{j}\int \Bigg(\partial^2_{x_j}\phi \bigg( \frac{|x|}{R} \bigg)-2\Bigg) \Bigg |  \frac{\partial u}{\partial x_j} \Bigg |^2 
+4 \sum_{j\neq k}\int_{R\leq|x|\leq 2R} \frac{\partial^2\phi}{\partial x_j\partial x_k} \bigg( \frac{|x|}{R} \bigg)\notag\\
& - \frac{1}{R^2} \int \Delta^2 \phi \bigg ( \frac{|x|}{R} \bigg) |u|^2- 4\left( \frac{1}{2}-\frac{1}{p+1} \right)\int \Bigg(\Delta \phi  \bigg( \frac{|x|}{R} \bigg)-2d\Bigg) |u|^{p+1}.
\end{align}
Thus, 
\begin{align}
\label{no radial remainder}
\big|A_R(u(t))\big|  = c \int_{|x|\geq R} \bigg(| \nabla u(t) |^2 + \frac{1}{R^2}  |u(t)|^2 + |u(t)|^{p+1}\bigg)dx.
\end{align}
Choosing $R$ large enough, over a suitably chosen time interval $[t_0,t_1]$, with $0\ll t_0\ll t_1<\infty$, combining \eqref{ridigity-eq1} and \eqref{lower}, we obtain
\begin{align}
\label{second derivative} 
|z''_R(t)|\geq 16(1-\omega^{p-1})E[u]-|A_R(u(t))|.
\end{align}
From Corollary \ref{precompact-localization} , letting $\epsilon=\frac{1-\omega^{p-1}}{c}$, with $c$ as in \eqref{no radial remainder}, we can obtain $R_0\geq0$ such that for all $t$,
\begin{align}
\label{integral}
\int_{|x+x(t)|>R_0}\big(|\nabla u(t)|^2+|u(t)|^2+|u(t)|^{p+1}\big)\leq\frac{1-\omega^{p-1}}{c}E[u].
\end{align}
Thus combining (\ref{no radial remainder}), (\ref{second derivative}) and (\ref{integral}), and  taking $R\geq R_0+\sup_{t_0\leq t\leq t_1}|x(t)|,$ gives that for all $t_0\leq t\leq t_1$, 
 \begin{align}
\label{derivada2}
|z''(t)|\geq 8(1-\omega^{p-1})E[u].
\end{align}
By Lemma \ref{spacial translation}, there exists $t_0\geq0$ such that for all $t\geq t_0$, we have $|x(t)|\leq \gamma t.$ Taking $R=R_0+\gamma t_1,$ we have that (\ref{derivada2}) holds for all $t\in[t_0,t_1]$. Thus, integrating it over this interval, we obtain
\begin{align}\label{energy1}
|z'_R(t_1)-z'_R(t_0)| \geq 8(1-\omega^{p-1}) E[u](t_1-t_0).
\end{align}
In addition, for all $t\in[t_0,t_1]$,  combining \eqref{localization_zder}, $\g_u(0)<1$, and Lemma \ref{Lower-bound-convexity} we have
\begin{align}
|z_R'(t)|&\leq cR\|u(t)\|^{2(1-s)}_{\Lt}\|\nabla u(t)\|^{2s}_{\Lt}\leq2 cR\|\uQ\|^{2(1-s)}_{\Lt}\|\nabla \uQ\|^{2s}_{\Lt}\notag\\
&\leq c\|\uQ\|^{2(1-s)}_{\Lt}\|\nabla \uQ\|^{2s} _{\Lt}(R_0+\gamma t_1). \label{energy2}
\end{align}
Combining \eqref{energy1} and \eqref{energy2} yields 
\begin{equation}\label{rigidez}
8(1-\omega^{p-1}) E[u](t_1-t_0)\leq 2 c\|\uQ\|^{2(1-s)}_{\Lt}\|\nabla \uQ\|^{2s} _{\Lt}(R_0+\gamma t_1).
\end{equation}
Observe that, $\omega$,  and $R_0$ are constants depending on $\ME[u]$,  and $t_0=t(\gamma)$. Let  $\gamma=\frac{(1-\omega^{p-1})E[u]}{c\|\uQ\|^{2(1-s)}_{\Lt}\|\nabla \uQ\|^{2s}_{\Lt}}>0$.Then \eqref{rigidez} yields
\begin{equation}\label{rigidez2}
6(1-\omega^{p-1}) E[u]t_1\leq 2 c\|\uQ\|^{2(1-s)}_{\Lt}\|\nabla \uQ\|^{2s} _{\Lt}R_0+8(1-\omega^{p-1}) E[u]t_0.
\end{equation}
Now sending $t_1\to +\infty,$ implies that the left hand side of \eqref{rigidez2} goes to $\infty$ and  the right hand side is bounded, which is a contradiction, unless $E[u]=0$ which implies $ u\equiv 0$.
\end{proof}

\section{Weak blowup via Concentration Compactness}\label {wbup}

In this section, we complete the proof of  Theorem A* part II  (b), i.e.,
if under the mass-energy threshold  $\ME[u]<1$, a solution $u(t)$ to $\NLSf$ with the initial condition $u_0\in H^1$ such that $\g_{u}(0)>1$ exists globally for all positive time, then  there exists a sequence of times $t_n\to +\infty$ such that $\g_u(t_n)\to +\infty$. We call this solution a ``weak blowup" solution.

\begin{definition}\label{me_line}  Let $\lambda>0$.  The horizontal line for which 
$$M[u]=M[u_Q]\quad \text{~~and~~}\quad \frac{E[u]}{E[u_Q]}=\frac{d}{2s}\lambda^{\frac{2}{s}}\bigg(1-\frac{\lambda^{p-1}}{\alpha^2}\bigg)$$ 
is called the ``\emph{ mass-energy}" line for $\lambda.$ 
\end{definition}
Notice that in Definition \ref{me_line}, the renormalized energy definition comes naturally by expressing the energy in terms of the gradient which is assumed to be $\lambda$.   We illustrate the mass-energy line notion 
in Figure \ref{fig2}.
\begin{figure}[htbp]
\includegraphics[width=160mm, height=120mm]{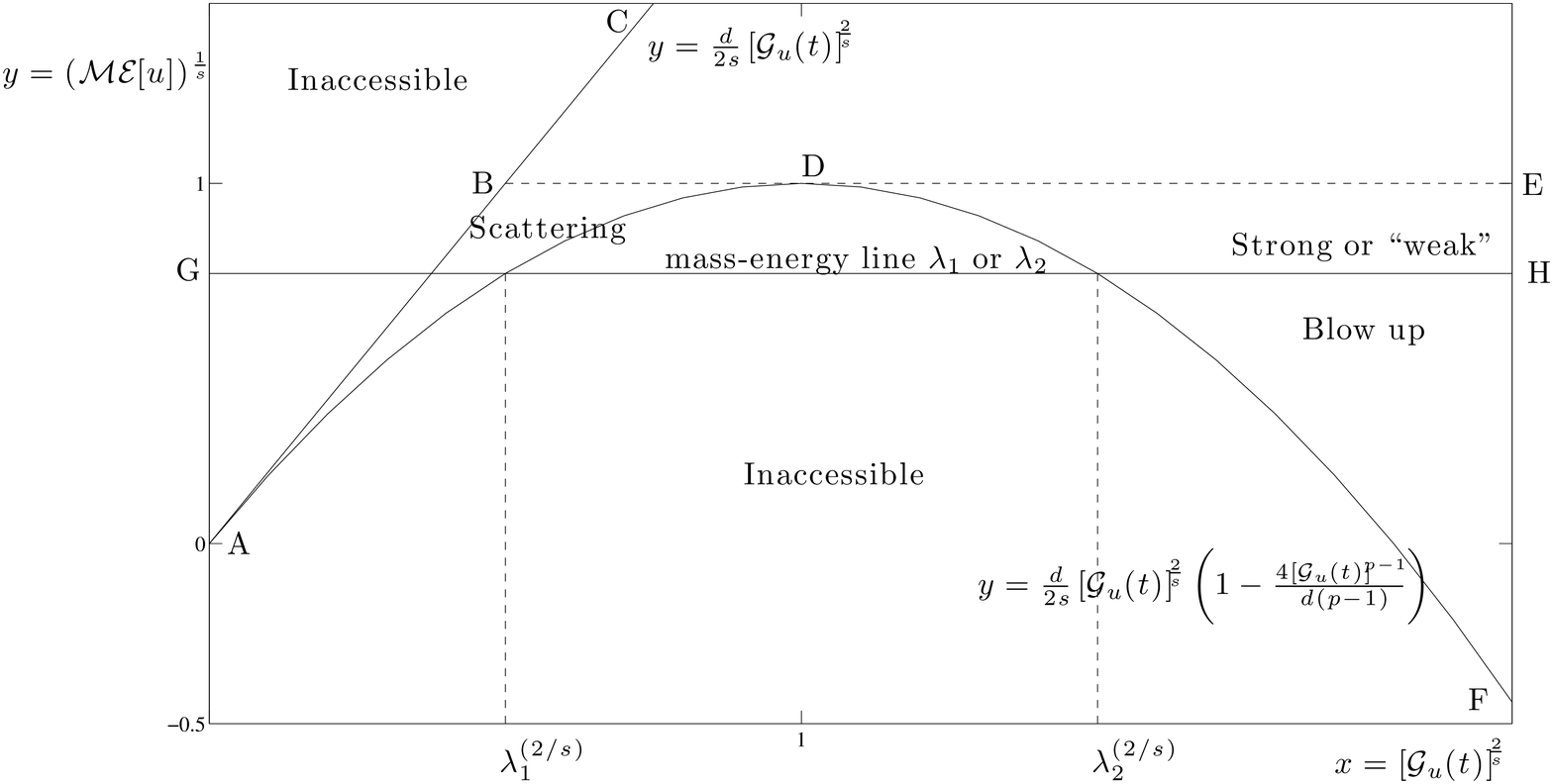}
\caption[Mass energy line for $\lambda>0$]{This is a graphical representation of restrictions on energy and gradient.  
 For a given $\lambda>0,$ the horizontal line GH is referred to as the ``mass-energy" line for this $\lambda$. Observe that this horizontal line can intersect the parabola
  $y=\frac{d}{2s} \left[\g_u(t )\right]^{\frac2s}\left(1 -\frac{4\left[\g_u(t )\right]^{p-1}}{d(p-1)} \right)$
twice, i.e., it can be a ``mass-energy" line for $0<\lambda_1<1$ and $1<\lambda_2<\infty$, the first case produces solutions which are global and are scattering (by Theorem A* part I) and the second case produces solutions which either blow up in finite time or diverge in infinite time (``weak blowup") as shown in Section \ref{wbup}. }
 \label{fig2}
\end{figure}

\subsection{ Outline for Weak blowup via Concentration Compactness}\label{outline weak}

Suppose that there is no finite time blowup for a nonradial and infinite variance solution (from Theorem A* part II),  thus, the existence on time (say, in forward direction) is infinite ($T^*=+\infty$). Now, under the assumption of global existence,  we study the behavior of $\g_u(t)$ as $t\to+\infty$, and use a concentration compactness type argument for establishing the divergence of $\g_u(t)$ in $H^1-$norm  as it was developed in \cite{HoRo09}, note that the concentration compactness  and the rigidity argument is not used here to prove scattering but to prove for a blowup property. The description of this argument is in steps 1, 2 and 3.

\noindent\emph{ Step 1:} \emph{ Near boundary behavior.}

\begin{figure}[htbp]
\includegraphics[width=150mm]{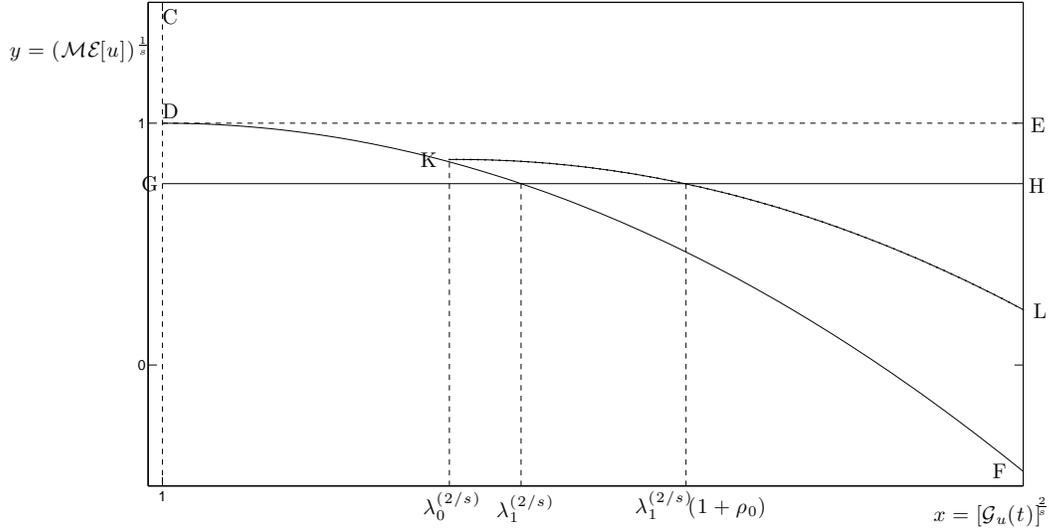}
\caption[Near  boundary behavior of  $\g(t)$.]{Near  boundary behavior of  $\g(t)$. We investigate whether the solution can
remain close to the boundary (see the dash dot line KL) for all time. }
 \label{fig3}
\end{figure}

\noindent Theorem A* II part (a) yields $\g_u(t)>1$ for all $t\in (T_*,T^*)$ whenever $\g_u(0)>1$  on the ``mass-energy" line for some $\lambda>1$. We illustrate this in Figure \ref{fig2}: given $u_0\in H^1$, we first determine $M[u_0]$ and $E[u_0]$ which specifies the ``mass-energy" line  GH. Then the gradient $\g_u(t)$ of a solution $u(t)$ lives on the line GH.  Note that $\g_u(t)>\lambda_2>1$ if   $\g_u(0)>1$. A natural question is whether $\g_u(t)$  can be, with time, much larger than 1 or $\lambda_2$. Proposition \ref{near boundary case} shows that it can not. Thus, we prove that the renormalized gradient $\g_u(t)$ can not forever remain near the boundary if originally $\g_u(0)$ is very close to it, that is,  
 if $\lambda_0>1$, there exists  
$ \rho_0(\lambda_0)>0$ such that for all $ \lambda>\lambda_0$ 
there is NO solution at the ``mass-energy" line for $\lambda$ satisfying 
\begin{equation*}
\lambda\leq\g_u(t)\leq\lambda(1+\rho_0).\label{eq:gradbound}
\end{equation*}
\noindent Using the Figure \ref{fig3}, this means that the solution $u(t)$ would have a gradient $\g_u(t)$ very close to the boundary DF (for all times), i.e., between the boundary DF and the dashed line KL.
\noindent  We will show that $\g_u(t)$ on any ``mass-energy" line with $\ME[u]<1$ and $\g_u(0)>1$ will escape to infinity (along this line). By contradiction,  assume that  all solutions  (starting from some mass-energy line corresponding to the initial renormalized gradient $\g_u(0)=\lambda_0>1$) are bounded in renormalized gradient for all $t>0.$

Step 1 gives the basis for induction,  giving that when $\lambda>1$, any  solution $u(t)$ of $\NLSf$ at the ``mass-energy" line for this $\lambda$ can not have a renormalized gradient $\g_u(t)$ bounded near the boundary $DF$ for all time (see Figure \ref{fig3}). We will show that  $\g_u(t)$, in fact, will tend to $+\infty$ (at least along an infinite time sequence). 

\begin{definition}\label{GBG}
Let $\lambda>1.$ We say  \emph{the property}  $\GB(\lambda,\sigma)$ \emph{holds}\footnote[13]{\normalsize $\GB$ stands for \emph{globally bounded gradient.}\\} if there exists a solution $u(t)$ of $\NLSf$ at the mass-energy line $\lambda$ (i.e., $M[u]=M[u_Q]$ and $\frac{E[u]}{E[u_Q]}=\frac{d}{2s}\lambda^{\frac{2}{s}}\left(1-\frac{\lambda^{p-1}}{\alpha^2}\right)$) such that 
$\lambda\leq\g_u(t)\leq\sigma$  for all $ t\geq 0$.
Figure \ref{fig4} illustrates this definition.
\end{definition}
\begin{figure}[htbp]
\includegraphics[width=150mm]{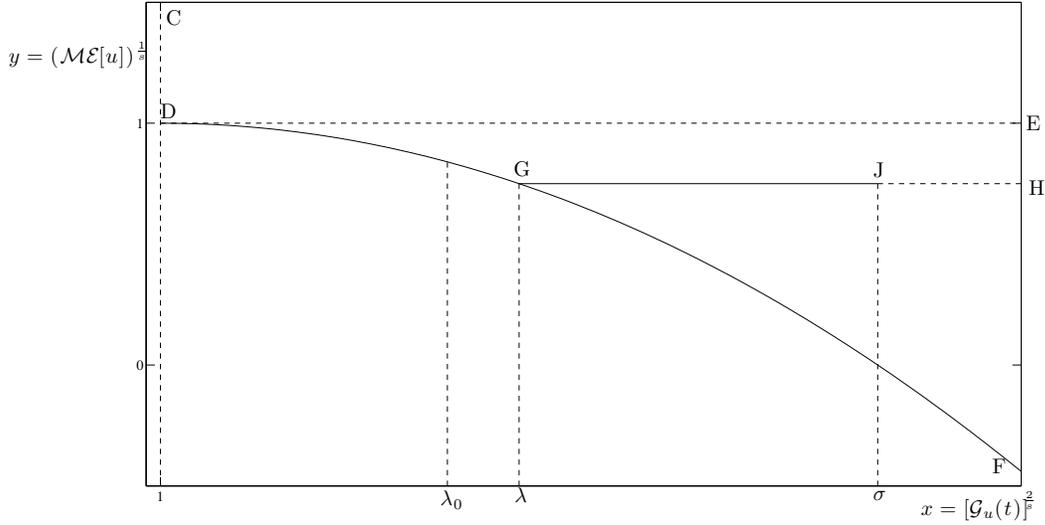}
\caption[Globally Bounded Gradient]{On this graph the statement ``$\GB(\lambda,\sigma)$ \emph{holds}" shoes that  $\g(t)$ is only on the segment GJ.}
 \label{fig4}
\end{figure}
  In other words, $\GB(\lambda, \sigma)$ is not true if for  every solution  $u(t)$ of $\NLSf$ at the ``mass-energy" line for $\lambda$, such that $\lambda\leq\g_u(t)$ for all $t>0,$   there exists $t^*$ such that $\sigma<\g_u(t^*)$. Iterating,
we conclude that, there exists a sequence $\{t_n\}\to \infty$ with $\sigma<\g_u(t_n)$ for all $n$.
  
Note that, if $\GB(\lambda,\sigma)$ does not hold, then for any $\sigma'<\sigma,\;$ $\GB(\lambda,\sigma')$ does not hold either. This will allow us induct on the $\GB$ notion.

\begin{definition}\label{thresholdGB}
Let $\lambda_0>1$.   We define \textrm{ the critical threshold} $\sigma_c$ by
$$
\sigma_c=  \sup\big\{\sigma | \sigma>\lambda_0 \mbox{ and } \GB(\lambda,\sigma) \mbox{ does NOT hold for all } \lambda \mbox{ with } \lambda_0\leq\lambda\leq\sigma\big\}.
$$  
Note that  $\sigma_c=\sigma_c(\lambda_0)$  stands for ``$\sigma$-critical".
\end{definition}

From the step 1 (Proposition \ref{near boundary case}) 
we have that $\GB(\lambda,\lambda(1+\rho_0(\lambda_0))$ does not hold for all $\lambda\geq\lambda_0$.

\noindent\emph{ Step 2: Induction argument.} 

\noindent Let $\lambda_0>1$ . We would like to show that $\sigma_c(\lambda_0)=+\infty$. Arguing by contradiction, we assume $\sigma_c(\lambda_0)$ is finite. 

\noindent Let $u(t)$ be a solution to $\NLSf$ with initial data $u_{n,0}$ at the ``mass-energy" line for $\lambda>\lambda_0,$ i.e.,$\frac{E[u]}{E[\uQ]}=\frac{d}{2s}\lambda^{\frac{2}{s}}\bigg(1-\frac{\lambda^{p-1}}{\alpha^2}\bigg)$,  $\;M[u]=M[\uQ]$
 and $\g_{u}(0)>1.$  We want to show that there exists a sequence of times $\{t_n\}\to+\infty$ such that $\g_u (t_n)\to\infty$. Suppose the opposite, that is,  such sequence of times does not exist.
 
\noindent Then  there exists $\sigma<\infty$ satisfying  $\lambda\leq \g_{u}(t)
\leq \sigma$ for all $t\geq0$, i.e., $\GB(\lambda,\sigma)$ holds with  $\sigma_c(\lambda_0)\leq\sigma<\infty.$ At this point we can apply Proposition \ref{nonradialPDNLS} (the nonlinear profile decomposition).

\noindent The nonlinear profile decomposition of the sequence $\{u_{n,0}\}$ and profile reordering will allow us to construct  a ``\emph{critical threshold solution}"  $u(t)=u_c(t) $ to $\NLSf$ at the ``mass-energy" line $\lambda_c,$ where $\lambda_0<\lambda_c<\sigma_c(\lambda_0)$ and $\lambda_c<\g_{u_c}(t)<\sigma_c(\lambda_0)$ for all $t>0$ (see Existence of threshold solution  Lemma \ref{existence threshold}).
 
 \noindent\emph{ Step 3:}  \emph{ Localization properties of critical threshold solution.} 

\noindent By construction, the critical threshold solution $u_c(t)$ will have the property  that the set 
$K=\{u(\cdot-x(t),t)|t\in[0,+\infty)\}$ has a compact closure in $H^1$ (Lemma \ref{compactH1}). Thus, we will have uniform concentration of $u_c(t)$ in time, which together with the localization property  (Corollary \ref{precompact-localization}) implies that for a given $\epsilon >0$, there exists an $R>0$ such that
$
\|\nabla u(x,t)\|_{\Lt(|x+x(t)|>R)}^2\leq\epsilon
$   
uniformly in $t\;$; as a consequence, $u_c(t)$ blows up in finite time (Lemma \eqref{blowup localization}), that is, $\sigma_c=+\infty,$ which contradicts the fact that $u_c(t)$ is bounded  in $H^1.$ Thus, $u_c(t)$ can not exist since  our assumption that $\sigma_c(\lambda_0)<\infty$ is false,  and this ends the proof of the ``weak blowup".

 In the rest of this chapter we proceed with the proof of claims described  in Step 1, 2 and 3.


First, recall variational characterization of the ground state.

\subsection{ Variational Characterization of the Ground State}

Propositon \ref{lion}  is a restatement of  Proposition 4.4 \cite{HoRo09} adjusted for our  general case, and shows that if a solution $u(t,x)$ is close to $u_Q(t,x)$ in mass and energy, then it is close to $u_Q$ in $H^1(\Rn),$ up to a phase and shift in space. The proof is identical so we omit it.
\begin{prop} \label{lion}
There exists a function $\epsilon(\rho)$ defined for small $\rho>0$ with $\lim_{\rho\to 0}\epsilon(\rho)=0$, such that for all $u\in H^1(\Rn)$ with
\begin{align*}
\big| \|u\|_{L^{p+1}}-\|u_Q\|_{L^{p+1}}\big|+\big| \|u\|_{L^2}-\|u_Q\|_{L^2}\big|+\big| \|\nabla u\|_{L^2}-\|\nabla u_Q\|_{L^2}\big|\leq\rho,
\end{align*}
there is $\theta_0\in \R$ and $x_0\in \Rn$ such that 
\begin{align}\label{sca_lion}
\|u-e^{i\theta_0}u_Q(\cdot-x_0)\|_{H^1}\leq\epsilon(\rho).
\end{align}
\end{prop}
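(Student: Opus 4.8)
The statement is the standard variational characterization of the ground state, asserting a quantitative version of the compactness of the minimizing sequences for the Gagliardo--Nirenberg functional. Since the excerpt explicitly says the proof is identical to Proposition 4.4 of \cite{HoRo09} and may be omitted, the natural plan is a contradiction/compactness argument, and I would record it as follows. Suppose the conclusion fails: then there exist $\rho_n \to 0$ and functions $u_n \in H^1(\Rn)$ satisfying
\begin{align*}
\big| \|u_n\|_{L^{p+1}}-\|u_Q\|_{L^{p+1}}\big|+\big| \|u_n\|_{L^2}-\|u_Q\|_{L^2}\big|+\big| \|\nabla u_n\|_{L^2}-\|\nabla u_Q\|_{L^2}\big|\leq \rho_n,
\end{align*}
yet $\inf_{\theta_0 \in \R,\, x_0 \in \Rn}\|u_n-e^{i\theta_0}u_Q(\cdot-x_0)\|_{H^1}\geq \delta$ for some fixed $\delta>0$. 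The sequence $\{u_n\}$ is bounded in $H^1(\Rn)$, and from the hypotheses we have $\|u_n\|_{L^2}\to\|u_Q\|_{L^2}$, $\|\nabla u_n\|_{L^2}\to\|\nabla u_Q\|_{L^2}$, and $\|u_n\|_{L^{p+1}}\to\|u_Q\|_{L^{p+1}}$; in particular, by the sharp Gagliardo--Nirenberg inequality \eqref{G-N} and the fact that $u_Q$ (equivalently $Q$) is an optimizer, $\{u_n\}$ is a minimizing sequence for the associated Weinstein functional.

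The key step is then to apply the linear profile decomposition (Proposition \ref{nonradialPD}) to $\{u_n\}$, or more classically to run the concentration-compactness dichotomy of P.-L.\ Lions directly on the functional: one shows that neither vanishing nor dichotomy can occur for a minimizing sequence, so after translation $u_n(\cdot - x_n)$ converges weakly (then, by the norm convergence, strongly) in $H^1(\Rn)$ to a limit $v$ which is itself an optimizer of \eqref{G-N} with $\|v\|_{L^2}=\|u_Q\|_{L^2}$ and $\|\nabla v\|_{L^2}=\|\nabla u_Q\|_{L^2}$. By the Euler--Lagrange equation and the uniqueness (up to scaling, translation, and phase) of the positive radial optimizer of the Gagliardo--Nirenberg inequality — which is exactly $Q$ up to the symmetries fixed in \eqref{eq:Qground} — and using that the $L^2$ and $\dot H^1$ norms of $v$ match those of $u_Q$ (which pins down the scaling parameter), one concludes $v = e^{i\theta_0}u_Q(\cdot - x_0)$ for some $\theta_0, x_0$. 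The strong $H^1$ convergence $u_n(\cdot-x_n)\to v$ then forces $\|u_n - e^{i\theta_0}u_Q(\cdot - x_0 + x_n)\|_{H^1}\to 0$, contradicting the lower bound $\delta$. This yields the existence of the modulus of continuity $\epsilon(\rho)$ with $\epsilon(\rho)\to 0$ as $\rho\to 0$.

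The main obstacle — and the reason the full argument is nontrivial rather than routine — is the rigidity input: one needs the classification of optimizers of the sharp Gagliardo--Nirenberg inequality in the mass-supercritical, energy-subcritical range, namely that every (nonnegative) optimizer is, up to translation, dilation, and multiplication by a constant, equal to the ground state $Q$ solving \eqref{eq:Qground}. This relies on the uniqueness results for ground states of the elliptic equation $-Q + \Delta Q + Q^p = 0$ (Kwong-type uniqueness), together with Weinstein's identification of the sharp constant \eqref{const GN}. Given that these are classical and are already invoked in Section \ref{SEandGS}, and given that the proof is stated to be identical to \cite[Proposition 4.4]{HoRo09}, I would simply remark that it follows verbatim and omit the details, as the excerpt does.
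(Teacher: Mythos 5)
Your proposal is correct and follows essentially the same compactness-plus-uniqueness argument as the reference \cite[Proposition 4.4]{HoRo09} that the paper explicitly defers to: contradiction via a minimizing sequence for the Weinstein functional, concentration compactness (or equivalently the linear profile decomposition) to extract a strongly convergent translate, and the classification of Gagliardo--Nirenberg optimizers (Weinstein's identification of the sharp constant plus Kwong-type uniqueness of the ground state) to identify the limit as $e^{i\theta_0}u_Q(\cdot-x_0)$. You have correctly isolated the nontrivial rigidity input, and your account of why the paper omits the details is accurate.
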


The Proposition \ref{newchar} is a variant of Proposition 4.1 \cite{HoRo09}, rephrased for our case.
\begin{prop} \label{newchar} 
There exists a function $\epsilon(\rho)$ such that $\epsilon(\rho)\to 0$ as $\rho\to 0$  satisfying the following:  Suppose there exists $\lambda >0$ such that 
\begin{align}\label{bound1la}
\bigg|\left(\ME[u]\right)^{\frac{1}{s}}-\frac{d}{2s}\lambda^{\frac{2}{s}}\bigg(1-\frac{\lambda^{p-1}}{\alpha^2}\bigg)\bigg|\leq \rho  \lambda^{\frac{2(p-1)}{s}}
\end{align}
and
\begin{align}\label{bound2la}
\big|[\g_u(t)]^{\frac{1}{s}}
-\lambda\big|\leq \rho\left\{\begin{array}{c}\lambda^{\frac2s} \quad \text{~~ if}\quad \lambda\leq1 \\\lambda\quad \text{~~ if}\quad \lambda\geq1\end{array}\right. .
\end{align}
Then there exist $\theta_0 \in \R$ and $x_0\in \Rn$ with $\kappa=\Big(\frac{M[u]}{M[u_Q]}\Big)^{\frac{1-s}{s}}$ such that 
\begin{align*}
\Big\|u(x)-e^{i\theta_0}\lambda \kappa^{-\frac{s}{1-s}} u_Q\big(\lambda( \kappa^{-\frac{3s}{d(1-s)}}x-x_0)\big)\Big\|_{L^2}\leq \kappa^{\frac{s}{2(1-s)}}\epsilon(\rho),
\end{align*}
and 
\begin{align*}
\Big\|\nabla\Big[u(x)-e^{i\theta_0}\lambda \kappa^{-\frac{s}{1-s}} u_Q\big(\lambda( \kappa^{-\frac{3s}{d(1-s)}}x-x_0) \Big]\Big\|_{L^2}\leq\lambda \kappa^{-\frac{s}{2(1-s)}}\epsilon(\rho).
\end{align*}
\end{prop}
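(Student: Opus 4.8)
The plan is to reduce Proposition \ref{newchar} to Proposition \ref{lion} by a rescaling argument. The point is that Proposition \ref{lion} is stated for functions that are close (in mass, energy and gradient) to the \emph{fixed} soliton profile $\uQ$, while Proposition \ref{newchar} allows the solution to sit on any ``mass-energy'' line for $\lambda$, and with any mass. So the first step is to introduce the rescaled function
\begin{align*}
v(x):=\frac{1}{\lambda}\,\kappa^{\frac{s}{1-s}}\, u\big(\kappa^{\frac{3s}{d(1-s)}}x\big),
\qquad \kappa=\Big(\tfrac{M[u]}{M[\uQ]}\Big)^{\frac{1-s}{s}},
\end{align*}
chosen precisely so that the two invariants that control the hypotheses of Proposition \ref{lion} are normalized: I would check (a routine computation using the scaling $u_\lambda(x)=\lambda^{2/(p-1)}u(\lambda x)$ and the $\kappa$-scaling, which is mass-critical-type on one slot and $\dot H^1$-type on the other) that $M[v]=M[\uQ]$, that $\g_v\equiv 1$ up to the same error as $\g_u$ relative to $\lambda$, and that $\ME[v]$ differs from $\ME[\uQ]=1$ by $O(\rho)$. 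Concretely, the normalization is forced: one power of $\kappa$ kills the mass discrepancy, and the factor $1/\lambda$ together with the $\lambda^{2/(p-1)}$-type scaling kills the gradient discrepancy, so that $v$ lands in a small $H^1$-neighborhood of the manifold of solitons.

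Second, I would translate hypotheses \eqref{bound1la} and \eqref{bound2la} into the three quantitative closeness statements required by Proposition \ref{lion}, namely $|\|v\|_{L^{p+1}}-\|\uQ\|_{L^{p+1}}|+|\|v\|_{L^2}-\|\uQ\|_{L^2}|+|\|\nabla v\|_{L^2}-\|\nabla \uQ\|_{L^2}|\le \tilde\rho(\rho)$ with $\tilde\rho(\rho)\to0$ as $\rho\to0$. The mass and gradient bounds are immediate from the normalization (mass exactly equal; gradient within $\rho$ of $1$, hence $\|\nabla v\|_{L^2}$ within $O(\rho)$ of $\|\nabla\uQ\|_{L^2}$ after using $\|\nabla \uQ\|_{L^2}=\|\uQ\|_{L^2}$, see \eqref{poz3} and the Pohozhaev identities in Section \ref{SEandGS}). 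For the $L^{p+1}$ piece, I would use the energy identity: writing $E[v]=\frac12\|\nabla v\|_{L^2}^2-\frac1{p+1}\|v\|_{L^{p+1}}^{p+1}$ and $E[\uQ]=\frac12\|\nabla\uQ\|_{L^2}^2-\frac1{p+1}\|\uQ\|_{L^{p+1}}^{p+1}$, the control of $\ME[v]$ from \eqref{bound1la} (and of $M[v]=M[\uQ]$) gives control of $E[v]-E[\uQ]$; combined with the already-established control of $\|\nabla v\|_{L^2}$ this pins down $\|v\|_{L^{p+1}}^{p+1}$ near $\|\uQ\|_{L^{p+1}}^{p+1}$, hence $\|v\|_{L^{p+1}}$ near $\|\uQ\|_{L^{p+1}}$. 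One subtlety: \eqref{bound2la} has the two regimes $\lambda\le1$ and $\lambda\ge1$ with different powers of $\lambda$ on the right; I would carry both, noting that in each regime the same rescaling normalizes $\g_v$ to be within $O(\rho)$ of $1$, because the right-hand sides are exactly the scale-invariant combinations that appear when one rewrites $\g_u$ in terms of $\g_v$.

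Third, apply Proposition \ref{lion} to $v$: there exist $\theta_0\in\R$ and $y_0\in\Rn$ with $\|v-e^{i\theta_0}\uQ(\cdot-y_0)\|_{H^1}\le \epsilon(\tilde\rho(\rho))=:\epsilon(\rho)$. Then I would undo the rescaling. Substituting back $u(x)=\lambda\kappa^{-\frac{s}{1-s}}v(\kappa^{-\frac{3s}{d(1-s)}}x)$ and setting $x_0$ to be the correspondingly rescaled translation parameter, the $L^2$ estimate picks up the Jacobian factor $\kappa^{\frac{s}{2(1-s)}}$ (from the change of variables $dx\mapsto \kappa^{\frac{3s}{1-s}\cdot\frac1?}$… more precisely the $L^2$ dilation weight in dimension $d$ combined with the amplitude factor $\lambda\kappa^{-s/(1-s)}$, which gives $\kappa^{s/(2(1-s))}$ net) and the $\nabla$ estimate picks up the extra factor $\lambda$ times $\kappa^{-\frac{s}{2(1-s)}}$ from differentiating the dilation — exactly the two bounds asserted in the statement. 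These are bookkeeping computations: the powers are determined by homogeneity and I would just record them.

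The main obstacle I anticipate is purely in the rescaling bookkeeping: getting the exponents of $\kappa$ and $\lambda$ exactly right in all three norms ($L^2$, $L^{p+1}$, $\dot H^1$) simultaneously, since $\kappa$ is tuned to the \emph{mass} while $\lambda$ is tuned to the \emph{gradient}, and the dimension $d$ and the critical index $s$ both enter the dilation weights. In particular one must verify that the \emph{same} choice $\kappa=(M[u]/M[\uQ])^{(1-s)/s}$ that normalizes the mass is consistent with \eqref{bound2la} normalizing the gradient — i.e., that the two rescalings (mass-dilation by $\kappa^{3s/(d(1-s))}$ in space with amplitude $\kappa^{-s/(1-s)}$, and the $\lambda$-factor) commute correctly with the NLS scaling and do not reintroduce a mass discrepancy. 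Everything else (deducing the $L^{p+1}$ closeness from the energy and mass constraints, then invoking Proposition \ref{lion} as a black box) is routine. Since Proposition \ref{newchar} is explicitly stated to be a rephrasing of \cite[Proposition 4.1]{HoRo09}, I would remark that the argument is the same as there, with the scaling adapted to general $d$ and $p$, and would not belabor the computation.
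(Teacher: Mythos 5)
Your plan is the right one and matches the paper's: reduce to Proposition \ref{lion} by a rescaling that normalizes mass and gradient, then undo. But the specific rescaling you write down does not do this, and the error is not a harmless bookkeeping slip; it changes the structure of the argument. With $v(x)=\tfrac{1}{\lambda}\kappa^{\frac{s}{1-s}}u\big(\kappa^{\frac{3s}{d(1-s)}}x\big)$ the mass is \emph{not} normalized: the amplitude $\kappa^{s/(1-s)}$ and dilation $\kappa^{3s/(d(1-s))}$ cancel the $\kappa$-dependence by design, so the extra prefactor $1/\lambda$ leaves $M[v]=\lambda^{-2}M[\uQ]\neq M[\uQ]$. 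Moreover, undoing your rescaling via $u(x)=\lambda\kappa^{-s/(1-s)}v\big(\kappa^{-3s/(d(1-s))}x\big)$ gives a comparison profile of the form $\lambda\kappa^{-s/(1-s)}u_Q\big(\kappa^{-3s/(d(1-s))}x-y_0\big)$, which cannot match the statement's conclusion, whose comparison profile $\lambda\kappa^{-s/(1-s)}u_Q\big(\lambda(\kappa^{-3s/(d(1-s))}x-x_0)\big)$ carries a genuine $\lambda$-dilation of the spatial variable. Any single rescaling that normalizes both the mass and the gradient (and thus prepares $v$ for Proposition \ref{lion}) must involve a $\lambda$-dilation in the argument of $u$, not only a $1/\lambda$ amplitude factor; this is precisely what the paper's second rescaling $v\mapsto\tilde u$ supplies.

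The paper in fact organizes the reduction in two stages: a $\kappa$-dilation $v=\kappa^{s/(1-s)}u(\kappa^{3s/(d(1-s))}\cdot)$ to impose $M[v]=M[\uQ]$, and then a $\lambda$-dilation $\tilde u=\lambda^{a}v(\lambda^{b}\cdot)$ chosen so that two constraints hold simultaneously: $\|\nabla\tilde u\|_{L^2}\approx\|\nabla u_Q\|_{L^2}$ and $\|\tilde u\|_{L^{p+1}}^{p+1}\approx\lambda^{-2(p-1)/s}\|v\|_{L^{p+1}}^{p+1}\approx\|u_Q\|_{L^{p+1}}^{p+1}$; the second constraint is what eliminates the $\lambda^{2(p-1)/s}$ that appears when one combines \eqref{bound1la}, \eqref{bound2la}, and the Pohozhaev identities. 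You treat the $L^{p+1}$ closeness as an automatic consequence of mass and gradient closeness plus the energy hypothesis, but because the $\lambda$-dilation is not the NLS scaling (it does not act homogeneously on $E$ when $\lambda\neq 1$), the energy of $v$ is not simply a scalar multiple of $E[u]$, and extracting the $L^{p+1}$ bound requires the Pohozhaev bookkeeping that the paper carries out explicitly, including the two regimes $\lambda\le 1$ and $\lambda\ge 1$ of \eqref{bound2la}. You flag the bookkeeping as the main obstacle and defer it, but since the bookkeeping is precisely the content of this proposition beyond the trivial reduction to Proposition \ref{lion}, deferring it leaves a real gap; as written, your rescaling neither verifies the hypotheses of Proposition \ref{lion} nor produces the stated conclusion when inverted.
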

\begin{proof}
Set $v(x)=\kappa^{\frac{s}{1-s}} u(\kappa^{\frac{3s}{(1-s)d}}x)$, hence $M[v]=\kappa^{-\frac{s}{1-s}}M[u]$. Assume $M[v]=M[u_Q].$ Then there exists $\lambda>0$ such that (\ref{bound1la}) and (\ref{bound2la}) become
\begin{equation}
\label{bound1sc}
\bigg|\dfrac{E[v]}{E[u_Q]}-\frac{d}{2s}\lambda^{\frac{2}{s}}\bigg(1-\frac{\lambda^{p-1}}{\alpha^2}\bigg)\bigg|\leq \rho_0 \lambda^{\frac{2(p-1)}{s}},
\end{equation}
and
\begin{equation}\label{bound2sc}
\bigg|\dfrac{\|\nabla v\|_{L^2}}{\|\nabla u_Q\|_{L^2}}-\lambda\bigg|\leq 
\rho_0 \left\{\begin{array}{c}\lambda^{\frac{2}{s}} \mbox{  if  } \lambda\leq1 \\\lambda \mbox{   if  } \lambda\geq1\end{array}\right. .
\end{equation}
Letting $\tilde u(x)=\lambda^{-\frac{2((p-1)(d-2)-ds)}{((p-1)(d-2)-4)s}}v(\lambda^{-\frac{2((p-1)(2-s)+2s)}{((p-1)(d-2)-4)s}}x)$,  we have
\begin{equation}\label{bound2sc1}
\bigg|\dfrac{\|\nabla \tilde u\|_{L^2}}{\|\nabla u_Q\|_{L^2}}-1\bigg|\leq \rho_0 \left\{\begin{array}{c}\lambda^{\frac{2}{s}-1} \mbox{  if  } \lambda\leq1 \\1\quad \mbox{   if  } \lambda\geq1\end{array}\right.
\leq \rho_0.
\end{equation}
Combining Pohozhaev identities, (\ref{bound1sc}) and (\ref{bound2sc}), gives
\begin{eqnarray*}
\frac{d}{2s\alpha^2}\bigg|\dfrac{\|\ v\|_{L^{p+1}}^{p+1}}{\| u_Q\|^{p+1}_{L^{p+1}}}-\lambda^{\frac{2(p-1)}{s}}\bigg|
&\leq&
\Bigg|\dfrac{E[v]}{E[u_Q]}-
\Bigg(\frac{d}{2s}\lambda^{\frac 2s}\bigg(1 -\frac{\lambda^{p-1}}{\alpha^{2}} \bigg)\Bigg)\Bigg|
+\frac{d}{2s}\bigg|\dfrac{\|\nabla v\|^2_{L^2}}{\|\nabla u_Q\|^2_{L^2}}-\lambda^2\bigg|\\
&\leq&\rho_0
\left(\lambda^{\frac{2(p-1)}{s}}+
\frac{d}{2s}\left\{\begin{array}{c}\lambda^{\frac{2(p-1)}{s}} \mbox{  if  } \lambda\leq1 \\\lambda^{\frac{2}{s}} \mbox{   if  } \lambda\geq1\end{array}\right.\right)\leq\frac{d+2s}{2s}\rho_0 \lambda^{\frac{2(p-1)}{s}}.
\end{eqnarray*}
This yields 
\begin{equation}
\label{bound3sc}
\bigg|\dfrac{\| \tilde u\|_{L^{p+1}}^{p+1}}{\| u_Q\|^{p+1}_{L^{p+1}}}-1\bigg|\leq \frac{\alpha^2(d+2s)}{d}\rho_0.
\end{equation}
From (\ref{bound2sc1}) and (\ref{bound3sc}) we have
\begin{equation*}
\bigg|{\| \tilde u\|_{L^{p+1}}}-{\| u_Q\|_{L^{p+1}}}\bigg|+\bigg|{\| \tilde u\|_{L^2}}-{\|u_Q\|_{L^2}}\bigg|+\bigg|{\| \nabla \tilde u\|_{L^2}}-{\| \nabla u_Q\|_{L^2}}\bigg|\leq C(\|u_Q\|_{L^2}) \rho_0.
\end{equation*}

Let $\rho=\frac{\rho_0}{C(\|u_Q\|_{L^2})},\;$ then by Proposition \ref{lion} there exist $\theta \in \R$ and $x_0 \in \Rn$ such that (\ref{sca_lion}) holds for $\tilde u$.  Rescaling to $v$ and then to $u$, completes the proof.
\end{proof}
Next proposition  is ``close to the boundary" behavior.

\begin{prop}\label{near boundary case} Fix $\lambda_0>1.$ There exists $\rho_0=\rho_0(\lambda_0)>0$ (with the property that $\rho_0\to0$ as $\lambda_0\searrow1$) such that for any $\lambda\geq \lambda_0,$  there is NO solution $u(t)$ of $\NLSf$ with P[u]=0 satisfying $\|u\|_{L^2}=\|u_Q\|_{L^2}$, and
$\frac{E[u]}{E[u_Q]}=\frac{d}{2s}\lambda^{\frac{2}{s}}\bigg(1-\frac{\lambda^{p-1}}{\alpha^2}\bigg)$
(i.e., on any ``mass-energy" line corresponding to $\lambda\geq\lambda_0$ and $\ME<1$)
with  $\lambda\leq\g_u(t)\leq\lambda(1+\rho_0)$ for all $t\geq0.$ 
A similar statement holds for $t\leq 0.$
\end{prop}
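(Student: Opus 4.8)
The plan is to argue by contradiction in the spirit of Holmer--Roudenko \cite{HoRo09}, combining the variational characterization (Proposition \ref{newchar}) with the virial/localized-variance machinery already set up for the Rigidity Theorem. Suppose, for some $\lambda\geq\lambda_0$, there \emph{were} a solution $u(t)$ on the ``mass-energy'' line for $\lambda$ with $P[u]=0$, $\|u\|_{\Lt}=\|u_Q\|_{\Lt}$, and $\lambda\leq\g_u(t)\leq\lambda(1+\rho_0)$ for all $t\geq0$. First I would normalize: by scaling (the same scaling used in Proposition \ref{newchar}, with $\kappa=1$ here since the mass is already $M[u_Q]$) we may work directly at the ground-state mass. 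The hypothesis says $[\g_u(t)]^{1/s}$ lies in the thin band $[\lambda^{1/s},\,\lambda^{1/s}(1+\rho_0)^{1/s}]$, so for $\rho_0$ small the bound \eqref{bound2la} of Proposition \ref{newchar} holds (in the $\lambda\geq1$ regime, the relevant comparison is with $\lambda$, up to a constant times $\rho_0$); and being exactly on the ``mass-energy'' line for $\lambda$ gives \eqref{bound1la} with the left side essentially zero, hence $\leq\rho\lambda^{2(p-1)/s}$. Therefore Proposition \ref{newchar} applies uniformly in $t$: there are $\theta(t)\in\R$ and $x(t)\in\Rn$ with
\begin{align*}
\big\|u(t)-e^{i\theta(t)}\lambda^{1/s} u_Q(\lambda^{1/s}(\cdot-x(t)))\big\|_{H^1}\leq \epsilon(\rho_0),
\end{align*}
i.e.\ $u(t)$ stays $\epsilon_1$-close to a modulated, rescaled ground state for all $t$, where $\epsilon_1\to0$ as $\rho_0\to0$ (and $\rho_0\to0$ as $\lambda_0\searrow1$).

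Next I would extract the two consequences of this modulation estimate needed to run the localized virial argument. Closeness to a single rescaled soliton orbit forces $\{u(t)\}$ modulo translation to be precompact in $H^1$ (this is exactly hypothesis (b) of Lemma \ref{spacial translation}, or one checks precompactness directly as in Lemma \ref{precompact}), so Corollary \ref{precompact-localization} gives uniform localization: for any $\eta>0$ there is $R_0$ with $\int_{|x+x(t)|>R_0}(|\nabla u|^2+|u|^2+|u|^{p+1})<\eta$ for all $t\geq0$. Moreover, since $P[u]=0$, Lemma \ref{spacial translation}(b) yields $x(t)/t\to0$. Now I would set up the truncated variance $z_R(t)=\int R^2\phi(x/R)|u(x,t)|^2\,dx$ exactly as in the proof of the Rigidity Theorem \ref{rigidity}, with the same radial cutoff $\phi$. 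The crucial sign computation: on the ``mass-energy'' line for $\lambda>1$, we have $\g_u(t)>1$, and I want the analogue of the lower bound \eqref{lower}. Here one must redo the Gagliardo--Nirenberg estimate for the convexity $z''_R(t)=8\|\nabla u\|_{\Lt}^2-\frac{4d(p-1)}{p+1}\|u\|_{L^{p+1}}^{p+1}+A_R(u(t))$ in the \emph{focusing/blowup} regime $\g_u(t)>\lambda>1$: one gets $8\|\nabla u\|_{\Lt}^2-\frac{4d(p-1)}{p+1}\|u\|_{L^{p+1}}^{p+1}\leq \|\nabla u\|_{\Lt}^2(8-\frac{2d(p-1)}{\alpha^2}[\g_u(t)]^{p-1})\leq -c_\lambda\|\nabla u\|_{\Lt}^2<0$ with $c_\lambda=\frac{2d(p-1)}{\alpha^2}(\lambda^{p-1}-1)>0$, using that $\g_u(t)\geq\lambda$. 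So the main term of $z''_R$ is strictly negative, bounded away from zero uniformly in $t$ (since $\|\nabla u(t)\|_{\Lt}$ is comparable to $\lambda^{1/s}\|\nabla u_Q\|_{\Lt}$ by the modulation estimate).

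Then the endgame mirrors the Rigidity Theorem but produces a contradiction with \emph{global existence} rather than with $u\equiv0$: choosing $\eta$ small relative to $c_\lambda$ and $R\geq R_0+\sup_{[t_0,t_1]}|x(t)|$ (which is $o(t_1)$ by Lemma \ref{spacial translation}), the error $A_R(u(t))$ is absorbed and $z''_R(t)\leq -\tfrac{c_\lambda}{2}\|\nabla u(t)\|_{\Lt}^2\leq -c'_\lambda<0$ for all $t\in[t_0,t_1]$; meanwhile $|z'_R(t)|\leq cR\|u(t)\|_{\Lt}^{2(1-s)}\|\nabla u(t)\|_{\Lt}^{2s}\lesssim R=o(t_1)$. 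Integrating $z''_R\leq -c'_\lambda$ twice forces $z_R(t_1)<0$ for $t_1$ large, contradicting $z_R\geq0$; equivalently, the standard convexity argument gives finite-time blowup, contradicting $T^*=+\infty$. (One must be slightly careful that $R$ is allowed to grow with $t_1$ while keeping the estimates uniform; this is handled exactly as in the proof of Theorem \ref{rigidity}, choosing $\gamma$ in $|x(t)|\leq\gamma t$ small enough that $R_0+\gamma t_1$ is beaten by the quadratic decay of $z_R$.) The statement for $t\leq0$ follows by time reversal $u(x,t)\mapsto\overline{u(x,-t)}$. The \textbf{main obstacle} I anticipate is making the modulation-to-precompactness-to-virial chain quantitatively uniform in $\lambda$ over the range $\lambda\geq\lambda_0$: the soliton width $\lambda^{1/s}$ changes the scale of $z_R$ and of the localization radius, so one should rescale once at the start to normalize $\|\nabla u\|_{\Lt}\sim\|\nabla u_Q\|_{\Lt}$ (equivalently use the $\tilde u$ of Proposition \ref{newchar}), after which all constants depend only on $\lambda_0$ through $c_{\lambda_0}$ and $\epsilon(\rho_0)$, and the contradiction is clean.
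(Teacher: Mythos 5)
Your overall architecture matches the paper's proof: contradiction, Proposition~\ref{newchar} to get a modulation estimate uniform in $t$, localization of the error $A_R$, control of $|x(t)|$ via Lemma~\ref{spacial translation}, and a two-fold integration of the localized virial identity. Your route to localization via precompactness and Corollary~\ref{precompact-localization} is a legitimate alternative to the paper's more direct use of the modulation bounds \eqref{galieanQR}--\eqref{galieangradQR} together with the exponential decay of $u_Q$; it costs you nothing and gives the same conclusion.

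However, there is a genuine error in the central sign computation. You claim that ``redoing Gagliardo--Nirenberg'' gives
$8\|\nabla u\|_{\Lt}^2-\tfrac{4d(p-1)}{p+1}\|u\|_{L^{p+1}}^{p+1}\leq 8\|\nabla u\|_{\Lt}^2\bigl(1-[\g_u(t)]^{p-1}\bigr)$.
Unwinding the constants, this inequality is exactly $\|u\|_{L^{p+1}}^{p+1}\geq C_{GN}\|\nabla u\|_{\Lt}^{d(p-1)/2}\|u\|_{\Lt}^{2-(d-2)(p-1)/2}$, i.e.\ the \emph{reverse} of the sharp Gagliardo--Nirenberg inequality \eqref{G-N}, which is false except at the ground state. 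Gagliardo--Nirenberg gives an upper bound on $\|u\|_{L^{p+1}}^{p+1}$ and hence a \emph{lower} bound on the virial main term; it cannot show that term is negative. The correct argument, and the one the paper uses, rewrites the main term \emph{exactly} via the energy identity,
\begin{align*}
8\|\nabla u\|_{\Lt}^2-\tfrac{4d(p-1)}{p+1}\|u\|_{L^{p+1}}^{p+1}
= 16\alpha^2 E[u]-8(\alpha^2-1)\|\nabla u\|_{\Lt}^2,
\end{align*}
and only then invokes the constraints: $\|\nabla u\|_{\Lt}^2\geq \lambda^{2/s}\|\nabla u_Q\|_{\Lt}^2=\tfrac{d}{s}\lambda^{2/s}E[u_Q]$ from $\g_u(t)\geq\lambda$, and the value of $E[u]/E[u_Q]$ on the mass-energy line. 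Substituting gives the clean bound $z''_R(t)\leq -8\tfrac{d}{s}\lambda^{2/s}(\lambda^{p-1}-1)E[u_Q]+A_R(u(t))$, which is what makes the endgame work. You had all the right inputs ($\g_u\geq\lambda$, the mass-energy line, $\alpha^2>1$), but Gagliardo--Nirenberg is the wrong lever; replace that step by the exact energy-identity rewriting and the rest of your argument closes.
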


\begin{proof}
To the contrary, assume that there exists a solution $u(t)$ of (\ref{eq:NLS}) with $\|u\|_{L^2}=\|u_Q\|_{L^2}$,
$\frac{E[u]}{E[u_Q]}=\frac{d}{2s}\lambda^{\frac{2}{s}}\bigg(1-\frac{\lambda^{p-1}}{\alpha^2}\bigg)$  and $\g_u(t)\in[\lambda, \lambda(1+\rho_0)]$.

By continuity of the flow $u(t)$ and Proposition \ref{newchar}, there are  continuous $x(t)$ and $\theta(t)$ such that 
\begin{align}
\Big\|u(x)-e^{i\theta_0}\lambda 
u_Q\big(\lambda
(x-x_0)\big)\Big\|_{L^2}\leq 
\epsilon(\rho),\label{galieanQR}
\end{align}
and 
\begin{align}
\Big\|\nabla\Big[u(x)-e^{i\theta_0}\lambda 
 u_Q\big(\lambda( 
 x-x_0) \Big]\Big\|_{L^2}\leq\lambda
 \epsilon(\rho). 
\label{galieangradQR}
\end{align}

Define $R(T ) = \max\Big\{\max_{0\leq t\leq T} |x(t)|,\log \epsilon(\rho)^{-1}\Big\}$.  Consider the localized variance (\ref{localization_z}).  Note 
$$\frac{d}{s}\lambda^{\frac{2}{s}}E[u_Q]= \lambda^{\frac{2}{s}}{\|\nabla u_Q\|^2_{L^2}}\leq{\|\nabla u(t)\|^2_{L^2}},$$
then,
\begin{align*}
z''_R&= 4d(p-1)E[u]-\big(2d(p-1)-8\big)\|\nabla u\|_{L^2}^2+A_R (u(t))\\
&=16\alpha^2E[u]-8\big(\alpha^2-1\big)\|\nabla u\|_{L^2}^2+A_R (u(t))\leq -8\frac{d}{s} \lambda^{\frac2s}(\lambda^{p-1}-1)E[u_Q]+A_R (u(t)),
\end{align*}
where $A_R(u(t))$ is given by \eqref{ARvar}.

Let $T > 0$ and  for the local virial identity (\ref{virialz}) assume $R = 2R(T )$. Therefore, (\ref{galieanQR}) and (\ref{galieangradQR}) assure that there exists $c_1 > 0$ such that 
$$|A_R (u(t))| \leq  c_1 \lambda^2\big( \epsilon(\rho) + e^{-R(T )}\big)^2 \leq \tilde c_1 \lambda^2 \epsilon(\rho)^2.$$ 
Taking a suitable $\rho_0$ small (i.e. $\lambda > 1$ is taken closer to 1), such that for $0\leq t\leq T$, $\epsilon(\rho)$ is small enough, we get 
$$z''_ R (t) \leq -8\frac{d}{s} \lambda^{\frac2s}(\lambda^{p-1}-1)E[u_Q].$$
Integrating $z''_R(t)$ in time over $[0, T]$ twice, we obtain 
$$\frac{z_R (T )} {T^ 2} \leq \frac{z_R(0)} {T^2} + \frac{z' _R (0)}{ T} -8\frac{d}{s} \lambda^{\frac2s}(\lambda^{p-1}-1)E[u_Q].$$

Note $\sup_{x\in \Rn} \phi(x)$ from (\ref{localization_z}), is bounded, say by $c_2>0$. Then from \eqref{localization_z} we have 
$$|z_R(0)|\leq c_2 R^2 \|u_0\|^2_ {L^2} =  c_2 R^2 \|u_Q\|^2_ {L^2},$$ 
and by (\ref{localization_zder})
$$|z'_ R (0)| \leq c_3 R\|u_0\|^{2(1-s)}_{L^2}\|\nabla u_0\|^{2s}_{L^2} \leq  c_3 R\|u_Q\|^{2(1-s)}_{L^2}\|\nabla u_Q\|^{2s}_{L^2} \lambda^{\frac1s}(1 + \rho_0 ).$$ 
Taking T large enough so that by Lemma \ref{spacial translation} we have $\frac{R (T)}{ T}<\epsilon(\rho)$, we estimate
\begin{align*}
\frac{z_{2R(T)} (T )} {T^ 2} \leq& c_4 \Big(\frac{R(T)^2} {T^2} + \frac{R (T)}{ T}\Big) - 4\frac{d}{s} \lambda^{\frac2s}(\lambda^{p-1}-1)E[u_Q]\\
\leq& C (\epsilon(\rho)^2+\epsilon(\rho)) -4\frac{d}{s} \lambda^{\frac2s}(\lambda^{p-1}-1)E[u_Q] .
\end{align*}
We can initially choose $\rho_0$ small enough (and thus, $\epsilon(\rho_0)$) such that 
$C (\epsilon(\rho)^2+\epsilon(\rho)) < 4\frac{d}{s} \lambda^{\frac2s}(\lambda^{p-1}-1)E[u_Q].$ We obtain 
$0 \leq z_{2R(T )} (T ) < 0,\;$ which is a contradiction, showing that our initial assumption about the existence of a solution to (1.1) with bounded $\g_u(t)$ does not hold.
\end{proof}

Before we exhibit the existence of a critical element/solution, we return to the nonlinear profile decomposition (Proposition \ref{nonradialPDNLS}) and introduce reordering.

\begin{lemma}[Profile reordering]\label{reordering} Suppose $\phi_n=\phi_n(x)$ is a bounded sequence in $H^1(\Rn)$. Let $\lambda_0>1.$ Assume that $M[\phi_n]=M[u_Q]\;$ and  $\;\frac{E[\phi_n]}{E[u_Q]}=\frac{d}{2s}\lambda_n^{\frac{2}{s}}\big(1-\frac{\lambda_n^{p-1}}{\alpha^2}\big)$ such that  $1<\lambda_0\leq\lambda_n$ and $\lambda_n\leq \g_{\phi_n}(t)$
for each n. Apply Proposition \ref{nonradialPDNLS} to the sequence $\{\psi_n\}$ and obtain nonlinear profiles $\{\tpsi^j\}$.   Then, these profiles $\tpsi^j$ can be reordered so that there exist $1\leq M_1\leq M_2\leq M$ and 
\begin{enumerate}
\item For each $1\leq j \leq M_1,$  we have $t^j_n=0$ and $v^j(t)\equiv \NLS(t)\tpsi^j$ does not scatter as $t\to +\infty.$ (In particular, there is at least one such $j$)
\item For each $M_1+1\leq j\leq M_2,$  we  have $t^j_n=0$ and $v^j(t)$ scatters as $t\to +\infty.$ (If $M_1=M_2,$ there are no $j$ with this property.)
\item For each $M_2+1\leq j\leq M,$  we  have $|t^j_n|\to\infty$ and $v^j(t)$ scatters as $t\to +\infty.$ (If $M_2=M,$ there are no $j$ with this property.) 
\end{enumerate}
\begin{proof}
Pohozhaev identities 
\eqref{poz3} and energy definition yield
\begin{align*}
\Bigg(\frac{\|\phi_n\|_{L^{p+1}}}{\|u_Q\|_{L^{p+1}}}\Bigg)^{p+1}=\frac{d}{d-2s}[\g_{\phi_n}(t)]^{\frac{2}{s}}
-\frac{2s}{d-2s}\frac{E[\phi_n]}{E[u_Q]}
\geq\lambda^{\frac{2(p-1)}{s}}_n\geq \lambda_0^{\frac{2(p-1)}{s}}>1.
\end{align*}
Notice that if $j$ is such that $|t^j_n|\to \infty,$ then  $\|\NLS(-t^j_n)\tpsi^j\|_{L^{p+1}}\to0,$ and by  (\ref{aproxL6NL}) we have that 
$\frac{\|\phi_n\|_{L^{p+1}}}{\|u_Q\|_{L^{p+1}}}
\to 0$. Therefore, there exists at least one $j$ such that $t^j_n$ converges. Without loss of generality, assume $t^j_n=0,$ and reorder the profiles such that for $1\leq j\leq M_2$, we have $t^j_n=0$ and for $M_2+1\leq j\leq M$, we have $|t^j_n|\to0$.

It is left to prove that there exists at least one $j$, $1\leq j\leq M_2$ such that $v^j(t)$ is not scattering. Assume that  for all $1\leq j\leq M_2$ we have that all $v^j$ are scattering, and thus, $ \|v^j(t)\|_{L^{p+1}} \to 0$ as ${t\to+\infty}.$ Let  $\epsilon > 0$ and $t_0$ large enough such that for all $1\leq j\leq M_2$ we have $ \|v^j(t)\|_{L^{p+1}}^{p+1}\leq \epsilon/M_2.$ Using $L^{p+1}$ orthogonality (\ref{aproxuL6NL}) along the NLS flow, and letting $n\to +\infty,$ we obtain
\begin{align*}
\lambda_0^{\frac{2(p-1)}{s}}\|u_Q&\|_{L^{p+1}}^{p+1}\leq \| u_n(t) \|^{p+1}_{L^{p+1}} \\
&= \sum^{M_2}_{j=1} \|v^j(t_0) \|^{p+1}_{L^{p+1}} +\sum^{M}_{j=M_2+1} \|v^j(t_0-t^j_n) \|^{p+1}_{L^{p+1}} + \|W^M_n(t) \|^{p+1}_{L^{p+1}} +o_n(1)\\
&\leq \epsilon+ \|W^M_n(t) \|^{p+1}_{L^{p+1}} +o_n(1).
\end{align*}
The last line  is obtained since $\sum^{M}_{j=M_2+1} \|v^j(t_0-t^j_n) \|^{p+1}_{L^{p+1}}\to 0$ as $n\to \infty$, and gives a contradiction.
\end{proof}
\end{lemma}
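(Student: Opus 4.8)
The plan is to run the nonlinear profile decomposition of Proposition \ref{nonradialPDNLS} on $\{\phi_n\}$ and then exploit the hypothesis $\g_{\phi_n}(t)\ge\lambda_n\ge\lambda_0>1$ to force a uniform lower bound on the potential energy, which, combined with $L^{p+1}$ asymptotic orthogonality, locates the profiles. First I would record the elementary consequence of the Pohozhaev identities \eqref{poz3} and the definition of the energy (using $M[\phi_n]=M[u_Q]$):
\[
\Big(\frac{\|\phi_n\|_{L^{p+1}}}{\|u_Q\|_{L^{p+1}}}\Big)^{p+1}=\frac{d}{d-2s}\,[\g_{\phi_n}(t)]^{2/s}-\frac{2s}{d-2s}\,\frac{E[\phi_n]}{E[u_Q]}\ \ge\ \lambda_n^{2(p-1)/s}\ \ge\ \lambda_0^{2(p-1)/s}>1,
\]
so $\|\phi_n\|_{L^{p+1}}$ is bounded below uniformly in $n$.

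Next I would sort the profiles by the behaviour of $t^j_n$. For every index $j$ with $|t^j_n|\to\infty$, property (b) of Proposition \ref{nonradialPDNLS} (equivalently \eqref{flow approx}) keeps $\NLS(-t^j_n)\tpsi^j$ within $o_n(1)$ of $e^{-it^j_n\Delta}\psi^j$ in $H^1$, and the $L^{p+1}$ space--time decay of the free evolution together with a density argument in $\dH^{p/(p+1)}\cap L^{(p+1)/p}$ (exactly as in the derivation of \eqref{divergent}) gives $\|\NLS(-t^j_n)\tpsi^j\|_{L^{p+1}}\to0$. Using the $L^{p+1}$ orthogonality \eqref{aproxL6NL} at $t=0$ and the fact that $\|\wt^M_n\|_{L^{p+1}}$ is negligible for $M$ large (via \eqref{smallnessPropNL} and $H^1\hookrightarrow L^{p+1}$, which holds here since $p+1<\tfrac{2d}{d-2}$; cf. the proof of Proposition \ref{energy Pythagorean expansion}), one sees that if \emph{all} $j$ had $|t^j_n|\to\infty$ then $\|\phi_n\|_{L^{p+1}}/\|u_Q\|_{L^{p+1}}\to0$, contradicting the lower bound above. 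Hence some $t^j_n$ is bounded; after passing to a subsequence and absorbing the limit into the profile I may take $t^j_n=0$, and after reindexing there is $M_2$ with $t^j_n=0$ for $1\le j\le M_2$ and $|t^j_n|\to\infty$ for $M_2<j\le M$. The profiles in the second block scatter in $H^1$ by property (b) of Proposition \ref{nonradialPDNLS} together with the $H^1$ scattering criterion (Proposition \ref{H^1 Scattering}); here one should note that the finite half-line Besov--Strichartz norm supplied by property (b) sits on the forward half-line once the sign of $t^j_n$ is tracked, so forward scattering is the relevant conclusion.

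It remains to find the non-scattering profiles inside the block $1\le j\le M_2$. Suppose, to obtain a contradiction, that $v^j(t)=\NLS(t)\tpsi^j$ scatters as $t\to+\infty$ for every such $j$, so $\|v^j(t)\|_{L^{p+1}}\to0$. Fix $\epsilon>0$, choose $t_0$ with $\|v^j(t_0)\|_{L^{p+1}}^{p+1}\le\epsilon/M_2$ for all $1\le j\le M_2$, and apply $L^{p+1}$ orthogonality along the NLS flow at time $t_0$ --- the analogue of \eqref{aproxL6NL}, i.e. \eqref{aproxuL6NL} of Lemma \ref{HPdecompNLf} (legitimate on the fixed window $[0,t_0]$ because the scattering profiles are controlled by the small-data and wave-operator theory and the solution under study exists there). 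Sending $n\to\infty$, the contributions of the profiles with $|t^j_n|\to\infty$ vanish, the block $1\le j\le M_2$ contributes at most $\epsilon$, and the remainder term tends to $0$ as $M\to\infty$; this yields $\lambda_0^{2(p-1)/s}\|u_Q\|_{L^{p+1}}^{p+1}\le\|u_n(t_0)\|_{L^{p+1}}^{p+1}\le\epsilon+o(1)$, impossible for $\epsilon$ small. Therefore at least one $v^j$ with $1\le j\le M_2$ does not scatter, and a final reshuffle of the first block produces $M_1$ and items (1)--(3).

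The step I expect to be the main obstacle is the last contradiction: one must control the $L^{p+1}$ norm of the \emph{nonlinear} remainder $\wt^M_n(t)$ along the flow on $[0,t_0]$ in a way that survives taking $n\to\infty$ first and $M\to\infty$ afterwards. This requires the nonlinear $L^{p+1}$-orthogonality \eqref{aproxuL6NL}, which is itself obtained via a long-term perturbation argument (Proposition \ref{longperturbation}), rather than merely the $\dB^0_{\SHs}$-smallness \eqref{smallnessPropNL}; a secondary technical point is that the hypothesis gives only the lower bound $\g_{\phi_n}(t)\ge\lambda_n$ and no upper bound, so existence and $H^1$-boundedness of the various profile flows $v^j$ on the fixed interval $[0,t_0]$ must be justified (for the scattering profiles from Propositions \ref{small data} and \ref{Existence of wave operator}) before \eqref{aproxuL6NL} can be invoked.
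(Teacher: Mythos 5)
Your proposal is correct and follows essentially the same line of reasoning as the paper's own proof: the Pohozhaev computation yielding a uniform lower bound on $\|\phi_n\|_{L^{p+1}}$, the elimination of the possibility that every $t^j_n$ diverges via $L^{p+1}$ orthogonality at $t=0$, and the contradiction obtained from $L^{p+1}$ orthogonality along the flow \eqref{aproxuL6NL} after choosing $t_0$ large. Your closing remarks about needing the full nonlinear $L^{p+1}$ orthogonality of Lemma \ref{HPdecompNLf} (rather than just \eqref{smallnessPropNL}) and about the $H^1$-boundedness hypothesis required for that lemma are well placed; in the paper's applications (Lemmas \ref{existence threshold} and \ref{compactH1}) the sequences $\phi_n$ do come with the needed upper bound $\g_{\phi_n}(t)\le\sigma_c$, which supplies exactly the boundedness Lemma \ref{HPdecompNLf} requires.
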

 Recall that we have a fixed $\lambda_0>1.$
 
\begin{lemma}[Existence of  a threshold solution] \label{existence threshold}
There exists initial data $u_{c,0}\in H^1(\Rn)$ and $1<\lambda_0\leq\lambda_c\leq\sigma_c(\lambda_0)$ such that $u_c(t)\equiv\NLS(t)u_{c,0}$ is a global solution with $M[u_c]=M[u_Q],$ \,$\frac{E[u_c]}{E[u_Q]}=\frac{d}{2s}\lambda_c^{\frac{2}{s}}\bigg(1-\frac{\lambda_c^{p-1}}{\alpha^2}\bigg)$ and, moreover, $\lambda_c\leq\g_{u_c}(t)
\leq \sigma_c$ for all $t\geq0.$
\end{lemma}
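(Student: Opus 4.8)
The plan is to mirror the construction of the critical (scattering) solution in Proposition~\ref{existence of u_c}, but now working along a ``mass-energy'' line with $\lambda>1$ instead of below the threshold. First I would take a sequence of solutions $u_n(t)=\NLS(t)u_{n,0}$ at ``mass-energy'' lines for $\lambda_n$ with $\lambda_0\le\lambda_n$, $\lambda_n\le\g_{u_n}(0)$, and with $\lambda_n\searrow\lambda_c:=\sup\{\lambda : \exists\,\sigma,\ \GB(\lambda,\sigma)\text{ holds}\}$... more precisely, by Definition~\ref{thresholdGB} and the assumption (for contradiction) that $\sigma_c(\lambda_0)<\infty$, I would select $u_{n,0}$ with $\GB(\lambda_n,\sigma_n)$ holding and $\sigma_n\searrow\sigma_c$, so that $\lambda_n\le\g_{u_n}(t)\le\sigma_n$ for all $t\ge0$ and all the mass/energy data converge. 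Rescaling via the scaling symmetry (as in Proposition~\ref{newchar}, using $\kappa=(M[u]/M[u_Q])^{(1-s)/s}$) I may assume $M[u_{n,0}]=M[u_Q]$, so $\|u_{n,0}\|_{H^1}$ is uniformly bounded.

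Next I would apply the nonlinear profile decomposition (Proposition~\ref{nonradialPDNLS}) together with the profile reordering (Lemma~\ref{reordering}) to $\{u_{n,0}\}$, getting $u_{n,0}=\sum_{j=1}^M\NLS(-t^j_n)\tpsi^j(\cdot-x^j_n)+\wt^M_n$, with the Pythagorean decompositions \eqref{HPdecompNL}, \eqref{pythagoreanNLS} and \eqref{aproxL6NL} for mass, energy and $L^{p+1}$ norm. By Lemma~\ref{reordering} there is at least one index $j$ (say $j=1$) with $t^1_n=0$ whose nonlinear evolution $v^1(t)=\NLS(t)\tpsi^1$ does \emph{not} scatter as $t\to+\infty$. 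Then I claim exactly one profile is nonzero. If two or more profiles are nonzero, then by the mass/energy Pythagorean expansions each $\tpsi^j$ has strictly smaller mass and strictly smaller energy than $u_n$'s, hence each lies at a ``mass-energy'' line for some $\lambda^j$ with $\lambda_0\le\lambda^j<\lambda_c$; since $\lambda_c$ is the threshold (and $\GB$ fails below it — note $\GB$ fails for smaller $\sigma$ once it fails, as remarked after Definition~\ref{GBG}), either every $v^j$ scatters or some $v^j$ is a finite-time blowup or diverges in $H^1$. The scattering profiles contribute finite $\dB^0_{S(\dHs)}$ norm; for the non-scattering profile, its $\g$ must eventually exceed $\sigma_c$, and combining all profiles through the long-term perturbation argument (Proposition~\ref{longperturbation}, as used in Proposition~\ref{nonradialPDNLS}) forces $\g_{u_n}(t_n)>\sigma_n$ for some $t_n$ and large $n$, contradicting $\GB(\lambda_n,\sigma_n)$. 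Hence only one profile survives: $u_{n,0}=\NLS(-t^1_n)\tpsi^1(\cdot-x^1_n)+\wt^1_n$ with $\lim_n\|\NLS(t)\wt^1_n\|_{\dB^0_{\SHs}}=0$, $t^1_n=0$ (the sequence is bounded by Lemma~\ref{reordering}, and a finite limit can be absorbed into $\tpsi^1$), and I set $u_{c,0}=\tpsi^1$, $u_c(t)=\NLS(t)u_{c,0}$.

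It remains to verify the quantitative properties. From the mass Pythagorean identity \eqref{masainitial} and $\lim_n M[\wt^1_n]=0$ I get $M[u_c]=M[u_Q]$; from energy conservation, \eqref{pythagoreanNLS}, and $\lim_n E[\wt^1_n]=0$ (which follows from Lemma~\ref{equivalence grad and energy}-type bounds since $\wt^1_n\to0$ in $H^1$), I get $E[u_c]=\lim_n E[u_{n,0}]$, hence $u_c$ sits on the ``mass-energy'' line for $\lambda_c$. To see $\lambda_c\le\g_{u_c}(t)\le\sigma_c$ for all $t\ge0$: the lower bound holds because $u_c$ lies on that ``mass-energy'' line with $\ME[u_c]<1$ and $\g_{u_c}(0)>1$, so Theorem~A* part~II forces $\g_{u_c}(t)>1$ and the ``mass-energy'' line constraint \eqref{thrME-G} pins $\g_{u_c}(t)\ge\lambda_c$; for the upper bound, if $\g_{u_c}(t^*)>\sigma_c$ for some $t^*$, then by $H^1$-continuity and the approximation $u_n\to u_c$ in the relevant norms on $[0,t^*]$ (via the long-term perturbation estimate \eqref{aprox}), we would get $\g_{u_n}(t^*)>\sigma_n$ for large $n$, again contradicting $\GB(\lambda_n,\sigma_n)$. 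Finally, $u_c$ is global in positive time: by Lemma~\ref{HPdecompNLf} and the single-profile structure, $u_c$ exists and has bounded $H^1$ norm on each $[0,T]$, so $T^*=+\infty$. The main obstacle I anticipate is the bookkeeping in the two-or-more-profiles case: carefully tracking which profiles scatter versus which can be globally-bounded-gradient solutions strictly below threshold, and combining the long-term perturbation with the orthogonality of profiles to produce a genuine contradiction with $\GB(\lambda_n,\sigma_n)$ uniformly in $n$ — this is where the argument differs most from the scattering case and where one must use that $\sigma_c$ is a supremum and that $\GB$ failure propagates downward in $\sigma$.
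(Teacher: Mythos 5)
Your overall scaffolding matches the paper's: take a minimizing sequence of solutions realizing $\GB(\lambda_n,\sigma_n)$ with $\sigma_n\searrow\sigma_c$ (you correctly read the definition of $\sigma_c$ this way, which looks like a typo in the paper), apply the nonlinear profile decomposition and Lemma~\ref{reordering} to isolate a nonscattering profile $\tpsi^1$ with $t^1_n=0$, reduce to a single profile, set $u_{c,0}=\tpsi^1$, and verify the quantitative bounds.

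However, your ``exactly one profile'' step has a genuine gap, and it is the heart of the argument. You claim that if two or more profiles are nonzero, then since each $\tpsi^j$ has strictly smaller mass and energy than $u_{n,0}$, ``each lies at a mass-energy line for some $\lambda^j$ with $\lambda_0\le\lambda^j<\lambda_c$.'' This is false in the regime $\lambda>1$: the mass-energy line parameter is defined through $\ME[u]^{1/s}=\frac{d}{2s}\lambda^{2/s}\left(1-\frac{\lambda^{p-1}}{\alpha^2}\right)$, and this function of $\lambda$ has its maximum at $\lambda=1$ and is strictly \emph{decreasing} for $\lambda>1$ (see the restriction \eqref{thrME-G} and Figure~\ref{fig2}). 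Thus smaller $\ME[\tpsi^j]$ corresponds to \emph{larger} $\lambda^j$, not smaller. Your inequality goes the wrong way, and the subsequent contradiction via long-term perturbation is left entirely to intuition. Indeed the paper's own extraction of $\tpsi^1$ uses exactly the opposite direction: it obtains $E[\tpsi^1]\le\lim_n E[\phi_n]$ and $M[\tpsi^1]\le M[u_Q]$, then deduces $\lambda_1\ge\lambda_0$ (a lower bound, not an upper one), and then must run a dichotomy $\lambda_1\le\sigma_c$ versus $\lambda_1>\sigma_c$; the latter case is eliminated by the squeeze, not assumed away.

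The paper does not prove ``exactly one profile is nonzero'' as a standalone lemma. Instead, in the case $\lambda_1\le\sigma_c$, it uses that $\GB(\lambda_1,\sigma_c-\delta)$ fails for every $\delta>0$ to produce times $t_k$ with $\g_{v^1}(t_k)\to\sigma_c$, and then runs a Pythagorean squeeze in $\dH^1$ along the flow (Lemma~\ref{HPdecompNLf}): $\sigma_c^2-o_k(1)\le \|\nabla v^1(t_k)\|^2_{L^2}/\|\nabla u_Q\|^2_{L^2}\le \|\nabla u_n(t)\|^2_{L^2}/\|\nabla u_Q\|^2_{L^2}+o_n(1)\le\sigma_c^2+o_n(1)$. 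Sending $k,n\to\infty$ forces both $\|W^M_n(t_k)\|_{H^1}\to 0$ and $M[v^1]=M[u_Q]$, which simultaneously annihilates all other profiles and pins the gradient bound $\g_{v^1}(t)\le\sigma_c$ for all $t$. This is the mechanism that replaces the more-than-one-profile argument you sketched, and it is the step your proposal needs to incorporate; without it, and with the $\lambda^j$ inequality reversed, the reduction to a single profile does not go through.
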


\begin{proof}
Definition of $\sigma_c$ implies the existence of sequences $\{\lambda_n\}$ and $\{\sigma_n\}$ with $\lambda_0\leq\lambda_n\leq \sigma_n$  and $\sigma_n \searrow\sigma_c$ such that $\GB(\lambda_n,\sigma_n)$ is false.  This means that there exists $u_{n,0}$ with $M[u]=M[u_Q]$, $\frac{E[u_{n,0}]}{E[u_Q]}=\frac{d}{2s}\lambda^{\frac{2}{s}}\bigg(1-\frac{\lambda^{p-1}}{\alpha^2}\bigg)$ and $\lambda_c\leq\frac{\|\nabla u\|_{L^2}}{\|\nabla u_Q\|_{L^2}}=[\g_u(t)]^{\frac1s}\leq \sigma_c,$
 such that $u_n(t)=\NLS(t)u_{n,0}$ is global.

Note that the sequence $\{\lambda_n\}$ is bounded, thus, we pass to a convergent subsequence $\{\lambda_{n_k}\}$. Assume $\lambda_{n_k}\to \lambda'$  as ${n_k}\to\infty$,  thus $\lambda_0\leq\lambda'\leq\sigma_c.$

We apply the nonlinear profile decomposition (Proposition \ref{nonradialPDNLS}) and reordering (Lemma \ref{reordering}).

In Lemma \ref{reordering}, let $\phi_n=u_{n,0}$. Recall that $v^j(t)$ scatters as $t\to \infty$ for $M_1+1\leq j\leq M_2,$  and by Proposition \ref{nonradialPDNLS}, $v^j(t)$ also scatter in one or the other time direction  for $M_2+1\leq j\leq M$ and  $E[\tpsi^j]=E[v^j]\geq0.$ Thus, by the Pythagorean decomposition for the nonlinear flow (\ref{pythagoreanNLS}) we have
$$\sum_{j=1}^{M_1}E[\tpsi^j]\leq E[\phi_n]+o_n(1).$$
For at least one $1\leq j\leq M_1$, we have $E[\tpsi^j]\leq\max\{\lim_n E[\phi_n],0\}$. Without loss of generality, we may assume $j=1$. Since $1=M[\tpsi^1]\leq\lim_nM[\phi_n]=M[u_Q]=1$, it follows
$
\left(\ME[\tpsi^1]\right)^{\frac1s}\leq \max\bigg(\lim_n\dfrac{E[\phi_n]}{E[u_Q]}\bigg),
$
thus, for some $\lambda_1\geq\lambda_0,$ we have
$
\left(\ME[\tpsi^1]\right)^{\frac1s}=\frac{d}{2s}\lambda_1^{\frac{2}{s}}\bigg(1-\frac{\lambda^{p-1}}{\alpha^2}\bigg).
$

Recall $\tpsi^1$ is a nonscattering solution, thus $[\g_{\psi^1}(t)]^{\frac1s}>\lambda$, otherwise it will contradict Theorem A* Part I (b). We have two cases:  either $\lambda_1\leq\sigma_c$ or $\lambda_1>\sigma_c$.

\noindent
\emph{ Case 1. } $\lambda_1\leq\sigma_c.$ Since the statement ``$\GB(\lambda_1,\sigma_c-\delta)$ is false" implies for each $\delta>0$, there is a nondecreasing sequence $t_k$ of times such that 
$
\lim[\g_{v^1}(t_k)]^{\frac1s}\geq \sigma_c,
$ 
thus,
\begin{eqnarray}
\sigma_c^2-o_k(1)&\leq&\lim[\g_{v^1}(t_k)]^{\frac2s}
\leq\dfrac{\|\nabla v^1(t_k)\|^2_{L^2}}{\|\nabla u_Q\|^2_{L^2}}\notag\\
&\leq&\dfrac{\sum_{j=1}^M\|\nabla v^1(t_k-t_n)\|^2_{L^2}+\|W_n^M(t_k)\|^2_{L^2}}{\|\nabla u_Q\|^2_{L^2}}\label{desigu}\\
&\leq&\dfrac{\|\nabla u_n(t)\|^2_{L^2}}{\|\nabla u_Q\|^2_{L^2}}+o_n(1)
\leq\sigma_c^2+o_n(1)\notag.
\end{eqnarray}
Taking $k\to \infty$, we obtain $\sigma_c^2-o_n(1)=\sigma_c^2+o_k(1)$. Thus, $\|W_n^M(t_k)\|_{H^1}\to 0$  and $M[v^1]=M[u_Q].$ Then, Lemma \ref{HPdecompNLf} yields that for all $t,$
$$
\dfrac{\|\nabla v^1(t)\|^2_{L^2}}{\|\nabla u_Q\|^2_{L^2}}\leq\lim_n\dfrac{\| u_n(t)\|^2_{L^2}}{\|\nabla u_Q\|^2_{L^2}}\leq \sigma_c.
$$
Take $u_{c,0}=v^1(0)(=\psi^1),$ and $\lambda_c=\lambda_1.$

\noindent \emph{ Case 2. } $\lambda_1\geq\sigma_c.$ Note that 
\begin{equation}
\lambda_1^2\leq\lim[\g_{v^1}(t_k)]^{\frac2s}.
\label{reepdesi}
\end{equation}
Replacing the first line of \eqref{desigu} by \eqref{reepdesi}, taking $t_k=0$ and sending $n\to+\infty$, we obtain
\begin{eqnarray*}
\lambda_1^2&\leq&\dfrac{\|v^1(t_k)\|^2_{L^2}\|\nabla v^1(t_k)\|^2_{L^2}}{\| u_Q\|^2_{L^2}\|\nabla u_Q\|^2_{L^2}}
\leq\dfrac{\|\nabla v^1(t_k)\|^2_{L^2}}{\|\nabla u_Q\|^2_{L^2}}\notag\\
&\leq&\dfrac{\sum_{j=1}^M\|\nabla v^1(t_k-t^j_n)\|^2_{L^2}+\|W_n^M(t_k)\|^2_{L^2}}{\|\nabla u_Q\|^2_{L^2}}\\
&\leq&\dfrac{\|\nabla u_n(t)\|^2_{L^2}}{\|\nabla u_Q\|^2_{L^2}}+o_n(1)
\leq\sigma_c^2+o_n(1).\notag
\end{eqnarray*}
Thus, we have  $\lambda_1\leq\sigma_c,$ which is a contradiction. Thus, this case cannot happen.
 \end{proof}

\begin{lemma}{\label{compactH1}}
Assume $u(t)=u_c(t)$ to be the critical solution provided by Lemma \ref{existence threshold}. Then there exists a path $x(t)$ in $\Rn$ such that 
$$
K=\{u(\cdot-x(t),t)|t\geq 0\}
$$
has a compact closure in $H^1(\Rn)$.
\end{lemma}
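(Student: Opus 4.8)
\textbf{Proof proposal for Lemma \ref{compactH1}.}

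The plan is to show precompactness of $K$ by a contradiction argument that mirrors the proof of Lemma \ref{precompact} (precompactness of the flow of the scattering-critical solution) but now using the threshold solution $u_c(t)$ from Lemma \ref{existence threshold} together with the rigidity-type tool Proposition \ref{near boundary case}. First I would fix a sequence $\tau_n \to +\infty$; since $u_c(t)$ is global with $M[u_c]=M[u_Q]$ and $\lambda_c \le \g_{u_c}(t) \le \sigma_c$ for all $t\ge 0$, the sequence $\phi_n := u_c(\tau_n)$ is uniformly bounded in $H^1(\Rn)$ (the $L^2$ norm is conserved and the $\dot H^1$ norm is controlled by $\sigma_c \|\nabla u_Q\|_{L^2}$). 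Applying the nonlinear profile decomposition (Proposition \ref{nonradialPDNLS}) together with the reordering (Lemma \ref{reordering}) to $\{\phi_n\}$ yields profiles $\tpsi^j$, time shifts $t^j_n$, space shifts $x^j_n$, and a remainder $\wt^M_n$ with the Pythagorean expansions \eqref{HPdecompNL}, \eqref{pythagoreanNLS} and the $L^{p+1}$ orthogonality along the flow \eqref{aproxuL6NL}.

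Next I would argue, exactly as in Lemma \ref{precompact} and Proposition \ref{existence of u_c}, that only one profile can survive: if two or more $\tpsi^j$ were nonzero, then since $E[\tpsi^j]\ge 0$ (Lemma \ref{equivalence grad and energy}) and $M[\tpsi^j]\le 1$, each profile would satisfy $M[\tpsi^j]^{1-s}E[\tpsi^j]^s$ strictly below the mass-energy value of $u_c$, hence strictly below the threshold for $\sigma_c$; combined with the $\dot H^1$ Pythagorean expansion \eqref{PdecompNLS} this would force each $v^j(t) = \NLS(t)\tpsi^j$ to have renormalized gradient bounded below $\sigma_c$ and (by Proposition \ref{near boundary case}, applied at the appropriate mass-energy line, or by Theorem A* Part I if the profile is below the boundary) to either scatter or stay globally bounded with gradient escaping — but the reordering guarantees at least one nonscattering profile, and the strict deficit in mass-energy contradicts minimality of $\sigma_c$. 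So we are left with $u_c(\tau_n) = \NLS(-t^1_n)\tpsi^1(\cdot - x^1_n) + \wt^1_n$ with $M[\tpsi^1]=1$, $\ME[\tpsi^1]=\ME[u_c]$, and $\lim_n \|\wt^1_n\|_{H^1}=0$ (the $L^2$ and energy deficits vanish, and Lemma \ref{equivalence grad and energy} upgrades this to $H^1$, using $\g_{\tpsi^1}>1$-type bounds; more carefully, one uses \eqref{HPdecompNL} with $s=0,1$ and \eqref{pythagoreanNLS}).

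Then I would show $t^1_n$ has a convergent subsequence. If $t^1_n \to -\infty$, then applying $\NLS(t)$ to the decomposition and using the smallness of $\|\NLS(t)\wt^1_n\|_{\dB^0_{S(\dHs)}}$ together with $\|\NLS(t)\tpsi^1(\cdot - x^1_n)\|_{\dB^0_{S(\dHs;[0,+\infty))}} = \|\NLS(t)\tpsi^1\|_{\dB^0_{S(\dHs;[t^1_n,+\infty))}} \to 0$ would make $\|u_c\|_{\dB^0_{S(\dHs;(-\infty,\tau_n])}}$ small, hence by small-data scattering (Proposition \ref{small data}) $u_c$ would scatter — contradicting $[\g_{u_c}(t)]^{1/s}\ge \lambda_c>1$ (which, by Theorem A* Part I(b) applied contrapositively, or by the blowup dichotomy, precludes scattering for a solution on a mass-energy line with $\g_{u_c}>1$). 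Symmetrically, $t^1_n\to+\infty$ leads to the same contradiction by reflecting the time direction. Hence $t^1_n \to t^1$ finite, and by continuity of the NLS flow in $H^1$, $\NLS(-t^1_n)\tpsi^1 \to \NLS(-t^1)\tpsi^1$ in $H^1$, so $u_c(\tau_n)$ converges in $H^1$ modulo the translation by $x^1_n$. Setting $x(t)$ to be a continuous path through the $x^1_n$ (extended by continuity using continuity of the flow, as in Lemma \ref{precompact}) gives that $K=\{u_c(\cdot-x(t),t)\mid t\ge 0\}$ has compact closure. The main obstacle I anticipate is the one-profile reduction: one must carefully verify that a profile strictly below the $\sigma_c$-threshold in mass-energy cannot be the nonscattering profile forcing $\g$ up to $\sigma_c$, which requires invoking Proposition \ref{near boundary case} at the right mass-energy line and handling the case split $\lambda_1\le\sigma_c$ versus $\lambda_1>\sigma_c$ just as in the proof of Lemma \ref{existence threshold}.
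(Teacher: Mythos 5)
Your proposal matches the paper's approach: for $t_n\to\infty$ set $\phi_n=u_c(t_n)$, apply the nonlinear profile decomposition together with the reordering (Lemma \ref{reordering}), and repeat the proof of Lemma \ref{existence threshold} to conclude that only the profile $\tpsi^1$ survives with $\wt^M_n\to 0$ in $H^1$, whence $u_c(\cdot-x^1_n,t_n)\to\tpsi^1$ along a subsequence. Two small remarks. First, your extra step ruling out $t^1_n\to\pm\infty$ is redundant here: Lemma \ref{reordering} already places the surviving nonscattering profile in the $t^1_n=0$ class (this is precisely where the present setting differs from Lemma \ref{precompact}, where that step is genuinely needed), and the justification you cite --- the ``contrapositive of Theorem A* Part I(b)'' --- runs the wrong direction; the actual obstruction to scattering for a solution with $\g_{u_c}(t)\ge\lambda_c>1$ is the uniform $L^{p+1}$ lower bound $\|u_c(t)\|_{L^{p+1}}^{p+1}\gtrsim\lambda_0^{2(p-1)/s}\|u_Q\|_{L^{p+1}}^{p+1}$ derived in the proof of Lemma \ref{reordering}. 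Second, when you invoke Lemma \ref{equivalence grad and energy} to upgrade the vanishing mass and energy of $\wt^M_n$ to vanishing $H^1$ norm, the hypothesis you need is $\g_{\wt^M_n}<1$ (which holds since the remainder is small), not a ``$\g_{\tpsi^1}>1$'' bound; with that correction the argument closes exactly as in the paper.
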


\begin{proof}
As we proved in Lemma \ref{precompact-localization}, it suffices to show that for each sequence of times $t_n\to\infty$, passing to a subsequence, there exists  a sequence $x_n$ such that $u(\cdot-x_n,t_n)$ converges in $H^1$. Let   $\phi_n = u(t_n)$ as in Proposition \ref{reordering}, and apply  the proof of Lemma \ref{existence threshold}. It follows for $j\geq2$ we have $\psi_j =0$ and  $\wt_n^M \to 0$ in $H^1$ as $n\to \infty$. And thus, $u(\cdot-x_n,t_n)\to \psi^1$ in $H^1$.
\end{proof}

\begin{lemma} [Blow up for \emph{a priori} localized solutions]\label{blowup localization} 
Suppose $u$ is a solution of the $\NLSf$ at the mass-energy line $\lambda>1$, with $\g_u(0)>1$. Select $\kappa$ such that
$0<\kappa<\min(\lambda-1,\kappa_0)$, where $\kappa_0$ is an absolute
constant. Assume that there is a radius $R\gtrsim \kappa^{-1/2}$
such that for all $t$, we have 
$\g_{u_{R}}(t):=\dfrac{\|u\|_{\Lt(|x|\geq R)}^{1-s}\|\nabla u(t)\|_{\Lt(|x|\geq R)}^{s}}{\|\uQ\|_{\Lt(|x|\geq R)}^{1-s}\|\nabla \uQ\|_{\Lt(|x|\geq R)}^{s}}\lesssim \kappa.$
Define $\tilde r(t)$ to be the scaled local variance:
$
 r(t) =  \frac{z_R(t)}{32 \alpha^2 E[\uQ] \left(\frac{d}{2s}\lambda^{\frac2s}
\left(1-\frac{\lambda^{p-1}}{\alpha^2}-\kappa\right)\right)} \,.
$

Then blowup occurs in forward time before $t_b$ (i.e., $T^* \leq
t_b$), where
$
t_b = r'(0) + \sqrt{ r'(0)^2 + 2 r(0)} \,.
$
\end{lemma}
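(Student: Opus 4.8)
The plan is to run the classical convexity (virial) argument of Glangetas--Merle, but carried out on the localized variance $z_R(t)$ from \eqref{localization_z} rather than the true variance, so that no finite variance assumption is needed --- the localization of the solution outside $B_R$ is supplied by the hypothesis $\g_{u_R}(t)\lesssim\kappa$. First I would record the two basic identities for $z_R$ already in hand: \eqref{derivative_z}--\eqref{localization_zder} for $z_R'$, and the localized virial identity, which (after using radiality of $\phi$ and the computation leading to \eqref{ridigity-eq1}) gives
\begin{align*}
z_R''(t) = 8\|\nabla u\|_{\Lt}^2 - \frac{4d(p-1)}{p+1}\|u\|_{L^{p+1}}^{p+1} + A_R(u(t)),
\end{align*}
with $A_R$ as in \eqref{ARvar}, and the error bound \eqref{no radial remainder}, namely $|A_R(u(t))| \lesssim \int_{|x|\ge R}\big(|\nabla u|^2 + R^{-2}|u|^2 + |u|^{p+1}\big)$.

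Next I would convert the main term $8\|\nabla u\|_{\Lt}^2 - \frac{4d(p-1)}{p+1}\|u\|_{L^{p+1}}^{p+1}$ into a strictly negative quantity. Writing it as $16\alpha^2 E[u] - 8(\alpha^2-1)\|\nabla u\|_{\Lt}^2$ (as in the proof of Proposition \ref{near boundary case}) and using that $u$ sits on the mass-energy line for $\lambda>1$ together with $\g_u(0)>1$ (hence, by the invariance in Theorem A* Part II, $\g_u(t)>1$ and in fact $\|\nabla u(t)\|_{\Lt}^2 \ge \lambda^{2/s}\|\nabla u_Q\|_{\Lt}^2 = \frac{d}{s}\lambda^{2/s}E[u_Q]$ for all $t$), I would get
\begin{align*}
8\|\nabla u\|_{\Lt}^2 - \frac{4d(p-1)}{p+1}\|u\|_{L^{p+1}}^{p+1} \le -8\frac{d}{s}\lambda^{\frac2s}(\lambda^{p-1}-1)E[u_Q] < 0.
\end{align*}
For the error term, the hypothesis $\g_{u_R}(t)\lesssim\kappa$ together with $R\gtrsim\kappa^{-1/2}$ bounds $\int_{|x|\ge R}(|\nabla u|^2 + |u|^{p+1})\lesssim\kappa^{\text{(power)}}\|\nabla u_Q\|_{\Lt}^2\cdots$ and $R^{-2}\int|u|^2\lesssim\kappa\|u_Q\|_{\Lt}^2$, so that $|A_R(u(t))| \lesssim \kappa\, \frac{d}{s}\lambda^{\frac2s}E[u_Q]$; choosing the absolute constant $\kappa_0$ small enough (and absorbing constants) makes $|A_R(u(t))|$ strictly smaller than the gain from the main term, yielding
\begin{align*}
z_R''(t) \le -16\alpha^2 E[u_Q]\Big(\frac{d}{2s}\lambda^{\frac2s}\big(1-\tfrac{\lambda^{p-1}}{\alpha^2}-\kappa\big)\Big)^{-1}\cdot(\text{something})\cdots
\end{align*}
--- more precisely, after dividing by the normalizing constant in the definition of $r(t)$ one obtains $r''(t)\le -2$ for all $t$ in the maximal interval of existence. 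The sign here is exactly arranged so that $-(1-\frac{\lambda^{p-1}}{\alpha^2}-\kappa)<0$ because $\lambda>1+\kappa$ forces $\lambda^{p-1}>\alpha^2(\ldots)$; this is where $0<\kappa<\lambda-1$ enters.

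Finally I would close with the standard convexity/ODE comparison: $r(t)$ is nonnegative (it is a positive-definite weighted $L^2$ mass, $z_R\ge 0$), $r\in C^2$ on $[0,T^*)$, and $r''\le -2$, so $r(t)\le r(0)+r'(0)t - t^2$; the right side becomes negative for $t > r'(0)+\sqrt{r'(0)^2+r(0)}$, and an elementary refinement (comparing with the parabola through the initial data) pushes the bound to $t_b = r'(0)+\sqrt{r'(0)^2+2r(0)}$. Since $r(t)$ cannot become negative while the solution exists, we must have $T^*\le t_b$, i.e.\ finite time blowup in forward time before $t_b$. The main obstacle I anticipate is not conceptual but bookkeeping: getting the error estimate $|A_R(u(t))|\lesssim\kappa\cdot(\text{main quantity})$ with the right powers of $\kappa$ and $R$ so that it is genuinely absorbed uniformly in $t$ --- this requires carefully tracking how $\|u\|_{L^{p+1}(|x|\ge R)}^{p+1}$ and $R^{-2}\|u\|_{L^2}^2$ scale against $\g_{u_R}\lesssim\kappa$ via Gagliardo--Nirenberg on the exterior region, and pinning down the absolute constant $\kappa_0$. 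Everything else (the identities for $z_R$, the coercivity of the main term on the $\lambda>1$ mass-energy line, the convexity endgame) is routine given the material already developed in Sections 2--3.
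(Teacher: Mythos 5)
Your strategy matches the paper's: the localized virial identity \eqref{ridigity-eq1} applied to $z_R$, the algebraic rewriting of the main term as $16\alpha^2 E[u]-8(\alpha^2-1)\|\nabla u\|_{\Lt}^2$, coercivity from the mass-energy line for $\lambda>1$ via $\|\nabla u(t)\|_{\Lt}^2\ge\lambda^{2/s}\|\nabla \uQ\|_{\Lt}^2$ (guaranteed by Theorem A* part II), absorption of the exterior error $A_R$ using the hypotheses $\g_{u_R}(t)\lesssim\kappa$ and $R\gtrsim\kappa^{-1/2}$ together with Gagliardo--Nirenberg on $\{|x|\ge R\}$, and a convexity endgame. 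One arithmetic slip in the last step: with the stated normalization of $r$ the differential inequality is $r''(t)\le -1$, not $-2$, so that integrating twice gives $r(t)\le r(0)+r'(0)t-\tfrac12 t^2$, whose positive root is exactly $t_b=r'(0)+\sqrt{r'(0)^2+2r(0)}$ --- no further ``refinement'' is needed, and the intermediate root $r'(0)+\sqrt{r'(0)^2+r(0)}$ you quoted is not the root of $r(0)+r'(0)t-t^2$ in any case (that would be $\tfrac12\bigl(r'(0)+\sqrt{r'(0)^2+4r(0)}\bigr)$).
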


\begin{proof}
By the local virial identity \eqref{ridigity-eq1},
$$
r''(t) =
\frac{16\alpha^2E[u]-8(\alpha^2-1)\|\nabla u\|_{L^2}^2+{A_R(u(t))}}{16 \alpha^2 E[\uQ] \left(\frac{d}{2s}\lambda^{\frac2s}
\left(1-\frac{\lambda^{p-1}}{\alpha^2}\right)-\kappa\right)},
$$
where  
$$
\big|A_R(u(t))\big|  = \| \nabla u(t) \|^2_{\Lt(|x|\geq R)}  + \frac{1}{R^2}  \|u(t)\|^2_{\Lt(|x|\geq R)} + \|u(t)\|^{p+1}_{L^{p+1}(|x|\geq R)}.
$$

Note that, $E[\uQ]=\frac ds\|\nabla \uQ\|_{L^2}^2$ and definition of the mass-energy line yield
\begin{align}
\frac{16\alpha^2E[u]-8(\alpha^2-1)\|\nabla u\|_{L^2}^2}{16 \alpha^2 E[\uQ] }
&=\frac{E[u]}{E[\uQ]}-\frac{d\|\nabla u\|_{L^2}^2}{sE[\uQ]}\notag\\
&=\frac{E[u]}{E[\uQ]}-\frac{\|\nabla u\|_{L^2}^2}{\|\nabla \uQ\|_{L^2}^2}\label{P2c}\\
&\leq \frac{d}{2s}\lambda^{\frac2s}
\left(1-\frac{\lambda^{p-1}}{\alpha^2}\right)-[\g_u(t)]^2\label{P3c}.
\end{align}
In addition, we have the following estimates 
$$\| \nabla u(t) \|^2_{\Lt(|x|\geq R)}\lesssim \kappa,\quad\quad\quad\quad 
\frac{\| u(t) \|^2_{\Lt(|x|\geq R)}}
{R^2}=\frac{\| \uQ \|^2_{\Lt}}
{R^2}\lesssim \kappa,$$
\begin{align} \label{magia3}
 \|u(t)\|^{p+1}_{L^{p+1}(|x|\geq R)}&\lesssim
 \|\nabla u\|^{\frac{d(p-1)}{2}}_{L^{2}(|x|\geq R)}\| u\|^{2-\frac{(d-2)(p-1)}{2}}_{L^{2}(|x|\geq R)}\\
&\lesssim[\g_{u_{R}}(t)]^{2}
\left(\|\nabla \uQ\|^{s}_{L^{2}}\| \uQ\|^{1-s}_{L^2}\right)^{p-1}\lesssim \kappa\notag.
\end{align}
We used the Gagliardo-Nirenberg to obtain \eqref{magia3} and noticing that $\|\nabla \uQ\|^{s}_{L^{2}}$ and $\| \uQ\|^{1-s}_{L^2}$ are constants, the last expression is estimated by $\kappa$ (up to a constant). In addition, $\g_u(t)>1,$ then $\kappa\lesssim \kappa[\g_u(t)]^2$. Applying the above estimates, it follows
$$
r''(t) \lesssim
\frac{\frac{d}{2s}\lambda^{\frac2s}
\left(1-\frac{\lambda^{p-1}}{\alpha^2}\right)-[\g_u(t)]^2(1-\kappa)}
{\frac{d}{2s}\lambda^{\frac2s}
\left(1-\frac{\lambda^{p-1}}{\alpha^2}\right)-\kappa}.$$
Since $\g_u(t)\geq\lambda$, we obtain
$
\tilde r''(t) \leq -1  \,.
$
which is a contradiction.  Now integrating in time twice gives
$
r(t) \leq -\frac12 t^2 + r'(0)t + r(0) \,.
$

The positive root of the polynomial on the right-hand side is $t_b=r'(0) + \sqrt{ r'(0)^2 + 2 r(0)}.$
\end{proof}
This concludes all the claims in steps 1, 2 and 3 in subsection \ref{outline weak} and finishes the proof of Theorem A* part II  (b).

\bibliographystyle{amsalpha}

\end{document}